\documentclass[11pt,reqno]{amsart}

\usepackage{microtype}
\usepackage{amsmath,amssymb,mathtools}
\usepackage{booktabs,tabularx,array}
\usepackage{xcolor}
\usepackage{enumitem}
\usepackage[numbers,sort&compress]{natbib}
\usepackage[colorlinks=true,linkcolor=blue!55!black,citecolor=blue!55!black,urlcolor=blue!55!black]{hyperref}
\hypersetup{
  pdftitle={Logarithmic Chowla Correlations Across All Shift Scales},
  pdfauthor={Jizhou Guo},
  pdfsubject={Quantitative logarithmic Chowla correlations},
  pdfkeywords={Chowla conjecture, Liouville function, logarithmic correlations, shift averages, harmonic multipliers, short arithmetic progressions, exceptional characters}
}

\newcommand{\N}{\mathbb{N}}
\newcommand{\Z}{\mathbb{Z}}
\newcommand{\R}{\mathbb{R}}

\newcommand{\T}{\mathbb{R}/\mathbb{Z}}
\newcommand{\cD}{\mathcal{D}}
\newcommand{\cE}{\mathcal{E}}
\newcommand{\cP}{\mathcal{P}}
\newcommand{\cS}{\mathcal{S}}
\newcommand{\cT}{\mathcal{T}}
\newcommand{\1}{\mathbf{1}}
\newcommand{\eps}{\varepsilon}
\newcommand{\dd}{\,\mathrm{d}}
\newcommand{\Li}{\lambda}
\newcommand{\WP}{W_{\mathrm{Pil}}}
\newcommand{\WM}{\mathcal{W}}

\newtheorem{theorem}{Theorem}[section]
\newtheorem{proposition}[theorem]{Proposition}
\newtheorem{lemma}[theorem]{Lemma}
\newtheorem{corollary}[theorem]{Corollary}

\newtheorem{publishedinput}[theorem]{Input}
\theoremstyle{definition}
\newtheorem{definition}[theorem]{Definition}
\theoremstyle{remark}
\newtheorem{remark}[theorem]{Remark}

\title[Logarithmic Chowla correlations across all scales]{Logarithmic Chowla Correlations\\Across All Shift Scales}
\author{Jizhou Guo}
\address{Dots Studio, RedNote}
\email{mitsuha2021b@gmail.com}
\email{sjtu18640985163@sjtu.edu.cn}
\date{September 1, 2026}
\subjclass[2020]{11N37, 11N35}
\keywords{Chowla conjecture, Liouville function, logarithmic correlations, multiplicative functions, polynomial shifts, harmonic multipliers, short intervals}

\begin{document}

\begin{abstract}
Let \(\lambda(n)=(-1)^{\Omega(n)}\) be the Liouville function.  We prove a
fixed power-logarithmic bound for its logarithmically weighted two-point
correlations across the full shift range.  There is an absolute \(c>0\)
such that every sufficiently large \(x\) admits a single set
\(\mathcal E_x\subseteq[1,x]\) with
\[
 |\mathcal E_x\cap[1,H]|
 \ll_A H(\log x)^{-A}
 \qquad(1\le H\le x)
\]
for every fixed \(A>0\), while
\[
 \max_{\substack{1\le h\le x\\h\notin\mathcal E_x}}
 \sup_{1\le y\le x}
 \left|\sum_{n\le y}\frac{\lambda(n)\lambda(n+h)}n\right|
 \ll (\log x)^{1-c}.
\]
The same exceptional-set formulation extends, without an upper cutoff, to
all positive integer shifts.  Earlier full-range theorems average over the
shift; here a fixed saving holds pointwise outside one set whose density in
every initial segment is smaller than every fixed negative power of
\(\log x\).

The new middle-scale argument combines a general-good-modulus Liouville
deletion lemma with a linear bad-modulus score, a progression Fourier
estimate, and a Mellin-localized dilation that separates divisor-dependent
endpoints.  Maximal fixed-moment bounds evacuate the low prefix and control
the long-shift range.  Assuming GRH for primitive Dirichlet \(L\)-functions,
we also prove, uniformly for \(h\in\mathbb N\) and \(1\le y\le x\),
\[
 \left|\sum_{n\le y}\frac{\lambda(n)\lambda(n+h)}n\right|
 \le \log(2\min\{h,y\})+O((\log x)^{1-c_{\mathrm G}})
\]
for an absolute \(c_{\mathrm G}>0\), with no exceptional shifts.

\end{abstract}

\maketitle

\section{Introduction and main results}\label{sec:introduction}

The two-point Chowla conjecture predicts that
\[
 \sum_{n\le x}\Li(n)\Li(n+h)=o(x)
\]
for every fixed nonzero integer \(h\).  It remains open even at \(h=1\).
Tao proved its logarithmically averaged form for fixed affine forms
\citep{tao2015logarithmic}.  At shift one, Helfgott and Radziwi\l\l{}
obtained a quantitative logarithmic saving by an expansion argument
\citep{helfgott2021expansion}, and Pilatte proved the power-logarithmic
estimate
\begin{equation}\label{eq:pilatte-intro}
 \sum_{n\le x}\frac{\Li(n)\Li(n+1)}n
 \ll(\log x)^{1-c_0}
\end{equation}
for an absolute \(c_0>0\) \citep{pilatte2025improved}.

Uniformity in a growing shift is a separate issue.  Tao and
Ter\"av\"ainen proved quantitative two-point estimates for general
multiplicative functions, uniformly over small progression moduli, residue
classes, and shifts, outside a common exceptional set of dyadic scales
\citep{taoteravainen2026quantitative}.  Their Liouville specialization
allows a sufficiently small absolute polylogarithmic range.  On the other
hand, the averaged-shift theorem of Matom\"aki, Radziwi\l\l{}, and Tao reaches
the full range of shift lengths \citep{matomaki2015averaged}; its later
quantitative refinement in \citep{menon2026short} remains an average over
the shift.  The question addressed here is whether a fixed power-logarithmic
saving can hold pointwise throughout the full shift range after deleting one
exceptional set that is substantially smaller than the saving itself.

Our main result gives such a set, simultaneously at every shift cutoff and
every terminal point.

\begin{theorem}[Global shift range]\label{thm:global-u}
There is an absolute constant \(c>0\) such that every sufficiently large
\(x\) admits a set
\[
 \cE_x\subseteq[1,x]\cap\N
\]
chosen independently of the exponent below, with the following properties.
For every fixed \(A>0\), uniformly for \(1\le H\le x\),
\begin{equation}\label{eq:global-prefix-intro}
 |\cE_x\cap[1,H]|\ll_A H(\log x)^{-A},
\end{equation}
and
\begin{equation}\label{eq:global-corr-intro}
 \max_{\substack{1\le h\le x\\h\notin\cE_x}}
 \sup_{1\le y\le x}
 \left|\sum_{n\le y}\frac{\Li(n)\Li(n+h)}n\right|
 \ll(\log x)^{1-c}.
\end{equation}
\end{theorem}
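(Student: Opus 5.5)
The plan is to split the shift range into three regimes and build \(\cE_x\) as a union of three exceptional sets, one per regime. Each piece is constructed by a maximal-in-\(y\) moment argument, so it does not depend on \(A\): it comes with bounds for every fixed moment, and Chebyshev then gives every power of \(\log x\). Throughout, write
\[
S_h(y)=\sum_{n\le y}\Li(n)\Li(n+h)/n .
\]

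\textbf{Small shifts, \(h\le(\log x)^{\delta}\) with \(\delta>0\) small.} The plan is to adapt Pilatte's expansion argument for \eqref{eq:pilatte-intro}. Alternatively, one can specialize the Tao--Ter\"av\"ainen uniform estimate to Liouville. In either case the target is
\[
\sup_{y\le x}|S_h(y)|\ll(\log x)^{1-c}
\]
for every such \(h\), with no exceptions. Any failure at these \(h\) adds at most \((\log x)^{\delta}\) elements, and this cardinality must itself be bounded by \(H(\log x)^{-A}\) for every \(H\). Since that is impossible for small \(H\), the low prefix has to be evacuated completely. This is exactly the role the abstract assigns to the maximal fixed-moment bounds: for \(h\) in this range one proves a genuine pointwise bound, using the small modulus \(h\) as a progression modulus.

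\textbf{Long shifts, \(h\) large compared with \(y\).} When \(h>y\), one has the trivial bound \(|S_h(y)|\le\log(2y)\), which is useless. So one splits
\[
S_h(y)=\sum_{n\le y/(\log x)^{B}}+\sum_{y/(\log x)^{B}<n\le y}.
\]
The first piece is \(\le\log y-B\log\log x\), which still does not give a saving. Instead, the plan is to use the dyadic decomposition over \(n\sim N\) and bound each piece's moment over \(h\) in each dyadic block. This is a Matom\"aki--Radziwi\l\l--Tao type averaged-shift estimate in quantitative form, following \citep{menon2026short}. It gives
\[
\sum_{h\sim H}\ \sup_{y}\Bigl|\sum_{N<n\le 2N,\,n\le y}\Li(n)\Li(n+h)/n\Bigr|^{2k}\ll H(\log x)^{-K}
\]
outside the ranges where \(N\) is tiny, and summing over the \(\ll\log x\) dyadic \(N\) is affordable. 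The range of small \(N\) contributes only the block \(n\le\exp((\log x)^{1-c})\), whose logarithmic sum is trivially \(\ll(\log x)^{1-c}\); this is what makes the saving pointwise in \(h\). The set of \(h\) in a dyadic block \(\sim H\) at which some dyadic piece exceeds \((\log x)^{-c'}\) is then put into \(\cE_x\). Summing the moment bounds over dyadic blocks up to \(H\) gives \eqref{eq:global-prefix-intro}.

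\textbf{Middle scales, and the main obstacle.} The hard part is the middle range, where \(h\) is between a power of \(\log x\) and a size where the averaged-shift moment bounds are too weak to give arbitrary log-power density. Here I would run the paper's new argument. First, delete from \(\Li(n)\) the contribution of primes in a good-modulus set (the Liouville deletion lemma), so that the remaining correlation becomes a sum over progressions \(n\equiv a\pmod q\). Next, score the bad moduli linearly, so that the set of \(h\) divisible by many bad \(q\) has tiny density. Then apply the progression Fourier estimate to bound the correlation restricted to good moduli. Finally, handle the fact that the endpoint \(y\) interacts with the divisor \(d\mid h\) through \(n\le y\) versus \(n\le y/d\): a Mellin-localized dilation smooths the cutoff, and the supremum over \(y\) is recovered from a countable net of \(y\) values at multiplicative spacing \(1+(\log x)^{-2}\).

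The step I expect to be the crux is making the bad-modulus exceptional set uniformly sparse in every initial segment \([1,H]\), rather than only on average up to \(x\). To address this, I would define membership by a divisibility condition (\(h\) divisible by some bad \(q\) in a sparse set with \(\sum_{q\ \mathrm{bad}}1/q\ll(\log x)^{-A}\) for all \(A\), for example by exploiting Siegel-zero-type rarity). With that definition, \(|\{h\le H\}\cap\cE_x|\le\sum_{q\ \mathrm{bad}}H/q\) automatically.
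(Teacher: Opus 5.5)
There is a genuine gap in the middle range. You read ``maximal fixed-moment bounds evacuate the low prefix'' as a statement about small shifts. In the paper, however, the prefix is the initial segment \(n\le Y\) of the summation variable for a middle band \(H<h\le 2H\), where \(Y=H^{3/2}\exp(10R^{2.01})\) and \(R=(\log H)^{1/5}\). The scored progression-Fourier estimate and the Mellin-localized transfer give pointwise control only on the tail \(Y<n\le y\). They need three things there:
\begin{enumerate}
\item translation errors \(hM/Z\) smaller than every power of \(R\);
\item quotient intervals (in \(n=r+hq\)) of length at least \(\exp(R^{2.01})\), for the Tao--Ter\"av\"ainen centred bound;
\item KMT moduli \(hq/d\) negligible against bases \(X'\) with \(\log X'\gg\log H\).
\end{enumerate}
Your middle-range plan gives no control of \(\sum_{n\le Y}\). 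That segment contains \(\sum_{n\le h}\lambda(n)\lambda(n+h)/n\), whose trivial size is \(\log h\in((\log x)^{1/2},\tfrac25\log x]\). An exceptional set defined only by divisibility by bad moduli would therefore need a pointwise bound for this initial segment at essentially every middle shift, and no available input supplies that endpoint estimate. The paper instead fixes the threshold set \(E_{\rm pre}(H)=\{h:\sup_{y\le Y}|C_h(y)|>(\log x)^{1-c_3}\}\). It bounds this set with Theorem~\ref{thm:one-third-moments}, which applies because \(H\ge Y^{3/5}\) and \(3/5>1/3\). Markov with \(p=p(C)\) then gives \(|E_{\rm pre}(H)|\ll_C H(\log x)^{-C}\), while the set itself does not depend on \(C\).

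The moment bound in your long-shift step also does not follow from Matom\"aki--Radziwi\l\l{}--Tao or Menon. Those results give an averaged saving of fixed quality. Combined with the trivial \(O(1)\) bound on each dyadic piece, Markov then yields only a fixed power of \(\log x\) in the exceptional density, not every power. Arbitrary log-power moments come from two different sources:
\begin{enumerate}
\item Davenport's estimate makes the Fourier multiplier of \(\lambda(n)n^{-1}\mathbf{1}_{(N,2N]}(n)\) uniformly \(O_A((\log N)^{-A})\). With Plancherel and a binary-cell maximal argument, this handles pieces with \(N\) not much larger than the shift window (Theorem~\ref{thm:long-shift-moments}).
\item Pieces with \(H<N\le H^{3-\delta}\) require the almost-all maximal Fourier uniformity of Matom\"aki et al.\ on intervals longer than \(N^{1/3+\delta}\).
\end{enumerate}
This is why the pure-moment regime begins only at \(h\ge x^{2/5}\), and why middle shifts need the prefix/tail split described above.

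Your exceptionless range \(h\le(\log x)^\delta\) is also too short. In the middle-band machinery every saving and every exceptional density is a power of \(\log H\) or of the form \(\exp(-(\log H)^{c})\). These are powers of \(\log x\) only when \(\log H\ge(\log x)^{c'}\). The paper covers \(h\le\exp((\log x)^{1/2})\) with Theorem~\ref{thm:super-main} at \(\eta=1/2\), where the parameter is tied to \(\log x\). Its only exceptions are the shifts with \(q_0/(q_0,h)\le(\log x)^{2/5}\) for a single Landau--Page conductor \(q_0\), and Siegel's theorem makes \(q_0\) larger than every power of \(\log x\).

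By contrast, the step you flag as the crux is routine. Bandwise exceptional sets of size \(\ll_C 2^j(\log x)^{-C}\) sum geometrically over the dyadic bands up to \(H\).
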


The lower threshold for \(x\) and the constants implicit in
\eqref{eq:global-prefix-intro} are ineffective because the proof invokes
Siegel's theorem.  The saving exponent \(c\) is absolute.

Thus the correlation saving and the exceptional-set exponent are completely
decoupled: \(c\) is fixed, while \(A\) is arbitrary.  The quantifier order
in \eqref{eq:global-prefix-intro} is also essential.  One set \(\cE_x\)
works for every \(A\), every initial segment, and every terminal point; the
set is not reselected when one of these parameters changes.  We will avoid
using ``all shifts'' without this qualifier.

The natural range in Theorem~\ref{thm:global-u} contains the new analytic
work.  A translated-moment argument removes the formal upper cutoff.

\begin{corollary}[All positive shift scales]\label{cor:all-positive-shifts}
There are an absolute \(c_+>0\) and, for every sufficiently large \(x\), a
single set \(\cE_x^+\subseteq\N\) such that for every fixed \(A>0\) and
every finite \(H\ge1\),
\begin{equation}\label{eq:all-positive-prefix-intro}
 |\cE_x^+\cap[1,H]|\ll_A H(\log x)^{-A},
\end{equation}
whereas
\begin{equation}\label{eq:all-positive-corr-intro}
 \sup_{h\in\N\setminus\cE_x^+}\sup_{1\le y\le x}
 \left|\sum_{n\le y}\frac{\Li(n)\Li(n+h)}n\right|
 \ll(\log x)^{1-c_+}.
\end{equation}
\end{corollary}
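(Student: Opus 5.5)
Corollary~\ref{cor:all-positive-shifts} follows from Theorem~\ref{thm:global-u} once we control shifts \(h>x\). For \(h\le x\) we simply put the elements of \(\cE_x\) into \(\cE_x^+\). For shifts \(h>x\) the sum \(\sum_{n\le y}\Li(n)\Li(n+h)/n\) never sees \(n\) comparable to \(h\). The plan is to cover the long-shift range by translated windows, one dyadic block at a time. On each window a fixed-moment bound shows that only a tiny proportion of shifts are bad, and we collect those bad shifts into \(\cE_x^+\).

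\emph{Step 1: a maximal moment bound for large shifts.} Fix \(h_0>x\) and consider shifts \(h=h_0+k\) with \(1\le k\le K\), where \(K=x\). Write
\[
 S(h,y)=\sum_{n\le y}\Li(n)\Li(n+h)/n .
\]
We want to bound \(\sum_{k\le K}\sup_{y\le x}|S(h_0+k,y)|^{2q}\) for a fixed large integer \(q\). The approach is to split \([1,x]\) into \(O(\log x)\) dyadic ranges \(n\sim N\) and, inside each, into short subintervals of relative length \((\log x)^{-B}\). This reduces the supremum over \(y\) to a supremum over polylogarithmically many endpoints, at an error \(O((\log x)^{1-c})\) coming from the partial tail block, which we handle trivially only if its weight is \(\ll(\log x)^{-c}\). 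Here I would use the rough block length \(N(\log x)^{-c}\); this costs \((\log x)^{c}\) endpoints per scale, which a union bound absorbs.

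\emph{Step 2: averaging over \(k\).} For each dyadic \(N\le x\) and each endpoint, we need to bound
\[
 \frac1K\sum_{k\le K}\Bigl|\sum_{n\sim N}\frac{\Li(n)\Li(n+h_0+k)}n\Bigr|^2 .
\]
Expanding the square and summing over \(k\) first gives correlations of \(\Li\) over the interval \((h_0,h_0+K]\) of length \(x\), located at height \(h_0\), at shifts \(n-n'\). This is exactly the averaged-shift / short-interval setting. By Matom\"aki--Radziwi\l\l{} type short-interval estimates for \(\Li\) (in the quantitative form of \citep{menon2026short}), this averages to \(\ll(\log x)^{-A'}\) times the trivial bound, uniformly in \(h_0\), as long as \(K=x\ge\exp((\log h_0)^{\epsilon})\)-type ranges hold. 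For \(h_0\) far larger than this, short-interval results fail. There the proposal instead uses the \(n\)-sum as the long variable: since \(\sum_{n\sim N}\) has length \(N\), we apply the dual expansion only when \(N\) is large, and for small \(N\le(\log x)^{C}\) we use the trivial bound \(O(1)\) per dyadic block. Those blocks contribute at most \(O(\log\log x)\), which is acceptable.

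\emph{Step 3: Chebyshev and summation.} Chebyshev's inequality with the maximal moment bound shows that the set of \(k\le K\) where \(\sup_y|S|>(\log x)^{1-c_+}\) has size \(\ll K(\log x)^{-A}\), for each \(A\) after choosing \(q\) depending on \(A\). To make the set independent of \(A\), define it with the fixed threshold. The count is then bounded for every \(A\) simultaneously, because the moment estimates hold for all \(q\). Summing over windows \([h_0,h_0+x]\) covering \((x,H]\) gives \eqref{eq:all-positive-prefix-intro}.

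The main obstacle is uniformity in \(h_0\) when \(h_0\) is astronomically large compared with \(x\). Short-interval technology for \(\Li\) near height \(h_0\) with interval length \(x\) is unavailable when \(\log h_0\gg x\). I would first try to avoid it altogether by exploiting that the moment only requires averaging \(\Li(m)\Li(m+r)\) over \(m\) in an interval of length \(x\) with \(r\le x\). If no such height-uniform input is available, the fallback is to let \(\cE_x^+\) contain whole windows wherever the short-interval input fails. This still gives density \(\ll(\log x)^{-A}\) in every initial segment, provided those windows are rare, which would follow from a mean-value estimate over \(h_0\) in the style of Matom\"aki--Radziwi\l\l{}--Tao.
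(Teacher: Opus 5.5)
There is a genuine gap in Step~2, and it is the one you flag at the end. You look for cancellation in the factor \(\Li(n+h_0+k)\), that is, in correlations of \(\Li\) on a window of length \(x\) at height \(h_0\). No input of this kind is available uniformly in \(h_0\). For a single window of length \(x\) at height \(h_0\ge x^{C}\) with \(C\) large, there is no pointwise short-interval theorem. Even where almost-all results in the Matom\"aki--Radziwi\l\l{}--Tao style apply, they are not known to give the saving \((\log x)^{-A'}\) for every fixed \(A'\) that Step~2 asserts. A fixed rate of saving would only yield a fixed-power exceptional density once \(c_+\) is held fixed.

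Your fallback, putting whole windows into \(\cE_x^+\) wherever the input fails, rests on an unproved mean-value statement over \(h_0\). Since \eqref{eq:all-positive-prefix-intro} must hold for every finite \(H\), such windows would have to be rare in every long prefix, which is the same unproved statement. The ``dual expansion'' you mention for huge \(h_0\) is also left unspecified.

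The missing idea is to put all the cancellation on the weight \(\Li(n)/n\), \(n\le x\), and treat the second factor as an arbitrary \(1\)-bounded sequence. With \(f=b\,\1_{[L+1,L+H+x]}\), the quantity \(\sup_{y\le x}|\sum_{n\le y}\Li(n)b(n+h)/n|\) is exactly \((T_x^*f)(h)\), a maximal convolution operator on \(\ell^p(\Z)\). On a dyadic shell its Fourier multiplier \(\sum_{n\sim N}\Li(n)e(\alpha n)/n\) is \(\ll_A(\log N)^{-A}\) uniformly in \(\alpha\), by Davenport's estimate. Plancherel, together with a binary-cell decomposition for the maximal truncation, gives rapidly decaying \(\ell^2\) norms. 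These interpolate with the trivial \(\ell^1\) and \(\ell^\infty\) bounds to give summable \(\ell^p\) norms, which is Lemma~\ref{lem:uniform-lp-multiplier} and Theorem~\ref{thm:long-shift-moments}. Because convolution commutes with translation, the bound is automatically uniform in the window location, so nothing about \(\Li\) near \(h_0\) is ever needed.

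Nor do you need small moments, only bounded ones. On \(I_j=(2^jN,2^{j+1}N]\) with \(N=\lfloor x\rfloor\) and length \(B_j\ge x/2\), the theorem gives \(\sum_{h\in I_j}A_x(h)^p\ll_p B_j\) for every fixed \(p\). Define \(\cE_x^+=\cE_x\cup\{h>N:A_x(h)>(\log x)^{1-c_+}\}\). Markov with \(p>(C+1)/(1-c_+)\) then gives \(|\cE_x^+\cap I_j|\ll_C B_j(\log x)^{-C}\), and summing the geometric series over \(j\) yields the prefix bound. This is the paper's proof.

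Your Step~3, with the threshold set fixed before \(C\) and the union with \(\cE_x\), is correct. The endpoint discretization in Step~1 becomes unnecessary, since the maximal operator bound controls every \(y\le x\) at once.
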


There is also a natural conditional statement with no exceptional shifts.

\begin{theorem}[GRH scale profile]\label{thm:grh-all-shifts}
Assume GRH for all primitive Dirichlet \(L\)-functions.  There is an
absolute \(c_{\rm G}>0\) such that, uniformly for \(h\in\N\) and
\(1\le y\le x\),
\begin{equation}\label{eq:grh-profile-intro}
 \left|\sum_{n\le y}\frac{\Li(n)\Li(n+h)}n\right|
 \le \log(2\min\{h,y\})+O((\log x)^{1-c_{\rm G}}).
\end{equation}
Consequently, for every prescribed function \(H(x)\ge1\) satisfying
\(\log H(x)=o(\log x)\),
\begin{equation}\label{eq:grh-subpower-intro}
 \sup_{1\le h\le\lfloor H(x)\rfloor}
 \left|\sum_{n\le x}\frac{\Li(n)\Li(n+h)}n\right|
 =o(\log x).
\end{equation}
\end{theorem}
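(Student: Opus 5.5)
The plan is to split the shift range at a threshold \(h\le y\) versus \(h>y\), and to treat the small-shift range by a GRH-conditional uniform version of Pilatte's argument.

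\emph{Trivial regime.} The term \(\log(2\min\{h,y\})\) is exactly the trivial bound on a short initial piece. Write
\[
 \sum_{n\le y}\frac{\Li(n)\Li(n+h)}n=\sum_{n\le \min\{h,y\}}\frac{\Li(n)\Li(n+h)}n+\sum_{h<n\le y}\frac{\Li(n)\Li(n+h)}n.
\]
The first sum is at most \(\sum_{n\le\min\{h,y\}}1/n\le \log(2\min\{h,y\})\) in absolute value, using the elementary bound \(\sum_{n\le m}1/n\le\log(2m)\) for \(m\ge1\). This reduces everything to showing that the tail \(\sum_{h<n\le y}\) is \(O((\log x)^{1-c_{\rm G}})\) uniformly in \(h<y\le x\). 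The consequence \eqref{eq:grh-subpower-intro} is then immediate, since \(\log(2H(x))=o(\log x)\).

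\emph{Dyadic pieces with \(n>h\).} Split \(n\in(h,y]\) dyadically into blocks \(n\sim N\) with \(N\ge h\), so that \(h\le n\) on each block. On a block with \(N\ge h(\log x)^{B}\), the shift is small relative to the length of the block. Here I would run the Pilatte/Helfgott--Radziwi\l\l{} expansion machinery with \(h\) treated as a modulus-type parameter. The argument needs equidistribution of \(\Li\) in arithmetic progressions modulo divisors of \(h\), and in short progressions of length \(N/q\), for moduli \(q\) up to powers of \(h\). Unconditionally these steps are exactly where Siegel zeros and large-modulus exceptions force an exceptional set. Under GRH, every primitive \(L(s,\chi)\) satisfies a zero-free region strong enough to give
\[
 \sum_{n\le X,\ n\equiv a\ (q)}\Li(n)\ll X^{1/2+\eps}q^{\eps}.
\]
With this input, Pilatte's multiplicative-to-additive graph-expansion bound goes through with constants uniform in \(h\). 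Each block then contributes \(\ll(\log N)^{-c}\) relative to \(\sum_{n\sim N}1/n\asymp1\), in a form summable over \(\log\)-many dyadic scales after taking the usual logarithmic average. This gives a total of \(O((\log x)^{1-c})\).

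\emph{Blocks with \(h\le N<h(\log x)^B\).} There are only \(O(B\log\log x)\) such blocks, each contributing \(O(1)\). Their total is therefore \(O(\log\log x)\), which is absorbed into the error term.

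\emph{Main obstacle.} The hard step is the block-wise correlation bound when \(h\) is comparable to a power of \(N\): say \(N\le h^{K}\), which is not covered by fixed-shift theory. For this I would use the dilation trick \(n\mapsto pn\). Multiplicativity gives \(\Li(pn)\Li(pn+ph)=\Li(n)\Li(n+h)\), which relates the shift \(h\) to the shift \(ph\) over primes \(p\) in a range \(P\). The Tao entropy-decrement/Pilatte construction then requires controlling \(\Li(n+h)\) along progressions \(n\equiv -h\pmod p\), and sums of \(\Li\) twisted by \(e(\alpha n)\) at major arcs with denominators up to \((\log x)^{O(1)}\). Uniformly in \(h\) up to \(x\), these are precisely what GRH supplies: Page-type exceptional characters disappear, and the minor arcs are handled by Matom\"aki--Radziwi\l\l{} in short intervals of length \(N(\log x)^{-B}\ge h^{1/K}\)-scale. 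If this uniformity fails at the largest scales, the fallback is to enlarge \(B\) so that those scales fall into the \(O(\log\log x)\) bucket. The price is choosing \(c_{\rm G}\) smaller than Pilatte's \(c_0\).
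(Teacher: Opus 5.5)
\textbf{Verdict: the proposal has a genuine gap.} Your first step, the trivial bound on the range up to about $h$, matches the structure of the paper's argument. The nontrivial step does not work as stated.

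\emph{The per-block claim is not available.} You claim that each dyadic block $n\sim N$ with $N\ge h(\log x)^B$ contributes $\ll(\log N)^{-c}$. For $h=1$ this is a per-scale Ces\`aro two-point Chowla estimate with a rate, which is open even under GRH. Pilatte's saving exists only after logarithmic averaging across scales, through the dilation identity $W_{\mathrm{Pil}}C_h(y)=\sum_{d\in\mathcal D}\sum_{m\le dy,\,d\mid m}\lambda(m)\lambda(m+hd)/m$. Its per-scale estimate concerns the divisor-weighted dilated sum, and it returns to $C_h$ only after division by $W_{\mathrm{Pil}}$. So you cannot bound the tail $\sum_{h<n\le y}$ block by block; isolating that tail would also create divisor-dependent endpoints $(dN,dy]$. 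The paper instead dilates the whole sum with the common prefix $m\le y$ and bounds the dilated part $m\le N_h$ trivially. That is exactly where the term $\log N_h$ comes from.

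\emph{The threshold is too low.} A saving that is a power of $\log x$ needs Pilatte primes up to $e^R$ with $R$ a power of $L=\log x$; the paper takes $R=L^{1/4}$. The Tao--Ter\"av\"ainen centred decoupling then needs quotient intervals of length at least $\exp(R^{2.01})$. This forces $N_h=h\exp(4G+R)$ with $G=10R^{2.01}$, so the trivially bounded range costs $O(L^{0.5025})$, not $O(\log\log x)$. Your fallback of enlarging $B$ cannot push the scales $h\le N\le h^K$ into a trivial bucket either. That would add $(K-1)\log h$, which is not $O(L^{1-c})$ when $h$ is a power of $x$, whereas the statement allows coefficient exactly $1$ on $\log h$.

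\emph{The GRH input you invoke is not the one needed.} Pointwise bounds such as $\sum_{n\le X,\,n\equiv a\,(q)}\lambda(n)\ll X^{1/2+\varepsilon}q^{\varepsilon}$ say nothing about the objects that actually arise. These are progressions $\lambda(y+ht)$, $t\le M$, with $M\le e^R=\exp(L^{1/4})$, far below any power of the ambient scale. Their step $h$ goes up to $x$, so after splitting at a rational frequency $a_0/q$ the moduli are $hq$, as large as $x$ rather than $(\log x)^{O(1)}$. Moreover, if you translate the physical variable through windows of length $hM$, as in the fixed-polylogarithmic argument, the fourth-root frequency transfer loses $h^{1/4}$. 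That loss can be compensated only while $h$ is bounded by a fixed power of $\log x$.

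\emph{What is missing.} The proof needs the following steps.
\begin{enumerate}
\item Translate the quotient $q$ in $n=r+hq$, so that both Fourier factors have length $O(M)$.
\item Prove the progression-of-starts estimate $\sum_{Z<y\le 2Z}|\sum_{t\le M}\lambda(y+ht)e(\alpha t)|\ll R^{-c}MZ$ for $Z\ge he^{4G}$.
\begin{enumerate}
\item Minor arcs: use the dualized joint Ramar\'e expansion, which has only $O(ZM^3)$ compatible start tuples.
\item Major arcs: use endpoint--residue decoupling followed by the Klurman--Mangerel--Ter\"av\"ainen short-progression variance. GRH enters only to make every automatically typical modulus good, and, through a Perron bound for $L(2s,\chi^2)/L(s,\chi)$, to delete the KMT character main term.
\end{enumerate}
\item Prune the block primes dividing $h$, so that $p\mid r+hq$ becomes a residue condition on $q$.
\item Apply the Tao--Ter\"av\"ainen estimate with arbitrary residues and bounded functions.
\end{enumerate}
Saying that Pilatte's expansion ``goes through uniformly in $h$'' skips precisely this content.
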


The remaining results below supply the three regimes used in the proof of
Theorem~\ref{thm:global-u}.  We retain the stronger structure available in
the low-shift range and the sharper distributional information available in
long shift windows.

\begin{theorem}[Almost-all super-polylogarithmic shifts]
\label{thm:super-main}
For every fixed
\begin{equation}\label{eq:super-eta-range-intro}
 0<\eta<1
\end{equation}
there is a constant \(c_\eta>0\) such that all sufficiently large \(x\)
admit a single set
\begin{equation}\label{eq:super-exception-intro}
 \cE_{x,\eta}\subseteq
 [1,\exp((\log x)^\eta)]\cap\mathbb Z,
\end{equation}
which, for every fixed \(C>0\), satisfies simultaneously for all
\(1\le H\le\exp((\log x)^\eta)\)
\begin{equation}\label{eq:super-prefix-exception-intro}
 |\cE_{x,\eta}\cap[1,H]|
 \ll_{\eta,C}H(\log x)^{-C},
\end{equation}
and for which
\begin{equation}\label{eq:super-correlation-intro}
 \max_{\substack{1\le h\le\exp((\log x)^\eta)\\
                    h\notin\cE_{x,\eta}}}
 \sup_{1\le y\le x}
 \left|\sum_{n\le y}\frac{\Li(n)\Li(n+h)}n\right|
 \ll_\eta(\log x)^{1-c_\eta}.
\end{equation}
The set \(\cE_{x,\eta}\) contains at most one prime.
\end{theorem}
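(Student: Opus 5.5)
I will reduce Theorem~\ref{thm:super-main} to a variance-type estimate over shifts in a dyadic window, combined with a deterministic deletion of shifts divisible by bad moduli. Fix \(\eta\in(0,1)\) and write \(H_0=\exp((\log x)^\eta)\). After trivially discarding the range \(n\le H_0^{2}\), which contributes only \(O((\log x)^\eta)\), I will decompose \([H_0^2,y]\) into logarithmic blocks \(n\in[X,2X]\). By Tao's entropy-decrement or Pilatte's expansion mechanism, as in \citep{pilatte2025improved}, the Liouville factor \(\Li(n)\) can be replaced by averages over small primes \(p\in\cP\). This produces
\[
\sum_{n}\frac{\Li(n)\Li(n+h)}{n}\approx \frac{1}{|\cP|}\sum_{p\in\cP}\sum_{m}\frac{\Li(m)\Li(m+h/p)}{m}\1_{p\mid h}+\text{(bilinear term)}.
\]
This is the analogue of the ``general-good-modulus Liouville deletion lemma'' announced in the abstract. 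The bilinear term is controlled by the expansion/graph argument. Uniformly in \(h\), it is a sum over pairs \(n,n+h\) linked by small primes, and the gain comes from the spectral gap of the associated graph. I will follow Pilatte's argument with \(h\) in place of \(1\), tracking how the spectral gap depends on the divisibility of \(h\) by small moduli \(q\le(\log x)^{C'}\).

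The key quantitative step is a \emph{linear bad-modulus score}. For each modulus \(q\) up to a polylogarithmic bound I define a defect measuring failure of equidistribution of \(\Li\) in progressions mod \(q\) in short intervals. By Siegel's theorem and the Matom\"aki--Radziwi\l\l{} theory for twisted sums, this defect is negligible except possibly for moduli that are multiples of a single exceptional conductor \(q^*\). Shifts \(h\) for which the correlation fails to save \((\log x)^{c_\eta}\) must then either have large score \(\sum_{q\mid h}\mathrm{defect}(q)\) or be divisible by \(q^*\) times a large smooth part. The exceptional set \(\cE_{x,\eta}\) is defined once as the set of \(h\le H_0\) whose score exceeds a fixed threshold. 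Because the score is linear in the divisor indicators, its first moment over \(h\le H\) is \(\ll H\sum_q \mathrm{defect}(q)/q\). Taking high moments, possible since the defects come from Markov-type bounds on the sum of \(q^{-1}\) over the bad set, gives the prefix bound \eqref{eq:super-prefix-exception-intro} for every \(C\) simultaneously, uniformly in \(H\) with \(1\le H\le H_0\). A set defined through divisibility contains at most one prime, namely possibly a prime dividing \(q^*\), since a prime \(h\) has only the divisors \(1\) and \(h\).

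The sup over \(y\) is handled via a Mellin-localized dilation. I will write the partial sum with endpoint \(y\) as a smooth Mellin integral over \(|t|\le(\log x)^{O(1)}\), apply the progression Fourier estimate for \(\Li(n)n^{it}\) in arithmetic progressions, and observe that divisor-dependent endpoints \(y/p\) are separated by the dilation. This makes the bound uniform in \(y\) up to a logarithmic loss absorbed into \(c_\eta\). The main obstacle I expect is the middle step, namely making the expansion saving in the bilinear term uniform for \(h\) as large as \(\exp((\log x)^\eta)\). Pilatte's graph lives on primes up to \(\exp((\log x)^{\theta})\), and the shift must remain small relative to the prime-sized steps. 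I would first try to choose the prime scale \(\exp((\log x)^{\theta})\) with \(\eta<\theta<1\), so that \(h/n\) is negligible at the relevant block scale. I would then check that the loss in the spectral gap is only a \(\eta\)-dependent power, which would give the constant \(c_\eta\).
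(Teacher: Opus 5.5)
There is a genuine gap: the proposal misses the step that actually limits the shift range, and the exceptional-set mechanism is misidentified.

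\textbf{The missing step.} Your reduction starts from a wrong identity. Complete multiplicativity multiplies the shift, $\Li(dm)\Li(dm+dh)=\Li(m)\Li(m+h)$. So the Tao/Pilatte dilation reads $\WP C_h(x)=\sum_{d\in\cD}\sum_{m\le dx,\,d\mid m}\Li(m)\Li(m+hd)/m$, with shift $hd$. Your version, with $h/p$ and $\1_{p\mid h}$, vanishes for most $h$ and is not this mechanism.

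With the correct expansion, the centred (expansion) part does not degrade with $h$. After pruning block primes dividing $h$ and writing $n=r+hq$, the condition $p\mid n$ becomes $q\equiv -rh^{-1}\pmod p$. The Tao--Ter\"av\"ainen decoupling allows arbitrary residues, so it applies with no loss in $h$.

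What degrades is the uncentred difference $S_{1,h}-S_{2,h}$. This is a harmonic average over $d$ of $\sum_n\Li(n)\Li(n+hd)$, and it needs Fourier uniformity of $\Li$ along progressions $y+ht$, $t\le M$, with one common frequency, summed over starts $y$. Embedding these progressions in intervals of length $hM$ costs $h^{1/4}$ (the factor $h^{1/4}\WM^{-1/16}$ in the polylogarithmic argument). That loss is fatal once $h$ is super-polylogarithmic.

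The paper's missing idea has three parts:
\begin{itemize}
\item Translate the quotient $q$ rather than $n$, so the second Fourier factor has length $2M$ rather than $2hM$.
\item Prove a progression-of-starts estimate. On minor arcs, dualize and use a joint Ramar\'e expansion whose fourth moment has only $O(ZM^3)$ compatible start tuples, not $O(Zh^3M^3)$.
\item On major arcs, use the endpoint--residue lemma, which converts the start-dependent residue into the full KMT short-progression variance at modulus $hq$.
\end{itemize}

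Your proposed repair also fails for $\eta\ge1/2.01$. You put the block primes up to $\exp((\log x)^\theta)$ with $\eta<\theta<1$. But the centred input needs quotient intervals of length at least $\exp((\log p_{\max})^{2.01})$, which forces $2.01\theta<1$; otherwise the required length exceeds $x$. The paper instead keeps the block primes below $e^R$ with $R=(\log x)^{1/5}$, lets $h\le e^{R^{5\eta}}$ exceed them by far, and relies on the scale chain $\rho<\gamma<\beta$.

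\textbf{The exceptional set.} The major-arc moduli are $hq$ with $q\le\Omega=R^2$, so they range up to $\exp((\log x)^\eta)(\log x)^{O(1)}$. If they were polylogarithmic, as you write, Siegel's theorem would exclude every exceptional character and no exceptional shifts would arise; that is why Theorem~\ref{thm:main} has none.

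Because $hq\le X^{\epsilon^{200}}$ at every local base, KMT good-modulus membership is automatic, so no score is needed here. The linear bad-modulus score and the Mellin localization belong to the middle band of Theorem~\ref{thm:global-u}.

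The only obstruction is a real character with a zero in the window $1-\vartheta\ll R^{-(\gamma-200a)}$. Since $\rho<\gamma-200a$, this window lies inside a single Landau--Page window for all conductors up to $e^{R^\rho}R^2$. Hence one conductor $q_0$ serves every parent interval, rescaled base, divisor block and prefix, and Siegel gives only $q_0\gg_A R^A$.

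The set $\cE=\{h:q_0/(q_0,h)\le\Omega\}$ is then counted deterministically via $\sum_{q\le\Omega}(q_0,q)\le\Omega\tau(q_0)$. This gives $H'R^{-B}$ for every $B$ with one fixed set. A Markov bound on a score would give this only if the bad reciprocal mass were already smaller than every power of $\log x$, which you do not establish.

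Likewise, ``a prime has only the divisors $1$ and $h$'' does not give at most one prime. A prime $p\in\cE$ must satisfy $p\mid q_0$ and $q_0/p\le\Omega$. Two such primes would force $q_0\le\Omega^2$, contradicting Siegel.

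Finally, the supremum over $y$ needs no Mellin step. The divisor-dependent tail $y<m\le dy$ is trivially $O(\WP R)$, so rerunning the dilation argument with $y$ in place of $x$ suffices.
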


The lower threshold and the constants implicit in
\eqref{eq:super-prefix-exception-intro} are ineffective for the same reason.

Theorem~\ref{thm:super-main} reaches every fixed stretched-logarithmic
subpower range \(\exp((\log x)^\eta)=x^{o(1)}\), \(\eta<1\).  Its exceptional set has
prefix density smaller than every fixed negative power of \(\log x\),
uniformly at every shorter cutoff; it is not merely a density-one statement
with an unspecified rate.  On long
shift windows, the harmonic weight admits a different argument which gives
all fixed moments.

\begin{theorem}[Long-shift moment bounds]\label{thm:long-shift-moments}
Fix \(1<p<\infty\).  Uniformly for \(x\ge2\), integers \(H\ge1\), and
integers \(L\ge0\),
\begin{equation}\label{eq:long-shift-moment-intro}
 \frac1H\sum_{L<h\le L+H}
 \sup_{1\le y\le x}
 \left|\sum_{n\le y}\frac{\Li(n)\Li(n+h)}n\right|^p
 \ll_p 1+\frac{x}{H}.
\end{equation}
More generally, the same estimate holds with \(\Li(n+h)\) replaced by
an arbitrary \(1\)-bounded sequence \(b(n+h)\), uniformly in \(b\).
Thus one exceptional set works simultaneously for every truncation
\(y\le x\).  In particular, if \(H\ge x\), every fixed maximal moment is
bounded uniformly in \(x,H,L\).
\end{theorem}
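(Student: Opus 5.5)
We would prove Theorem~\ref{thm:long-shift-moments} by duality, turning the $h$-average into an operator bound on $\ell^p(\Z)$. Throughout, $a(n)=\Li(n)$ and $b$ is an arbitrary $1$-bounded sequence. Set
\[
 T_y f(m):=\sum_{n\le y}\frac{a(n)}{n}\,f(m+n),\qquad
 T^*f(m):=\sup_{1\le y\le x}|T_yf(m)|,
\]
and take $f=b\,\1_{(L,\,L+H+x+1]}$. For $L<h\le L+H$ and $n\le x$ we have $f(h+n)=b(n+h)$, so the left side of \eqref{eq:long-shift-moment-intro} is at most $H^{-1}\|T^*f\|_{\ell^p}^p$. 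Since $\|f\|_{\ell^p}^p\le H+x+1$, the theorem follows from the $L$-independent bound
\[
 \|T^*\|_{\ell^p\to\ell^p}\ll_p1 \qquad(1<p<\infty),
\]
uniformly in $x$. Because $b$ enters only through $f$, uniformity in $b$ and the simultaneous control of all truncations $y\le x$ come for free.

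For $p=2$ we would use Plancherel on $\T$. A single truncation $T_y$ has operator norm $\sup_\theta|\sum_{n\le y}\Li(n)e(n\theta)/n|$. This is $O(1)$ uniformly in $y$ and $\theta$ by the Davenport-type bound $\sum_{n\le N}\Li(n)e(n\theta)\ll_A N(\log N)^{-A}$ combined with partial summation, with the trivial bound handling small $n$. For the maximal operator, split $(0,x]$ into dyadic blocks $B_j=[2^j,2^{j+1})$. For $y\in B_J$,
\[
 |T_yf|\le\sum_{j<J}|T_{B_j}f|+\sup_{y\in B_J}|T_{B_J\cap[1,y]}f|.
\]
The first sum has $\ell^2$ norm at most $\sum_j\|T_{B_j}\|_{2\to2}\,\|f\|_2\ll\sum_j j^{-2}\|f\|_2$, since Davenport with partial summation gives a full-block Fourier bound of $j^{-A}$. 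For the within-block supremum we would run a Rademacher--Menshov chaining argument over the dyadic sub-intervals of $B_J$ at depth $k$. The trivial bound for each depth-$k$ piece gives a contribution of $2^{-k/2}$ to the square function in block $J$. However, taking the supremum over $J$ by summing squares over $J$ costs a factor $\log x$.

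This within-block step is the main obstacle. To remove the $\log x$ loss we would try to obtain decay in $j$ for the short pieces, of the form
\[
 \sup_\theta\Bigl(\sum_{P\subset B_j,\ |P|=2^{j-k}}\Bigl|\sum_{n\in P}\frac{\Li(n)e(n\theta)}{n}\Bigr|^2\Bigr)^{1/2}\ll 2^{-\delta k}\,j^{-1-\delta}.
\]
This would come from mean-square Fourier uniformity of $\Li$ in short intervals, of Matom\"aki--Radziwi\l\l{}--Tao type, applied on each block. If that is too costly, the fallback is to give up pointwise Fourier control: write the within-block term as a Hilbert-space-valued convolution and apply a square-function (Littlewood--Paley in $j$) argument, bounding $\sum_j|\widehat{K_j}(\theta)|^2$ uniformly in $\theta$.

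For general $p$, we would treat $T^*$ as a maximal truncated convolution with the oscillating kernel $\Li(n)/n$. The case $1<p<2$ would follow by interpolating the $\ell^2$ bound with a weak-$(1,1)$ estimate for the dyadic maximal pieces, and $2<p<\infty$ by duality applied to the linearized operator $f\mapsto T_{y(m)}f(m)$. If a weak-$(1,1)$ bound is not directly available, we would use the Marcinkiewicz multiplier theorem on each dyadic block, fed by the uniform block Fourier bounds and their $\theta$-derivatives (again from Davenport with partial summation), and then sum over blocks using the decay in $j$. The final sentence of the theorem is then immediate: if $H\ge x$, then $1+x/H\le2$.
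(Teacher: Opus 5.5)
Your reduction is the paper's. With $f=b\,\1_{(L,L+H+x+1]}$ the left side is at most $H^{-1}\|T_x^*f\|_{\ell^p}^p$, and everything rests on an $x$-uniform maximal $\ell^p$ bound. That bound comes from dyadic shells $B_j=(2^j,2^{j+1}]$, with Davenport plus partial summation handling complete shells.

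The gap is the within-shell maximal operator
\[
 \mathcal M_jf(h)=\sup_{Y\in B_j}\Bigl|\sum_{n\in B_j,\,n\le Y}\lambda(n)f(h+n)/n\Bigr|.
\]
You correctly see that you need $\|\mathcal M_j\|_{2\to2}$ to be summable in $j$, but you leave this open. The input you propose would not deliver it. Your target decay $j^{-1-\delta}$ is a saving beyond $1/\log$ of the ambient scale. Short-interval Fourier uniformity results of Matom\"aki--Radziwi\l\l{}--Tao type give far weaker quantitative savings, and none at all for very short cells. The Littlewood--Paley fallback in $j$ does not touch the supremum inside a single shell.

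The missing observation is that no short-interval input is needed. Any integer interval $P\subseteq B_j$, however short, is a difference of two initial segments of length $\asymp2^j$. So Davenport and partial summation give $\sup_\theta|\sum_{n\in P}\lambda(n)e(n\theta)/n|\ll_A(1+j)^{-2A}$ uniformly in $P$. A depth-$k$ binary cell also has the trivial bound $\ll2^{-k}$. An initial segment of the shell uses at most one cell per depth. Bounding the maximum over cells by their square sum, then applying Minkowski and Plancherel, gives
\[
 \|\mathcal M_jf\|_2\le\sum_{k}\Bigl(\sum_{|P|=2^{j-k}}\|T_Pf\|_2^2\Bigr)^{1/2}
 \ll_A\sum_k2^{k/2}\min\{(1+j)^{-2A},2^{-k}\}\,\|f\|_2\ll_A(1+j)^{-A}\|f\|_2 .
\]
Davenport is used only at the $O(\log j)$ depths with $2^k\le(1+j)^{2A}$. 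There the number of cells is only polynomial in $j$, which its arbitrary log-power saving absorbs; the trivial bound handles all deeper levels. This is exactly your displayed target.

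Then interpolate each $\mathcal M_j$ between this $\ell^2$ decay and its trivial $O(1)$ bounds on $\ell^1$ and $\ell^\infty$, which follow from $\mathcal M_jf(h)\le\sum_{n\in B_j}|f(h+n)|/n$. This gives $\|\mathcal M_j\|_{p\to p}\ll(1+j)^{-A\vartheta_p}$ with $\vartheta_p=2/\max\{p,p/(p-1)\}$, which is summable once $A>1/\vartheta_p$. It treats every $1<p<\infty$ at once. So you need neither a weak-type bound for the whole operator, nor duality, nor a Marcinkiewicz multiplier argument; the last would not be uniform in $j$ for this oscillating kernel.
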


A direct Markov consequence is especially useful.  Fix \(B\ge0\) and
\(\varepsilon,C>0\).  If \(H\ge x/(\log x)^B\), then in every interval of
\(H\) consecutive positive shifts, all but
\[
 O_{B,\varepsilon,C}\bigl(H(\log x)^{-C}\bigr)
\]
shifts satisfy
\begin{equation}\label{eq:long-shift-tail-intro}
 \left|\sum_{n\le x}\frac{\Li(n)\Li(n+h)}n\right|
 \le (\log x)^\varepsilon.
\end{equation}
Thus the exact endpoint \(H=x\) has an arbitrarily power-logarithmically
sparse exceptional set, with no Landau--Page alternative.

The factor \(x/H\) can be removed on substantially shorter polynomial
windows.  The threshold below comes from the published almost-all maximal
Fourier-uniformity theorem for M\"obius on intervals longer than
\(X^{1/3+\varepsilon}\), transferred here to Liouville and then used one
physical block at a time.

\begin{theorem}[All moments beyond one third]
\label{thm:one-third-moments}
Fix \(\theta>1/3\) and \(1<p<\infty\).  Uniformly for all sufficiently
large \(x\), all integers \(H\ge x^\theta\), all \(L\ge0\), and every
\(1\)-bounded sequence \(b:\mathbb Z\to\mathbb C\),
\begin{equation}\label{eq:one-third-moment-intro}
 \frac1H\sum_{L<h\le L+H}
 \sup_{1\le y\le x}
 \left|\sum_{n\le y}\frac{\Li(n)b(n+h)}n\right|^p
 \ll_{p,\theta}1.
\end{equation}
More precisely, for every fixed \(A>0\), the contribution above the shift
scale satisfies
\begin{equation}\label{eq:one-third-high-tail-intro}
 \left(\frac1H\sum_{L<h\le L+H}
 \sup_{H\le y\le x}
 \left|\sum_{H<n\le y}\frac{\Li(n)b(n+h)}n\right|^p\right)^{1/p}
 \ll_{p,\theta,A}(\log x)^{-A},
\end{equation}
where the sum is empty if \(H\ge x\).  Thus, in normalized \(L^p\), the
full correlation is its prefix at \(n\le H\) plus an arbitrarily
power-logarithmically small tail.  In particular, the same bounds hold for
\(b=\Li\).
\end{theorem}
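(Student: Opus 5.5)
We prove Theorem~\ref{thm:one-third-moments} by splitting each correlation at \(n=H\) and treating the two pieces separately. The prefix piece is
\[
 \sup_{y\le H}\Bigl|\sum_{n\le y}\Li(n)b(n+h)/n\Bigr| .
\]
Its \(L^p\) average over \(h\) is bounded by the argument behind Theorem~\ref{thm:long-shift-moments}, applied with \(x\) replaced by \(H\). That theorem allows an arbitrary \(1\)-bounded \(b\), and with \(x=H\) it gives \(\ll_p 1+H/H\ll 1\). Hence \eqref{eq:one-third-moment-intro} follows from \eqref{eq:one-third-high-tail-intro} by Minkowski's inequality, and all the new work lies in the tail \(H<n\le y\).

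For the tail, we decompose \((H,x]\) dyadically into blocks \(I_j=(X_j,2X_j]\) with \(X_j\ge H\). There are \(O(\log x)\) blocks. Inside each block we subdivide further into \((\log x)^{O(A)}\) subintervals of length \(X_j(\log x)^{-B}\). The supremum over \(y\) is then controlled by two contributions. The first is the sum of complete subinterval pieces, handled by a union bound or an \(\ell^p\) sum over the \(O((\log x)^{B+1})\) subinterval endpoints. The second is a trivial bound \(\ll(\log x)^{-B}\) on the last partial subinterval, since there the weight \(1/n\) is essentially constant and \(|b|\le1\). On each subinterval \(J\) we remove \(1/n\) by partial summation, at the cost of another maximal-in-endpoint sum of \(O(1)\) type. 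This reduces matters to bounding
\[
 \frac1H\sum_{L<h\le L+H}\sup_{J'\subseteq J}\Bigl|\frac1{X_j}\sum_{n\in J'}\Li(n)b(n+h)\Bigr|^p .
\]
For each fixed \(h\) this is a correlation of \(\Li\) on a window of length about \(X_j\) against the \(1\)-bounded sequence \(n\mapsto b(n+h)\).

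The key step is to rewrite the average over \(h\in(L,L+H]\) as an average over physical positions, which is where the window condition \(H\ge X_j^{\theta'}\) with \(\theta'>1/3\) enters. Write \(m=n+h\). Summing \(|\cdot|^2\) over \(h\) (and interpolating to \(p\), using the trivial \(L^\infty\) bound \(\ll1\) of the normalized block sum) turns the \(h\)-average into an average over short intervals \([m-H,m]\) of \(\Li\) tested against \(b\). After Fourier expansion in the short variable (a large-sieve/duality step), this is bounded by the maximal short-interval Fourier uniformity norm
\[
 \sup_{\alpha}\Bigl|\sum_{t\le H}\Li(m+t)e(\alpha t)\Bigr|,
\]
averaged over \(m\sim X_j\). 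The published almost-all maximal Fourier-uniformity theorem for \(\mu\) on intervals of length \(X^{1/3+\varepsilon}\) gives a saving \((\log X)^{-A'}\) for all but a \((\log X)^{-A'}\) proportion of such \(m\). We transfer it to \(\Li=\sum_{d^2\mid n}\mu(n/d^2)\) by splitting at \(d\le D\) and treating \(d>D\) trivially. Exceptional \(m\) contribute only their proportion, because \(b\) is \(1\)-bounded. Choosing \(A'\) large in terms of \(A,B,p\) makes each block contribute \(\ll(\log x)^{-A-2}\), and summing over the \(O(\log x)\) blocks yields \eqref{eq:one-third-high-tail-intro}.

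The main obstacle is this reduction of the \(h\)-moment with the supremum over \(y\) to the maximal Fourier-uniformity input. The difficulty is that \(b\) is arbitrary, so the sharp cutoff \(n\le y\), which depends on \(h\) through \(m\), must be decoupled. We would first try the subinterval discretization described above, which replaces the cutoff by finitely many fixed intervals. We would then apply Cauchy--Schwarz in \(m\), so that \(b\) disappears entirely and only \(\int_0^1|\sum_{t\le H}\Li(m+t)e(\alpha t)|\,\cdots\) remains. One caveat is that a genuine \(\sup_\alpha\) appears only in the bilinear estimate. A simpler \(L^2\) bound, which is Parseval in \(h\), gives merely \(O(1)\) rather than a logarithmic saving, so the maximal theorem is genuinely needed for the \((\log x)^{-A}\) tail.
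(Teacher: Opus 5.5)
Your outline matches the paper's architecture: the prefix \(n\le H\) is handled by Theorem~\ref{thm:long-shift-moments} with summation range and window both \(H\); the range above \(H\) is cut into dyadic shells; the almost-all maximal Fourier theorem for \(\mu\) is transferred to \(\Li\) via \(\Li(n)=\sum_{d^2\mid n}\mu(n/d^2)\); and bad starts cost only their proportion. The gap is the step you flag yourself, which is the actual content of Lemma~\ref{lem:localized-toeplitz}. Suppose Parseval or duality in \(h\) is applied to a whole mesh interval \(J\), and \(b\) is then removed by Cauchy--Schwarz against its \(\ell^2\)-mass on \(J+(L,L+H]\). Then you get at best \(\sup_\alpha|\sum_{n\in J}\Li(n)e(\alpha n)|\cdot(|J|+H)^{1/2}\), a loss of \((|J|/H)^{1/2}\), which can be a power of \(x\). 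This global mass is the source of the \(x/H\) in Theorem~\ref{thm:long-shift-moments}; contrary to your closing remark, for \(H<x\) global Parseval does not even give \(O(1)\). The missing idea is to localize the physical variable \emph{before} Plancherel. Cut the range into windows \(W\) of length \(K\le H\); the paper uses a translated grid of blocks of length \(K_N=\lfloor\min\{H/4,N^{1-\rho}\}\rfloor\), and a sliding average over window starts works equally well. For \(L<h\le L+H\), the values \(b(n+h)\) with \(n\in W\) occupy at most \(K+H\le 2H\) integers, so \(\|\sum_{n\in W}\Li(n)b(n+h)\|_{\ell^2_h}\le(2H)^{1/2}\sup_\alpha|\sum_{n\in W}\Li(n)e(\alpha n)|\). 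Then recombine the windows by Minkowski, never using \(b\) beyond this local mass.

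Two further points need repair. First, your windows have length \(H\), but Input~\ref{input:almost-all-mobius-fourier} needs \(K\le N^{1-\delta}\), which fails on all shells with \(N<H^{1/(1-\delta)}\). Where \(N/H\) exceeds every fixed power of \(\log x\), Davenport's global bound cannot substitute, because the unlocalized loss \((N/H)^{1/2}\) swamps any logarithmic saving. Windows of length \(\min\{H,N^{1-\rho}\}\) cost nothing, precisely because each still sees only \(O(H)\) values of \(b\). Second, removing \(1/n\) by partial summation on a mesh interval reintroduces \(\sup_{J'\subseteq J}\) over \(\asymp|J|\) endpoints, which no union bound absorbs. Either freeze \(1/n\) on each mesh interval (error \(O((\log x)^{-2B})\) per interval), or use the sliding-window identity. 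The latter gives pointwise \(\sup_{J'\subseteq J}|\sum_{n\in J'}\Li(n)b(n+h)|\le K^{-1}\sum_{[m,m+K)\subseteq J}|\sum_{m\le n<m+K}\Li(n)b(n+h)|+2K\). With these repairs, your mesh-and-union-bound treatment of \(\sup_y\) is a valid alternative to the paper's. The paper instead controls every subinterval of a good block through the progression (star) norm of the input, a binary-cell decomposition, and sublinear interpolation of the block maximal operators. Your route needs only interval Fourier norms, but spends part of the arbitrary logarithmic saving on the \((\log x)^{O(B)/p}\) union-bound cost.
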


The moment theorem gives a rapid distributional tail: for every fixed
\(P>0\) and every \(T\ge1\),
\begin{equation}\label{eq:one-third-distribution-intro}
 \#\left\{L<h\le L+H:
 \sup_{1\le y\le x}
 \left|\sum_{n\le y}\frac{\Li(n)b(n+h)}n\right|>T\right\}
 \ll_{P,\theta}HT^{-P}.
\end{equation}
Consequently, for every fixed \(\theta>1/3\) and \(\varepsilon,C>0\),
outside \(O_{\theta,\varepsilon,C}(H(\log x)^{-C})\) shifts in any window
of length \(H\ge x^\theta\), one has
\begin{equation}\label{eq:one-third-tail-intro}
 \left|\sum_{n\le x}\frac{\Li(n)\Li(n+h)}n\right|
 \le(\log x)^\varepsilon.
\end{equation}
The same exceptional set gives this inequality with the left side replaced
by its supremum over all terminal points \(1\le y\le x\).
Thus the exceptional power \(C\) is arbitrary and is no longer coupled to
the correlation-saving exponent.  In particular, taking
\(\varepsilon=1/2\) gives a fixed power-log consequence with the
absolute choice \(c_0=1/2\) for every fixed \(\theta>1/3\) and arbitrary
fixed \(C>0\).  The threshold \(\theta>1/3\) reflects the range of the
published quantitative input used here.

The two all-moment theorems are maximal in the summation endpoint, rather
than estimates at one preselected terminal point.  They will be used below
for prefix evacuation, the high shift range, and the extension beyond
\(h=x\).

\begin{corollary}[The exceptional-character-free case]
\label{cor:super-all-shifts}
Fix \(0<\eta<1\).  If the Landau--Page exceptional primitive real
character in the conductor range used in the proof does not exist, then
\eqref{eq:super-correlation-intro} holds for every
\(1\le h\le\exp((\log x)^\eta)\).  In particular, this conclusion holds
under the generalized Riemann hypothesis for Dirichlet \(L\)-functions.
\end{corollary}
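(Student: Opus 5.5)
Corollary~\ref{cor:super-all-shifts} should follow by re-reading the proof of Theorem~\ref{thm:super-main} and tracking where the exceptional set \(\cE_{x,\eta}\) comes from. The plan has three steps.

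First, I will separate the two sources of exceptional shifts in that proof. Shifts \(h\) in the low prefix \(h\le(\log x)^{B}\) are handled by the Tao--Ter\"av\"ainen-type input uniformly, so no exceptions arise there. The middle range up to \(\exp((\log x)^\eta)\) uses the general-good-modulus Liouville deletion lemma together with the linear bad-modulus score. In that argument a shift is declared exceptional only if its bad-modulus score is large. A modulus is declared bad only through the Siegel/Landau--Page alternative: the only obstruction to Page-type equidistribution of \(\Li\) in progressions to moduli \(q\) in the relevant conductor range is a real primitive character \(\chi_1\) with a Siegel zero, and bad moduli are exactly the multiples of its conductor \(q_1\). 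This is consistent with the statement that \(\cE_{x,\eta}\) contains at most one prime, presumably the prime dividing \(q_1\), together with shifts whose divisor structure correlates with \(q_1\).

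Second, I will assume no such exceptional character exists in the conductor range used. Then the set of bad moduli is empty, so every shift has bad-modulus score zero. Every step that produced \eqref{eq:super-correlation-intro} for \(h\notin\cE_{x,\eta}\), namely the progression Fourier estimate and the Mellin-localized dilation, then applies verbatim to every \(h\le\exp((\log x)^\eta)\). The implied constant \(\ll_\eta\) and the exponent \(c_\eta\) are unchanged, since they never depended on the exceptional set beyond its definition. The sup over \(y\le x\) is handled exactly as before.

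Third, for the GRH clause, GRH for Dirichlet \(L\)-functions excludes any real zero \(\beta>1/2\), and in particular any Siegel zero, so the hypothesis of the first part holds.

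The main obstacle is verifying that the construction of \(\cE_{x,\eta}\) really depends only on the Landau--Page character. If the proof also discards shifts for a second reason, for instance large-sieve exceptional moduli or a dyadic-scale exceptional set from the Tao--Ter\"av\"ainen input, the corollary fails as stated. I would check this first by auditing each deletion. Any deletion coming from zero-density considerations rather than from a single real zero would have to be absorbed using the log-free zero-density estimate with the Deuring--Heilbronn repulsion removed, which is valid once no exceptional zero exists.
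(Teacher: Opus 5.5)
Your top-level logic agrees with the paper. The only exceptional set in Theorem~\ref{thm:super-main} is \(\cE_R\) from \eqref{eq:exceptional-shift-set}, which is defined to be empty when the conductor \(q_0\) of Lemma~\ref{lem:global-conductor} does not exist. GRH rules out the real zero in the window \(1-\vartheta\ll R^{-(\gamma-200a)}\).

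But the mechanism you give is not the one in that proof, and under your mechanism your second step fails. Theorem~\ref{thm:super-main} has no split into a Tao--Ter\"av\"ainen low prefix and a scored middle range. It is one argument for all \(h\le H_R\): Theorem~\ref{thm:outer-apf}, then the common-prefix transfer of Proposition~\ref{prop:apf-transfer} and Corollary~\ref{cor:maximal-apf-transfer}, with the centred bound applied to every \(h\).

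The general-good-modulus deletion, the linear bad-modulus score, and the Mellin-localized dilation belong to the middle band of Theorem~\ref{thm:global-u}. There, the bad moduli \(\mathcal B_X\) in \eqref{eq:middle-robust-bad-set} are the multiples of conductors in \(\mathcal G_X\). These are conductors of primitive \(L\)-functions with any zero, complex or real, in \(\Re s>\sigma_X\), \(|\Im s|\le 12X\). The absence of a Siegel zero does not empty \(\mathcal G_X\). The log-free zero-density estimate only counts these conductors, and removing Deuring--Heilbronn repulsion does not eliminate them. So the chain ``no exceptional character \(\Rightarrow\) no bad moduli \(\Rightarrow\) score zero'' is false for that machinery. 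It would hold under GRH, but not under the weaker unconditional hypothesis of the corollary.

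The missing idea is why no zero-density exceptions arise in the subpower regime. Every KMT working modulus there divides \(hq\), so it is at most \(e^{R^\rho}R^2\). The parameter inequality \(\rho<\gamma-200a\) then gives \(hq\le (X')^{\epsilon_K^{200}}\) for every base \(X'\) in the outer family; see \eqref{eq:kmt-good-check}. This places every modulus in KMT's automatic good-modulus range, with typicality supplied by KMT Lemma~9.1.

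Consequently, the only residual obstruction in the Corollary~1.7-type deletion is a primitive real character with a real zero in that window. Lemma~\ref{lem:global-conductor} shows that one conductor \(q_0\) serves the entire family. If it does not exist, the argument closes as follows:
\begin{enumerate}
\item Lemma~\ref{lem:kmt-liouville} deletes every main term for every \(h\).
\item Lemma~\ref{lem:apf-rational-major} and Theorem~\ref{thm:outer-apf} then hold with \(\cE_R=\varnothing\).
\item The transfer introduces no further exceptional sets. Coprime pruning and the typical-set complement are error terms, and the minor arcs and the centred bound are uniform in \(h\).
\end{enumerate}
This gives \eqref{eq:super-correlation-intro} for all \(1\le h\le\exp((\log x)^\eta)\). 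Your instinct to audit every deletion was right, but the audit has to land on this automatic-good-modulus fact, not on a zero-density repair.
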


At fixed polylogarithmic scale the exceptional set can be removed, with a
saving exponent independent of the fixed power.

\begin{theorem}[All fixed polylogarithmic ranges]\label{thm:main}
There exists an absolute constant \(c>0\) such that, for every fixed
\(A>0\), there are \(C_A>0\) and \(x_A\ge3\) for which
\begin{equation}\label{eq:main-theorem}
 \sup_{1\le h\le(\log x)^A}
 \left|\sum_{n\le x}\frac{\Li(n)\Li(n+h)}n\right|
 \le C_A(\log x)^{1-c}
\end{equation}
for every \(x\ge x_A\).
\end{theorem}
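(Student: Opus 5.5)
The plan is to extend Pilatte's shift-one argument \eqref{eq:pilatte-intro} uniformly over shifts $h\le(\log x)^A$. Shifts this small sit inside the polylogarithmic window of Tao--Ter\"av\"ainen only for a small absolute power, so we cannot simply quote them. Instead we will make every $h$-dependence in Pilatte's method polynomial in $h$ (hence polylogarithmic in $x$) and check that such losses are absorbed into the saving. The saving exponent itself must not depend on $A$.

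\textbf{Entropy decrement.} First, apply Tao's entropy decrement step with a logarithmic weight. For a set $\cP$ of primes $p\in[P_0,P_1]$, with $P_0$ a large fixed power of $\log x$ chosen so that $p\nmid h$ for typical $p$ and with $h\ll(\log x)^A$, this rewrites
\[
\sum_{n\le x}\frac{\Li(n)\Li(n+h)}n
\]
as an average, over $p,q\in\cP$, of
\[
\sum_{n}\frac{\Li(n)\Li(n+(q-p)h')}{n}
\]
along progressions. Here $h'$ arises after clearing the factor $ph$. Primes dividing $h$ number $O(A\log\log x)$, and we discard them at negligible cost. The result is a bilinear form in $\Li$ over the graph on $\Z$ whose edges join $n$ and $n+ph\cdot(\text{stuff})$. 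Equivalently, in Pilatte's setup, it is the operator $A_{\cP,h}$ with edges $n\sim n\pm ph$ restricted to $p\mid n$. Because the shift by $h$ just rescales that graph, the combinatorial input becomes: bound the $L^2\to L^2$ norm of this operator on $[1,x]$ by a power-log saving. This follows from Pilatte's spectral/trace bound, i.e.\ counting closed walks of length $2k$, with $h$ appearing only in the congruence conditions.

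\textbf{Main obstacle.} The hardest step is the closed-walk (moment) count with the factor $h$ present. One needs the number of non-trivial closed walks on primes $p_i$ with $\sum\pm p_i h\equiv 0$ subject to divisibility constraints, and these are analysed through character sums modulo products of primes. Here $h$ enters as a modulus twist, and exceptional characters of conductor dividing $h\prod p_i$ could spoil the bound. The first approach is to absorb $h$ by restricting to $p\equiv 1$-type structures, or simply to note that $\gcd(h,\prod p_i)=1$ and that $h$ merely multiplies the walk displacement. The walk then closes exactly when the unscaled walk closes, so the count is independent of $h$ except through the range $n\le x$, which shrinks to $n\le x/h$ at only a $\log h=O(A\log\log x)$ loss.

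\textbf{Assembly.} Error terms in the entropy decrement step scale like $(\log P_1/\log P_0)^{-1}$ and $h/P_0$. Taking $P_0=(\log x)^{2A+10}$ and $P_1=\exp((\log x)^{c'})$ makes them $\ll(\log x)^{-c}$ with $c$ independent of $A$. The $A$-dependence is then confined to $C_A$ and $x_A$.
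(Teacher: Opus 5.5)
Your proposal has a genuine gap, and the step you single out as the main obstacle is not where the shift actually costs anything.

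First, Tao's entropy decrement cannot open a power-saving argument. It controls mutual information only at most scales and gives at best iterated-logarithmic savings, so it does not produce errors of the form $(\log P_1/\log P_0)^{-1}+h/P_0$. Pilatte's method, following Helfgott--Radziwi\l\l{}, starts instead from the exact logarithmic dilation identity
\[
 \WP\sum_{n\le x}\frac{\Li(n)\Li(n+h)}n
 =\sum_{d\in\cD}\sum_{\substack{m\le dx\\ d\mid m}}
 \frac{\Li(m)\Li(m+hd)}m,
\]
where $d=p_1\cdots p_J$ runs over products of block primes. It then splits $\prod_j\1_{p_j\mid m}$ into the centred product $\prod_j(\1_{p_j\mid m}-1/p_j)$ plus the nonempty-subset terms. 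An operator-norm or closed-walk bound controls only the centred piece. For that piece your rescaling observation is essentially correct: every block prime exceeds $H_0>h$, so writing $m=r+hq$ turns $p\mid m$ into $q\equiv-rh^{-1}\pmod p$. The arbitrary-interval, arbitrary-residue theorem of Tao--Ter\"av\"ainen (Input~\ref{input:tt-decoupling}) then applies with no loss in $h$. That is the routine part.

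What your plan omits is the uncentred part. After subset rescaling one must show that
\[
 \sum_{d\in\cD_I(M)}\frac1d\sum_{n\in I_N}\Li(n)\Li(n+hd)
\]
is small; this is a Chowla correlation averaged over the sparse shift set $h\cD_I(M)$. No expansion bound helps here, because the positive averaging operator $f\mapsto\sum_d d^{-1}f(\cdot+hd)$ has $\ell^2$ norm equal to its harmonic mass. One needs short-interval Fourier uniformity of $\Li$, and this is exactly where $h$ enters. Translation averaging over length $K=\lceil hM\rceil$, followed by H\"older against the fourth moment of $\sum_d e(\alpha hd)/d$, loses $(K/M)^{1/4}\asymp h^{1/4}$. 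When $\log x=R^3$ this can be as large as $R^{3A/4}$.

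To keep $c$ independent of $A$, the short-interval exponential-sum saving must beat an arbitrary prescribed power of $R$. The paper proves this as the flexible Menon estimate, Proposition~\ref{prop:flexible-menon}, with circle cutoff $\Omega=R^{12A+6}$. Then $h^{1/4}\Omega^{-1/16}\le R^{-3/8}$, and $A$ enters only the constants, through $B_0=6A+3$ and the ineffective major-arc character input. Your proposal has no substitute for this step. Your assembly attributes all $h$-dependence to an $h/P_0$ error and so does not see it. Also, the possible exceptional character arises in the major arcs of this short-interval estimate, not in the closed-walk count.
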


The threshold \(x_A\) and the constant \(C_A\) may be ineffective through
Input~\ref{input:menon-character}; no effective dependence on \(A\) is
asserted.

The distinction between the pointwise theorems is quantitative and logical.
Theorem~\ref{thm:main} has no exceptional shifts and its saving exponent is
independent of the fixed power \(A\), but \(A\) must be fixed before
\(x\to\infty\).  Theorem~\ref{thm:super-main} supplies a one-conductor
exceptional set in each fixed stretched-logarithmic range, while its saving
may depend on \(\eta\).  Theorem~\ref{thm:one-third-moments} is instead a
shift-window moment theorem and does not produce a pointwise arithmetic
description of its exceptional shifts.  Theorem~\ref{thm:global-u} uses
both mechanisms but is stronger than either separately: it retains a fixed
pointwise saving through all shift scales while the density exponent of one
global exceptional set is arbitrary.

\begin{table}[t]
\centering
\caption{Quantitative two-point results and the shift quantifier relevant
to this paper.  The table suppresses hypotheses not used in the
comparison.}\label{tab:comparison}
\scriptsize
\begin{tabularx}{\textwidth}{@{}>{\raggedright\arraybackslash}p{0.18\textwidth}*{3}{>{\raggedright\arraybackslash}X}@{}}
\toprule
Result & averaging / scales & shift range & conclusion \\
\midrule
Tao (2016) & logarithmic, all large \(x\) & each fixed shift & qualitative \(o(\log x)\) \\
Helfgott--Radziwi\l\l{} (2021) & logarithmic & shift one & \((\log\log x)^{-1/2}\)-type saving \\
Pilatte (2026) & logarithmic, all large \(x\) & shift one & fixed power of \(\log x\) \\
Tao--Ter\"av\"ainen (2025, v2 2026) & natural averages outside a common scale-exception set & one small absolute polylogarithmic range & fixed power of \(\log N\) \\
Matom\"aki--Radziwi\l\l{}--Tao; Menon & natural, averaged over shifts & every \(1\le H\le x\) & quantitative mean saving; no fixed pointwise saving with arbitrary exceptional exponent \\
Theorem~\ref{thm:main} & logarithmic, all large \(x\) & every fixed polylogarithmic range & one absolute power saving, no shift exceptions \\
Theorem~\ref{thm:super-main} & logarithmic, all large \(x\) & \(h\le\exp((\log x)^\eta)\), \(\eta<1\) & fixed power saving outside an explicitly sparse shift set \\
Theorem~\ref{thm:global-u} & logarithmic, all large \(x\) & full natural range \(1\le h\le x\) & fixed power saving outside one all-prefix arbitrary-log sparse set; maximal in \(y\) \\
Theorem~\ref{thm:long-shift-moments} & logarithmic, shift-window moments & $L<h\le L+H$ & maximal fixed moments $\ll_p1+x/H$; one exceptional set for all cutoffs \\
Theorem~\ref{thm:one-third-moments} & logarithmic, shift-window moments & $L<h\le L+H$, $H\ge x^\theta$, $\theta>1/3$ & maximal fixed moments $\ll_{p,\theta}1$; the maximal tail above $H$ is arbitrarily log-power small \\
\bottomrule
\end{tabularx}
\end{table}

\paragraph{Progression-of-starts Fourier uniformity.}
For a common frequency \(\alpha\), the new local object is
\begin{equation}\label{eq:apf-intro}
 \sum_{Z<y\le2Z}
 \left|\sum_{t\le M}\Li(y+ht)e(\alpha t)\right|.
\end{equation}
If one first embeds each progression in an ordinary interval of length
\(hM\), character or additive orthogonality loses the density \(1/h\).
Instead we dualize the outer norm and apply Ramar\'e's identity before any
character-wise triangle inequality.  After the fourth power is expanded,
three congruences in the start variables cancel the apparent \(h^3\)
geometric-volume loss.  This proves the minor arcs uniformly in the full
shift range.

\paragraph{Major arcs and one conductor.}
At a rational frequency \(a/q\), splitting \(t\) modulo \(q\) produces a
moving residue class modulo \(hq\).  Averaging the physical left endpoint
over a complete modulus decouples this residue from the start and converts
\eqref{eq:apf-intro} into the full short-progression variance of
Klurman--Mangerel--Ter\"av\"ainen
\citep{klurman2023shortaps}.  Their variance theorem leaves a possible
real-character main term.  We prove that, across every parent interval,
divisor reduction, Abel block, and subset-rescaled base used here, all
surviving terms are organized by one Landau--Page conductor.  Siegel's theorem
makes that conductor larger than every fixed power of the local parameter,
which gives \eqref{eq:super-exception-intro} and the assertion about prime
shifts.

\paragraph{Transfer to Chowla correlations.}
Writing \(n=r+hq\), we translate the quotient \(q\), rather than the
physical integer \(n\), before Fourier inversion.  The second Fourier
factor then has length \(2M\), not \(2hM\).  Direct interpolation of the
product factor between \(L^1\) and \(L^\infty\), followed by the \(L^4\)
bound for the divisor polynomial, gives a fourth-root transfer with no
exceptional-frequency split.  Before the centred step, block primes dividing
\(h\) are pruned; their reciprocal mass is smaller than every fixed power of
\(R^{-1}\), while the fourth moment can only decrease.  The centred term then
follows from the arbitrary-interval, arbitrary-residue decoupling theorem of
Tao and Ter\"av\"ainen.  The exact parameter construction leaves the strict
region \(\eta<1\).

\paragraph{Harmonic multipliers and polynomial shift windows.}
At the endpoint we use the logarithmic weight directly.  Davenport's
uniform exponential-sum estimate for \(\Li\) implies that on each dyadic
block the Fourier multiplier of \(\Li(n)/n\) decays faster than every fixed
power of the block index.  Plancherel and a binary decomposition inside
each dyadic shell give decay for the maximal partial-sum operator, with an
inessential square-root loss in the logarithmic exponent.  The dyadic
harmonic mass gives the \(\ell^1\) and \(\ell^\infty\) endpoints, and
sublinear interpolation yields a summable maximal operator norm on every
fixed \(\ell^p\), \(1<p<\infty\).  This proves
Theorem~\ref{thm:long-shift-moments} without any exceptional character.
For \(H\ge x^\theta\), \(\theta>1/3\), we partition every physical shell
above \(H\) into intervals of length at most \(H\).  The published
almost-all maximal Fourier estimate of
\citet{matomaki2026higherII}, together with a square-divisor transfer from
M\"obius to Liouville, makes the good local convolution multipliers
arbitrarily power-logarithmically small.  A translated block grid makes
the total harmonic mass of the bad physical intervals equally small.
This proves Theorem~\ref{thm:one-third-moments} and removes the global
\(x/H\) loss.

\paragraph{The middle shift range.}
The new issue in Theorem~\ref{thm:global-u} is the interval between the
stretched-logarithmic pointwise theorem and the one-third moment theorem.
On a dyadic shift band \(H<h\le2H\), we first set
\(Y=H^{3/2}\exp(O((\log H)^{2.01/5}))\).  The maximal all-moment theorem
controls every prefix \(y\le Y\) outside one set of size
\(O_A(H(\log x)^{-A})\) for all fixed \(A\).  This \emph{prefix
evacuation} leaves a high tail on which the quotient intervals are long
enough for the centred theorem and translation boundaries are negligible.

The physical moduli in that tail are no longer in KMT's automatic small
core.  We classify KMT character main terms by primitive conductor and
delete all but at most one Landau--Page conductor on each robust physical
band.  Rather than discard the remaining bad gcd strata by a union bound,
we retain their exact linear coefficient
\[
 W_{X,q}(h)=\frac1{hq}
 \sum_{\substack{m\mid hq\\m\ {
m bad}}}\varphi(m).
\]
The generator structure of the bad moduli makes the average of this score
stretched-exponentially small in \(\log H\); Markov therefore gives one
shift set satisfying every fixed power of \((\log x)^{-1}\).  Because the
score is independent of the Abel prefix length, it survives the complete
major-arc transfer without a square-root loss.

\paragraph{Mellin-localized dilation.}
Localizing the final correlation tail creates the divisor-dependent interval
\((dY,dy]\), so the earlier common-parent transfer cannot simply be reused.
After placing the active divisor product \(d'\) in its existing dyadic
block, Mellin inversion separates
\[
 w(k/(d'N))
 =\frac1{2\pi}\int_{\R}\widehat w(t)(k/N)^{it}d'^{-it}\,dt.
\]
The factor \(d'^{-it}\) is a unit divisor coefficient and \(k^{it}\) is a
bounded multiplicative twist.  Consequently the divisor fourth moment,
the scored progression estimate, and the arbitrary-function centred bound
all persist.  A uniform Mellin cutoff and a fresh complex KMT minimizer on
each frequency slice close the constrained-twist endpoint.  This produces
the middle-band high-tail estimate.  Dyadic stitching with the low and high
regimes proves Theorem~\ref{thm:global-u}; translated moments beyond \(x\)
then prove Corollary~\ref{cor:all-positive-shifts}.

The paper is organized as follows.  Section~\ref{sec:inputs} records the
published inputs.  Sections~\ref{sec:flexible-minor}--\ref{sec:main-proof}
prove Theorem~\ref{thm:main}.  Sections~\ref{sec:progression-fourier} and
\ref{sec:exceptional-modulus} establish the progression Fourier estimate
and its one-conductor exceptional set.  Section~\ref{sec:superpolylog-transfer}
proves Theorem~\ref{thm:super-main}.  Section~\ref{sec:maximal-moments}
proves the long-shift moment theorem, the all-moment theorem beyond one
third.
Sections~\ref{sec:general-good-deletion}--\ref{sec:global-u-proof} prove the
new middle-band estimate and Theorem~\ref{thm:global-u}.
Section~\ref{sec:grh-all-shifts} proves Theorem~\ref{thm:grh-all-shifts}.
Section~\ref{sec:global-consequences} records consequences and the remaining
pointwise boundary.  The appendices give dependency and parameter ledgers.

\section{Notation and published inputs}\label{sec:inputs}

Write \(e(t)=\exp(2\pi i t)\), and identify \(\T\) with \([0,1)\) when
integrating.  All implied constants are absolute unless a subscript indicates
their dependence.  The notation \(u\asymp v\) means \(u\ll v\ll u\).  Empty
integer endpoints in short sums are interpreted in the usual way.

\subsection{Pilatte's prime blocks}

Fix once and for all a sufficiently small absolute \(\eps_1>0\).  For a
large parameter \(H\), put
\begin{equation}\label{eq:global-parameters}
 R=\log H,\qquad
 H_0=\exp(R^{1-\eps_1}),\qquad
 J=\lfloor\eps_1^2\log R\rfloor.
\end{equation}
We use the prime blocks \(\cP_1,\ldots,\cP_J\) constructed by Pilatte
\citep[Section~2]{pilatte2025improved}.  Set
\[
 V_j=\sum_{p\in\cP_j}\frac1p,\qquad
 V=\max_{1\le j\le J}V_j,\qquad
 \cP=\bigsqcup_{j=1}^J\cP_j,
\]
and let
\[
 \cD=\{p_1\cdots p_J:p_j\in\cP_j\},
 \qquad
 \WP=\sum_{d\in\cD}\frac1d=\prod_{j=1}^JV_j.
\]
For every choice \(p_i\in\cP_i\), Pilatte's block-separation lemma gives
\begin{equation}\label{eq:block-geometry}
 p_1\cdots p_i<p_{i+1}^{1/10},\qquad
 p_1\cdots p_J<H,\qquad H_0<p_i<H.
\end{equation}
Moreover,
\begin{equation}\label{eq:block-masses}
 V_j=\frac{\eps_1\log R}{2J}+o(1),\qquad
 \WP\asymp_{\eps_1}V^J,\qquad
 V^J\asymp_{\eps_1}R^{c_0(\eps_1)},
\end{equation}
where
\begin{equation}\label{eq:c0-definition}
 c_0(\eps_1)=\eps_1^2\log\frac1{2\eps_1}>0.
\end{equation}
After decreasing \(\eps_1\) once, we also require
\begin{equation}\label{eq:subset-loss}
 2^JV^{2J}\le R^{1/40}
\end{equation}
for all sufficiently large \(R\).  This is possible because the exponent of
\(R\) in the left-hand side tends to zero with \(\eps_1\).

For nonempty \(I\subseteq[J]\), let
\[
 \cD_I=\left\{\prod_{i\in I}p_i:p_i\in\cP_i\right\},
 \qquad
 \cD_I(M)=\cD_I\cap(M/2,M],
\]
where \(M\) ranges over dyadic values.  Put
\[
 V_I[M]=\sum_{d\in\cD_I(M)}\frac1d,\qquad
 Q_{I,M}(\alpha)=\sum_{d\in\cD_I(M)}\frac{e(\alpha d)}d.
\]
We use two further consequences of Pilatte's framework.

\begin{publishedinput}[Fourth moment]\label{input:fourth-moment}
For every nonempty \(I\subseteq[J]\) and every dyadic \(M\),
\begin{equation}\label{eq:fourth-moment}
 \int_0^1|Q_{I,M}(\alpha)|^4\dd\alpha
 \ll\frac{V^{4J}}{M(\log M)^4}.
\end{equation}
\end{publishedinput}

\begin{publishedinput}[Typical-set complement]\label{input:sieve-complement}
Let \(P_1<Q_1\) be the first prime range in the typical-factorization
construction below.  If \(\cT\) denotes the associated arithmetic typical
set, then
\begin{equation}\label{eq:sieve-complement}
 \#\{m\le3Z:m\notin\cT\}
 \ll Z\frac{\log P_1}{\log Q_1}.
\end{equation}
\end{publishedinput}

These are Pilatte's Lemma~C.2 and Menon's Proposition~2.13, respectively
\citep{pilatte2025improved,menon2026short}.

\subsection{Menon's character estimate}

Let \(\kappa_M>0\) be Menon's exponent-pair constant, and fix an absolute
\(C_M>3\) so large that
\begin{equation}\label{eq:CM-choice}
 \frac{C_M\kappa_M}{24}-\frac1{15}>\frac52.
\end{equation}
Menon notes that \(C_M=10^{10}\) is admissible.  The following is the
specialization of his Corollary~3.7 used in Section~\ref{sec:flexible-major}.

\begin{publishedinput}[Character-twisted short intervals]
\label{input:menon-character}
Fix \(D,B'>0\).  Suppose
\begin{equation}\label{eq:menon-character-hyp}
 Y\ge L\ge10,\qquad Y^{1/2}\le X_0\le Y,\qquad
 q\le(\log Y)^{B'},
\end{equation}
and let \(\chi\pmod q\).  If \(\cT\) is the arithmetic typical set below,
and on the interval in question it agrees with the local set from Menon's
Proposition~3.1, choose the auxiliary last-range parameter \(R_{\mathrm M}\)
in Menon's equation~(2.7) sufficiently large in terms of \(D\).  Then
\begin{equation}\label{eq:menon-character}
 \frac1Y\int_Y^{2Y}\left|
  \frac1L\sum_{\substack{y<m\le y+L\\m\in\cT}}
        \chi(m)\Li(m)
 \right|^2\dd y
 \ll_{D,B'}
 \frac{(\log Q_1)^{1/3}}{P_1^{\kappa_M/24}}
 + (\log Y)^{-D}.
\end{equation}
The constant is ineffective but uniform once \(D\), \(B'\), and this
fixed choice of \(R_{\mathrm M}=R_{\mathrm M}(D)\) are fixed.  Here \(Y,L\)
are the local ambient scale and short length (the \(X,h\) of Menon's
Corollary~3.7), while \(X_0\) denotes the auxiliary square-root scale.  This
renaming keeps the local variables distinct from Menon's internal auxiliary
parameters.
\end{publishedinput}

\subsection{Centred decoupling}

The centred part of the proof is supplied by the general quantitative
correlation theorem of Tao and Ter\"av\"ainen.  We record only the specialized
interface needed below.

\begin{publishedinput}[Arbitrary-interval centred decoupling]
\label{input:tt-decoupling}
Let \(I_q\) be an arbitrary interval of integers of length \(L\), with
\(L\ge\exp(R^{2.01})\).  Let \(I'\subseteq[J]\) be nonempty, choose arbitrary
residues \(b_p\pmod p\), and let \(g_1,g_2\) be \(1\)-bounded functions.
Then, for \(\sigma\in\{-1,+1\}\),
\begin{multline}\label{eq:tt-decoupling}
 \sum_{(p_i)_{i\in I'}}
 \left|\sum_{q\in I_q}
  \prod_{i\in I'}\left(\1_{q\equiv b_{p_i}\ (p_i)}-\frac1{p_i}\right)
  g_1(q)g_2\left(q+\sigma\prod_{i\in I'}p_i\right)
 \right|\\
 \ll V^{0.51|I'|}L.
\end{multline}
\end{publishedinput}

This is the form of \citep[Theorem~3.3]{taoteravainen2026quantitative}
obtained by retaining the actual interval length.  The theorem explicitly
permits arbitrary intervals, arbitrary residue assignments, and different
bounded functions, so no translation-invariance extension is being assumed.

\subsection{Almost-all local Fourier uniformity}

For an interval $I$, write
\[
 \left|\sum_{n\in I\cap\mathbb Z}a(n)\right|^*
 :=\sup_{P\subseteq I\cap\mathbb Z}
   \left|\sum_{n\in P}a(n)\right|,
\]
where $P$ ranges over arithmetic progressions.

\begin{publishedinput}[Almost-all maximal M\"obius Fourier uniformity]
\label{input:almost-all-mobius-fourier}
Fix $\delta,A>0$.  Uniformly for
\[
 X^{1/3+\delta}\le K\le X^{1-\delta},
\]
one has
\begin{multline}\label{eq:published-mobius-fourier}
 \operatorname{meas}\left\{y\in[X,2X]:
 \sup_{\alpha\in\mathbb R/\mathbb Z}
 \left|\sum_{y<n\le y+K}\mu(n)e(\alpha n)\right|^*
 >K(\log X)^{-A}\right\}\\
 \ll_{A,\delta}X(\log X)^{-A}.
\end{multline}
\end{publishedinput}

This is the circle-nilmanifold, linear-phase specialization of
\citet[Theorem~1.1(i)]{matomaki2026higherII}.  That paper was published in
\emph{Inventiones Mathematicae} in 2026.  The supremum over polynomial
orbits in the source makes the exceptional set independent of the Fourier
frequency, and its star is the maximal progression norm displayed above.
For the present specialization one takes the polynomial orbit to be the
linear phase \(e(\alpha n)\); the source's supremum over arithmetic
progressions is exactly the displayed star norm, so no extra frequency or
progression union is introduced here.

\section{The flexible Menon estimate: minor arcs}\label{sec:flexible-minor}

The first new ingredient is a localized version of Menon's exponential-sum
estimate in which the circle cutoff may be any fixed power of the outer
logarithm.  We distinguish the number \(J_M\) of Menon prime ranges from the
number \(J\) of Pilatte blocks in Section~\ref{sec:inputs}.

Given \(X_0\) and \(P_1<Q_1\), define recursively
\[
 P_j=\exp\!\left(j^{4j}(\log Q_1)^{j-1}\log P_1\right),\qquad
 Q_j=\exp\!\left(j^{4j+2}(\log Q_1)^j\right),
\]
and take \(J_M\) maximal subject to
\begin{equation}\label{eq:menon-J}
 Q_{J_M}\le\exp\!\left(\frac{\log X_0}{\log\log X_0}\right).
\end{equation}

\begin{definition}[Arithmetic typical set]\label{def:typical-set}
Set
\begin{equation}\label{eq:typical-set}
 \begin{aligned}
 \cT(P_1,Q_1,X_0)=\{m\ge1:\;&\text{for every }1\le j\le J_M,\\
 &\text{some prime }p\in[P_j,Q_j]\text{ divides }m\}.
 \end{aligned}
\end{equation}
\end{definition}

This definition is independent of a finite support convention.  On a local
base interval \([Y,2Y]\), Menon's set is precisely
\(\cT(P_1,Q_1,X_0)\cap[1,3Y]\).  This observation prevents a top dyadic
block from crossing an artificial cutoff.

\begin{proposition}[Localized flexible Menon estimate]
\label{prop:flexible-menon}
Fix \(B_0>0\).  There is an ineffective \(Z_*(B_0)\) such that the
following holds for \(Z\ge Z_*(B_0)\).  Suppose \(Z\ge K\ge10\) and
\begin{equation}\label{eq:flexible-hyp}
 (\log K)^5\le\Omega\le
 \min\{K^{1/(2C_M)},(\log Z)^{B_0}\}.
\end{equation}
Put
\begin{equation}\label{eq:flexible-scales}
 X_0=(4Z)^{1/2},\qquad
 K_0=\exp\!\left(\frac{\log X_0}{\log\log X_0}\right),\qquad
 \widetilde K=\min(K,K_0),
\end{equation}
and
\begin{equation}\label{eq:flexible-prime-range}
 P_1=\Omega^{C_M},\qquad Q_1=\widetilde K/\Omega^3,\qquad
 \cT=\cT(P_1,Q_1,X_0).
\end{equation}
Then, uniformly in \(\alpha\in\T\),
\begin{equation}\label{eq:flexible-conclusion}
 \frac1Z\int_Z^{2Z}\left|
  \frac1K\sum_{\substack{x<n\le x+K\\n\in\cT}}
       \Li(n)e(\alpha n)
 \right|\dd x
 \ll_{B_0}
 \frac{(\log K)^{1/4}\log\log K}{\Omega^{1/4}}.
\end{equation}
The implied constant and threshold may depend on \(B_0\); no uniformity is
claimed if \(B_0\) grows with \(Z\).
\end{proposition}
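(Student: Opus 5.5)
The plan is to rerun Menon's minor/major-arc argument for exponential sums of \(\Li\) restricted to the typical set, with the circle cutoff \(\Omega\) a free parameter rather than a fixed power of \(\log K\).

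\textbf{Step 1: Dirichlet approximation.} Fix \(\alpha\) and choose \(q\le K/\Omega\) with \(|\alpha-a/q|\le \Omega/(qK)\), \((a,q)=1\). Split into a minor-arc case \(q>\Omega\) and a major-arc case \(q\le\Omega\).

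\textbf{Step 2: minor arcs (\(q>\Omega\)).} Use Ramar\'e's identity on \(\cT\), factoring out a prime \(p\) from the last usable range \([P_{j},Q_{j}]\) whose multiplicative weight is controlled by the typical-set construction. This reduces the \(L^1\) average over \(x\) to bilinear sums
\[
\sum_{p\sim P}\Bigl|\sum_{m\sim x/P}\Li(m)e(\alpha pm)\Bigr|
\]
on short intervals. Cauchy--Schwarz and the large-sieve/Vinogradov bilinear bound give a saving of roughly \((q^{-1}+\Omega^{-1}+P/K)^{1/2}\) times logarithmic losses. Taking fourth roots after a second Cauchy--Schwarz, together with the dyadic count of ranges (about \(\log K\) of them, which I expect produces the \((\log K)^{1/4}\log\log K\)), should yield \((\log K)^{1/4}\log\log K\,\Omega^{-1/4}\). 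The hypotheses \(P_1=\Omega^{C_M}\) and \(Q_1=\widetilde K/\Omega^3\) guarantee \(P/K\le\Omega^{-3}\) and \(P\ge\Omega^{C_M}\), so all the error terms are dominated. The cap \(\widetilde K\le K_0\) keeps all ranges below \(Q_{J_M}\). The restriction \(\Omega\le K^{1/(2C_M)}\) ensures \(P_1\le Q_1\).

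\textbf{Step 3: major arcs (\(q\le\Omega\)).} Write \(e(\alpha n)=e(an/q)e(\beta n)\) with \(|\beta|\le\Omega/(qK)\). Expand \(e(an/q)\) into Dirichlet characters modulo divisors of \(q\); the Gauss-sum normalisation gives total weight \(\ll q^{1/2}\). Remove the twist \(e(\beta n)\) by partial summation over subintervals of length \(K/\Omega\). Since \(q\le\Omega\le(\log Z)^{B_0}\), Input~\ref{input:menon-character} applies with \(B'=B_0\), \(Y=Z\), and \(L\approx K/\Omega^{2}\). After Cauchy--Schwarz this gives a bound of
\[
q^{1/2}\Omega\Bigl((\log Q_1)^{1/6}P_1^{-\kappa_M/48}+(\log Z)^{-D/2}\Bigr).
\]
By \eqref{eq:CM-choice}, \(P_1^{\kappa_M/48}=\Omega^{C_M\kappa_M/48}\) exceeds \(\Omega^{2}\) by a margin, so this term is \(\ll\Omega^{-1/4}\). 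Choosing \(D=D(B_0)\) large handles the \((\log Z)^{-D}\) term, because \(\Omega\le(\log Z)^{B_0}\). The length condition \(L\ge10\) holds since \(\Omega\le K^{1/(2C_M)}\).

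\textbf{Main obstacle.} The delicate point is verifying that Menon's local set on \([Z,2Z]\) is exactly \(\cT\cap[1,3Z]\) under our choice \(X_0=(4Z)^{1/2}\). This is what makes Input~\ref{input:menon-character} applicable uniformly across all \(\Omega\) in the flexible range. Ineffectivity enters only through Siegel's theorem inside that input, which is why the threshold \(Z_*(B_0)\) is ineffective.
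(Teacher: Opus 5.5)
Your skeleton is the paper's (Dirichlet split at $\Omega$; Ramar\'e and Vinogradov on the minor arcs; characters and Input~\ref{input:menon-character} with $D=D(B_0)$ on the major arcs). However, the major-arc step does not close as written.

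Condition \eqref{eq:CM-choice} says only that $P_1^{\kappa_M/48}=\Omega^{5/4+1/30+\delta}$ for some $\delta>0$; it does not give $P_1^{\kappa_M/48}\ge\Omega^{2}$. Take your loss factor $q^{1/2}\Omega$ together with $(\log Q_1)^{1/6}\le\Omega^{1/30}$. Your major-arc bound is then $q^{1/2}\Omega^{-1/4-\delta}$, which for $q$ near $\Omega$ gives no saving at all.

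The factor $q^{1/2}\Omega$ is itself an overcount. The partial-summation loss $|\theta|K\le\Omega/q$ is paid against the residue-class weight, not on top of it. In the paper the $q$ classes cost a factor $q$, and $\frac{\Omega}{Kq}\int_0^K I(K')\,dK'$ then costs $\Omega/q$ more, for a total loss $q+\Omega\ll\Omega$. With your Gauss sums you would get $\sqrt q+\Omega/\sqrt q\ll\Omega$. Condition \eqref{eq:CM-choice} is calibrated exactly so that the square-root saving $\Omega^{-5/4-\delta}$ from the Input beats a loss of $\Omega$.

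You also omit what makes the character expansion legitimate when $(n,q)=d>1$. Writing $n=dm$ requires $\1_{\cT}(dm)\Li(dm)=\Li(d)\1_{\cT}(m)\Li(m)$. This holds because $d\le q\le\Omega<P_1$, so no prime of $d$ lies in any Menon range. The Input is then applied at base $Y=Z/d$ and length $K'/d$, not at $Y=Z$. The auxiliary scale stays $X_0=2Z^{1/2}\in[Y^{1/2},Y]$, and one needs $B'=B_0+1$. The support check that actually matters is at this rescaled base. Partial sums with $K'\le qQ_1$ must be bounded trivially, at total cost $O(KZ\Omega^{-4})$; this is where $Q_1\le K/\Omega^3$ enters.

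On the minor arcs, the Ramar\'e prime must come from the first range $[P_1,Q_1]$. For large $Z$ every later range lies above $K$ (already $P_2>K$). The $m$-intervals $(x/p,(x+K)/p]$ then contain at most one integer, so there is no geometric sum.

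Nor is the logarithmic factor produced by counting ranges. About $\log K$ dyadic blocks each costing $\Omega^{-1/4}$ would give $(\log K)\Omega^{-1/4}$, which exceeds the claim. In the paper, three ingredients combine:
\begin{itemize}
\item H\"older to the fourth power;
\item the uniform count $\ll P^3/(\log P)^4$ for $p_1+p_2-p_3-p_4=r$;
\item Vinogradov's lemma, whose $\log q$ supplies the $\log K$.
\end{itemize}
Together these make one block cost $(\log K)^{1/4}\Omega^{-1/4}/\log P$, and $\sum_P 1/\log P\ll\log\log K$.

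A single Cauchy--Schwarz in $m$ would also suffice, using the trivial count $\ll P/\log P$ for $p_1-p_2=r$. A block then costs $(\log K/(\Omega\log P))^{1/2}$, and the dyadic sum is $\ll\Omega^{-1/2}\log K$. This is admissible because $\Omega\ge(\log K)^5$. As you describe it, however, the second Cauchy--Schwarz and fourth root throw away precisely the margin that pays for the dyadic count.
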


We first prove the complete minor-arc input.  The point is to verify from the
argument, rather than from its last displayed line, that no earlier step uses
Menon's printed numerical upper bound on \(\Omega\).

\begin{lemma}[Weighted minor arcs]\label{lem:weighted-minor}
Let \(X_A\ge K\), let
\[
 \cS_A=\cT(P_1,Q_1,X_A^{1/2})\cap[1,X_A],
\]
and suppose coprime \(a,q\) satisfy
\begin{equation}\label{eq:minor-approximant}
 \Omega<q\le K/\Omega,
 \qquad
 \left|\alpha-\frac aq\right|\le\frac{\Omega}{Kq}.
\end{equation}
For every measurable \(\vartheta\), \(|\vartheta|\le1\), supported on
\([0,X_A]\),
\begin{equation}\label{eq:weighted-minor}
 \left|\int_{\R}\vartheta(x)
  \sum_{\substack{x<n\le x+K\\n\in\cS_A}}
       \Li(n)e(\alpha n)\dd x\right|
 \ll
 \frac{(\log K)^{1/4}\log\log K}{\Omega^{1/4}}KX_A,
\end{equation}
provided
\begin{equation}\label{eq:minor-dependencies}
 P_1=\Omega^{C_M},\qquad Q_1\le K/\Omega^3,\qquad P_1<Q_1.
\end{equation}
\end{lemma}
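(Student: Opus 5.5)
We would follow Menon's minor-arc argument, which is a Ramar\'e-identity type I/II decomposition, and check at each step that only the hypotheses \eqref{eq:minor-approximant} and \eqref{eq:minor-dependencies} are used.

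\emph{Step 1: exchange sum and integral.} First we swap the sum over \(n\) with the integral over \(x\). This writes the left-hand side of \eqref{eq:weighted-minor} as \(\sum_{n\in\cS_A}\Li(n)e(\alpha n)\,w(n)\), with weight
\[
 w(n)=\int_{n-K}^{n}\vartheta(x)\dd x .
\]
This weight satisfies \(|w|\le K\), is \(K\)-Lipschitz in \(n\), and is supported on \(n\le X_A+K\le 2X_A\). The target is then a bound \(\ll \delta KX_A\), where
\[
 \delta=(\log K)^{1/4}\log\log K\,\Omega^{-1/4}.
\]

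\emph{Step 2: Ramar\'e's identity.} Next we apply Ramar\'e's identity on the last Menon range \([P_{J_M},Q_{J_M}]\), or more precisely a range chosen so that \(n\in\cS_A\) has a prime factor there. Write \(n=pm\) with weight \(1/(\omega_{[P,Q]}(m)+1)\). Complete multiplicativity gives \(\Li(pm)=-\Li(m)\), so the sum becomes a bilinear form
\[
 -\sum_{P\le p\le Q}\ \sum_m \frac{\Li(m)\1_{\cS'}(m)}{\omega(m)+1}\,e(\alpha pm)\,w(pm),
\]
up to the usual error from \(p^2\mid n\) terms, which is \(\ll KX_A/P\). We then split \(p\) into dyadic ranges \(p\sim P'\) and smooth out \(w(pm)\) by partial summation in \(m\) over blocks of length \(\asymp K/P'\); the Lipschitz bound on \(w\) makes this loss acceptable.

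\emph{Step 3: Cauchy--Schwarz and the large sieve.} Apply Cauchy--Schwarz in \(m\) and expand the square over pairs \(p_1,p_2\). Diagonal pairs contribute \(\ll KX_A\,(P'\log P')^{-1/2}\). For the off-diagonal pairs, the geometric sum \(\sum_m e(\alpha(p_1-p_2)m)\) over the block of length \(K/P'\) is controlled by the Diophantine condition in \eqref{eq:minor-approximant}. Since \(\Omega<q\le K/\Omega\) and \(|\alpha-a/q|\le\Omega/(Kq)\), the standard estimate \(\sum_{|k|\le P'^2}\min(K/P',\|\alpha k\|^{-1})\) is
\[
 \ll \Bigl(\frac{P'^2}{q}+1\Bigr)\Bigl(\frac{K}{P'}+q\log q\Bigr).
\]
After normalization this gives a saving of \(\Omega^{-1/2}\) up to logs. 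This is exactly where \(q>\Omega\) and \(q\le K/\Omega\) enter. Taking square roots, each dyadic \(p\)-range contributes \(\ll KX_A(\log K)^{1/2}\Omega^{-1/2}\) per unit log-density. Summing over dyadic ranges, with the normalization by \(\sum_{p\in[P,Q]}1/p\asymp\log\log\) coming from the \(\omega+1\) weight, gives the \(\log\log K\) factor. A final Cauchy--Schwarz/H\"older, used because one passes between \(L^1\) and \(L^2\) in \(x\) in Menon's setup, yields the stated quarter powers.

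\emph{Main obstacle.} The main difficulty is bookkeeping rather than new analysis. We have to confirm that the admissible \(p\)-range requires only \(P_1=\Omega^{C_M}\) and \(Q_1\le K/\Omega^3\), with \(P_1<Q_1\), and nowhere Menon's printed upper bound on \(\Omega\). Concretely, this means checking that the block length \(K/P'\ge\Omega^3\) always dominates \(q\), and that the error from the typical-set truncation \(\cS_A\subseteq[1,X_A]\) is absorbed by the \(X_A\) factor. We would carry out this verification line by line against Menon's minor-arc proof.
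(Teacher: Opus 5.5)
\textbf{There are genuine gaps in Step~2.} As written, your plan fails there in two places.

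\emph{The prime range and the counting.} Ramar\'e's identity has to be applied on the \emph{first} Menon range \([P_1,Q_1]\), as the paper does, not on the last range. The minor-arc saving rests on \(P\le Q_1\le K/\Omega^3\). For fixed \(p\sim P\), the \(m\) with \(x<mp\le x+K\) then form an interval of length \(\asymp K/P\ge\Omega^3\), on which \(e(\alpha mp)\) gives a geometric sum. This same bound makes the Vinogradov term \(P\log q\) at most \(K\log K/\Omega^3\). The later ranges lie far above \(Q_1\) (already \(\log P_2=2^8\log Q_1\log P_1\)), and when \(Q_1\asymp K/\Omega^3\) all their primes exceed \(K\). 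Each such prime admits at most one \(m\), so the Diophantine hypothesis on \(\alpha\) is never used. Your own check \(K/P'\ge\Omega^3\) holds only on \([P_1,Q_1]\). It also need not dominate \(q\), which can be as large as \(K/\Omega\); Vinogradov's bound absorbs both terms. After Cauchy--Schwarz the relevant frequencies are \(r=p_1-p_2\) with \(|r|\ll P'\), not \(|k|\le P'^2\). Moreover, the number of pairs with \(p_1-p_2=r\) carries the singular series \(r/\varphi(r)\), so it is not uniform in \(r\) and needs a separate argument. The paper's fourth moment avoids this because \(\#\{p_1+p_2-p_3-p_4=r\}\ll P^3/(\log P)^4\) holds uniformly in \(r\).

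\emph{The weight.} Your handling of \(w\) is not justified. With the Lipschitz constant \(K\) that you state, partial summation over a block of length \(K/P'\) loses a factor \(K\). The true constant is \(2\), but even then \(w(pm)\) varies by \(\asymp K\), its full size, across such a block. So partial summation does not smooth the weight; it only reduces you to initial-segment sums. That reduction is harmless when the endpoint is common to all \(p\). However, the support condition coming from \(\cS_A\subseteq[1,X_A]\) is \(\1_{mp\le X_A}\). It is not a function of \(m\) alone, so it cannot sit inside \(\1_{\cS'}(m)\), and it creates \(p\)-dependent endpoints. Discarding it costs \(O(K^2)\), which is not \(o(KX_A)\) when \(X_A\asymp K\), a case the lemma allows.

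\emph{The missing device.} Keep \(\mathcal J(mp)=\int\vartheta(x)\1_{x<mp\le x+K}\dd x\) inside the moment and expand the product of these integrals. For fixed primes and fixed \(x\)-variables, the \(m\)-sum (including the cut \(mp\le X_A\)) is then a geometric progression over an interval of length \(\le K/P\). The \(x\)-variables range over a set of measure \(O(X_AK^3)\), or \(O(X_AK)\) in a second moment.

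\emph{What your route gives once repaired.} With that device and the first range, a Cauchy--Schwarz version of your argument does go through. Once the \(r/\varphi(r)\) weights are handled, it gives a saving at least as good as \((\log K/\Omega)^{1/4}\). Your closing H\"older step is then spurious. In the paper the quarter powers come from H\"older with exponents \((4/3,4)\) in \(m\) followed by the fourth-power expansion, not from passing between \(L^1\) and \(L^2\) in \(x\). The dual function \(\vartheta\) already linearizes the \(L^1\) norm.
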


\begin{proof}
Let \(\cP_0\) be the primes in \([P_1,Q_1]\), and let
\(\omega_{\cP_0}(m)\) count their distinct prime divisors.  Let
\(\cS_A'\) impose the support condition \([1,X_A]\) and the prime-factor
conditions for the ranges \([P_j,Q_j]\), \(j\ge2\).  Ramar\'e's identity,
including the prime-square correction, is
\begin{equation}\label{eq:ramare}
 \1_{\cS_A}(n)=
 \sum_{\substack{p\in\cP_0,\ m\ge1\\mp=n}}
 \frac{\1_{\cS_A'}(mp)}
      {\1_{p\nmid m}+\omega_{\cP_0}(m)}.
\end{equation}
If \(n\in\cS_A\) has \(r\) distinct prime divisors in \(\cP_0\), every
denominator on the right is \(r\), including when \(p^2\mid n\), and there
are exactly \(r\) summands.  If a later prime-factor condition fails, both
sides vanish.

Cover \([P_1,Q_1]\) by dyadic prime intervals \([P,2P]\).  For one such
interval write
\begin{equation}\label{eq:Jmp}
 \mathcal J(mp)=\int_{\R}\vartheta(x)
             \1_{x<mp\le x+K}\dd x,
\end{equation}
and denote by \(\mathcal L_P\) the corresponding part of the left-hand side
of \eqref{eq:weighted-minor}.  Replacing \(\1_{p\nmid m}\) in
\eqref{eq:ramare} by \(1\) changes this dyadic contribution by at most
\begin{equation}\label{eq:prime-square-error}
 K\sum_{P\le p\le2P}\sum_{\substack{m\le X_A/P\\p\mid m}}1
 \ll \frac{KX_A}{P}
 \ll \frac{KX_A(\log K)^{1/4}}
          {\Omega^{1/4}\log P}.
\end{equation}
The last bound uses \(P\ge P_1=\Omega^{C_M}\) and sufficiently large
parameters.

After this replacement, discard the remaining denominator and apply H\"older.
The dyadic contribution is bounded by
\begin{equation}\label{eq:minor-holder}
 \left(\frac{X_A}{P}\right)^{3/4}
 \left(
  \sum_{m\le X_A/P}
  \left|\sum_{\substack{P\le p\le2P\\p\in\cP_0\\mp\le X_A}}
       \Li(p)e(\alpha mp)\mathcal J(mp)\right|^4
 \right)^{1/4}.
\end{equation}
Expanding the fourth power, the common \(m\)-sum is a geometric progression.
For a fixed prime quadruple it is bounded by
\begin{equation}\label{eq:minor-geometric}
 \min\left\{\frac KP,
  \frac1{\|(p_1+p_2-p_3-p_4)\alpha\|}\right\}.
\end{equation}
The four variables in \eqref{eq:Jmp} occupy a set of measure
\(O(X_AK^3)\): once the first variable is fixed, the other three lie in
intervals of length \(O(K)\).  The standard upper-bound sieve followed by
Cauchy--Schwarz gives, uniformly for \(|r|=O(P)\),
\begin{equation}\label{eq:four-prime-count}
 \#\{p_1+p_2-p_3-p_4=r:P\le p_i\le2P\}
 \ll\frac{P^3}{(\log P)^4}.
\end{equation}
Consequently the fourth power in \eqref{eq:minor-holder} is at most
\begin{equation}\label{eq:expanded-fourth}
 \frac{X_AK^3P^3}{(\log P)^4}
 \sum_{|r|=O(P)}
 \min\left\{\frac KP,\frac1{\|r\alpha\|}\right\}.
\end{equation}

The approximation \eqref{eq:minor-approximant} also gives
\(|\alpha-a/q|\le q^{-2}\).  Vinogradov's lemma therefore bounds the last
sum by
\begin{align}
 \left(\frac Pq+1\right)
 \left(\frac KP+q\log q\right)
 &\ll \frac{K\log K}{\Omega}.               \label{eq:vinogradov-endgame}
\end{align}
Indeed, the four terms are
\[
 K/q,\qquad P\log q,\qquad K/P,\qquad q\log q,
\]
and \eqref{eq:minor-approximant}--\eqref{eq:minor-dependencies} give
\[
 K/q\le K/\Omega,\qquad
 P\log q\le K\log K/\Omega^3,
\]
\[
 K/P\le K/\Omega,\qquad
 q\log q\le K\log K/\Omega.
\]
Inserting \eqref{eq:vinogradov-endgame} into
\eqref{eq:expanded-fourth} and then \eqref{eq:minor-holder} yields
\begin{equation}\label{eq:one-dyadic-minor}
 |\mathcal L_P|
 \ll\frac{KX_A(\log K)^{1/4}}
          {\Omega^{1/4}\log P}.
\end{equation}
Finally,
\begin{equation}\label{eq:dyadic-minor-sum}
 \sum_{\substack{P\text{ dyadic}\\P_1\ll P\ll Q_1}}\frac1{\log P}
 \ll\log\log Q_1-\log\log P_1+O(1)
 \ll\log\log K.
\end{equation}
Equations \eqref{eq:prime-square-error}--\eqref{eq:dyadic-minor-sum}
prove \eqref{eq:weighted-minor}.  They also list every occurrence of
\(\Omega\) in the minor-arc proof.  Apart from \(P_1<Q_1\), only
\eqref{eq:minor-approximant} and \eqref{eq:minor-dependencies} are used;
there is no upper restriction of the form
\(\Omega\le(\log X_A)^{c}\).
\end{proof}

We record the elementary admissibility check for the proposition.  For
sufficiently large \(Z\), \(10<P_1<Q_1\).  If \(K\le K_0\), then
\begin{equation}\label{eq:prime-range-case1}
 P_1\Omega^3=\Omega^{C_M+3}
 \le K^{1/2+3/(2C_M)}<K=\widetilde K.
\end{equation}
If \(K>K_0\), then
\begin{equation}\label{eq:prime-range-case2}
 (C_M+3)\log\Omega
 \le(C_M+3)B_0\log\log Z=o(\log K_0),
\end{equation}
so again \(P_1\Omega^3<K_0=\widetilde K\).  The remaining spacing
conditions are unchanged from Menon's construction.

Dirichlet approximation supplies coprime \(a,q\) such that
\begin{equation}\label{eq:dirichlet-flexible}
 1\le q\le K/\Omega,\qquad
 \left|\alpha-\frac aq\right|\le\frac{\Omega}{Kq}.
\end{equation}
When \(q>\Omega\), apply Lemma~\ref{lem:weighted-minor} with
\(X_A=4Z\).  For \(x\in[Z,2Z]\),
\[
 x<n\le x+K\quad\Longrightarrow\quad 1\le n<3Z<4Z,
\]
so arithmetic membership in \(\cT\) agrees with membership in the ambient
set of the lemma.  If
\[
 F(x)=\sum_{\substack{x<n\le x+K\\n\in\cT}}\Li(n)e(\alpha n),
\]
choose the measurable phase
\[
 \vartheta(x)=\1_{[Z,2Z]}(x)
 \begin{cases}\overline{F(x)}/|F(x)|,&F(x)\ne0,\\0,&F(x)=0.\end{cases}
\]
It is \(1\)-bounded and supported in \([0,4Z]\).  Hence
\begin{equation}\label{eq:flexible-minor-result}
 \int_Z^{2Z}|F(x)|\dd x
 \ll\frac{(\log K)^{1/4}\log\log K}{\Omega^{1/4}}KZ.
\end{equation}
This proves Proposition~\ref{prop:flexible-menon} on the minor arcs.

\section{The flexible Menon estimate: major arcs}\label{sec:flexible-major}

We complete the proof of Proposition~\ref{prop:flexible-menon}.  Assume that
the approximating denominator in \eqref{eq:dirichlet-flexible} satisfies
\(q\le\Omega\), and write
\[
 \alpha=a/q+\theta,\qquad |\theta|\le\Omega/(Kq).
\]
For \(0\le K'\le K\), put
\begin{equation}\label{eq:I-Kprime}
 I(K')=\int_Z^{2Z}\left|
  \sum_{\substack{x<n\le x+K'\\n\in\cT}}
       \Li(n)e(an/q)
 \right|\dd x.
\end{equation}
Partial summation in the short interval, followed by integration in \(x\),
gives
\begin{equation}\label{eq:major-partial-summation}
 \int_Z^{2Z}\left|
  \sum_{\substack{x<n\le x+K\\n\in\cT}}
       \Li(n)e(\alpha n)
 \right|\dd x
 \ll I(K)+\frac{\Omega}{Kq}\int_0^KI(K')\dd K'.
\end{equation}

\subsection{Short endpoints}

For \(0\le K'\le qQ_1\), the trivial estimate is
\begin{equation}\label{eq:major-short-trivial}
 I(K')\ll(K'+1)Z.
\end{equation}
Consequently
\begin{align}
 \frac{\Omega}{Kq}\int_0^{qQ_1}I(K')\dd K'
 &\ll \frac{\Omega qQ_1^2Z}{K}+\frac{\Omega Q_1Z}{K} \notag\\
 &\le \frac{KZ}{\Omega^4}+\frac Z{\Omega^2}
 \ll \frac{KZ}{\Omega^4}.                 \label{eq:major-short-endpoints}
\end{align}
The last inequality uses
\(K\ge\Omega^{2C_M}\ge\Omega^2\), which is equivalent to the upper bound
\(\Omega\le K^{1/(2C_M)}\) in \eqref{eq:flexible-hyp}.

\subsection{Residue classes and localization}

Suppose now that \(K'>qQ_1\).  Split the inner sum in
\eqref{eq:I-Kprime} into residue classes \(b\pmod q\), and write
\begin{equation}\label{eq:residue-gcd}
 d=(b,q),\qquad b=db_0,\qquad q=dq_0.
\end{equation}
Since \(d\le q\le\Omega<P_1\), no prime divisor of \(d\) belongs to any
Menon range \([P_j,Q_j]\).  Complete multiplicativity gives the exact
identity
\begin{equation}\label{eq:typical-dilation}
 \1_{\cT}(dm)\Li(dm)=\Li(d)\1_{\cT}(m)\Li(m).
\end{equation}
After substituting \(x=dy\) and applying character orthogonality modulo
\(q_0\), the contribution of the residue class \(b\) is at most
\begin{equation}\label{eq:character-reduction}
 \frac d{\varphi(q_0)}\sum_{\chi\pmod{q_0}}
 \int_Y^{2Y}\left|
  \sum_{\substack{y<m\le y+L\\m\in\cT}}
       \overline\chi(m)\Li(m)
 \right|\dd y,
\end{equation}
where
\begin{equation}\label{eq:Y-L-definition}
 Y=Z/d,\qquad L=K'/d.
\end{equation}
There is a single base interval \([Y,2Y]\); no dyadic block is asked to cross
a finite support cutoff.

Fix
\[
 \delta=\frac12\left(
 \frac{C_M\kappa_M}{24}-\frac1{15}-\frac52\right)>0
\]
and choose
\[
 D>\max\left\{3B_0+10,\ B_0\left(\frac52+\delta\right)+1\right\}.
\]
Then choose Menon's auxiliary
last-range parameter \(R_{\mathrm M}=R_{\mathrm M}(D)\) sufficiently large
as required in his Proposition~3.1 and Corollary~3.7.  These choices are
made before \(Z\) tends to infinity.  We now verify every remaining
hypothesis of Input~\ref{input:menon-character}.  First,
\begin{equation}\label{eq:YL-bounds}
 L\le K/d\le Z/d=Y,\qquad
 L>qQ_1/d\ge Q_1>10.
\end{equation}
Since \(d\le\Omega\le(\log Z)^{B_0}=Z^{o(1)}\), for sufficiently large
\(Z\),
\begin{equation}\label{eq:X0-local-bounds}
 Y^{1/2}\le X_0=2Z^{1/2}\le Y.
\end{equation}
Also \(\log Y\sim\log Z\), and therefore
\begin{equation}\label{eq:q0-bound}
 q_0\le\Omega\le(\log Y)^{B_0+1}.
\end{equation}
Finally, \eqref{eq:YL-bounds} gives \(m\le y+L\le3Y\).  Hence arithmetic
membership in \(\cT\) agrees exactly with Menon's Proposition~3.1 local set
on every integer in \eqref{eq:character-reduction}.  We may apply
\eqref{eq:menon-character} with \(B'=B_0+1\), \(D=D(B_0)\), and the fixed
choice \(R_{\mathrm M}=R_{\mathrm M}(D)\).

By \eqref{eq:CM-choice},
\eqref{eq:flexible-hyp}, and \(\log Q_1\le\log K\le\Omega^{1/5}\),
\begin{equation}\label{eq:character-first-term}
 \frac{(\log Q_1)^{1/3}}{P_1^{\kappa_M/24}}
 \le\Omega^{1/15-C_M\kappa_M/24}
 \ll\Omega^{-5/2-\delta}
\end{equation}
by the definition of \(\delta\).  Since
\(\Omega\le(\log Z)^{B_0}\) and \(\log Y\sim\log Z\), the initial choice
of \(D=D(B_0)\) gives
\begin{equation}\label{eq:character-second-term}
 (\log Y)^{-D}\ll_{B_0}\Omega^{-5/2-\delta}.
\end{equation}
It follows that
\begin{equation}\label{eq:character-L2}
 \int_Y^{2Y}\left|
  \sum_{\substack{y<m\le y+L\\m\in\cT}}
       \overline\chi(m)\Li(m)
 \right|^2\dd y
 \ll_{B_0}\Omega^{-5/2-\delta}L^2Y.
\end{equation}
Cauchy--Schwarz, with the strict \(\delta\)-margin discarded, yields
\begin{equation}\label{eq:character-L1}
 \int_Y^{2Y}\left|
  \sum_{\substack{y<m\le y+L\\m\in\cT}}
       \overline\chi(m)\Li(m)
 \right|\dd y
 \ll_{B_0}\Omega^{-5/4}LY.
\end{equation}

\subsection{Residue count and conclusion}

Insert \eqref{eq:character-L1} into
\eqref{eq:character-reduction}.  Since \(L=K'/d\) and \(Y=Z/d\), the
class satisfying \((b,q)=d\) contributes at most
\begin{equation}\label{eq:one-residue-class}
 \frac{K'Z}{d\Omega^{5/4}}.
\end{equation}
The residue count is exact:
\begin{equation}\label{eq:exact-divisor-count}
 \sum_{b\pmod q}\frac1{(b,q)}
 =\sum_{d\mid q}\frac{\varphi(q/d)}d
 \le\sum_{d\mid q}\varphi(q/d)=q.
\end{equation}
Thus, for every \(K'>qQ_1\),
\begin{equation}\label{eq:I-Kprime-bound}
 I(K')\ll_{B_0}\frac{qK'Z}{\Omega^{5/4}}.
\end{equation}
At \(K'=K\), this is \(\ll KZ/\Omega^{1/4}\), and
\begin{align}
 \frac{\Omega}{Kq}\int_{qQ_1}^KI(K')\dd K'
 &\ll_{B_0}
 \frac{\Omega}{Kq}\int_{qQ_1}^K
       \frac{qK'Z}{\Omega^{5/4}}\dd K' \notag\\
 &\ll\frac{KZ}{\Omega^{1/4}}.              \label{eq:major-integrated}
\end{align}
Together, \eqref{eq:major-partial-summation},
\eqref{eq:major-short-endpoints}, and \eqref{eq:major-integrated} prove
\eqref{eq:flexible-conclusion} on the major arcs.  The minor arcs were
settled in \eqref{eq:flexible-minor-result}, so
Proposition~\ref{prop:flexible-menon} follows.

\begin{remark}\label{rem:fixed-B}
The proof fixes \(B'=B_0+1\), a sufficiently large \(D=D(B_0)\), and then
Menon's auxiliary \(R_{\mathrm M}=R_{\mathrm M}(D)\) before using the
character estimate.  The implied constant and starting threshold are
therefore allowed to depend ineffectively on \(B_0\).
This is why the proposition applies to every fixed logarithmic exponent but
does not apply uniformly to an exponent growing with \(Z\).
\end{remark}

\section{One-sided truncation and uniform uncentring}\label{sec:uncentring}

Fix a nonempty \(I\subseteq[J]\), a dyadic block
\(\cD_I(M)\subset(M/2,M]\), and an integer interval
\(I_N=(N,2N]\cap\Z\).  Throughout this section assume
\begin{equation}\label{eq:uncentring-range}
 1\le h\le R^{3A},\qquad \log N\ge R^{2.1}-R,
\end{equation}
where \(A>0\) is fixed.  Set
\begin{equation}\label{eq:circle-cutoff}
 \WM=R^{12A+6},\qquad B_0=6A+3,
\end{equation}
and let
\begin{equation}\label{eq:K-definition}
 K=\lceil hM\rceil.
\end{equation}
The Menon typical set in this section is the set from
Proposition~\ref{prop:flexible-menon} with local base \(Z=N\), short length
\(K\), and circle cutoff \(\Omega=\WM\).

We first record the discrete consequence of the proposition.  For integer
\(K\), the summand in \eqref{eq:flexible-conclusion} is constant on unit
cells of \(x\).  Splitting \([N,2N]\) into such cells and absorbing the two
endpoint cells gives
\begin{equation}\label{eq:discrete-flexible-menon}
 \sup_{\alpha\in\T}\sum_{N<n\le2N}
 \left|\sum_{\substack{t\le K\\n+t\in\cT}}
       \Li(n+t)e(\alpha t)\right|
 \ll_A E_KKN+O(K),
\end{equation}
where
\begin{equation}\label{eq:EK}
 E_K=\frac{(\log K)^{1/4}\log\log K}{\WM^{1/4}}.
\end{equation}
All hypotheses of Proposition~\ref{prop:flexible-menon}, including those
needed after a later subset rescaling, are verified in
Section~\ref{sec:parameter-discharge}.

Define
\begin{equation}\label{eq:T-Mh}
 T_{M,h}=
 \sum_{n\in I_N}\sum_{d\in\cD_I(M)}
 \frac{\Li(n)\Li(n+hd)}d.
\end{equation}
Translation averaging gives
\begin{equation}\label{eq:translation-average}
 T_{M,h}
 =\frac1K\sum_{t\le K}\sum_{n\in I_N}
  \sum_{d\in\cD_I(M)}
  \frac{\Li(n+t)\Li(n+t+hd)}d
  +O(KV_I[M]).
\end{equation}
The boundary term follows because translating an interval by \(t\) changes
at most \(O(t)\) terms.

\subsection{The complement is removed before Fourier inversion}

Insert the typical cutoff only in the first Liouville factor:
\begin{align}\label{eq:one-sided-truncation}
 \Li(n+t)\Li(n+t+hd)
 &=\1_{\cT}(n+t)\Li(n+t)\Li(n+t+hd)\notag\\
 &\quad+\1_{\cT^c}(n+t)\Li(n+t)\Li(n+t+hd).
\end{align}
This identity avoids asking one typical set to control two different short
lengths.  By Input~\ref{input:sieve-complement}, Fubini, and
\(|\Li|\le1\), the contribution of the second line is
\begin{equation}\label{eq:complement-bound}
 |\operatorname{Comp}_{M,h}|
 \ll NV_I[M]\frac{\log P_1}{\log Q_1}.
\end{equation}
The factor \(K\) counting translations is cancelled by the \(1/K\) in
\eqref{eq:translation-average}; in particular, no factor \(h\) occurs.

Here
\begin{equation}\label{eq:P1Q1-application}
 P_1=\WM^{C_M},\qquad Q_1=K/\WM^3,
\end{equation}
because Section~\ref{sec:parameter-discharge} shows \(K<K_0\).  Hence
\begin{equation}\label{eq:complement-ratio}
 \frac{\log P_1}{\log Q_1}
 \ll_A\frac{\log R}{\log M}.
\end{equation}

\subsection{Fourier inversion on the typical part}

Define
\[
 F_n(\alpha)=\sum_{t\le K}
  \1_{\cT}(n+t)\Li(n+t)e(\alpha t),
\]
\[
 G_n(\alpha)=\sum_{k\le2K}\Li(n+k)e(-\alpha k),
 \qquad
 Q_h(\alpha)=\sum_{d\in\cD_I(M)}\frac{e(\alpha hd)}d.
\]
Fourier inversion gives the typical main term exactly as
\begin{equation}\label{eq:typical-fourier}
 \frac1K\sum_{n\in I_N}\int_0^1
 Q_h(\alpha)F_n(\alpha)G_n(\alpha)\dd\alpha.
\end{equation}
Multiplication by the integer \(h\) preserves Haar measure on \(\T\), so
Input~\ref{input:fourth-moment} gives
\begin{equation}\label{eq:Qh-fourth}
 \int_0^1|Q_h(\alpha)|^4\dd\alpha
 =\int_0^1|Q_{I,M}(\alpha)|^4\dd\alpha
 \ll\frac{V^{4J}}{M(\log M)^4}.
\end{equation}

Put
\[
 \mathcal S(\alpha)=\sum_{n\in I_N}|F_n(\alpha)G_n(\alpha)|.
\]
The bound \(\|G_n\|_\infty\le2K\) and
\eqref{eq:discrete-flexible-menon} give
\begin{equation}\label{eq:fixed-product-linfty}
 \|\mathcal S\|_\infty\ll_A E_KK^2N.
\end{equation}
Here the endpoint term \(O(K)\) in
\eqref{eq:discrete-flexible-menon} is absorbed by its \(E_KKN\) term.
Indeed, \(K\ge M\ge H_0\) and \(\WM=R^{12A+6}\) give
\[
 E_K^{-1}
 =\frac{\WM^{1/4}}{(\log K)^{1/4}\log\log K}
 \ll_A R^{3A+3/2},
\]
whereas \(N\ge\exp(R^{2.1}-R)\).  Thus \(E_KN\gg_A 1\) for all
sufficiently large \(R\).  For each \(n\), Parseval and
Cauchy--Schwarz give \(\int_\T|F_nG_n|\ll K\), and hence
\begin{equation}\label{eq:fixed-product-lone}
 \|\mathcal S\|_1\ll KN.
\end{equation}
Interpolation between \eqref{eq:fixed-product-linfty} and
\eqref{eq:fixed-product-lone} gives
\[
 \|\mathcal S\|_{4/3}\ll_A E_K^{1/4}K^{5/4}N.
\]
Denote by \(\operatorname{Typ}_{M,h}\) the typical contribution inside the
translation average in \eqref{eq:translation-average}.  Direct H\"older
in \eqref{eq:typical-fourier}, followed by \eqref{eq:Qh-fourth} and
\(K/M\ll h\), gives
\begin{equation}\label{eq:typical-block-bound}
 |\operatorname{Typ}_{M,h}|
 \ll_A Nh^{1/4}E_K^{1/4}\frac{V^J}{\log M}.
\end{equation}

\subsection{Summation over the divisor scale}

Since
\begin{equation}\label{eq:log-K-log-M}
 \log K=\log M+O_A(\log R),
\end{equation}
we have
\begin{equation}\label{eq:EK-fourth-root}
 \begin{aligned}
 E_K^{1/4}\ll_A{}&\WM^{-1/16}(\log M)^{1/16}\\
 &\times(\log\log M)^{1/4}.
 \end{aligned}
\end{equation}
Because \(M\) is dyadic and \(H_0\le M\le H=e^R\), write
\(M=2^k\).  Comparison of
\((k\log 2)^{-15/16}(\log(k\log 2))^{1/4}\) with the corresponding
integral in \(k\) (equivalently, in a variable comparable to \(\log M\))
yields
\begin{equation}\label{eq:dyadic-log-sum}
 \sum_{M\text{ dyadic}}(\log M)^{-15/16}
                 (\log\log M)^{1/4}
 \ll R^{1/16}(\log R)^{1/4}.
\end{equation}
Insert the fourth-root bound into the block estimate and use
\eqref{eq:dyadic-log-sum}.  Therefore
\begin{equation}\label{eq:typical-summed}
 \sum_M|\operatorname{Typ}_{M,h}|
 \ll_A V^JN h^{1/4}\WM^{-1/16}
 R^{1/16}(\log R)^{1/4}.
\end{equation}
The parameter choice \eqref{eq:circle-cutoff} was made so that
\begin{equation}\label{eq:h-cancellation}
 h^{1/4}\WM^{-1/16}
 \le R^{3A/4}R^{-(12A+6)/16}=R^{-3/8}.
\end{equation}
Thus the typical contribution is
\begin{equation}\label{eq:typical-power-saving}
 \sum_M|\operatorname{Typ}_{M,h}|
 \ll_A V^JNR^{-5/16}(\log R)^{1/4}.
\end{equation}
The complement is smaller, since
\begin{equation}\label{eq:complement-summed}
 \sum_M|\operatorname{Comp}_{M,h}|
 \ll_A NV^J\frac{\log R}{R^{1-\eps_1}}
 =o(V^JNR^{-3/10}).
\end{equation}
The translation boundaries are also negligible:
\begin{equation}\label{eq:translation-summed}
 \sum_M K V_I[M]\ll hHV^J=o(V^JNR^{-100}),
\end{equation}
because \(h\le R^{3A}\), \(H=e^R\), and
\(N\ge\exp(R^{2.1})\).  Finally,
\((\log R)^{1/4}\ll R^{1/80}\), so
\begin{proposition}[Uniform local uncentring]
\label{prop:uniform-local-uncentring}
Under \eqref{eq:uncentring-range},
\begin{equation}\label{eq:uncentring-final}
 \sum_M|T_{M,h}|
 \ll_A V^JNR^{-3/10}.
\end{equation}
The constant is uniform in the interval \(I_N\) and in every local base
\(N\) satisfying the displayed lower bound.
\end{proposition}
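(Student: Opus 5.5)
The plan is to assemble the three pieces already estimated in this section. For each dyadic $M$ the translation identity \eqref{eq:translation-average} expresses $T_{M,h}$ as a translation average plus a boundary term $O(KV_I[M])$. Inside the average, the one-sided truncation \eqref{eq:one-sided-truncation} splits the summand into the typical part $\operatorname{Typ}_{M,h}$ and the complement part $\operatorname{Comp}_{M,h}$. The triangle inequality then gives
\[
 |T_{M,h}|\le|\operatorname{Typ}_{M,h}|+|\operatorname{Comp}_{M,h}|+O(KV_I[M]),
\]
and we sum this over the dyadic $M$ with $H_0\le M\le H$. These are the only scales that occur, since $\cD_I\subseteq\cD$ and \eqref{eq:block-geometry} places every element of $\cD_I$ in $(H_0,H)$ up to the factor-two dyadic enlargement.

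Each of the three sums is then bounded.

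\textbf{Typical part.} Summing the block bound \eqref{eq:typical-block-bound} with the fourth-root estimate \eqref{eq:EK-fourth-root} and the dyadic sum \eqref{eq:dyadic-log-sum} gives \eqref{eq:typical-summed}. The cancellation \eqref{eq:h-cancellation}, which holds because $h\le R^{3A}$ and $\WM=R^{12A+6}$, upgrades this to \eqref{eq:typical-power-saving}, namely $V^JNR^{-5/16}(\log R)^{1/4}$. Since $(\log R)^{1/4}\ll R^{1/80}$ and $5/16-1/80=24/80=3/10$, this is $\ll_A V^JNR^{-3/10}$.

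\textbf{Complement part.} Combining \eqref{eq:complement-bound} with \eqref{eq:complement-ratio}, and using $\log M\ge\log H_0=R^{1-\eps_1}$ together with $\sum_M V_I[M]\le V^J$, we obtain \eqref{eq:complement-summed}. This is $o(V^JNR^{-3/10})$ because $\eps_1$ is small.

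\textbf{Translation boundaries.} These are handled by \eqref{eq:translation-summed}. Here $K\ll hH$ and the harmonic masses again sum to at most $V^J$, while $N\ge\exp(R^{2.1}-R)$ dwarfs $hH=e^{R+O(\log R)}$.

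Uniformity in $I_N$ and $N$ needs a separate check. The only $N$-dependent input is the discrete flexible Menon bound \eqref{eq:discrete-flexible-menon}, and its constant depends only on $B_0=6A+3$. We need its hypotheses to hold uniformly, in particular $K<K_0$ and $(\log K)^5\le\WM\le\min\{K^{1/(2C_M)},(\log N)^{B_0}\}$. Since $K\ge H_0$, $\WM=R^{12A+6}$, and $\log N\ge R^{2.1}-R$, these hold for $R$ large, as the paper defers to Section~\ref{sec:parameter-discharge}. The absorption of the endpoint $O(K)$ into $E_KKN$ was likewise shown to require only the lower bound on $N$. The main point needing care is this verification that no step secretly depends on the position of $I_N$; every bound above involves $N$ only through $|I_N|=N$ and the stated lower bound.
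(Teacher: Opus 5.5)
Your proposal is correct and is essentially the paper's own argument: translation averaging with the $O(KV_I[M])$ boundary, the one-sided typical/complement split, the fourth-root interpolation bound \eqref{eq:typical-block-bound} summed over dyadic $M$ via \eqref{eq:dyadic-log-sum} and \eqref{eq:h-cancellation}, then $(\log R)^{1/4}\ll R^{1/80}$, with the complement and boundary terms shown to be of lower order. One harmless slip: $\cD_I\not\subseteq\cD$ unless $I=[J]$; the conclusion still holds because each $d\in\cD_I$ divides some element of $\cD$ and contains at least one block prime, so it lies in $(H_0,H)$ by \eqref{eq:block-geometry}.
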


\section{Block expansion and the centred form}\label{sec:centering}

Define
\begin{equation}\label{eq:S1-definition}
 S_{1,h}=\sum_{n\in I_N}\sum_{\substack{d\in\cD\\d\mid n}}
 \Li(n)\Li(n+hd)
\end{equation}
and
\begin{equation}\label{eq:S2-definition}
 S_{2,h}=\sum_{n\in I_N}
 \sum_{d=p_1\cdots p_J\in\cD}
 \Li(n)\Li(n+hd)
 \prod_{j=1}^J\left(\1_{p_j\mid n}-\frac1{p_j}\right).
\end{equation}
We first compare the uncentred and centred expressions and then estimate the
centred one by Input~\ref{input:tt-decoupling}.

\subsection{Subset expansion}

Expanding the product in \eqref{eq:S2-definition} gives the exact identity
\begin{multline}\label{eq:subset-expansion}
 S_{2,h}-S_{1,h}
 =\sum_{\emptyset\ne I\subseteq[J]}(-1)^{|I|}
 \sum_{(p_1,\ldots,p_J)}
 \left(\prod_{i\in I}\frac1{p_i}\right)\\
 \times\sum_{n\in I_N}
 \left(\prod_{i\notin I}\1_{p_i\mid n}\right)
 \Li(n)\Li(n+hp_1\cdots p_J).
\end{multline}
For a fixed tuple, put
\[
 d_0=\prod_{i\notin I}p_i,\qquad
 d'=\prod_{i\in I}p_i,
\]
and write \(n=d_0m\).  Complete multiplicativity yields
\begin{equation}\label{eq:lambda-rescaling}
 \Li(d_0m)\Li(d_0m+hd_0d')
 =\Li(d_0)^2\Li(m)\Li(m+hd')
 =\Li(m)\Li(m+hd').
\end{equation}
The remaining product \(d'\) carries the reciprocal weight required in
Section~\ref{sec:uncentring}.  The outer scale is now \(N/d_0\), and
\begin{equation}\label{eq:rescaled-outer-log}
 \log(N/d_0)\ge R^{2.1}-R
\end{equation}
because \(d_0\le H=e^R\).

Apply Proposition~\ref{prop:uniform-local-uncentring} with local base
\(Z=N/d_0\).  Its lower hypothesis is exactly
\eqref{eq:rescaled-outer-log}; the remaining flexible-Menon hypotheses are
uniform at this base by Section~\ref{sec:parameter-discharge}.  The
proposition contributes a factor \(V^J(N/d_0)\).  Summing over the fixed
factors uses
\begin{equation}\label{eq:fixed-factor-mass}
 \sum_{d_0\in\cD_{[J]\setminus I}}\frac1{d_0}
 =\prod_{i\notin I}V_i\le V^J.
\end{equation}
Thus all nonempty subsets cost at most \(2^JV^{2J}\).
Combining \eqref{eq:subset-loss} with \eqref{eq:uncentring-final} gives
\begin{equation}\label{eq:centered-uncentered}
 |S_{1,h}-S_{2,h}|
 \ll_A N R^{1/40-3/10}\ll NR^{-1/4}.
\end{equation}
The saving exponent is absolute.

\subsection{The centred term}

Since \(h\le R^{3A}<H_0\), every block prime is coprime to \(h\).  Split
\(S_{2,h}\) according to \(n=r+hq\).  For each \(p\in\cP\),
\begin{equation}\label{eq:residue-change}
 p\mid r+hq
 \quad\Longleftrightarrow\quad
 q\equiv-rh^{-1}\pmod p.
\end{equation}
Set \(g_r(q)=\Li(r+hq)\).  Then
\[
 g_r(q+d)=\Li(r+hq+hd).
\]
The \(q\)-interval \(I_r\) corresponding to \(n\in I_N\) has cardinality
\(L_r\), and
\begin{equation}\label{eq:Lr-lower}
 \log L_r\ge R^{2.1}-3A\log R+o(1)>R^{2.01}
\end{equation}
for sufficiently large \(R\).  Apply Input~\ref{input:tt-decoupling} with
\(I'=[J]\), \(\sigma=+1\), the residue assignments in
\eqref{eq:residue-change}, and \(g_1=g_2=g_r\).  Triangle inequality over
the prime tuples gives
\begin{equation}\label{eq:one-residue-centered}
 |S_{2,h}^{(r)}|\ll V^{0.51J}L_r.
\end{equation}
The residue intervals partition \(I_N\), so
\(\sum_{r\pmod h}L_r=|I_N|\).  Consequently
\begin{equation}\label{eq:centered-bound}
 |S_{2,h}|\ll V^{0.51J}N.
\end{equation}
Combining \eqref{eq:centered-uncentered} and
\eqref{eq:centered-bound}, we obtain the dyadic estimate
\begin{proposition}\label{prop:dyadic-correlation}
Uniformly for \(h\le R^{3A}\) and \(\log N\ge R^{2.1}\),
\begin{equation}\label{eq:S1-final}
 |S_{1,h}|\ll_A V^{0.51J}N+NR^{-1/4}.
\end{equation}
\end{proposition}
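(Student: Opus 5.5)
The estimate will follow from the triangle inequality applied to the decomposition \(S_{1,h}=S_{2,h}+(S_{1,h}-S_{2,h})\), with the two pieces controlled by \eqref{eq:centered-bound} and \eqref{eq:centered-uncentered}. The work is to check that the hypotheses used in those two displays follow from the hypotheses of the proposition.

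First, \(S_{1,h}-S_{2,h}\). The exact identity \eqref{eq:subset-expansion} writes the difference as a sum over nonempty \(I\subseteq[J]\). For each tuple, the rescaling \(n=d_0m\) and \eqref{eq:lambda-rescaling} reduce the inner sum to a block sum \(T_{M,h}\) over the interval \((N/d_0,2N/d_0]\), with reciprocal weights on \(d'\in\cD_I\). Since \(d_0\le H=e^R\), the assumption \(\log N\ge R^{2.1}\) gives \eqref{eq:rescaled-outer-log}. This is exactly the lower hypothesis in \eqref{eq:uncentring-range}, so Proposition~\ref{prop:uniform-local-uncentring} applies uniformly in the rescaled base. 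A divisibility edge issue, namely that \(d_0\mid n\) with \(n\in(N,2N]\) gives an interval of \(m\) that need not be exactly dyadic, only affects \(O(1)\) terms per tuple. These cost \(O(hHV^{2J})\) after summing the reciprocal weights, which is negligible against \(NR^{-1/4}\).

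Next, summing over \(d_0\) with \eqref{eq:fixed-factor-mass} and over the \(2^J\) subsets costs at most \(2^JV^{2J}\le R^{1/40}\) by \eqref{eq:subset-loss}. Together with \eqref{eq:uncentring-final}, this gives \(|S_{1,h}-S_{2,h}|\ll_A NR^{1/40-3/10}\ll NR^{-1/4}\). The point to watch is that the implied constant in Proposition~\ref{prop:uniform-local-uncentring} depends only on \(A\) and not on \(d_0\), \(I\), or \(N\). That proposition asserts exactly this, subject to the parameter verifications deferred to Section~\ref{sec:parameter-discharge}, which I would take as given.

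For the centred term, \(h\le R^{3A}<H_0<p\) for every block prime, so \(h\) is invertible modulo each \(p\). Splitting \(n=r+hq\) by residues \(r\bmod h\) turns each condition \(p_j\mid n\) into the congruence \eqref{eq:residue-change} in \(q\). Each \(q\)-interval has length \(L_r\asymp N/h\), so \eqref{eq:Lr-lower} holds. Input~\ref{input:tt-decoupling} with \(I'=[J]\), \(\sigma=+1\), and \(g_1=g_2=g_r\) then gives \eqref{eq:one-residue-centered}. Summing over \(r\) with \(\sum_rL_r=N\) gives \eqref{eq:centered-bound}.

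The step I regard as the main obstacle is the uniformity of the uncentring bound after rescaling. Every flexible-Menon hypothesis, in particular \(K<K_0\) and the circle-cutoff range \eqref{eq:flexible-hyp}, has to hold at each base \(N/d_0\) and each dyadic scale \(M\) with a single fixed \(B_0=6A+3\). Without that, the constant would depend on the subset and could not be summed over the \(2^J\) subsets.
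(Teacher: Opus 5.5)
Your proposal is correct and follows the paper's proof step for step. The difference $S_{1,h}-S_{2,h}$ is handled by the subset expansion, the rescaling $n=d_0m$, Proposition~\ref{prop:uniform-local-uncentring} at base $N/d_0$, and the loss $2^JV^{2J}\le R^{1/40}$; the centred term is handled by Input~\ref{input:tt-decoupling} on each class $n=r+hq$. (The edge correction you mention is unnecessary: $d_0\mid n$ with $n\in(N,2N]$ corresponds exactly to $m\in(N/d_0,2N/d_0]$, which the uncentring proposition already covers uniformly.)
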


\section{Parameter discharge and proof of the main theorem}
\label{sec:main-proof}

\subsection{The flexible-Menon hypotheses}\label{sec:parameter-discharge}

In the subset expansion of Section~\ref{sec:centering}, the local base is
\(Z=N/d_0\), where \(d_0\le H=e^R\).  Thus
\begin{equation}\label{eq:local-base-log}
 \log Z\ge R^{2.1}-R,
 \qquad K=\lceil hM\rceil,
 \qquad H_0\le M\le H.
\end{equation}
We check Proposition~\ref{prop:flexible-menon} with the parameters in
\eqref{eq:circle-cutoff}.  For every fixed \(A>0\), as \(R\to\infty\),
\begin{equation}\label{eq:lower-circle-hyp}
 (\log K)^5\le R^{12A+6}=\WM.
\end{equation}
Also
\begin{equation}\label{eq:power-circle-hyp}
 \log\WM=O_A(\log R)=o(\log K),
 \qquad\text{hence}\qquad
 \WM\le K^{1/(2C_M)}.
\end{equation}
The choice \(B_0=6A+3\) gives
\begin{equation}\label{eq:log-circle-hyp}
 \WM=R^{12A+6}
 <(R^{2.1}-R)^{6A+3}
 \le(\log Z)^{B_0}.
\end{equation}
Moreover,
\begin{equation}\label{eq:K-less-Z}
 \log K\le R+O_A(\log R)<R^{2.1}-R\le\log Z.
\end{equation}
Finally, the truncation scale in \eqref{eq:flexible-scales} satisfies
\begin{equation}\label{eq:K0-large}
 \log K_0\asymp\frac{\log Z}{\log\log Z}
 \gg\frac{R^{2.1}}{\log R}\gg\log K.
\end{equation}
Therefore \(\widetilde K=K\),
\(Q_1=K/\WM^3\), and every hypothesis of
Proposition~\ref{prop:flexible-menon} holds at every local scale used in the
proof.  The same estimates show \(h<H_0\), as was used in
Section~\ref{sec:centering}.

\subsection{Logarithmic dilation}

For
\[
 C_h(x)=\sum_{n\le x}\frac{\Li(n)\Li(n+h)}n,
\]
complete multiplicativity gives, for every \(d\in\cD\),
\[
 C_h(x)=d\sum_{\substack{m\le dx\\d\mid m}}
 \frac{\Li(m)\Li(m+hd)}m.
\]
After multiplication by \(1/d\) and summation over \(d\),
\begin{equation}\label{eq:logarithmic-dilation}
 \WP C_h(x)
 =\sum_{d\in\cD}\sum_{\substack{m\le dx\\d\mid m}}
 \frac{\Li(m)\Li(m+hd)}m.
\end{equation}
The part with \(x<m\le dx\) is bounded by
\begin{equation}\label{eq:dilation-tail}
 \sum_{d\in\cD}\sum_{\substack{x<m\le dx\\d\mid m}}\frac1m
 \ll\sum_{d\in\cD}\frac{\log d+1}{d}
 \ll\WP\log H.
\end{equation}

Define the unweighted prefix
\[
 \mathcal A_h(M)=\sum_{d\in\cD}\sum_{\substack{m\le M\\d\mid m}}
 \Li(m)\Li(m+hd).
\]
Decompose \((1,M]\) into intervals of the form \((N,2N]\).  On the pieces
with \(\log N\ge R^{2.1}\), apply
Proposition~\ref{prop:dyadic-correlation}; below that threshold use the
trivial divisor bound.  The lengths of the pieces in each range form a
geometric series, so, uniformly for every \(M\ge1\),
\begin{equation}\label{eq:all-prefixes}
 |\mathcal A_h(M)|\ll_A
 \WP\min\{M,e^{R^{2.1}}\}
 +\left(V^{0.51J}+R^{-1/4}\right)M.
\end{equation}
Partial summation in \eqref{eq:all-prefixes} gives
\[
 \left|\sum_{d\in\cD}\sum_{\substack{m\le x\\d\mid m}}
 \frac{\Li(m)\Li(m+hd)}m\right|
 \ll_A \WP R^{2.1}
 +\left(V^{0.51J}+R^{-1/4}\right)\log x.
\]
Dividing \eqref{eq:logarithmic-dilation} by \(\WP\), adding
\eqref{eq:dilation-tail}, and using
\(\WP\asymp_{\eps_1}V^J\to\infty\) from
\eqref{eq:block-masses}, gives, after the harmless weakening
\(R^{-1/4}/\WP\le R^{-1/4}\),
\begin{equation}\label{eq:pre-final-bound}
 |C_h(x)|
 \ll_A R^{2.1}+R+
 \left(R^{-1/4}+\frac{V^{0.51J}}{\WP}\right)\log x.
\end{equation}
Here the first term comes from the prefix below the threshold and the second
from \eqref{eq:dilation-tail}; the final factor \(\log x\) is the integral
of the linear term in \eqref{eq:all-prefixes} under partial summation.

By \eqref{eq:block-masses},
\begin{equation}\label{eq:decoupling-ratio}
 \frac{V^{0.51J}}{\WP}
 \ll_{\eps_1}V^{-0.49J}
 \ll R^{-c_{\mathrm{dec}}}
\end{equation}
for an absolute \(c_{\mathrm{dec}}>0\), once \(\eps_1\) has been fixed.

\subsection{Choice of the outer scale}

First suppose
\begin{equation}\label{eq:xR-relation}
 \log x=R^3.
\end{equation}
Then \(h\le(\log x)^A\) is exactly \(h\le R^{3A}\), and
\eqref{eq:pre-final-bound} becomes
\begin{equation}\label{eq:final-exponents}
 |C_h(x)|\ll_A
 (\log x)^{0.7}
 +(\log x)^{1-1/12}
 +(\log x)^{1-c_{\mathrm{dec}}/3}.
\end{equation}
Hence \eqref{eq:main-theorem} holds for any fixed
\begin{equation}\label{eq:absolute-c}
 0<c<\min\left\{\frac3{10},\frac1{12},
                         \frac{c_{\mathrm{dec}}}{3}\right\}.
\end{equation}
This \(c\) is independent of \(A\).

To cover every sufficiently large real \(x\), take
\[
 H=\left\lceil\exp((\log x)^{1/3})\right\rceil,
 \qquad x_H=\exp((\log H)^3).
\]
Then
\begin{equation}\label{eq:all-real-x}
 |\log x_H-\log x|\ll\frac{(\log H)^2}{H}=o(1).
\end{equation}
The harmonic contribution between \(x\) and \(x_H\) is \(o(1)\).
Moreover, \(\log H\ge(\log x)^{1/3}\), so the shift range proved at \(x_H\)
contains the desired range at \(x\).  This completes the proof of
Theorem~\ref{thm:main}.

\section{Progression-of-starts Fourier uniformity}
\label{sec:progression-fourier}

The fixed-polylogarithmic argument in the preceding sections translates an
ordinary interval through length \(hM\).  That translation is harmless when
\(h\) is a fixed power of \(R\), but it is not suitable when
\(\log h\) itself is a power of \(R\).  We instead keep the starts of the
progressions as the averaging variable.  For \(Z,M,h\ge1\), put
\begin{equation}\label{eq:apf-definition}
 \mathcal F_{h,M}(\alpha;Z)
 :=\sum_{Z<y\le2Z}
 \left|\sum_{1\le t\le M}\Li(y+ht)e(\alpha t)\right|,
 \qquad e(u):=e^{2\pi iu}.
\end{equation}
The frequency \(\alpha\) in \eqref{eq:apf-definition} is common to all
starting points.  This feature is essential in the later Fourier inversion.

Fix constants
\begin{equation}\label{eq:super-parameter-region}
 \begin{gathered}
 a>0,\qquad 0<\rho<5,\qquad 2.01<\gamma<\beta,\qquad \rho<\gamma,\\
 a<\frac{1-\varepsilon_b}{200},\qquad
 \rho<\gamma-200a.
 \end{gathered}
\end{equation}
Here \(\varepsilon_b>0\) is a fixed Pilatte block parameter, independent of
\(\eps_1\) fixed in Section~\ref{sec:inputs}.  The second block system
controlled by \(\varepsilon_b\) is introduced explicitly in
Section~\ref{sec:superpolylog-transfer}; its parameter is allowed to depend
on the fixed exponent \(\eta\) in Theorem~\ref{thm:super-main}.  Let
\begin{equation}\label{eq:super-local-scales}
 H_R=\lfloor e^{R^\rho}\rfloor,\qquad
 \Omega=R^2,\qquad \epsilon_K=R^{-a},
\end{equation}
and suppose
\begin{equation}\label{eq:apf-local-range}
 1\le h\le H_R,\qquad
 e^{R^{1-\varepsilon_b}}\le M\le e^R,\qquad
 \log Z\ge R^\gamma-R.
\end{equation}

\subsection{The joint Ramar\'e minor arc}

The first new point is that the minor arcs admit a power saving independent
of \(h\) throughout \eqref{eq:apf-local-range}.  Put
\begin{equation}\label{eq:apf-first-prime-range}
 P_1=\Omega^{C_M},\qquad Q_1=M/\Omega^3,\qquad
 \mathcal P_h=\{p\in[P_1,Q_1]:p\nmid h\},
\end{equation}
where \(C_M\) is the fixed large constant used in
Section~\ref{sec:flexible-minor}.  Put \(X_0=(4Z)^{1/2}\), and let
\(\cT_h=\cT(P_1,Q_1,X_0)\) be the typical set from
Definition~\ref{def:typical-set}, with the first prime factor required to
belong to \(\mathcal P_h\).  Excluding primes which divide \(h\) changes
the density by at most
\begin{equation}\label{eq:shared-prime-density}
 \sum_{\substack{p\mid h\\p\ge P_1}}\frac1p+O(P_1^{-1})
 \le \frac{\log h}{P_1\log P_1}+O(P_1^{-1})
 \ll R^{-100}.
\end{equation}
Together with Input~\ref{input:sieve-complement} and translation of the
physical interval, this gives
\begin{equation}\label{eq:apf-typical-complement}
 \sum_{Z<y\le2Z}\sum_{t\le M}1_{\cT_h^c}(y+ht)
 \ll MZ\frac{\log P_1}{\log Q_1}+R^{-100}MZ+hM^2.
\end{equation}
The last term is smaller than \(R^{-A}MZ\) for every fixed \(A>0\), since
\(
 \log(hM/Z)\le R^\rho+2R-R^\gamma\to-\infty.
\)

\begin{lemma}[Joint minor-arc estimate]\label{lem:apf-minor}
Suppose that \(a_0/q\) is reduced and
\begin{equation}\label{eq:apf-minor-approximant}
 \Omega<q\le M/\Omega,\qquad
 |\alpha-a_0/q|\le\frac{\Omega}{Mq}.
\end{equation}
Then, uniformly in \eqref{eq:apf-local-range},
\begin{equation}\label{eq:apf-minor-conclusion}
 \mathcal F_{h,M}(\alpha;Z)\ll R^{-1/5}MZ.
\end{equation}
\end{lemma}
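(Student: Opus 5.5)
We follow the scheme of Lemma~\ref{lem:weighted-minor}, with the starting point \(y\) in place of the continuous translate \(x\). The key departure is that we dualize the outer \(\ell^1\) norm before any decomposition. First we replace the inner sum by its restriction to \(\cT_h\). By \eqref{eq:apf-typical-complement} this costs \(O(MZ\log P_1/\log Q_1)+O(R^{-100}MZ)\). Here \(\log P_1\ll\log R\) and \(\log Q_1\gg R^{1-\varepsilon_b}\), so the cost is far below \(R^{-1/5}MZ\), since \(1-\varepsilon_b>1/5\). We then choose unimodular weights \(\vartheta(y)\) on \((Z,2Z]\) and write the truncated quantity as
\[
 \sum_{Z<y\le2Z}\vartheta(y)\sum_{t\le M}\1_{\cT_h}(y+ht)\Li(y+ht)e(\alpha t).
\]
Next we apply Ramar\'e's identity \eqref{eq:ramare} to \(n=y+ht\), writing \(n=mp\) with \(p\in\mathcal P_h\) in a dyadic range \([P,2P]\). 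The prime-square correction is handled as in \eqref{eq:prime-square-error}. Because \(p\nmid h\), the condition \(p\mid y+ht\) fixes \(t\) modulo \(p\) once \(y\) is given. Equivalently, for fixed \(m,p\) the admissible pairs \((y,t)\) satisfy \(y\equiv mp\pmod h\), \(t=(mp-y)/h\in[1,M]\).

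We then apply H\"older in \(m\), which ranges over \(m\ll Z/P\) since \(hM\) is negligible against \(Z\). Expanding the fourth power gives
\[
 \sum_{p_1,\dots,p_4}\sum_{y_1,\dots,y_4}\prod\vartheta^{\pm}(y_i)\sum_{m}e\Bigl(\alpha\frac{m(p_1+p_2-p_3-p_4)-(y_1+y_2-y_3-y_4)}{h}\Bigr).
\]
The constraints are \(mp_i\equiv y_i\pmod h\) and \(0<mp_i-y_i\le hM\). Once \(y_1\) is fixed, each other \(y_i\) lies in a window of length \(O(hMP/P)=O(hM)\), which naively gives \(Z(hM)^3\). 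This is the apparent \(h^3\) volume loss. However, \(y_i\equiv mp_i\) modulo \(h\), with \(y_1\) determining \(m\) modulo \(h/(h,p_1)=h\). Hence each of \(y_2,y_3,y_4\) is forced into a single residue class modulo \(h\). This cancels the three factors of \(h\), and the \(y\)-volume is \(O(ZM^3)\).

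The \(m\)-sum then runs over the progression \(m\equiv m_0\pmod h\) of length \(O(M/P)\) within each admissible window. Its phase is \(e(\alpha r m/h)\) with \(r=p_1+p_2-p_3-p_4\), so over \(m=m_0+hk\) the phase is \(e(\alpha rk)\). Since \(\alpha\) is common to all starts, the geometric sum is \(\ll\min\{M/P,\|r\alpha\|^{-1}\}\), as in \eqref{eq:minor-geometric}. The four-prime count \eqref{eq:four-prime-count} and Vinogradov's lemma under \eqref{eq:apf-minor-approximant} then give \eqref{eq:vinogradov-endgame} with \(K\) replaced by \(M\). That bound is \(\ll M\log M/\Omega=M R^{-1}\log M\) with \(\log M\le R\). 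It is here that \(\Omega=R^2\) is needed: it gives a saving of \(\Omega^{-1}\log M\le R^{-1}\) inside the fourth power.

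Taking fourth roots gives \(\ll MZ R^{-1/4}/\log P\) per dyadic prime block. Summing over \(P\) costs \(\log\log M\ll\log R\), so the total is \(\ll MZR^{-1/4}\log R\ll R^{-1/5}MZ\).

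The main obstacle is the residue bookkeeping in the expanded fourth power. One must check carefully that the three congruences genuinely cut the \(y\)-volume by \(h^3\) uniformly in \(h\le H_R\). This requires that the windows of length \(O(hM)\) contain \(\asymp M\) members of each class, which holds since they are long multiples of \(h\). It also requires handling the boundary coupling between \(m\) and \(y_1\) by smoothing or a sup over subintervals, and confirming that the parity of \(m\) modulo \(h\) does not alter the phase beyond a harmless constant. Everything else mirrors Lemma~\ref{lem:weighted-minor}.
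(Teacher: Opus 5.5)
Your proposal is correct and follows the paper's proof essentially step for step. The shared steps are: dualize the outer $\ell^1$ norm and absorb $e(-\alpha y/h)$ into the weights; apply Ramar\'e's identity with first-range primes $p\nmid h$; use H\"older in $m$; then observe that fixing $y_1$ pins $m$, and hence each $y_i$, to one residue class modulo $h$, which gives $O(ZM^3)$ start tuples before the four-prime sieve and Vinogradov's lemma. The boundary concerns you flag need no smoothing or supremum over subintervals: for a fixed tuple, the admissible $m$ form exactly an interval of length at most $hM/P$ intersected with one class modulo $h$. The bound $\min\{M/P+1,\|r\alpha\|^{-1}\}$ therefore applies directly.
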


\begin{proof}
Dualizing the outer \(\ell^1\)-norm gives
\begin{equation}\label{eq:apf-dual}
 \mathcal F_{h,M}(\alpha;Z)
 =\sup_{|\vartheta_y|\le1}
 \left|\sum_{Z<y\le2Z}\vartheta_y
       \sum_{t\le M}\Li(y+ht)e(\alpha t)\right|.
\end{equation}
Write
\(
 \widetilde\vartheta_y=\vartheta_y e(-\alpha y/h)
\)
and define the sparse window
\begin{equation}\label{eq:sparse-window}
 J_h(u)=
 \sum_{\substack{Z<y\le2Z\\u-hM\le y<u\\y\equiv u\, (h)}}
 \widetilde\vartheta_y.
\end{equation}
Since \(e(\alpha t)=e(\alpha(y+ht)/h)e(-\alpha y/h)\), the typical part
of \eqref{eq:apf-dual} may be expanded by Ramar\'e's identity on the
first prime range.  On a dyadic block \(p\sim P\) this gives
\begin{equation}\label{eq:apf-ramare-form}
 \mathcal L_P=
 \sum_m a_m\sum_{\substack{p\sim P\\p\in\mathcal P_h}}
 \Li(p)e(\alpha mp/h)J_h(mp),\qquad |a_m|\le1.
\end{equation}
Indeed, before absorbing the \(m\)-factors, the exact denominator is
\(
 1_{p\nmid m}+\omega_{\mathcal P_h}(m).
\)
Replacing \(1_{p\nmid m}\) by \(1\) costs
\begin{equation}\label{eq:apf-prime-square}
 O(MZ/P+hM^2/P),
\end{equation}
and all later prime ranges are disjoint from the first, so their indicators
depend only on \(m\).  The physical support remains in \(J_h(mp)\), not in
a hidden \(p\)-dependent coefficient.

H\"older's inequality bounds \eqref{eq:apf-ramare-form} by
\begin{equation}\label{eq:apf-holder}
 |\mathcal L_P|
 \le (Z/P)^{3/4}
 \left(\sum_{m\ll Z/P}
 \left|\sum_{p\sim P}\Li(p)e(\alpha mp/h)J_h(mp)\right|^4
 \right)^{1/4}.
\end{equation}
We now expand the fourth power before estimating the sparse windows.  For
fixed \(p_1,\ldots,p_4\sim P\), a contributing tuple satisfies
\begin{equation}\label{eq:sparse-tuple-conditions}
 y_i<mp_i\le y_i+hM,\qquad y_i\equiv mp_i\pmod h.
\end{equation}
Fixing \(y_1\), each of \(y_2,y_3,y_4\) lies in an interval of length
\(O(hM)\) and in one residue class modulo \(h\).  There are therefore
\(O(M)\) choices for each, and the number of compatible start tuples is
\begin{equation}\label{eq:sparse-tuple-count}
 O(ZM^3).
\end{equation}
For a fixed tuple, write \(m=m_0+h\ell\).  The common \(m\)-interval has
length at most \(hM/P\), and if
\(
 r=p_1+p_2-p_3-p_4,
\)
the remaining geometric sum is bounded by
\begin{equation}\label{eq:sparse-geometric-sum}
 \min\left\{M/P+1,\|r\alpha\|^{-1}\right\}.
\end{equation}
This includes repeated primes and \(r=0\).

The standard upper-bound sieve estimate
\begin{equation}\label{eq:four-prime-apf}
 \#\{p_1+p_2-p_3-p_4=r:p_i\sim P\}
 \ll P^3/(\log P)^4
\end{equation}
and \eqref{eq:sparse-tuple-count}--\eqref{eq:sparse-geometric-sum}
give
\begin{multline}\label{eq:apf-fourth-expanded}
 \sum_m\left|\sum_{p\sim P}\Li(p)e(\alpha mp/h)J_h(mp)\right|^4\\
 \ll \frac{ZM^3P^3}{(\log P)^4}
 \sum_{|r|\ll P}
 \min\left\{M/P+1,\|r\alpha\|^{-1}\right\}.
\end{multline}
Vinogradov's lemma, \eqref{eq:apf-minor-approximant}, and
\(P\le Q_1=M/\Omega^3\) imply
\begin{equation}\label{eq:apf-vinogradov}
 \sum_{|r|\ll P}
 \min\left\{M/P+1,\|r\alpha\|^{-1}\right\}
 \ll M\log M/\Omega.
\end{equation}
Substitution in \eqref{eq:apf-holder}, followed by dyadic summation in
\(P\), yields
\begin{equation}\label{eq:apf-minor-raw}
 \mathcal F^{\rm typ}_{h,M}(\alpha;Z)
 \ll MZ\frac{(\log M)^{1/4}\log\log M}{\Omega^{1/4}}.
\end{equation}
Since \(\Omega=R^2\) and \(\log M\le R\), the right side is
\(O(R^{-1/5}MZ)\).  Equations
\eqref{eq:apf-typical-complement} and \eqref{eq:apf-prime-square}
are smaller after summation, proving \eqref{eq:apf-minor-conclusion}.
\end{proof}

\subsection{Decoupling a moving residue from its endpoint}

For a 1-bounded sequence \(f\), a real number \(x\), and integers
\(a,L,Q\), write
\begin{equation}\label{eq:progression-sum-def}
 S_f(x,a;L,Q)=
 \sum_{\substack{x<n\le x+L\\n\equiv a\, (Q)}}f(n).
\end{equation}

\begin{lemma}[Endpoint--residue decoupling]\label{lem:endpoint-residue}
Let \(a_y\equiv y+c\pmod Q\).  Then
\begin{equation}\label{eq:endpoint-residue}
 \sum_{Z<y\le2Z}|S_f(y,a_y;L,Q)|
 \le \frac1Q\int_{Z-Q}^{2Z}
       \sum_{a\bmod Q}|S_f(x,a;L,Q)|\,\dd x+O(Z).
\end{equation}
The implicit constant is absolute.
\end{lemma}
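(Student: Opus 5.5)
The plan is to average over a shift of the left endpoint by an amount $s\in[0,Q)$. The endpoint moves, but the residue stays tied to $y$, and we then reindex so that the residue becomes a free variable. Fix $y\in(Z,2Z]$ and $s\in[0,Q)$. Since the window $(y-s,y-s+L]$ differs from $(y,y+L]$ only in two intervals of length $s<Q$, each contains at most one element of a fixed class modulo $Q$. Hence
\[
 |S_f(y,a_y;L,Q)|\le |S_f(y-s,a_y;L,Q)|+2 .
\]
Averaging over $s\in[0,Q)$ gives
\[
 |S_f(y,a_y;L,Q)|\le \frac1Q\int_0^Q|S_f(y-s,a_y;L,Q)|\,\dd s+2 .
\]
Summing over the $O(Z)$ integers $y$ produces the error term $O(Z)$ with an absolute constant.

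Next I change variables. Put $x=y-s$, so that $x\in(y-Q,y]$ and
\[
 \sum_{Z<y\le 2Z}\frac1Q\int_{y-Q}^{y}|S_f(x,a_y;L,Q)|\,\dd x
 =\frac1Q\int_{Z-Q}^{2Z}\sum_{\substack{Z<y\le2Z\\ x\le y<x+Q}}|S_f(x,y+c;L,Q)|\,\dd x
\]
by Fubini, where the integration range $[Z-Q,2Z]$ covers every $x=y-s$ that occurs. The key observation is this: for fixed real $x$, the integers $y$ in $[x,x+Q)$ form a set of at most $Q$ consecutive integers. So the map $y\mapsto y+c\bmod Q$ is injective on them, and the residues $a_y$ run through distinct classes modulo $Q$. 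The inner sum is therefore at most $\sum_{a\bmod Q}|S_f(x,a;L,Q)|$, and inserting this gives \eqref{eq:endpoint-residue}.

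The only delicate point is the endpoint bookkeeping. One must check that the half-open window $[x,x+Q)$ contains exactly at most $Q$ integers, whatever the convention for integer versus real endpoints in $S_f$. One must also check that the boundary discrepancy is at most one term per side, which holds because the class $a_y$ is fixed while $s$ varies. Neither step uses any property of $f$ beyond $|f|\le1$, and no dependence on $L$ enters.
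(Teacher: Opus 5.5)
Your proposal is correct and is essentially the paper's own proof: you shift the left endpoint by $s\in[0,Q)$ at a cost of at most two terms, average over $s$, and then set $x=y-s$. For fixed $x$, the integers $y\in[x,x+Q)$ give distinct residues $y+c \bmod Q$, which bounds the inner sum by the full residue sum. You spell out the Fubini step and the injectivity more explicitly than the paper, but the argument is the same.
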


\begin{proof}
For \(0\le s<Q\), moving the left endpoint from \(y\) to \(y-s\), while
keeping \(a_y\) fixed, changes at most two terms.  Average the resulting
inequality over \(s\in[0,Q\)).  For fixed \((x,a)\), the equations
\(
 x=y-s, a\equiv y+c\pmod Q
\)
determine exactly one integer \(y\in[x,x+Q\)), apart from null endpoints.
This gives \eqref{eq:endpoint-residue}; the two endpoint terms contribute
\(O(Z)\).
\end{proof}

Lemma~\ref{lem:endpoint-residue} is the mechanism which avoids a
\(\sqrt{hq}\) loss on the major arcs: it turns the diagonal family of
residue classes selected by the starts \(y\) into the full residue-class
variance before Cauchy--Schwarz is applied.

\section{Major arcs and one exceptional modulus}
\label{sec:exceptional-modulus}

We complete the progression Fourier estimate on the major arcs.  The
published input is the real-valued hybrid variance theorem of Klurman,
Mangerel, and Ter\"av\"ainen \(KMT\), whose Corollary~1.6 retains a
possible real-character main term
\citep[Corollaries~1.6--1.7]{klurman2023shortaps}.  Suppose
\begin{equation}\label{eq:kmt-specialized-range}
 X\ge L\ge10Q\ge10,\qquad
 (\log(L/Q))^{-1/200}\le\epsilon\le1,\qquad
 Q\le X^{\epsilon^{200}},
\end{equation}
and \(Q\) is \((L/Q)^{\epsilon^2}\)-typical in the sense of KMT.  After
the Liouville-specific deletion proved in Lemma~\ref{lem:kmt-liouville},
KMT's variance estimate gives
\begin{equation}\label{eq:kmt-specialized-variance}
 \int_X^{2X}\sum_{b\bmod Q}^{*}
 |S_{\Li}(u,b;L,Q)|^2\,\dd u
 \ll \epsilon\varphi(Q)X(L/Q)^2.
\end{equation}
The superscript star restricts to reduced residue classes.  KMT's
Corollary~1.7 performs the deletion for \(\mu\), except possibly on moduli
induced by one primitive real character.  We now adapt that proof to
\(\Li\), and then show that one character works throughout the outer
argument.

The cap \(Q\leq X^{\epsilon^{200}}\) above is used only to obtain
automatic good-modulus membership in the subpower-range argument of this
section.  Sections~\ref{sec:general-good-deletion}--
\ref{sec:mellin-localized-transfer} instead use KMT's
general-good-modulus form and retain the complementary bad strata with
their exact linear weights.

\subsection{The Liouville main term}

\begin{lemma}[KMT main-term deletion for \(\Li\)]
\label{lem:kmt-liouville}
Let fixed \(a,\rho,\gamma\) satisfy
\begin{equation}\label{eq:kmt-liouville-parameters}
 a>0,\qquad 0<\rho<\gamma-200a,\qquad \epsilon=R^{-a}.
\end{equation}
Suppose \(q\le e^{R^\rho}R^2\),
\(\log X\ge R^\gamma-R^\rho-O(R)\), and the remaining hypotheses of
\eqref{eq:kmt-specialized-range} hold.  Let \(q_0\) be the conductor of
the possible Landau--Page primitive real character in the KMT proof.  If
no such conductor exists, or if \(q_0\nmid q\), then every character
\(\chi\pmod q\) satisfies
\begin{equation}\label{eq:liouville-character-mean}
 \left|\sum_{n\le3X}\Li(n)\chi(n)\right|
 \ll \epsilon^{1/2}\frac{\varphi(q)}qX.
\end{equation}
Consequently the KMT character main term is absorbed by the square-root
variance error.
\end{lemma}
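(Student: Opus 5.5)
The plan is to bound the character sum \(\sum_{n\le3X}\Li(n)\chi(n)\) through the Dirichlet series \(\sum\Li(n)\chi(n)n^{-s}=L(2s,\chi^2)/L(s,\chi)\) and a zero-free region, splitting by the primitive character inducing \(\chi\). Write \(\chi=\chi^*\chi_{0,q}\) with \(\chi^*\) primitive of conductor \(q^*\mid q\). Then
\[
 \sum_{n\le 3X}\Li(n)\chi(n)=\sum_{\substack{n\le3X\\(n,q)=1}}\Li(n)\chi^*(n),
\]
and removing the coprimality condition costs a sum over \(d\mid q^\infty\) built from products of primes dividing \(q\). Because \(\Li\) is completely multiplicative, the standard identity is
\[
 \sum_{\substack{n\le y\\(n,q)=1}}\Li(n)\chi^*(n)=\sum_{d\mid q^\infty}\mu_q(d)\,\Li(d)\chi^*(d)\sum_{m\le y/d}\Li(m)\chi^*(m),
\]
where \(\mu_q\) is supported on squarefree \(d\) whose prime factors all divide \(q\). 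I will bound the inner primitive sums uniformly in \(y\) and then sum over \(d\). The divisor sum satisfies \(\sum_{d}\mu_q(d)^2/d=\prod_{p\mid q}(1+1/p)\ll\log\log q\), which must be compared with the target factor \(\varphi(q)/q\gg1/\log\log q\). Since \(\log q\le R^\rho+O(\log R)\) while \(\log X\ge R^\gamma(1+o(1))\), this total loss of \((\log\log q)^2\ll(\log R)^2\) is harmless against any power saving in \(R\).

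For the primitive sums I take two cases. Suppose first that \(\chi^*\) is not the exceptional real character of conductor \(q_0\). The hypothesis \(q_0\nmid q\), together with \(q^*\mid q\), rules out \(\chi^*\) being induced by the exceptional one. The classical zero-free region for \(L(s,\chi^*)\) with \(q^*\le e^{R^\rho}R^2\) then holds with at most the single Landau–Page exception. Perron's formula for \(L(2s,\chi^{*2})/L(s,\chi^*)\) gives
\[
 \sum_{m\le y}\Li(m)\chi^*(m)\ll y\exp\!\left(-c\sqrt{\log y}\right)+y\exp\!\left(-c\frac{\log y}{\log q^*}\right),
\]
which is the Siegel–Walfisz-type bound without Siegel. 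One point needs care: for a real \(\chi^*\), \(L(2s,\chi^{*2})\) has a pole at \(s=1/2\). It lies to the left of the contour, so it contributes only \(O(y^{1/2+o(1)})\), which is fine.

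For the ranges \(y/d\ge X^{1/2}\), the ratio \(\log y/\log q^*\) is at least of order \(R^{\gamma-\rho}\), so the saving is \(\exp(-cR^{\gamma-\rho})\). This is far below \(\epsilon^{1/2}=R^{-a/2}\), and the margin \(\rho<\gamma-200a\) is more than enough. For \(d>X^{1/2}\) I bound the contribution trivially. Since \(d\) is supported on squarefree products of primes dividing \(q\), and there are only \(2^{\omega(q)}\) of them, this tail is at most \(\sum_{d>X^{1/2}}X/d\le 2^{\omega(q)}X^{1/2}\), which is negligible. The principal character (\(q^*=1\)) is covered by the same argument using the prime number theorem for \(\Li\).

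The main obstacle is making the zero-free-region step consistent with KMT's own exceptional character. The Landau–Page exception at level \(Q'=e^{R^\rho}R^2\) must be the same \(q_0\) that KMT's proof singles out, so that the hypothesis "\(q_0\nmid q\) or no such conductor" really excludes every bad \(\chi^*\). I would handle this by fixing, at this level, the unique primitive real character whose \(L\)-function has a real zero \(\beta>1-c/\log Q'\). I would then check that KMT's main-term condition for \(\mu\) is triggered only by characters induced from that same character, since their proof reduces to the same Dirichlet-series mean values, which are identical for \(\mu\) and \(\Li\) up to the harmless factor \(L(2s,\chi^2)\).

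The final clause, that the main term is absorbed by the square-root variance error, follows by inserting the bound on each character sum into KMT's main term. That term has size at most \(\frac1{\varphi(q)}|\sum\Li\chi|^2\cdot(L/Q)^2/X^2\) per character, which compares with \(\epsilon\varphi(Q)X(L/Q)^2\) after squaring \(\epsilon^{1/2}\).
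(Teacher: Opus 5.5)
\paragraph{Verdict.} Your route is different from the paper's, and it has a genuine gap in how it identifies the exceptional conductor \(q_0\).

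\paragraph{What works.} The paper transfers KMT's pretentious-distance lower bound from \(\mu\) to \(\Li\) using \(\Li(p)=\mu(p)=-1\), and then applies KMT's Hal\'asz inequality (Lemmas~7.4 and~7.9). You instead use Perron's formula for \(L(2s,\chi^{*2})/L(s,\chi^*)\) together with a zero-free region. Away from the exceptional character this is sound and gives a saving \(\exp(-cR^{\gamma-\rho})\). One small correction: for real non-exceptional \(\chi^*\) near \(s=1\), the zero-free region is \(1-c/\log Q'\), not \(1-c/\log q^*\). This does not change the numbers.

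\paragraph{The gap.} You rightly single out the exceptional conductor as the main obstacle, but you resolve it in the wrong direction.
\begin{itemize}
\item The lemma's \(q_0\) is KMT's. There a real character survives only if \(1-\vartheta\ll 1/(\epsilon^{200}\log X)\ll R^{200a-\gamma}\).
\item Because \(\rho<\gamma-200a\), this window is much narrower than the Landau--Page window \(1-\vartheta<c/\log Q'\asymp R^{-\rho}\) that your zero-free region requires.
\item Your planned check, that KMT's condition is triggered only by characters induced from the level-\(Q'\) exceptional character, gives the inclusion ``KMT-bad \(\subseteq\) Landau--Page-bad''. Your argument needs the reverse inclusion, and that is false in general.
\end{itemize}

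\paragraph{A concrete failure case.} Suppose the level-\(Q'\) exceptional character \(\chi_1\), of conductor \(q_1\), exists with
\[
 \frac{c_0}{\epsilon^{200}\log X}\le 1-\beta_1<\frac{c}{\log Q'}.
\]
Take \(q\) divisible by \(q_1\) but not by KMT's \(q_0\); for instance, KMT may have no exceptional character at all. The lemma then asserts the bound for \(\chi\) induced by \(\chi_1\). Your Perron bound is unavailable for this \(\chi\): you cannot move the contour past \(\beta_1\) without picking up the pole of \(1/L(s,\chi_1)\) there. As written, you prove the lemma only with \(q_0\) redefined as the level-\(Q'\) Landau--Page conductor. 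That weaker version would in fact suffice downstream, because Siegel applied to \(1-\beta_1<c/\log Q'\) still forces \(q_1\gg_A R^A\). It is not, however, the stated lemma.

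\paragraph{How to close it.} You must treat this intermediate zero explicitly. Either of the following works.
\begin{itemize}
\item Evaluate the residue \(L(2\beta_1,\chi_1^2)\,y^{\beta_1}/(\beta_1L'(\beta_1,\chi_1))\). For \(y\ge X^{1/2}\) you have \(y^{\beta_1-1}\le\exp(-c_0\epsilon^{-200}/2)\). Combine this with a lower bound for \(|L'(\beta_1,\chi_1)|\) that loses at most a power of \(\log q_1\), and a contour detour around \(\beta_1\).
\item For this one character, switch to the paper's prime-sum route. There the zero enters \(\sum_{X^{\epsilon^{10}}<p\le X}\chi_1(p)p^{-1-it}\) only through a term of size \(O(\exp(-c_0\epsilon^{-190}))\), which is negligible.
\end{itemize}
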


\begin{proof}
For a nonprincipal character, KMT's Hal\'asz reduction (Lemma~7.4) reduces
the claim to a lower bound for the pretentious distance.  The relevant prime
sum, uniformly for the twists \(|t|\leq\log X\), is
\begin{equation}\label{eq:kmt-prime-distance}
 \sum_{X^{\epsilon^{10}}\le p\le X}
 \frac{1+\Re(\chi(p)p^{it})}{p}.
\end{equation}
It is unchanged when \(\mu\) is replaced by \(\lambda\), since
\(\mu(p)=\lambda(p)=-1\) at every prime.  KMT Lemmas~7.4 and~7.9 and
Remark~7.2 therefore give the same distance lower bound for \(\lambda\),
apart from branches induced by the possible exceptional primitive character.
The hypothesis \(q_0\nmid q\) excludes all those induced branches, not merely
the primitive character itself.  More explicitly, for every nonprincipal
\(\chi\pmod q\), the distance identity
\[
 \mathbb D_q(\lambda,\chi(n)n^{it};3X)^2
 =\mathbb D_q(\mu,\chi(n)n^{it};3X)^2
\]
holds because the distance is a prime sum and \(\lambda(p)=\mu(p)=-1\).
The auxiliary lower endpoint in KMT's Lemma~7.9 may be taken as
\(y=X^{\epsilon^{10}}\).  Its condition
\(q^{1/\epsilon^{100}}\le y\) follows here from
\(\rho+110a<\gamma\), which is implied by the stronger hypothesis
\(\rho<\gamma-200a\); the required uniformity in \(t\) includes
\(|t|\le\log X\).
Consequently,
\[
 \inf_{|t|\leq\log X}
 \mathbb D_q(\lambda,\chi(n)n^{it};3X)^2
 \geq\log(1/\epsilon),
\]
and KMT's Hal\'asz inequality (Lemma~7.4), with its parameter
\(T=\log X\), gives the required bound.  Indeed, its quantity
\(M_q(T)\) is at least \(\log(1/\epsilon)\), so
\[
 (M_q(T)+1)e^{-M_q(T)}+T^{-1/2}+(\log X)^{-1/4}
 \ll \epsilon^{1/2}
\]
under the fixed parameter inequalities.  Multiplying by
\((\varphi(q)/q)X\) gives the claimed estimate.  KMT Lemma~7.9 is stated
for arbitrary characters modulo \(q\), so it already includes imprimitive
characters and their induced primitive conductors.  In KMT's notation the
only exceptional alternative is that the inducing conductor is divisible by
\(q_0\), which is excluded by \(q_0\nmid q\).

For the principal character \(\chi_0\pmod q\), inclusion--exclusion gives
\begin{equation}\label{eq:principal-liouville-ie}
 \sum_{n\le3X}\Li(n)\chi_0(n)
 =\sum_{d\mid\operatorname{rad}(q)}
   \mu(d)\Li(d)\sum_{m\le3X/d}\Li(m).
\end{equation}
The classical zero-free-region prime number theorem for Liouville,
obtained from
\(
 \sum\Li(n)n^{-s}=\zeta(2s)/\zeta(s),
\)
is uniform over these divisors because
\(
 \log(X/d)=\log X+O(R^\rho+\log R)\asymp\log X.
\)
It yields
\begin{equation}\label{eq:principal-liouville-pnt}
 \sum_{m\le3X/d}\Li(m)
 \ll \frac Xd
 \exp\{-c(\log X)^{3/5}(\log\log X)^{-1/5}\}.
\end{equation}
Put
\[
 E_X:=\exp\{-c(\log X)^{3/5}(\log\log X)^{-1/5}\}.
\]
After summing \(d\mid\operatorname{rad}(q)\),
\eqref{eq:principal-liouville-ie}--
\eqref{eq:principal-liouville-pnt} give the correct factor
\[
 \left|\sum_{n\le3X}\Li(n)\chi_0(n)\right|
 \ll XE_X\prod_{p\mid q}\left(1+\frac1p\right).
\]
Moreover,
\[
 \prod_{p\mid q}\left(1+\frac1p\right)\frac q{\varphi(q)}
 \ll (1+\log\log(3q))^2\ll_\rho(\log R)^2.
\]
Here the last inequality uses \(q\le e^{R^\rho}R^2\).  Since the lower
bound for \(\log X\) is a fixed positive power of \(R\), the factor
\(E_X(\log R)^2\) is smaller than \(R^{-C}\) for every fixed \(C>0\).
Taking \(C>a/2\) proves
\eqref{eq:liouville-character-mean}.  In KMT Corollary~1.6 the resulting
main term per reduced residue is \(O(\epsilon^{1/2}L/q)\), which is no
larger than the square-root variance contribution.
The real-valued form of KMT Corollary~1.6 reduces its displayed main term
to the untwisted estimate above; the same Hal\'asz reduction is uniform in
the twists used in the proof of that corollary.
\end{proof}

\subsection{Exact rational frequencies}

\begin{lemma}[Rational major arc]\label{lem:apf-rational-major}
Assume \eqref{eq:super-parameter-region}--\eqref{eq:apf-local-range}, and
let \(a_0/q\) be reduced with \(q\le\Omega\).  If no KMT main term survives
for any divisor of \(hq\), then
\begin{equation}\label{eq:rational-major-conclusion}
 \mathcal F_{h,M}(a_0/q;Z)\ll R^{-a/2}MZ.
\end{equation}
The same estimate holds for every consecutive \(t\)-interval of length
\(U\ge M/\Omega^4\), uniformly in its starting point.
\end{lemma}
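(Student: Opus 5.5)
The plan is to split $t$ modulo $q$, which turns each inner sum into a progression sum in a moving residue class. Then Lemma~\ref{lem:endpoint-residue} replaces the diagonal family of residues by the full residue-class variance, and the KMT variance \eqref{eq:kmt-specialized-variance} is applied after the main terms have been deleted.

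\textbf{Step 1: split $t$ modulo $q$.} Write $t=c+qs$ with $c\bmod q$. Then
\[
 \sum_{t\le M}\Li(y+ht)e(a_0t/q)=\sum_{c\bmod q}e(a_0c/q)\sum_{s}\Li(y+hc+hqs).
\]
The triangle inequality over $c$ costs a factor $q\le\Omega=R^2$; this is harmless only if the later saving is measured relative to $M/q$ per class. Each inner sum is a sum of $\Li(n)$ over $y+hc<n\le y+hc+hM$ (up to $O(1)$ endpoint terms) with $n\equiv y+hc\pmod{hq}$. So with $Q=hq$ and $L=hM$ it equals $S_\Li(y+hc,\,y+hc;\,L,Q)$, a progression sum whose residue moves with its start.

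\textbf{Step 2: decouple and reduce.} For each fixed $c$, apply Lemma~\ref{lem:endpoint-residue} with this $Q$ and shift $c'=hc$. This bounds the $y$-sum by
\[
 Q^{-1}\int_{Z-Q}^{2Z}\sum_{b\bmod Q}|S_\Li(x,b;L,Q)|\,\dd x+O(Z).
\]
Next split $b$ by $g=(b,Q)$ and write $n=gm$. Complete multiplicativity gives $\Li(gm)=\Li(g)\Li(m)$, so each class becomes a reduced class modulo $Q/g$ at scale $Z/g$ with length $L/g$. Apply Cauchy--Schwarz over $x$ and the reduced classes, then use \eqref{eq:kmt-specialized-variance} with $\epsilon=R^{-a}$. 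This gives roughly $\epsilon^{1/2}(Z/g)\varphi(Q/g)^{1/2}\cdot\varphi(Q/g)^{1/2}(L/Q)$ per stratum. Summing over $g\mid Q$ with the exact count \eqref{eq:exact-divisor-count} yields $\epsilon^{1/2}ZL/Q\cdot Q/Q$-type bounds, i.e.\ $\ll\epsilon^{1/2}MZ/q$ per $c$. Summing over $c$ then gives $R^{-a/2}MZ$. The hypothesis that no KMT main term survives for any divisor of $hq$ is used at this point, with Lemma~\ref{lem:kmt-liouville} covering every reduced modulus $Q/g$. The boundary errors are absorbed as follows: the $O(Z)$ terms total $O(qZ)$, and the $O(Q)$ extension of the integration range is controlled because $hqM\ll Z R^{-100}$ by \eqref{eq:apf-local-range}.

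\textbf{Step 3: verify KMT hypotheses.} We need the following.
\begin{itemize}[leftmargin=*]
\item $X\ge L\ge10Q$. The bound $L/Q=M/q\ge M/\Omega$ is large.
\item $(\log(L/Q))^{-1/200}\le R^{-a}$. This holds since $\log M\ge R^{1-\varepsilon_b}$ and $a<(1-\varepsilon_b)/200$.
\item $Q\le X^{\epsilon^{200}}$. This holds since $\log Q\le R^\rho+O(\log R)$ while $\epsilon^{200}\log X\ge R^{\gamma-200a}/2$, and $\rho<\gamma-200a$.
\item $Q$ is $(L/Q)^{\epsilon^2}$-typical. This is automatic in the subpower regime, as noted after \eqref{eq:kmt-specialized-variance}.
\end{itemize}

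\textbf{Step 4: general $t$-intervals.} For a $t$-interval of length $U\ge M/\Omega^4$, rerun the same argument with $L=hU$. Then $L/Q\ge M/\Omega^5$ is still long enough that $\log(L/Q)\asymp\log M$. The bound becomes $R^{-a/2}UZ$, hence $\ll R^{-a/2}MZ$.

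The main obstacle is the bookkeeping in Step 2. The gcd strata change both the scale and the modulus, and the KMT main-term exclusion must hold simultaneously for every divisor of $hq$. The variance normalization must also exactly cancel the $q$ lost to the triangle inequality over $c$.
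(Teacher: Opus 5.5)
Your argument follows the paper's proof step for step. You split $t$ modulo $q$ and apply Lemma~\ref{lem:endpoint-residue} with $Q=hq$. You group residues by $g=(b,Q)$ and rescale by complete multiplicativity to reduced data $(Q/g,\,Z/g,\,hM/g)$ with ratio $M/q$. You then use the KMT variance, Cauchy--Schwarz, the divisor--totient identity, and a rerun with $hU$ in place of $hM$.

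\textbf{Typicality.} One hypothesis check does not hold as you justified it. The remark after \eqref{eq:kmt-specialized-variance} says that the cap $Q\le X^{\epsilon^{200}}$ yields automatic \emph{good-modulus} membership. That is a different KMT hypothesis from typicality, so typicality still has to be verified.
\begin{itemize}
\item It must be checked for every reduced modulus $Q/g$, i.e.\ every divisor of $hq$, and for every admissible length $U\ge M/\Omega^4$.
\item It is not automatic merely because the shift range is subpower. Here $\log(hq)$ can be of size $R^\rho$ with $\rho$ close to $5$. By contrast, $y_{\rm typ}=(U/q)^{\epsilon^2}$ only satisfies $\log y_{\rm typ}\ge R^{1-\varepsilon_b-2a+o(1)}$, so $y_{\rm typ}$ can be far smaller than $hq$.
\item What makes it work is KMT's Lemma~9.1 in its actual form $y_{\rm typ}\ge 1000\log(hq)$. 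This holds because $\exp(R^{1-\varepsilon_b-2a+o(1)})\gg R^\rho$.
\item This is a second place where $a<(1-\varepsilon_b)/200$ is needed, namely to make $1-\varepsilon_b-2a>0$.
\end{itemize}

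\textbf{Two harmless bookkeeping points.}
\begin{itemize}
\item After the substitution $x=gu$ the integral picks up a Jacobian factor $g$. Each stratum therefore contributes $\epsilon^{1/2}\varphi(Q/g)ZM/q$, not $\epsilon^{1/2}\varphi(Q/g)(Z/g)M/q$. You should then sum with $\sum_{g\mid Q}\varphi(Q/g)=Q$ rather than \eqref{eq:exact-divisor-count}. Your final bound is unchanged.
\item For a partial interval $(T,T+U]$, the start variable becomes $y+hT$, so the physical base is $Z+hT$. This stays in the admissible range because $hM=o(Z)$.
\end{itemize}
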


\begin{proof}
Split \(t\) into its \(q\) residue classes.  For a representative
\(b\in\{0,\ldots,q-1\}\), there is the exact identity
\begin{equation}\label{eq:exact-t-residue-identity}
 \sum_{\substack{1\le t\le M\\t\equiv b\, (q)}}\Li(y+ht)
 =S_{\Li}(y+hb-hq,y+hb;hM,hq)+O(1).
\end{equation}
The shifted interval differs from the desired residue progression by at most
two endpoint terms.  After summing over starts and residue classes this costs
\(O(qZ+qhM)\).  Apply Lemma~\ref{lem:endpoint-residue} with
\(Q=hq\).  Group the residues by \(d=(c,Q)\), divide by \(d\), and use
complete multiplicativity.  The reduced data are
\begin{equation}\label{eq:gcd-reduced-kmt-data}
 Q_d=Q/d,\qquad X_d=Z/d,\qquad L_d=hM/d,\qquad
 \frac{L_d}{Q_d}=\frac Mq.
\end{equation}

We verify the KMT range, including the stated uniformity for partial
lengths \(U\ge M/\Omega^4\).  One has
\begin{equation}\label{eq:kmt-lower-check}
 \log(U/q)\ge R^{1-\varepsilon_b}-O(\log R),
\end{equation}
so the first strict inequality in \eqref{eq:super-parameter-region}
implies
\(
 (\log(U/q))^{-1/200}\le R^{-a}.
\)
Moreover
\begin{align}
 \log(X_d^{\epsilon_K^{200}})
   &\ge R^{\gamma-200a+o(1)}>R^\rho+O(\log R),
   \label{eq:kmt-good-check}\\
 \log y_{\rm typ},\qquad y_{\rm typ}:=(U/q)^{\epsilon_K^2},
   &\ge R^{1-\varepsilon_b-2a+o(1)}.
   \label{eq:kmt-typical-check}
\end{align}
The exponent in \eqref{eq:kmt-typical-check} is positive by
\(a<(1-\varepsilon_b)/200\).  Hence
\[
 y_{\rm typ}=\exp\{R^{1-\varepsilon_b-2a+o(1)}\}
 \gg R^\rho+\log R\gg\log(hq).
\]
The first inequality is exactly the size condition
\(y_{\rm typ}\ge1000\log(hq)\) in KMT's automatic-typicality criterion;
it is the value of \(y_{\rm typ}\), not its logarithm, that is compared
with \(\log(hq)\).  KMT's Lemma~9.1 therefore implies that every divisor
\(Q_d\le hq\) is \(y_{\rm typ}\)-typical.  Together with
\eqref{eq:kmt-good-check}, this places \(Q_d\) in both the automatic
good-modulus and typical ranges.  In particular,
\[
 Q_d\le hq\le e^{R^\rho}R^2\le (X')^{\epsilon_K^{200}}
\]
for the corresponding translated base \(X'\), and every primitive
character inducing a character modulo \(Q_d\) has conductor at most this
same bound.

Apply \eqref{eq:kmt-specialized-variance}, then Cauchy--Schwarz in \(u\)
and the reduced residue classes.  After the change \(x=du\), the integration
interval is \([Z/d-Q_d,2Z/d]\).  Since
\[
 \frac{hq}{Z},\ \frac{hM}{Z}
 \le \exp(R^\rho+2R-R^\gamma)=o(1),
\]
it is contained in
\([Z/(2d),2Z/d]\) for large \(R\), which is the union of two intervals of
the form \([X,2X]\).  Applying KMT on both pieces changes only the absolute
constant, and gives
\begin{equation}\label{eq:kmt-l1-reduced}
 d\int_{Z/d-Q_d}^{2Z/d}\sum_{c\bmod Q_d}^{*}
 |S_{\Li}(u,c;L_d,Q_d)|\,\dd u
 \ll R^{-a/2}\varphi(Q_d)\frac{ZM}{q}.
\end{equation}
Lemma~\ref{lem:endpoint-residue} contributes the outer factor \(1/Q\).
Summing \(d\mid Q\) and using
\(
 \sum_{d\mid Q}\varphi(Q/d)=Q
\)
shows that one \(b\pmod q\) costs
\begin{equation}\label{eq:one-b-major}
 \ll R^{-a/2}\frac{ZM}{q}+O(Z+hM).
\end{equation}
Summing all \(q\) classes gives
\begin{equation}\label{eq:all-b-major}
 \mathcal F_{h,M}(a_0/q;Z)
 \ll R^{-a/2}ZM+qZ+qhM.
\end{equation}
Both endpoint errors are negligible: \(q/M=o(R^{-A})\) and
\(qh/Z=o(R^{-A})\) for every fixed \(A\), by
\eqref{eq:apf-local-range}.  This proves
\eqref{eq:rational-major-conclusion}.  Replacing \(M\) by \(U\) and the
physical base \(Z\) by \(Z+hT\) gives the same argument for every partial
interval, since \eqref{eq:kmt-lower-check}--\eqref{eq:kmt-typical-check}
are unchanged.
\end{proof}

\subsection{One conductor on the whole outer scale}

Set \(\log x=R^\beta\), and consider all KMT base points generated by
\begin{equation}\label{eq:outer-kmt-family}
 \begin{gathered}
 e^{R^\gamma}\le N\le e^Rx,\qquad d_0\le e^R,\qquad Z=N/d_0,\\
 0\le T\le e^R,\qquad 0\le s<hq,\qquad d\mid hq,\qquad
 X'=\frac{Z+hT-s}{d}.
 \end{gathered}
\end{equation}
They lie in the deterministic band
\begin{equation}\label{eq:outer-kmt-band}
 \exp\{R^\gamma-R^\rho-O(R)\}\le X'\le2e^Rx.
\end{equation}
Indeed, the lower bound uses \(d\le hq\le e^{R^\rho}R^2\), while the
upper bound follows from \(\rho<\gamma<\beta\).  In particular, uniformly
over the whole family and for all sufficiently large \(R\),
\begin{equation}\label{eq:outer-kmt-log-band}
 \tfrac12R^\gamma\le\log X'\le2R^\beta.
\end{equation}
Only the lower endpoint of this band is used in the zero-window argument
below.

\begin{lemma}[Global exceptional conductor]\label{lem:global-conductor}
Under \eqref{eq:super-parameter-region}, there is at most one primitive
real character, of conductor
\begin{equation}\label{eq:q0-upper}
 q_0\le e^{R^\rho}R^2,
\end{equation}
whose main term can survive in any KMT call from
\eqref{eq:outer-kmt-family}.  If it exists, then for every fixed \(A>0\),
ineffectively for all sufficiently large \(R\),
\begin{equation}\label{eq:q0-lower}
 q_0\gg_A R^A.
\end{equation}
\end{lemma}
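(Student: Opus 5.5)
The plan is to go back inside KMT's proof of the main-term alternative and locate where the exceptional primitive real character arises. It comes from the Landau--Page zero-repulsion step in the pretentious-distance lower bound (KMT Lemma~7.9 and Remark~7.2). Our aim is to show that a single application of Landau--Page covers every base point in \eqref{eq:outer-kmt-family} at once.

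First, every modulus appearing in a KMT call from \eqref{eq:outer-kmt-family} divides \(hq\). Hence its size is at most \(e^{R^\rho}R^2\), and so are all inducing conductors. This gives \eqref{eq:q0-upper} directly, provided we can show the only possible culprit is a single primitive real character of conductor in this range.

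Second, we need that the zero window is common to the whole family. In KMT the exceptional character is defined by having an \(L\)-function zero with real part \(\geq 1-c/\log Q^*\), where \(Q^*\) is a conductor cap tied to the base point. We instead use a conductor cap depending only on \(R\), namely \(Q^*=e^{R^\rho}R^2\). We also fix the \(t\)-range \(|t|\le 2R^\beta\ge\log X'\), using \eqref{eq:outer-kmt-log-band}. Landau--Page then says there is at most one primitive real \(\chi_1\) of conductor \(\le Q^*\) with a real zero \(\beta_1>1-c/\log(Q^*R^\beta)\). Every other character, including all twists \(\chi(n)n^{it}\) in the range, has no zero in this region.

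Third, we check that the zero-free region suffices for each base point. The repulsion has to feed into the distance bound over primes \(p\in[X'^{\epsilon^{10}},X']\). Since \(\log X'\ge\tfrac12R^\gamma\) and \(\rho+110a<\gamma\), we get \(\epsilon^{10}\log X'\gg R^{\gamma-10a}\), which exceeds \(\log(Q^*R^\beta)\asymp R^\rho\) by a power of \(R\). So the standard argument (zero-free region, then a prime sum over characters, then a large distance) gives \(\mathbb D\ge\log(1/\epsilon)\) for every non-exceptional character. This holds at every point of the family with one fixed \(\chi_1\), which is the key uniformity. Only the lower endpoint of the band is used, as the text anticipates. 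The upper endpoint \(2R^\beta\) enters only logarithmically through the \(t\)-range.

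For \eqref{eq:q0-lower}, Siegel's theorem gives \(1-\beta_1\gg_\varepsilon q_0^{-\varepsilon}\). If the main term survives, then \(1-\beta_1\ll 1/\log(Q^*R^\beta)\ll R^{-\rho}\) (or \(R^{-\min(\rho,\cdot)}\), depending on the normalisation). Hence \(q_0^{\varepsilon}\gg R^{\rho}\), and taking \(\varepsilon=\rho/A\) gives \(q_0\gg_A R^A\), ineffectively.

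The main obstacle is the second step: verifying carefully that KMT's exceptional alternative is genuinely governed by a zero window that can be taken uniform across all \(X'\) in the band. This is delicate because KMT state it per call, relative to \(X\). I would first try to re-derive KMT's Lemma~7.9 with the conductor cap frozen at \(Q^*\), and check that their proof uses only an upper bound for the conductor and a lower bound for \(\log X\).
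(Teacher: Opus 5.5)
Your proof is correct, but it is organized differently from the paper's. The paper never constructs the exceptional character itself. It quotes from the proof of KMT Corollary~1.7 that a real character can survive a call at base \(X'\) only if its \(L\)-function has a real zero with \(1-\vartheta\ll1/(\epsilon_K^{200}\log X')\). It then uses \(\log X'\ge\tfrac12R^\gamma\) to put every such per-call window inside \(1-\vartheta\ll R^{-(\gamma-200a)}\), and uses \(\rho<\gamma-200a\) to nest that inside one Landau--Page window for conductors \(\le e^{R^\rho}R^2\). Siegel is applied with this narrow window.

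You instead fix the candidate \(\chi_1\) once, by Landau--Page at conductor cap \(Q^*\) and height \(2R^\beta\), with a window of size \(\asymp R^{-\rho}\). You then re-derive the large pretentious distance on \([(X')^{\epsilon^{10}},X']\) for every other character directly from the classical zero-free region. That step needs only \(\rho<\gamma-10a\).

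The paper's route is shorter and uses only the lower end of the band, but it relies on KMT's internal exceptional-zero bookkeeping. Yours makes uniformity in \(X'\) automatic, since \(\chi_1\) is defined from global data. It also yields the nonprincipal deletion of Lemma~\ref{lem:kmt-liouville} as a by-product.

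In exchange, two things fall on you. First, you must actually carry out the explicit-formula and partial-summation estimate uniformly for \(|t|\le2R^\beta\). This is where the upper end of the band enters, so your remark that only the lower endpoint is used is not quite right for your route. Second, you must treat the principal character separately, because the pole of \(\zeta\), not a zero-free region, governs its prime sum. For \(\lambda\) the pole enters with the favourable sign: \(\mathbb D^2=\sum(1+\cos(t\log p))/p\), and the cosine integral is bounded below.

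Your wider window may flag a \(\chi_1\) that KMT would in fact delete. This is harmless, since Siegel with \(\varepsilon=\rho/A\) still gives \(q_0\gg_A R^A\). Finally, the ``main obstacle'' you name is already settled by your third step. Once every character not induced by \(\chi_1\) has large distance, KMT's per-call exceptional alternative is no longer needed.
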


\begin{proof}
In the proof of KMT Corollary~1.7, a real character can survive only if its
\(L\)-function has a real zero \(\vartheta\) satisfying
\begin{equation}\label{eq:kmt-exceptional-zero-original}
 1-\vartheta\ll\frac{1}{\epsilon_K^{200}\log X'}.
\end{equation}
This is the condition \(\vartheta>1-c_0/\log(X'^{\epsilon_K^{200}})\)
in KMT's exceptional-modulus construction.  Uniformly over
\eqref{eq:outer-kmt-band}, it gives
\begin{equation}\label{eq:kmt-bad-zero-window}
 1-\vartheta\ll R^{-(\gamma-200a)}.
\end{equation}
All relevant primitive conductors are at most
\(
 \mathcal Q_R=e^{R^\rho}R^2.
\)
Indeed, every character in a KMT call is induced from a primitive character
whose conductor divides the working modulus \(Q_d\mid hq\), so its conductor
is at most \(h\Omega\le e^{R^\rho}R^2\).  The KMT exceptional-zero condition
is therefore a condition on this same primitive conductor, independent of
the choice of inducing modulus.
Because
\begin{equation}\label{eq:lp-window-inclusion}
 R^{-(\gamma-200a)}\log\mathcal Q_R
 \ll R^{-(\gamma-200a-\rho)}\longrightarrow0,
\end{equation}
the KMT window lies inside the Landau--Page interval
\(1-c_{\rm LP}/\log\mathcal Q_R<\vartheta<1\) for all sufficiently large
\(R\).  Landau--Page uniqueness then gives at most one primitive real
character in the whole family.  Thus the entire window
\eqref{eq:kmt-bad-zero-window} lies inside one
Landau--Page uniqueness window.  Hence at most one primitive real
character works for all calls, including distinct parent intervals and
subset-rescaled bases.

If its exceptional real zero is \(\vartheta_0\), then
\(
 1-\vartheta_0\ll R^{-(\gamma-200a)}.
\)
Siegel's theorem gives
\(
 1-\vartheta_0\gg_\delta q_0^{-\delta}
\)
for every fixed \(\delta>0\).  Choosing \(\delta\) after an arbitrary
fixed \(A\) gives \eqref{eq:q0-lower}.  The resulting threshold is
ineffective.
\end{proof}

If the character of Lemma~\ref{lem:global-conductor} exists, define
\begin{equation}\label{eq:exceptional-shift-set}
 \cE_R=
 \{1\le h\le H_R:q_0\mid hq\text{ for some }1\le q\le\Omega\};
\end{equation}
otherwise put \(\cE_R=\varnothing\); in that branch all subsequent
exceptional-set assertions are immediate.  When \(q_0\) exists,
equivalently,
\begin{equation}\label{eq:exceptional-shift-structure}
 \cE_R=\left\{1\le h\le H_R:
              \frac{q_0}{(q_0,h)}\le\Omega\right\}.
\end{equation}
The elementary gcd sum
\begin{equation}\label{eq:gcd-sum}
 \sum_{q\le\Omega}(q_0,q)
 \le\Omega\tau(q_0)
\end{equation}
and \eqref{eq:q0-lower} imply, for every fixed \(B>0\),
\begin{equation}\label{eq:exceptional-shift-count}
 |\cE_R\cap[1,H']|
 \le H'\Omega\frac{\tau(q_0)}{q_0}
 \ll_B H'R^{-B}
 \qquad(1\le H'\le H_R).
\end{equation}
Here we used the standard bound \(\tau(q_0)=q_0^{o(1)}\) together with
the ineffective estimate \(q_0\gg_A R^A\), choosing \(A\) after the desired
power \(B\).  The same calculation is valid for every prefix \(H'\), which
is the source of the prefix-sparse form of the exceptional set.
There is at most one prime in \(\cE_R\).  Indeed, if two distinct primes
\(p_1,p_2\) belonged to it, then for large \(R\), when \(q_0>\Omega\),
\eqref{eq:exceptional-shift-structure} would give
\(
 p_i\mid q_0, q_0/p_i\le\Omega.
\)
Thus \(q_0\ge p_1p_2\ge q_0^2/\Omega^2\), so
\(q_0\le\Omega^2=R^4\), contradicting \eqref{eq:q0-lower} with \(A>4\).

\subsection{Nearby frequencies and the APF theorem}

\begin{lemma}[Blockwise Abel summation]\label{lem:blockwise-abel}
Suppose every consecutive \(t\)-interval of length
\(U\ge U_0=M/\Omega^4\) satisfies the rational-frequency estimate
\begin{equation}\label{eq:abel-input}
 \sum_{Z<y\le2Z}
 \left|\sum_{T<t\le T+U}\Li(y+ht)e(a_0t/q)\right|
 \le\delta ZU,\qquad 0\le\delta\le1.
\end{equation}
If
\(
 \alpha=a_0/q+\theta,\qquad |\theta|\le\Omega/(Mq),
\)
then
\begin{equation}\label{eq:abel-output}
 \mathcal F_{h,M}(\alpha;Z)
 \ll(\delta+\Omega^{-3})ZM.
\end{equation}
\end{lemma}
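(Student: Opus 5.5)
We would split the \(t\)-range \([1,M]\) into consecutive blocks of length \(U_0=M/\Omega^4\) (the final block possibly shorter), freeze the slowly varying phase \(e(\theta t)\) on each block, and feed each block into the rational-frequency hypothesis \eqref{eq:abel-input}. Concretely, write \(e(\alpha t)=e(a_0t/q)e(\theta t)\), and let the blocks be \((T_k,T_k+U_0]\) with \(T_k=kU_0\) for \(0\le k<\Omega^4\), plus a final block of length less than \(U_0\). There are \(O(\Omega^4)\) blocks. Within one block, partial summation in \(t\) against the smooth weight \(e(\theta t)\) expresses the block sum as
\[
 e(\theta (T_k+U_0))S_k(T_k+U_0)-2\pi i\theta\int_{T_k}^{T_k+U_0}e(\theta u)S_k(u)\,\dd u,
\]
where \(S_k(u)=\sum_{T_k<t\le u}\Li(y+ht)e(a_0t/q)\). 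Summing \(|\cdot|\) over \(Z<y\le2Z\) and exchanging the \(y\)-sum with the \(u\)-integral, every partial sum \(S_k(u)\) is a consecutive \(t\)-interval starting at \(T_k\).

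The issue is that \eqref{eq:abel-input} is only assumed for lengths \(U\ge U_0\), whereas \(S_k(u)\) has length \(u-T_k\) which can be small. For those we would use a two-piece trick. Write \(S_k(u)\) as the difference of the sum over \((T_k-U_0, u]\) (length \(\ge U_0\)) and the sum over \((T_k-U_0,T_k]\) (length exactly \(U_0\)), both shifted appropriately. For the first block, alternatively, handle it by lengths \(\ge U_0\) measured from \(u\) backwards, or simply use the trivial bound on the initial piece. Each piece then satisfies \eqref{eq:abel-input} and contributes at most \(\delta Z\cdot O(U_0+u-T_k)\ll\delta ZU_0\). The hypothesis is uniform in the starting point \(T\), and the same bound persists for negative \(T\) after reindexing, since the argument of Lemma~\ref{lem:apf-rational-major} was uniform. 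If negative starts are unavailable, we would instead compare with the interval \((u-U_0,u]\) when \(u\ge U_0\), and bound the leftover of length \(\le U_0\) trivially only for the single initial block.

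The per-block bound is then \(\ll\delta ZU_0(1+|\theta|U_0)\). Since \(|\theta|U_0\le\Omega/(Mq)\cdot M/\Omega^4\le\Omega^{-3}\), this is \(\ll\delta ZU_0\). Summing over the \(O(\Omega^4)\) blocks gives \(\ll\delta ZM\). The remaining error comes from the final short block and any trivially handled piece, which together have length \(<2U_0\). These contribute \(O(ZU_0)=O(Z M\Omega^{-4})\), absorbed in \(\Omega^{-3}ZM\).

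The main obstacle is exactly this handling of partial sums of length below \(U_0\), where the hypothesis gives nothing. The difference-of-two-long-intervals device resolves it, at the cost of only a constant factor. Everything else is routine bookkeeping.
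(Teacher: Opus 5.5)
Your argument is correct, but it is organized differently from the paper's.

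The paper uses only \(O(\Omega)\) blocks of length \(B=\min\{M,\lfloor|\theta|^{-1}\rfloor\}\), on which \(e(\theta t)\) genuinely rotates. It applies discrete Abel summation on each block, invokes the hypothesis for every prefix length \(v\ge U_0\), and bounds the prefixes with \(v<U_0\) trivially. The \(\Omega^{-3}ZM\) term is the trivial cost of those short prefixes summed over the \(O(\Omega)\) blocks.

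You instead use about \(\Omega^4\) blocks of length \(U_0\), on which the phase is essentially constant since \(|\theta|U_0\le\Omega^{-3}\). Your difference device \((T_k,u]=(T_{k-1},u]\setminus(T_{k-1},T_k]\) is valid for \(k\ge1\) and stays inside \([0,M]\). The trivial bound on the first and last blocks costs only \(O(ZU_0)\), so you even get \(O((\delta+\Omega^{-4})ZM)\).

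The trade-off is this: your route needs the hypothesis only at lengths between \(U_0\) and \(2U_0\), but at many more starting points. The paper's route needs it at all lengths up to \(B\), but with far fewer blocks. Since the hypothesis is uniform in length and starting point, both close.

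With your block length the partial summation is in fact dispensable. Replacing \(e(\theta t)\) by \(e(\theta T_k)\) on each block costs \(O(|\theta|U_0^2)\) per start and block, i.e.\ \(O(\Omega^{-3}ZM)\) in total. After that, the hypothesis applies to each full block directly.

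Finally, two of your remarks are unnecessary. The appeal to negative starting points is not needed, since the trivial bound on the initial block already suffices. The fallback comparison with \((u-U_0,u]\) does not in fact yield two long intervals, but the same trivial bound covers it.
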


\begin{proof}
Take \(B=M\) when \(\theta=0\), and otherwise
\(
 B=\min\{M,\lfloor|\theta|^{-1}\rfloor\}.
\)
Partition \([1,M]\) into blocks of length \(B\).  On a block
\((T,T+L]\), discrete Abel summation gives
\begin{align}\label{eq:discrete-abel}
 \sum_{u=1}^{L}a_y(u)e(\theta(T+u))
 &=A_y(L)e(\theta(T+L))\\
 &\quad+\sum_{v=1}^{L-1}A_y(v)
   \{e(\theta(T+v))-e(\theta(T+v+1))\},\nonumber
\end{align}
where \(A_y(v)=\sum_{u\le v}a_y(u)\).  Use
\eqref{eq:abel-input} for \(v\ge U_0\), the trivial \(Zv\) bound below
\(U_0\), and

\(
 |e(\theta(t+1))-e(\theta t)|\le2\pi|\theta|.
\)
Since \(|\theta|B\le1\), the long portions sum to \(O(\delta ZM)\).
There are \(O(1+M|\theta|)=O(\Omega)\) blocks.  Since each short
prefix has length at most \(U_0=M/\Omega^4\), their total trivial
contribution is \(O(ZM\Omega^{-3})\).  This proves
\eqref{eq:abel-output}.
\end{proof}

\begin{theorem}[Outer-uniform progression Fourier estimate]
\label{thm:outer-apf}
Assume \eqref{eq:super-parameter-region} and set
\(\log x=R^\beta\).  For every fixed \(B>0\), all sufficiently large
\(R\) admit one set \(\cE_R\subseteq[1,H_R]\) satisfying
\eqref{eq:exceptional-shift-count} such that, simultaneously for
\begin{equation}\label{eq:outer-apf-uniformity}
 \begin{gathered}
 h\in[1,H_R]\setminus\cE_R,\qquad
 e^{R^\gamma-R}\le Z\le e^Rx,\\
 e^{R^{1-\varepsilon_b}}\le M\le e^R,\qquad
 \alpha\in\T,
 \end{gathered}
\end{equation}
one has
\begin{equation}\label{eq:outer-apf-conclusion}
 \mathcal F_{h,M}(\alpha;Z)\ll R^{-a/5}MZ.
\end{equation}
If the exceptional character does not exist, then \(\cE_R=\varnothing\).
\end{theorem}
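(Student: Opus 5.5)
The plan is to assemble Theorem~\ref{thm:outer-apf} from the three local ingredients already in place: the joint minor-arc estimate (Lemma~\ref{lem:apf-minor}), the rational major-arc estimate (Lemma~\ref{lem:apf-rational-major}), and blockwise Abel summation (Lemma~\ref{lem:blockwise-abel}). The exceptional set is the one defined in \eqref{eq:exceptional-shift-set} from the single conductor of Lemma~\ref{lem:global-conductor}. I will first fix \(\cE_R\) once, before any \(Z,M,\alpha\) are chosen. Lemma~\ref{lem:global-conductor} supplies at most one primitive real conductor \(q_0\) whose KMT main term can survive anywhere in the family \eqref{eq:outer-kmt-family}. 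We set \(\cE_R\) by \eqref{eq:exceptional-shift-set}, or \(\cE_R=\varnothing\) if no such conductor exists. The count \eqref{eq:exceptional-shift-count} already gives the required prefix density, with the ineffective threshold coming from Siegel's theorem. The key point is that this set depends only on \(R\) (through \(\Omega=R^2\) and \(H_R\)), not on \(Z\), \(M\) or \(\alpha\). This is what allows the conclusion to hold simultaneously over the range \eqref{eq:outer-apf-uniformity}.

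Next, fix \(h\notin\cE_R\), \(Z\) and \(M\) in the stated ranges, and \(\alpha\in\T\). Dirichlet approximation with parameter \(M/\Omega\) gives a reduced \(a_0/q\) with \(q\le M/\Omega\) and \(|\alpha-a_0/q|\le\Omega/(Mq)\).

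If \(q>\Omega\), Lemma~\ref{lem:apf-minor} applies directly and gives \(R^{-1/5}MZ\). This is at most \(R^{-a/5}MZ\), since \(a<1/200\). Before using it I will check that the range \eqref{eq:outer-apf-uniformity} for \(Z\) lies inside \eqref{eq:apf-local-range}: the lower bound \(\log Z\ge R^\gamma-R\) matches. The upper bound \(Z\le e^Rx\) only enters the complement term \eqref{eq:apf-typical-complement}, and there \(hM^2/Z\) is smaller still.

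If \(q\le\Omega\), I will invoke Lemma~\ref{lem:apf-rational-major} in its partial-interval form, giving \(\delta=O(R^{-a/2})\) for every \(t\)-interval of length \(U\ge M/\Omega^4\). Its hypothesis is that no KMT main term survives for any divisor of \(hq\). The base points it uses are exactly of the form \(X'=(Z+hT-s)/d\) with \(d\mid hq\), \(T\le e^R\) and \(s<hq\). These belong to the family \eqref{eq:outer-kmt-family}, with \(N=Zd_0\) and \(d_0=1\) allowed, and \(N\le e^Rx\).

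By Lemma~\ref{lem:global-conductor}, the only character that could survive in such a call has conductor \(q_0\) dividing some working modulus \(Q_d\mid hq\). That would require \(q_0\mid hq\) with \(q\le\Omega\), which is excluded precisely by \(h\notin\cE_R\). Feeding \(\delta\ll R^{-a/2}\) into Lemma~\ref{lem:blockwise-abel} then yields \(\mathcal F_{h,M}(\alpha;Z)\ll(R^{-a/2}+R^{-6})MZ\ll R^{-a/5}MZ\).

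The main obstacle is the bookkeeping in the major-arc step: verifying that every KMT invocation generated inside Lemma~\ref{lem:apf-rational-major} truly lies in the deterministic band \eqref{eq:outer-kmt-band}, uniformly in the partial-interval shifts \(T\) and the endpoint averages \(s\). Only then does the single conductor of Lemma~\ref{lem:global-conductor} govern all of them. I would handle this by rechecking \eqref{eq:kmt-lower-check}--\eqref{eq:kmt-typical-check} with \(U\) in place of \(M\), and noting that \(\log X'\ge\frac12R^\gamma\) is all the zero-window argument needs. The exceptional-free clause is immediate, because in that branch the hypothesis of Lemma~\ref{lem:apf-rational-major} holds for every \(h\).
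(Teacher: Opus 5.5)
Your proposal is correct and follows the paper's proof essentially step for step. You fix \(\cE_R\) from the single conductor of Lemma~\ref{lem:global-conductor} before any of \(Z,M,\alpha\) is chosen, split by Dirichlet approximation at denominator \(M/\Omega\), use Lemma~\ref{lem:apf-minor} on the minor arcs, and feed the partial-interval form of Lemma~\ref{lem:apf-rational-major} into Lemma~\ref{lem:blockwise-abel} on the major arcs. The one ingredient you leave implicit is that the actual deletion of main terms when \(q_0\nmid Q_d\) is Lemma~\ref{lem:kmt-liouville}, which also covers the principal character and the passage from \(\mu\) to \(\Li\); the paper cites it together with Lemma~\ref{lem:global-conductor}.
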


\begin{proof}
Dirichlet approximation at denominator \(M/\Omega\) gives either the
minor alternative \eqref{eq:apf-minor-approximant}, handled by
Lemma~\ref{lem:apf-minor}, or a major arc \(q\le\Omega\).  For
\(h\notin\cE_R\), Lemmas~\ref{lem:kmt-liouville} and
\ref{lem:global-conductor} delete every character main term, uniformly
over all rational partial intervals.  Lemma~\ref{lem:apf-rational-major}
then supplies \eqref{eq:abel-input} with
\(\delta\ll R^{-a/2}\), and Lemma~\ref{lem:blockwise-abel} gives
\begin{equation}\label{eq:major-apf-general-a}
 \mathcal F_{h,M}(\alpha;Z)
 \ll(R^{-a/2}+R^{-6})MZ.
\end{equation}
Because \eqref{eq:super-parameter-region} implies \(a<1/200\), both
\eqref{eq:major-apf-general-a} and the minor bound may be weakened to
\eqref{eq:outer-apf-conclusion}.  The complement
\eqref{eq:apf-typical-complement} is smaller.  The conductor in
Lemma~\ref{lem:global-conductor} was selected before every variable in
\eqref{eq:outer-apf-uniformity}, so the same \(\cE_R\) works throughout.
\end{proof}

\section{Transfer to logarithmic Chowla correlations}
\label{sec:superpolylog-transfer}

We now combine Theorem~\ref{thm:outer-apf} with a fresh copy of the
Pilatte block architecture.  The essential difference from
Sections~\ref{sec:uncentring}--\ref{sec:main-proof} is that quotient starts
are translated before Fourier inversion.  This removes the algebraic factor
\(h\) which would otherwise make a prescribed super-polylogarithmic range
inaccessible.

\subsection{A second block system and coprime pruning}

The block parameter used from now on is \(\varepsilon_b\), independent of
the absolute parameter \(\eps_1\) fixed in Section~\ref{sec:inputs}.  For
this fixed \(\varepsilon_b>0\), re-run Pilatte's construction with
\begin{equation}\label{eq:super-block-scales}
 H_0=\exp(R^{1-\varepsilon_b}),\qquad
 J=\lfloor\varepsilon_b^2\log R\rfloor.
\end{equation}
Write \(\cP_1,\ldots,\cP_J\), \(V_j\), \(V\), \(\cD\), and \(\WP\) for
this second system.  Thus the analogues of
\eqref{eq:block-geometry}--\eqref{eq:block-masses} hold with
\(\eps_1\) replaced by \(\varepsilon_b\), and Pilatte's fourth-moment
estimate is available for its divisor blocks.  In particular,
\begin{equation}\label{eq:super-block-mass}
 \WP\asymp_{\varepsilon_b}V^J,
 \qquad
 V^J\asymp_{\varepsilon_b}R^{c_0(\varepsilon_b)}.
\end{equation}
The point of using a second system is that \(\varepsilon_b\) may be chosen
after the exponent \(\eta\) in Theorem~\ref{thm:super-main} has been fixed.
The following records explicitly that the centred input may be re-run for
this system.

\begin{lemma}[Centred interface for the second block system]
\label{lem:second-system-centered-interface}
For every sufficiently small fixed \(\varepsilon_b>0\),
Input~\ref{input:tt-decoupling} remains valid, with constants allowed to
depend on \(\varepsilon_b\), after its original Pilatte system is replaced
by the second system in \eqref{eq:super-block-scales}.
\end{lemma}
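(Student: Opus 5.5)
The plan is to check that Theorem~3.3 of \citep{taoteravainen2026quantitative} uses no special property of the block system fixed with \(\eps_1\) in Section~\ref{sec:inputs}. It should depend only on a finite list of structural properties of Pilatte blocks, all of which the construction in \eqref{eq:super-block-scales} also satisfies. Concretely, I would read off the hypotheses of the Tao--Ter\"av\"ainen centred theorem in its Pilatte-block form. I expect these to be:
\begin{itemize}
 \item the block-separation geometry \eqref{eq:block-geometry}, namely \(p_1\cdots p_i<p_{i+1}^{1/10}\), \(p_1\cdots p_J<H\), and \(H_0<p_i<H\);
 \item the mass asymptotics \eqref{eq:block-masses}, namely \(V_j=\frac{\varepsilon\log R}{2J}+o(1)\) and \(V^J\asymp R^{c_0(\varepsilon)}\);
 \item the expansion/fourth-moment property of the divisor blocks (Input~\ref{input:fourth-moment}), which feeds the graph-expansion step;
 \item the length threshold \(L\ge\exp(R^{2.01})\).
\end{itemize}
Pilatte's construction \citep[Section~2]{pilatte2025improved} is carried out for an arbitrary sufficiently small block parameter. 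Hence each of these properties holds verbatim with \(\eps_1\) replaced by \(\varepsilon_b\), and \(o(1)\) terms and implied constants may depend on \(\varepsilon_b\).

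The key steps, in order, are as follows. First, restate Pilatte's block lemma and Lemma~C.2 for a general fixed parameter \(\varepsilon\in(0,\varepsilon^*)\). This gives \eqref{eq:super-block-mass} and the fourth moment for the second system. Second, trace the Tao--Ter\"av\"ainen proof of the centred bound. This is a van der Corput/entropy-decrement-free argument: Cauchy--Schwarz in the tuples reduces the matter to counting cycles in the prime graph, and Pilatte's expansion theorem bounds the spectral radius by \(V^{1/2+o(1)}\) per block. In that proof I would verify that the exponent \(0.51\) arises as \(1/2+\) a margin which can be absorbed once \(J=\lfloor\varepsilon^2\log R\rfloor\) and \(V\gg\log R/\varepsilon\) are large. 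This requires \(\varepsilon_b\) to be small, which explains the hypothesis ``sufficiently small''. Third, confirm that the length hypothesis \(L\ge\exp(R^{2.01})\) still dominates the largest modulus \(\prod p_i<H=e^R\) and the number of primes per block. This holds because it depends only on \(H\) and not on the block parameter. Fourth, note that arbitrary intervals, arbitrary residues \(b_p\), different bounded \(g_1,g_2\), and \(\sigma=\pm1\) are permitted in the source independently of the block system.

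The main obstacle is the second step. The expansion estimate in Pilatte's work carries quantitative losses such as \((\log V)^{O(1)}\) or \(\exp(O(J))\) factors, and these must stay below \(V^{0.01|I'|}\) uniformly in the subset \(I'\). I would first try to show that every such loss is of the form \(C^{|I'|}\) with \(C\) absolute. Since \(V\to\infty\) as \(\varepsilon_b\to0\) (through \(V\asymp\log R/\varepsilon_b\)), the inequality \(C\le V^{0.01}\) holds for all large \(R\), giving \(V^{0.51|I'|}\) with a constant depending only on \(\varepsilon_b\). If some loss instead depends on \(\varepsilon\) in the exponent, I would shrink the admissible range of \(\varepsilon_b\) so that the loss is dominated. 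This is permitted because the lemma asserts validity only for sufficiently small \(\varepsilon_b\).
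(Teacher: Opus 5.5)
Your top-level strategy matches the paper's: apply \citet[Theorem~3.3]{taoteravainen2026quantitative} with its block parameter set equal to \(\varepsilon_b\), after checking that Pilatte's construction at \(\varepsilon_b\) has the required separation, disjointness, location in \((H_0,H)\), and harmonic-mass properties. The paper stops at that point, because the cited theorem is already stated for every sufficiently small fixed block parameter, every nonempty block selection, arbitrary residues, and arbitrary \(1\)-bounded functions. Your second step, re-tracing the Tao--Ter\"av\"ainen argument and the origin of the exponent \(0.51\), is therefore unnecessary.

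If you do carry out that audit, one asymptotic must be corrected. From \(V_j=\varepsilon_b\log R/(2J)+o(1)\) and \(J=\lfloor\varepsilon_b^2\log R\rfloor\) you get \(V\asymp 1/(2\varepsilon_b)\), which is bounded independently of \(R\); it is not \(V\asymp\log R/\varepsilon_b\). This boundedness is exactly what makes \(V^J\asymp R^{c_0(\varepsilon_b)}\) a fixed power of \(R\), as in your own second bullet. Consequently, an absolute per-block loss \(C^{|I'|}\) is absorbed into \(V^{0.01|I'|}\) only by shrinking \(\varepsilon_b\) to roughly \(\varepsilon_b\le\tfrac12C^{-100}\), not by letting \(R\to\infty\). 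The ``sufficiently small'' hypothesis permits that shrinking, so your conclusion survives.
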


\begin{proof}
Apply \citet[Theorem~3.3]{taoteravainen2026quantitative} with its fixed block
parameter equal to \(\varepsilon_b\).  That theorem permits every
sufficiently small fixed value of this parameter, every nonempty selection
of the resulting blocks, every interval of the stated length, arbitrary
residue assignments, and arbitrary \(1\)-bounded functions.  Pilatte's
construction at \(\varepsilon_b\) is exactly the system required by that
application, with the separation, disjointness, location in \((H_0,H)\),
and harmonic-mass bounds recorded above.  The asserted uniformity follows.
\end{proof}

\begin{lemma}[Coprime Pilatte pruning]\label{lem:coprime-block-pruning}
Fix \(B_0>0\).  Uniformly for \(1\le h\le e^{R^{B_0}}\), define
\[
 \cP_j(h)=\{p\in\cP_j:p\nmid h\},\qquad
 V_j(h)=\sum_{p\in\cP_j(h)}\frac1p,
\]
and form \(\cD(h)\), \(\cD_I(h;M)\), \(V_I(h;M)\), and
\(Q_{I,h,M}\) from these pruned blocks in the evident way.  Put
\(
 W_{\mathrm{Pil},h}=\prod_{j=1}^J V_j(h).
\)
Then for every fixed \(A>0\), uniformly in such \(h\),
\begin{equation}\label{eq:pruned-mass-loss}
 \max_{j\le J}|V_j(h)-V_j|\ll_{A,\varepsilon_b}R^{-A},
 \qquad
 W_{\mathrm{Pil},h}=(1+O_{A,\varepsilon_b}(R^{-A}))\WP.
\end{equation}
In particular \(W_{\mathrm{Pil},h}\asymp_{\varepsilon_b}V^J\).  Moreover, for every
nonempty \(I\subseteq[J]\) and dyadic \(M\),
\begin{equation}\label{eq:pruned-fourth-moment}
 \int_0^1|Q_{I,h,M}(\alpha)|^4\,\dd\alpha
 \ll_{\varepsilon_b}\frac{V^{4J}}{M(\log M)^4},
\end{equation}
and
\begin{equation}\label{eq:pruned-block-mass-sum}
 \sum_M V_I(h;M)\le\prod_{i\in I}V_i(h)\le V^{|I|}.
\end{equation}
\end{lemma}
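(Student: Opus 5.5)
The plan treats the three claims separately: the mass estimate \eqref{eq:pruned-mass-loss}, the pruned fourth moment \eqref{eq:pruned-fourth-moment}, and the harmonic-mass bound \eqref{eq:pruned-block-mass-sum}.

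\emph{Mass loss.} Every block prime satisfies \(p>H_0=\exp(R^{1-\varepsilon_b})\) by \eqref{eq:block-geometry}, and \(h\le e^{R^{B_0}}\). Hence \(h\) has at most \(\log h/\log H_0\le R^{B_0}\) distinct prime divisors exceeding \(H_0\). The deleted mass therefore satisfies
\[
 0\le V_j-V_j(h)\le\sum_{\substack{p\mid h\\p>H_0}}\frac1p
 \le R^{B_0}\exp(-R^{1-\varepsilon_b}),
\]
which is smaller than every fixed power \(R^{-A}\). From \eqref{eq:block-masses}, each \(V_j\asymp_{\varepsilon_b}\log R/J\asymp_{\varepsilon_b}1\), and in particular \(V_j\) is bounded below. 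So each factor obeys \(V_j(h)=V_j(1+O(R^{-A}))\). Taking the product over the \(J\ll\log R\) factors gives \((1+O(R^{-A}))^{J}=1+O(R^{-A}\log R)\), and relabelling \(A\) yields the claim for \(W_{\mathrm{Pil},h}\). Combined with \eqref{eq:super-block-mass}, this gives \(W_{\mathrm{Pil},h}\asymp V^J\).

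\emph{Harmonic mass.} The dyadic blocks \(\cD_I(h;M)\) partition \(\cD_I(h)\). Each element is a product \(\prod_{i\in I}p_i\) with \(p_i\in\cP_i(h)\), and this representation is unique because the blocks are disjoint. Therefore \(\sum_M V_I(h;M)=\prod_{i\in I}V_i(h)\le\prod_{i\in I}V_i\le V^{|I|}\).

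\emph{Fourth moment.} Expanding gives
\[
 \int|Q_{I,h,M}|^4=\sum_{\substack{d_1+d_2=d_3+d_4\\ d_i\in\cD_I(h;M)}}\prod_{i=1}^4 d_i^{-1}.
\]
All the weights are positive and \(\cD_I(h;M)\subseteq\cD_I(M)\). The quadruples counted for the pruned blocks are therefore a subset of those counted in the unpruned expansion, so \(\int|Q_{I,h,M}|^4\le\int|Q_{I,M}|^4\). This is the sense in which pruning can only decrease the fourth moment. It then remains to cite Input~\ref{input:fourth-moment}, which is valid for the second system with constants depending on \(\varepsilon_b\), as in the discussion preceding Lemma~\ref{lem:second-system-centered-interface}. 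The only point needing care is to check that Pilatte's Lemma~C.2 is indeed available at parameter \(\varepsilon_b\) rather than \(\eps_1\); this is a matter of re-running the construction, and I do not expect a genuine obstacle there.
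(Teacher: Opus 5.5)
Your proposal is correct and follows the paper's proof essentially step for step. You bound the deleted mass by $\sum_{p\mid h,\,p>H_0}1/p$; you count the at most $\log h/\log H_0$ large prime divisors where the paper weights by $\log p/\log H_0$, which is the same estimate. You then multiply the $J=O(\log R)$ factors using $V_j\asymp_{\varepsilon_b}1$, compare the pruned fourth moment termwise with the unpruned one through the positive expansion over $d_1+d_2=d_3+d_4$, and get the harmonic-mass bound from disjointness of the dyadic blocks. The paper also treats the availability of Pilatte's fourth-moment input for the $\varepsilon_b$-system as part of its setup before the lemma, so the point you flag is handled the same way there.
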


\begin{proof}
Every block prime satisfies \(p>H_0\).  Hence
\begin{align*}
 0\le V_j-V_j(h)
 &\le\sum_{\substack{p\mid h\\p>H_0}}\frac1p
 \le \frac1{H_0\log H_0}\sum_{p\mid h}\log p\\
 &\le \frac{\log h}{H_0\log H_0}
 \le \frac{R^{B_0}}{\exp(R^{1-\varepsilon_b})R^{1-\varepsilon_b}}
 \ll_{A,\varepsilon_b}R^{-A}.
\end{align*}
The block-mass asymptotic gives \(V_j\asymp_{\varepsilon_b}1\).  Apply
the first estimate with exponent \(A+2\); since
\(J=O_{\varepsilon_b}(\log R)\), multiplying the factors then proves the
second claim in \eqref{eq:pruned-mass-loss} with the stated exponent \(A\).

For the fourth moment, orthogonality gives the exact positive expansion
\[
 \int_0^1|Q_{I,h,M}(\alpha)|^4\,\dd\alpha
 =\sum_{\substack{d_1+d_2=d_3+d_4\\
                   d_i\in\cD_I(h;M)}}
   \frac1{d_1d_2d_3d_4}.
\]
Since \(\cD_I(h;M)\subseteq\cD_I(M)\), this is at most the corresponding
unpruned fourth moment, proving \eqref{eq:pruned-fourth-moment}.
Equation~\eqref{eq:pruned-block-mass-sum} is immediate from disjointness of
the dyadic blocks.
\end{proof}

\subsection{A parameter-stable transfer theorem}

\begin{proposition}[APF-to-Chowla transfer]\label{prop:apf-transfer}
Fix
\begin{equation}\label{eq:transfer-fixed-parameters}
 \sigma>0,\qquad 0<\rho<5,\qquad 2.01<\gamma<\beta,
 \qquad \rho<\gamma.
\end{equation}
Choose a fixed \(\varepsilon_b>0\) and its second Pilatte block system so
that
\begin{equation}\label{eq:transfer-subset-hypotheses}
 2^JV^{2J}\le R^{\sigma/20}
\end{equation}
for all sufficiently large \(R\).  Put
\(
 \log x=R^\beta, H_R=\lfloor e^{R^\rho}\rfloor.
\)
Suppose one set \(\cE_R\subseteq[1,H_R]\) satisfies, for every fixed
\(B>0\) and every \(1\le H'\le H_R\),
\begin{equation}\label{eq:transfer-exception-hypothesis}
 |\cE_R\cap[1,H']|\ll_B H'R^{-B},
\end{equation}
and suppose the conclusion of Theorem~\ref{thm:outer-apf}, with exponent
\(\sigma\), holds outside this same set for every parent interval,
subset-rescaled base, divisor block, frequency, and prefix endpoint used in
the logarithmic dilation argument.  Then there is \(c_*>0\) such that
\begin{equation}\label{eq:transfer-final}
 \max_{\substack{1\le h\le
  \exp((\log x)^{\rho/\beta})\\h\notin\cE_R}}
 \left|\sum_{n\le x}\frac{\Li(n)\Li(n+h)}n\right|
 \ll(\log x)^{1-c_*}.
\end{equation}
One may take any fixed
\begin{equation}\label{eq:transfer-saving}
 0<c_*\le
 \min\left\{1-\frac\gamma\beta,
             \frac\sigma{6\beta},
             \frac{c_{\rm dec}}\beta\right\},
\end{equation}
where \(c_{\rm dec}=c_{\rm dec}(\varepsilon_b)>0\) is the centred-decoupling
saving from the second Pilatte block system.
\end{proposition}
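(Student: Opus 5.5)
The plan is to rerun the architecture of Sections~\ref{sec:uncentring}--\ref{sec:main-proof}, with the second block system and the coprime pruning of Lemma~\ref{lem:coprime-block-pruning}, and to replace the interval-translation step by translation of quotient starts. Fix \(h\le H_R\), \(h\notin\cE_R\), and work with the pruned system \(\cD(h)\), whose mass is \(W_{\mathrm{Pil},h}=(1+O(R^{-A}))\WP\). The logarithmic dilation identity \eqref{eq:logarithmic-dilation} holds with \(\cD(h)\) in place of \(\cD\), and the dilation tail is \(O(W_{\mathrm{Pil},h}R)\). The prefix below \(e^{R^\gamma}\) is bounded trivially, at cost \(\ll W_{\mathrm{Pil},h}R^\gamma\). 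After division by \(W_{\mathrm{Pil},h}\) this costs \(R^\gamma=(\log x)^{\gamma/\beta}\), which is the first entry of \eqref{eq:transfer-saving}. The remaining task is to bound \(S_{1,h}\) on each dyadic \(I_N=(N,2N]\) with \(\log N\ge R^\gamma\) by \(N(V^{0.51J}+R^{-\sigma/5})\), and then sum by partial summation as in \eqref{eq:all-prefixes}--\eqref{eq:pre-final-bound}.

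\emph{Centred part.} Every pruned block prime is coprime to \(h\), so \(p\mid r+hq\) is a residue condition on \(q\) exactly as in \eqref{eq:residue-change}. Write \(n=r+hq\) with \(r\bmod h\). The \(q\)-intervals have length \(\asymp N/h\), and \(\log(N/h)\ge R^\gamma-R^\rho>R^{2.01}\), so Lemma~\ref{lem:second-system-centered-interface} applies to each residue class \(r\). Summing over \(r\) gives \(|S_{2,h}|\ll V^{0.51J}N\), and hence a saving \(V^{-0.49J}\ll R^{-c_{\rm dec}}\) relative to \(\WP\).

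\emph{Uncentring.} Expand the subset difference \(S_{2,h}-S_{1,h}\) as in \eqref{eq:subset-expansion}, and rescale by \(d_0\). This gives bases \(Z=N/d_0\) with \(\log Z\ge R^\gamma-R\). It then suffices to bound
\[
 T_{M,h}=\sum_{n}\sum_{d\in\cD_I(h;M)}\frac{\Li(n)\Li(n+hd)}{d}.
\]
Write \(n=y+ht\). For fixed \(y\) in a window of \(hZ'\) starts, average the translation over \(t\le M\) rather than over \(t\le hM\). This changes \(T_{M,h}\) by only \(O(hM\cdot V_I[M])\), which is negligible because \(hM\le e^{R^\rho+R}\ll Z R^{-100}\). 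By Fourier inversion in the quotient variable, the product \(\Li(y+ht)\Li(y+h(t+d))\) equals \(\int Q_{I,h,M}(\alpha)F_y(\alpha)G_y(\alpha)\,d\alpha\). Here \(F_y(\alpha)=\sum_{t\le M}\Li(y+ht)e(\alpha t)\) and \(G_y\) has length \(2M\). Put \(\mathcal S=\sum_y|F_yG_y|\). Parseval gives \(\|\mathcal S\|_1\ll MZ\). The hypothesis of Theorem~\ref{thm:outer-apf}, at exponent \(\sigma\) and uniform in \(\alpha\), gives \(\|\mathcal S\|_\infty\ll R^{-\sigma}M^2Z\). Interpolation gives \(\|\mathcal S\|_{4/3}\ll R^{-\sigma/4}M^{5/4}Z\). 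Hölder with \eqref{eq:pruned-fourth-moment} then gives
\[
 |T_{M,h}|\ll R^{-\sigma/4}Z\,\frac{V^J}{\log M}.
\]
There is no factor \(h^{1/4}\), because the Fourier length is \(M\), not \(hM\). Sum over dyadic \(M\in[H_0,H]\); the number of such \(M\) is \(\ll\log(H/H_0)\ll R\), but each term carries \(1/\log M\ge 1/R\), so the sum is bounded by \(\ll\log(R/R^{1-\varepsilon_b})\ll\log R\). The subset cost \(2^JV^{2J}\le R^{\sigma/20}\) comes from \eqref{eq:transfer-subset-hypotheses}. Together these give \(|S_{1,h}-S_{2,h}|\ll NR^{-\sigma/5}\), which is within the \(\sigma/(6\beta)\) exponent.

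\emph{Main obstacle.} The hardest step is the bookkeeping for the typical/complement split. The natural route is to run the Fourier step on \(\1_{\cT_h}\)-truncated \(F_y\), with the complement bounded by \eqref{eq:apf-typical-complement} at cost \(\log R/R^{1-\varepsilon_b}\). The delicate point is ensuring that every base point produced by the parent interval, the subset rescaling, the start window, and the prefix endpoint lies inside the uniformity range \eqref{eq:outer-apf-uniformity}. This includes \(Z\le e^Rx\), which the hypothesis is assumed to cover. It also includes the Abel prefix lengths. Converting back to \(x\) with \(\log x=R^\beta\) then yields \eqref{eq:transfer-final}, since \(h\le\exp((\log x)^{\rho/\beta})=e^{R^\rho}\).
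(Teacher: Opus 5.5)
Your proposal is correct and follows the paper's proof essentially step by step. The shared steps are the coprime-pruned blocks, translation in the quotient variable so that \(F\) and \(G\) have lengths \(M\) and \(2M\) with no \(h^{1/4}\) loss, and \(L^1\)--\(L^\infty\) interpolation of \(\mathcal S\) paired with the pruned fourth moment. They also include the \(\log R\) dyadic cost with the \(2^JV^{2J}\le R^{\sigma/20}\) subset margin, the centred theorem on each residue class \(r\bmod h\), and trivial treatment below \(e^{R^\gamma}\). Two corrections: the uncentring bound is \(NR^{-\sigma/5}\log R\ll NR^{-\sigma/6}\) rather than \(NR^{-\sigma/5}\). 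Also, the typical/complement split you flag as the main obstacle is not needed, because the hypothesis (the conclusion of Theorem~\ref{thm:outer-apf}) already bounds the untruncated \(\mathcal F_{h,M}\), with the complement absorbed inside that theorem.
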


\begin{proof}
Fix \(h\in[1,H_R]\setminus\cE_R\) and use the pruned family from
Lemma~\ref{lem:coprime-block-pruning}.  In this proof we suppress the
\(h\)-dependence from \(\cD_I(h;M)\), \(V_I(h;M)\), and
\(Q_{I,h,M}\).

Fix a nonempty subset \(I\subseteq[J]\) and one dyadic divisor block
\(\cD_I(M)\).  Write \(n=r+hq\), \(0\le r<h\), and let \(I_r\) be the
induced interval of quotient starts.  Averaging translations in \(q\)
gives
\begin{multline}\label{eq:quotient-translation}
 T_{M,h}=\frac1M\sum_{t\le M}\sum_{r\bmod h}\sum_{q\in I_r}
 \sum_{d\in\cD_I(M)}
 \frac{\Li(r+h(q+t))\Li(r+h(q+t+d))}{d}\\
 +O(hMV_I[M]).
\end{multline}
Define
\begin{align}\label{eq:quotient-fourier-factors}
 F_{r,q}(\alpha)&=\sum_{t\le M}\Li(r+h(q+t))e(\alpha t),\nonumber\\
 G_{r,q}(\alpha)&=\sum_{k\le2M}\Li(r+h(q+k))e(-\alpha k),\\
 Q_{I,M}(\alpha)&=\sum_{d\in\cD_I(M)}e(\alpha d)/d.\nonumber
\end{align}
Fourier orthogonality in \(t+d=k\) turns the main term in
\eqref{eq:quotient-translation} into
\begin{equation}\label{eq:quotient-fourier-identity}
 \frac1M\sum_{r\bmod h}\sum_{q\in I_r}
 \int_\T Q_{I,M}(\alpha)F_{r,q}(\alpha)G_{r,q}(\alpha)\,\dd\alpha.
\end{equation}
Here \(\|G_{r,q}\|_\infty\le2M\), with no factor \(h\).

The frequency transfer has a direct fourth-root form.  Put
\[
 \mathcal S(\alpha)=\sum_{r\bmod h}\sum_{q\in I_r}
 |F_{r,q}(\alpha)G_{r,q}(\alpha)|.
\]
The APF hypothesis and \(\lVert G_{r,q}\rVert_\infty\le2M\) give
\begin{equation}\label{eq:apf-product-linfty}
 \lVert\mathcal S\rVert_\infty\ll R^{-\sigma}M^2N.
\end{equation}
For each start, Parseval and Cauchy--Schwarz give
\(
 \int_\T|F_{r,q}G_{r,q}|\ll M.
\)
There are \(O(N)\) quotient starts in total, and hence
\begin{equation}\label{eq:apf-product-lone}
 \lVert\mathcal S\rVert_1\ll MN.
\end{equation}
Interpolation between \eqref{eq:apf-product-linfty} and
\eqref{eq:apf-product-lone} yields
\[
 \lVert\mathcal S\rVert_{4/3}
 \ll R^{-\sigma/4}M^{5/4}N.
\]
Hölder's inequality and \eqref{eq:pruned-fourth-moment} now bound
\eqref{eq:quotient-fourier-identity} directly, without an exceptional
frequency split, and give
\begin{equation}\label{eq:apf-one-divisor-block}
 |T_{M,h}|
 \ll NR^{-\sigma/4}\frac{V^J}{\log M}
 +O(hMV_I[M]).
\end{equation}
The boundary term is \(o(NV_I[M]R^{-A})\) for every fixed \(A\), since
\(hM\le e^{R^\rho+R}\), every relevant local base has logarithm at least
\(R^\gamma-R\), and \(\gamma>\rho\).

Since the divisor blocks are dyadic and
\(e^{R^{1-\varepsilon_b}}\le M\le e^R\),
\begin{equation}\label{eq:apf-dyadic-log-sum}
 \sum_M\frac1{\log M}\ll_{\varepsilon_b}\log R.
\end{equation}
Consequently, after summing \(M\) and then the nonempty subset expansion,
\eqref{eq:transfer-subset-hypotheses} and
\(\log R\le R^{\sigma/40}\) give
\begin{equation}\label{eq:apf-uncentring-final}
 |S_{1,h}-S_{2,h}|\ll NR^{-\sigma/6}.
\end{equation}
Indeed,
\(
 -\sigma/4+\sigma/20+\sigma/40=-7\sigma/40<-\sigma/6.
\)
The subset expansion is unchanged: every pruned harmonic mass is at most
its unpruned counterpart, so its total cost is still bounded by
\(2^JV^{2J}\).

Every retained block prime is coprime to \(h\).  Thus, after writing
\(n=r+hq\), its divisibility condition is a genuine residue condition
\[
 p\mid r+hq\quad\Longleftrightarrow\quad
 q\equiv-rh^{-1}\pmod p.
\]
To apply the centred decoupling theorem, assign arbitrary residues to the
finitely many deleted primes.  The restricted outer sum over pruned prime
tuples is bounded by the full nonnegative outer sum in
Lemma~\ref{lem:second-system-centered-interface}.  Hence
\begin{equation}\label{eq:super-centered-bound}
 |S_{2,h}|\ll V^{0.51J}N.
\end{equation}
The corresponding quotient intervals satisfy
\begin{equation}\label{eq:super-centered-length}
 \log L_r\ge R^\gamma-R-R^\rho+O(1)>R^{2.01},
\end{equation}
again because \(\gamma>\rho\) and \(\gamma>2.01\).

It remains to normalize the dilation.  Complete multiplicativity gives,
for the pruned divisor family,
\begin{equation}\label{eq:pruned-logarithmic-dilation}
 \begin{aligned}
 W_{\mathrm{Pil},h} C_h(x)
 &=\sum_{d\in\cD(h)}\sum_{\substack{m\le dx\\d\mid m}}
   \frac{\Li(m)\Li(m+hd)}m,\\
 C_h(x)&:=\sum_{n\le x}\frac{\Li(n)\Li(n+h)}n.
 \end{aligned}
\end{equation}
The tail \(x<m\le dx\) costs \(O(W_{\mathrm{Pil},h}R)\), exactly as in
\eqref{eq:dilation-tail}.  Decompose every remaining prefix into intervals
of multiplicative width at most two.  Intervals below \(e^{R^\gamma}\) are
treated trivially; all larger intervals use
\eqref{eq:apf-uncentring-final}--\eqref{eq:super-centered-bound}.  The same
\(\cE_R\) was fixed before all prefix endpoints, so no exceptional-set
union is introduced.  Partial summation, division by
\(W_{\mathrm{Pil},h}\asymp_{\varepsilon_b}V^J\), and
Lemma~\ref{lem:coprime-block-pruning} give
\begin{equation}\label{eq:super-pre-final}
 |C_h(x)|
 \ll R^\gamma+R+
 \left(R^{-\sigma/6}+\frac{V^{0.51J}}{W_{\mathrm{Pil},h}}\right)\log x.
\end{equation}
By \eqref{eq:super-block-mass},
\(
 V^{0.51J}/W_{\mathrm{Pil},h}\ll R^{-c_{\rm dec}}
\)
for some fixed \(c_{\rm dec}(\varepsilon_b)>0\).  Since
\(\log x=R^\beta\), equations \eqref{eq:super-pre-final} and
\eqref{eq:transfer-saving} prove \eqref{eq:transfer-final}.
\end{proof}

\begin{corollary}[Maximal terminal-point transfer]
\label{cor:maximal-apf-transfer}
Under the hypotheses of Proposition~\ref{prop:apf-transfer}, the same
exceptional set satisfies
\[
 \max_{\substack{1\le h\le
  \exp((\log x)^{\rho/\beta})\\h\notin\cE_R}}
 \sup_{1\le y\le x}
 \left|\sum_{n\le y}\frac{\Li(n)\Li(n+h)}n\right|
 \ll (\log x)^{1-c_*}
\]
for every fixed \(c_*\) allowed by
\eqref{eq:transfer-saving}.
\end{corollary}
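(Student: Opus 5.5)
The plan is to rerun the proof of Proposition~\ref{prop:apf-transfer} with the terminal point \(x\) replaced by an arbitrary \(y\in[1,x]\), checking that every input used there is uniform in the endpoint. Only the exceptional set can depend on anything, and \(\cE_R\) was fixed before all parent intervals, prefix endpoints, divisor blocks and frequencies. So the proof is a check that no step reselects a set, followed by a split of the range of \(y\).

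Fix \(h\in[1,\exp((\log x)^{\rho/\beta})]\setminus\cE_R\) and split into two cases.

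\emph{Small \(y\).} If \(\log y\le R^\gamma\), bound the sum trivially:
\[
 \Bigl|\sum_{n\le y}\Li(n)\Li(n+h)/n\Bigr|\le\log y+1\ll R^\gamma=(\log x)^{\gamma/\beta},
\]
which is at most \((\log x)^{1-c_*}\) because \(c_*\le1-\gamma/\beta\).

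\emph{Large \(y\).} If \(\log y>R^\gamma\), apply the pruned dilation identity \eqref{eq:pruned-logarithmic-dilation} with \(x\) replaced by \(y\). The tail \(y<m\le dy\) still costs \(O(W_{\mathrm{Pil},h}R)\), by the same computation as \eqref{eq:dilation-tail}, since \(d\le H\). The remaining prefix \(m\le y\) is cut into intervals of multiplicative width at most two. Those below \(e^{R^\gamma}\) are treated trivially. Those above use \eqref{eq:apf-uncentring-final} and \eqref{eq:super-centered-bound}, with the last piece ending at \(y\).

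The one point to verify is that this last, truncated piece \((N,y]\) with \(y\le 2N\) is admissible. The argument in Proposition~\ref{prop:apf-transfer} was run on arbitrary integer intervals \(I_N\subseteq(N,2N]\):
\begin{itemize}[leftmargin=*]
\item The translation in \(q\) and the Fourier inversion only use that the quotient intervals \(I_r\) are intervals. The APF hypothesis is assumed for every prefix endpoint, and a sub-interval of starts only lowers \(\lVert\mathcal S\rVert_\infty\) and \(\lVert\mathcal S\rVert_1\).
\item Input~\ref{input:tt-decoupling}, via Lemma~\ref{lem:second-system-centered-interface}, allows arbitrary intervals of length at least \(\exp(R^{2.01})\).
\item A piece whose quotient length falls below that threshold contains at most \(h\exp(R^{2.01})\) integers. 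Its harmonic weight \(\ll V^J h e^{R^{2.01}}/N\) is negligible, because \(N\ge e^{R^\gamma}\) and \(\gamma>\max\{\rho,2.01\}\).
\end{itemize}
All implied constants depend only on the fixed parameters and not on \(y\). Partial summation therefore reproduces \eqref{eq:super-pre-final} with \(\log x\) replaced by \(\log y\le\log x\). The resulting bound \(R^\gamma+R+(R^{-\sigma/6}+R^{-c_{\rm dec}})\log x\) is uniform in \(y\), and taking the supremum over \(y\) gives the claim with the same \(c_*\).

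The main obstacle is the uniformity bookkeeping: making sure the hypothesis ``the APF conclusion holds outside \(\cE_R\) for every prefix endpoint'' really covers the partial dyadic piece ending at an arbitrary \(y\), including the subset-rescaled bases \(y/d_0\). I would handle this by noting that the family \eqref{eq:outer-kmt-family} already ranges over all \(N\le e^Rx\), so every rescaled endpoint is included.
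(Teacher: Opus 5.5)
Your proposal is correct and is essentially the paper's own proof. You split at \(y=e^{R^\gamma}\), bound small \(y\) trivially by \(O(1+R^\gamma)\le(\log x)^{1-c_*}\) using \(c_*\le1-\gamma/\beta\), and for larger \(y\) rerun the pruned dilation argument with terminal point \(y\). There you note that \(\cE_R\) was fixed before any endpoint and that every local and subset-rescaled base stays in \(e^{R^\gamma-R}\le Z\le e^Rx\).

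Your explicit treatment of the truncated final piece, which the paper leaves implicit here, is a useful addition. The only slip is that the trivial bound on a short piece of length \(\Delta\) is \(\Delta\) times the maximal number of \(d\in\cD(h)\) dividing one \(m\le x\), which is \(R^{O(J)}\), rather than \(V^J\Delta\). The margin \(\exp(R^\gamma-R^\rho-R^{2.01}-R)\) absorbs this loss anyway.
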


\begin{proof}
The exceptional set in Proposition~\ref{prop:apf-transfer} is selected
before any parent interval or prefix endpoint is used.  For a terminal point
\(y\ge e^{R^\gamma}\), repeat its dilation proof with \(y\) in place of
\(x\).  Every local base and every translated base then lies in the range
\(e^{R^\gamma-R}\le Z\le e^Rx\) of
Theorem~\ref{thm:outer-apf}, and the same partial-summation estimates only
decrease when the terminal point is shortened.  Thus
\[
 \left|\sum_{n\le y}\frac{\Li(n)\Li(n+h)}n\right|
 \ll R^\gamma+R+R^{-\sigma/6}\log x
       +R^{-c_{\rm dec}}\log x.
\]
If \(y<e^{R^\gamma}\), the trivial harmonic bound is
\(O(1+R^\gamma)\).  Since \(c_*\le1-\gamma/\beta\), both bounds are
\(O((\log x)^{1-c_*})\), uniformly in \(y\).  The prefix estimate for
\(\cE_R\) is inherited from \eqref{eq:exceptional-shift-count}.
\end{proof}

\subsection{The optimized exponent family}

We now prove the principal new result stated in the introduction.

\begin{proof}[Proof of Theorem~\ref{thm:super-main}]
Fix
\begin{equation}\label{eq:eta-open-range}
 0<\eta<1.
\end{equation}
Choose
\begin{equation}\label{eq:eta-parameter-construction}
 \beta=5,\qquad
 \rho=5\eta,\qquad
 \gamma=\frac52(1+\eta),\qquad
 a=\frac{1-\eta}{1000}.
\end{equation}
Then
\begin{equation}\label{eq:rho-identity}
 0<\rho<5,\qquad
 \frac\rho\beta=\eta,\qquad
 \beta-\gamma=\gamma-\rho=\frac52(1-\eta)>0,
\end{equation}
and \(\gamma>5/2>2.01\).  Moreover
\begin{equation}\label{eq:good-parameter-discharge}
 \gamma-200a-\rho
 =\frac{23}{10}(1-\eta)>0.
\end{equation}

After \(a\) has been fixed, choose the independent Pilatte parameter
\(\varepsilon_b>0\) sufficiently small that
\begin{equation}\label{eq:epsb-choice}
 \varepsilon_b<\frac1{100},\qquad
 2^JV^{2J}\le R^{a/100}
\end{equation}
for all sufficiently large \(R\).  This is possible because the exponent
of \(R\) in \(2^JV^{2J}\) tends to zero with \(\varepsilon_b\), exactly as
in the argument following \eqref{eq:subset-loss}.  Since \(a<1/1000\),
we also have
\[
 a<\frac{1-\varepsilon_b}{200}.
\]
Thus every condition in \eqref{eq:super-parameter-region} holds.  Notice
that no comparison between \(\rho\) and \(1-\varepsilon_b\) is required:
KMT typicality is supplied by Lemma~9.1 of KMT as used in
\eqref{eq:kmt-typical-check}, while block primes dividing \(h\) are removed
by Lemma~\ref{lem:coprime-block-pruning}.

Theorem~\ref{thm:outer-apf} applies with
\(
 \sigma=a/5=(1-\eta)/5000.
\)
Moreover \eqref{eq:epsb-choice} is exactly the subset condition
\(2^JV^{2J}\le R^{\sigma/20}\) in
Proposition~\ref{prop:apf-transfer}.  Corollary~\ref{cor:maximal-apf-transfer}
therefore gives some fixed \(c_\eta>0\).  For example, one may take any positive value
below
\begin{equation}\label{eq:explicit-c-eta}
 \min\left\{
   \frac{1-\eta}{2},
   \frac{1-\eta}{150000},
   \frac{c_{\rm dec}(\varepsilon_b)}5
 \right\}.
\end{equation}
By \eqref{eq:rho-identity}, the local shift cap becomes
\begin{equation}\label{eq:eta-shift-conversion}
 e^{R^\rho}=\exp((\log x)^{\rho/\beta})
 =\exp((\log x)^\eta).
\end{equation}
For a prescribed \(C>0\), choose \(B=5C\) in
\eqref{eq:exceptional-shift-count}; then, uniformly for
\(1\le H'\le\exp((\log x)^\eta)\),
\begin{equation}\label{eq:eta-exception-conversion}
 |\cE_R\cap[1,H']|\ll_{\eta,C}
 H'(\log x)^{-C}.
\end{equation}
The prime assertion follows from the argument after
\eqref{eq:exceptional-shift-count}.  If no exceptional character exists,
the construction gives \(\cE_R=\varnothing\), proving the conditional
all-shifts assertion as well.
\end{proof}

The endpoint \(\eta=1\) is not included.  In the present power-scale
architecture the final exponent is \(\eta=\rho/\beta\), while the transfer
requires the strict chain \(\rho<\gamma<\beta\).  Thus \(\eta=1\) would
force \(\rho=\beta\) and destroy the scale gap.  The endpoint is a genuine
scale-separation boundary of this argument.

\section{Maximal moments in long shift windows}\label{sec:maximal-moments}

The pointwise progression-of-starts argument stops at the strict endpoint
$\eta=1$.  At that endpoint a different, much simpler mechanism becomes
available.  The maximal truncations of the harmonic Liouville convolution
operators form a uniformly bounded family on every $\ell^p(\mathbb Z)$,
$1<p<\infty$.  This gives all fixed moments over shift windows of length at
least the summation range.  Localizing the high physical blocks and using a
published almost-all Fourier theorem extends the same all-moment conclusion
to every $H\ge x^\theta$ with $\theta>1/3$.  For still shorter polynomial
shift windows we combine the multiplier bound below the shift scale with
Menon's averaged Chowla theorem above it.

We use the classical Davenport estimate in the Liouville form
\begin{equation}\label{eq:davenport-liouville}
 \sup_{\alpha\in\mathbb R/\mathbb Z}
 \left|\sum_{n\le Y}\Li(n)e(\alpha n)\right|
 \ll_A \frac{Y}{(\log(2Y))^A}
 \qquad(A>0),
\end{equation}
uniformly for $Y\ge2$.  For completeness, this follows directly from
Davenport's M\"obius estimate \citep{davenport1937infiniteII}.  Indeed,
using
\[
 \Li(n)=\sum_{d^2\mid n}\mu(n/d^2),
\]
the left side before taking the supremum becomes
\[
 \sum_{d\le\sqrt Y}\sum_{m\le Y/d^2}\mu(m)e(\alpha d^2m).
\]
For $d\le Y^{1/4}$, Davenport's bound applies uniformly to the inner sum,
and its length is at least $Y^{1/2}$; after increasing the logarithmic
exponent and summing $d^{-2}$ this contributes
$O_A(Y(\log(2Y))^{-A})$.  The complementary range is bounded trivially by
\[
 \sum_{d>Y^{1/4}}\frac{Y}{d^2}\ll Y^{3/4},
\]
which is smaller than the same target for sufficiently large $Y$; bounded
$Y$ is absorbed into the constant.  This proves
\eqref{eq:davenport-liouville}.  The associated harmonic Liouville series
also appears in \citet{bateman1963trigonometrical}.

\begin{lemma}[Dyadic harmonic multiplier decay]
\label{lem:dyadic-harmonic-multiplier}
Let $N\ge2$ and let $I\subseteq(N,2N]$ be an integer interval.  Then for
every fixed $A>0$,
\begin{equation}\label{eq:dyadic-harmonic-fourier}
 \sup_{\alpha\in\mathbb R/\mathbb Z}
 \left|\sum_{n\in I}\frac{\Li(n)}n e(\alpha n)\right|
 \ll_A (\log(2N))^{-A}.
\end{equation}
\end{lemma}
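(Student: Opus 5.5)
The plan is partial summation against the weight $1/n$, feeding in the Davenport bound \eqref{eq:davenport-liouville}. Write $I=(N_1,N_2]\cap\Z$ with $N\le N_1<N_2\le 2N$, and set
\[
 S_\alpha(u)=\sum_{N_1<n\le u}\Li(n)e(\alpha n).
\]
Since $S_\alpha(u)=\sum_{n\le u}-\sum_{n\le N_1}$ and both endpoints lie in $[N,2N]$, the estimate \eqref{eq:davenport-liouville} gives, uniformly in $\alpha$ and in $N_1\le u\le N_2$,
\[
 |S_\alpha(u)|\ll_A N(\log(2N))^{-A}.
\]
Here the $(\log 2Y)^{-A}$ factors at $Y=u$ and $Y=N_1$ are both comparable to $(\log 2N)^{-A}$.

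Abel summation then gives
\[
 \sum_{n\in I}\frac{\Li(n)}n e(\alpha n)=\frac{S_\alpha(N_2)}{N_2}+\int_{N_1}^{N_2}\frac{S_\alpha(u)}{u^2}\,\dd u.
\]
The boundary term is $\ll_A (\log 2N)^{-A}$ because $N_2\ge N$. The integral is bounded by
\[
 \sup_u|S_\alpha(u)|\int_N^{2N}u^{-2}\,\dd u\ll_A (\log 2N)^{-A}.
\]
This proves \eqref{eq:dyadic-harmonic-fourier}, uniformly in $\alpha$ and in the subinterval $I$.

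The only point needing care is uniformity. The bound \eqref{eq:davenport-liouville} is uniform in $\alpha$, and it applies for all $Y\ge2$. For $u$ or $N_1$ below $2$, which can only happen when $N$ is bounded, the estimate holds trivially with the constant absorbing everything. No step is a genuine obstacle. The substantive input, Davenport's estimate transferred to $\Li$ through $\Li(n)=\sum_{d^2\mid n}\mu(n/d^2)$, was already established just before the lemma.
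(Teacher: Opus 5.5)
Your proposal is correct and is essentially the paper's proof: partial summation against $1/n$, with the Davenport bound \eqref{eq:davenport-liouville} inserted and $\log(2t)\asymp\log(2N)$ used on the dyadic range. The only cosmetic difference is that you normalize the partial sum to vanish at the left endpoint, so you get one boundary term where the paper has two.
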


\begin{proof}
Write $S(t,\alpha)=\sum_{n\le t}\Li(n)e(\alpha n)$.  Partial summation on
$I=(a,b]\subseteq(N,2N]$ gives
\[
 \sum_{a<n\le b}\frac{\Li(n)}n e(\alpha n)
 =\frac{S(b,\alpha)}b-\frac{S(a,\alpha)}a
   +\int_a^b\frac{S(t,\alpha)}{t^2}\,dt.
\]
Insert \eqref{eq:davenport-liouville}; throughout the integral
$\log(2t)\asymp\log(2N)$.  This proves
\eqref{eq:dyadic-harmonic-fourier}.
\end{proof}

For $X\ge1$ define the one-sided convolution operator
\begin{equation}\label{eq:harmonic-convolution-operator}
 (T_Xf)(h):=\sum_{1\le n\le X}\frac{\Li(n)}n f(h+n),
 \qquad f\in\ell^p(\mathbb Z).
\end{equation}
Its maximal truncation is
\begin{equation}\label{eq:maximal-harmonic-convolution-operator}
 (T_X^*f)(h):=\sup_{1\le Y\le X}|(T_Yf)(h)|.
\end{equation}

\begin{lemma}[Uniform maximal $\ell^p$ harmonic Liouville multiplier]
\label{lem:uniform-lp-multiplier}
For every fixed $1<p<\infty$ there is $M_p<\infty$ such that
\begin{equation}\label{eq:uniform-lp-operator}
 \sup_{X\ge1}\|T_X^*\|_{\ell^p(\mathbb Z)\to\ell^p(\mathbb Z)}\le M_p.
\end{equation}
\end{lemma}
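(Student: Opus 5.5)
Decompose the kernel \(\Li(n)/n\,\mathbf 1_{[1,X]}(n)\) into dyadic shells \(n\in(2^k,2^{k+1}]\), \(k\ge0\), and write \(T_X=\sum_k T_X^{(k)}\), where \(T_X^{(k)}\) is convolution with the shell kernel cut at \(X\). For \(Y\le X\), \(T_Y f\) is a finite sum of complete shells \(k<k_Y\) plus one partial shell. Hence
\[
 T_X^*f\le \sum_{k}\ \sup_{2^k<Y\le 2^{k+1}}\bigl|T^{(k)}_Yf\bigr|
 =:\sum_k \mathcal M_kf,
\]
because the complete-shell contributions are each dominated by the same shell maximal function. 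It therefore suffices to prove \(\|\mathcal M_k\|_{\ell^p\to\ell^p}\le C_p 2^{-\delta_p}\cdot(1+k)^{-2}\) for some summable sequence, uniformly in \(X\). The paper anticipates exactly this: an \(\ell^2\) bound decaying faster than any power of \(k\), \(\ell^1\) and \(\ell^\infty\) bounds of size \(O(1)\), and then interpolation.

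\emph{Step 1 (\(\ell^2\)).} For a fixed shell, run the Rademacher--Menshov/binary decomposition in the cut point \(Y\in(2^k,2^{k+1}]\). Write every initial segment \((2^k,Y]\) as a disjoint union of at most \(k+1\) dyadic subintervals of the shell, one at each level \(j\le k+1\). Then
\[
 \mathcal M_kf\le\sum_{j\le k+1}\Bigl(\sum_{I\text{ at level }j}|K_I*f|^2\Bigr)^{1/2},
\]
where \(K_I\) is the kernel restricted to the subinterval \(I\). By Plancherel,
\[
 \Bigl\|\bigl(\textstyle\sum_I|K_I*f|^2\bigr)^{1/2}\Bigr\|_2^2=\sum_I\|\widehat K_I\,\widehat f\|_2^2 .
\]
Using \(\sum_I|\widehat K_I|^2\le \sup_I\|\widehat K_I\|_\infty\sum_I\|\widehat K_I\|_\infty\), Lemma~\ref{lem:dyadic-harmonic-multiplier} gives \(\|\widehat K_I\|_\infty\ll_A k^{-A}\), since \(I\subseteq(2^k,2^{k+1}]\). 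Crudely, \(\sum_I\|\widehat K_I\|_\infty\le 2^{j}k^{-A}\) is too large. Instead use \(\sum_I|\widehat K_I(\alpha)|^2\le 2^j\sup_I|\widehat K_I|^2\ll 2^{j}k^{-2A}\). That is also bad for large \(j\), so split levels: for subintervals shorter than \(2^k/k^{B}\), use the trivial \(\ell^1\) mass, \(\|K_I\|_1\ll |I|/2^k\), and sum. This gives each level \(\ll\min(2^{j/2}k^{-A},\,2^{-j'})\) in the appropriate normalisation, and summing over levels gives \(\|\mathcal M_k\|_{2\to2}\ll_A k^{-A/2+1}\), so \(\ll_A k^{-A}\) after renaming. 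This is the ``square-root loss in the logarithmic exponent'' mentioned in the introduction. The bookkeeping of this split is the step I expect to be the main obstacle, though any power decay suffices.

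\emph{Step 2 (endpoints).} Trivially \(\mathcal M_kf\le |K^{(k)}|*|f|\) with \(\|K^{(k)}\|_1\le\sum_{2^k<n\le2^{k+1}}1/n\le 1\). Hence \(\|\mathcal M_k\|_{\ell^1\to\ell^1},\|\mathcal M_k\|_{\ell^\infty\to\ell^\infty}\le1\).

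\emph{Step 3 (interpolation).} \(\mathcal M_k\) is sublinear, so Marcinkiewicz (or Riesz--Thorin applied to the linearised operator with a measurable choice of \(Y(h)\)) between \(\ell^2\) and \(\ell^1\) or \(\ell^\infty\) gives \(\|\mathcal M_k\|_{p\to p}\ll_{p,A}k^{-A\theta_p}\), with \(\theta_p>0\) for \(1<p<\infty\). Choose \(A\) so that \(A\theta_p>1\), and sum over \(k\) (adding the shell \(k=0\) trivially). This gives \(M_p\) independent of \(X\).
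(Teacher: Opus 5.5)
Your proposal is correct and is essentially the paper's proof: dyadic shells with a shell maximal operator, a binary-cell decomposition in the cut point, Plancherel together with Lemma~\ref{lem:dyadic-harmonic-multiplier} on coarse cells and the trivial harmonic mass on fine cells. This gives a per-level bound $\ll\min\{2^{j/2}(1+k)^{-A},2^{-j/2}\}$ and hence $\|\mathcal M_k\|_{2\to2}\ll_A(1+k)^{-A/2}$, after which the trivial $\ell^1$ and $\ell^\infty$ bounds, sublinear interpolation, and summation over shells finish the argument exactly as you describe. Incidentally, the inequality you set aside already suffices: $\sum_I\|\widehat K_I\|_\infty\le\sum_I\|K_I\|_1\le1$ gives each level $\ll_A(1+k)^{-A/2}$ directly, hence your stated $(1+k)^{1-A/2}$, with no level split needed.
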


\begin{proof}
For $j\ge0$ put
\[
 \mathcal M_jf(h):=
 \sup_{2^j<Y\le2^{j+1}}
 \left|\sum_{2^j<n\le Y}\frac{\Li(n)}n f(h+n)\right|.
\]
We first prove rapid decay of $\mathcal M_j$ on $\ell^2$.  Partition the
integer shell $(2^j,2^{j+1}]$ into binary cells.  At level $\ell$ there
are $O(2^\ell)$ cells, each of length $O(2^{j-\ell})$, and every initial
subinterval of the shell is a disjoint union of at most one cell at each
level.  If $B$ is a level-$\ell$ cell and $T_B$ is convolution with
$\Li(n)n^{-1}1_B(n)$, then Plancherel, Lemma
\ref{lem:dyadic-harmonic-multiplier} with exponent $2A$, and the trivial
harmonic mass of $B$ give
\begin{equation}\label{eq:binary-cell-l2}
 \|T_B\|_{2\to2}
 \ll_A\min\{(1+j)^{-2A},2^{-\ell}\}.
\end{equation}
The binary decomposition and Cauchy--Schwarz over the cells at each level
therefore imply
\begin{align}
 \|\mathcal M_jf\|_2
 &\ll\sum_{\ell=0}^{j+1}
       \left(\sum_{B\text{ at level }\ell}\|T_Bf\|_2^2\right)^{1/2}
 \notag\\
 &\ll_A\sum_{\ell=0}^{j+1}2^{\ell/2}
       \min\{(1+j)^{-2A},2^{-\ell}\}\,\|f\|_2
 \ll_A(1+j)^{-A}\|f\|_2.
 \label{eq:block-maximal-l2}
\end{align}
The last sum is split where $2^{-\ell}$ crosses $(1+j)^{-2A}$.

Pointwise,
\[
 \mathcal M_jf(h)
 \le\sum_{2^j<n\le2^{j+1}}\frac{|f(h+n)|}{n},
\]
so $\mathcal M_j$ has uniformly bounded strong type $(1,1)$ and
$(\infty,\infty)$.  Interpolation for sublinear operators applied to
\eqref{eq:block-maximal-l2} gives
\begin{equation}\label{eq:block-lp-op}
 \|\mathcal M_j\|_{p\to p}
 \ll_{A,p}(1+j)^{-A\vartheta_p},
 \qquad
 \vartheta_p:=\frac{2}{\max\{p,p/(p-1)\}}>0.
\end{equation}
Choose $A>1/\vartheta_p$.  Every prefix $[1,Y]$ contains complete dyadic
shells and at most one truncated final shell, whence pointwise
\[
 T_X^*f\le\sum_{j:\,2^j\le X}\mathcal M_jf+O(|f(\,\cdot+1)|).
\]
The operator norms in \eqref{eq:block-lp-op} are summable, uniformly in
$X$, and \eqref{eq:uniform-lp-operator} follows.
\end{proof}

\begin{proof}[Proof of Theorem~\ref{thm:long-shift-moments}]
Fix $L\ge0$ and an arbitrary sequence $b:\mathbb Z\to\mathbb C$ with
$|b(m)|\le1$.  Let
\[
 f(m):=b(m)1_{[L+1,L+H+x]}(m).
\]
For every integer $L<h\le L+H$, one has exactly
\[
 (T_x^*f)(h)=
 \sup_{1\le y\le x}\left|\sum_{n\le y}
 \frac{\Li(n)b(n+h)}n\right|.
\]
Lemma~\ref{lem:uniform-lp-multiplier} gives
\[
 \sum_{L<h\le L+H}
 \sup_{1\le y\le x}
 \left|\sum_{n\le y}\frac{\Li(n)b(n+h)}n\right|^p
 \le \|T_x^*f\|_{\ell^p}^p
 \ll_p \|f\|_{\ell^p}^p
 \ll_p H+x.
\]
Division by $H$ proves \eqref{eq:long-shift-moment-intro}.
Taking $b=\Li$ gives the displayed correlation theorem.
\end{proof}

\begin{corollary}[Arbitrarily sparse endpoint exceptions]
\label{cor:long-shift-tail}
Fix $B\ge0$ and $\varepsilon,C>0$.  Uniformly for
\begin{equation}\label{eq:near-endpoint-window}
 H\ge \frac{x}{(\log x)^B}
\end{equation}
and $L\ge0$, all sufficiently large $x$ admit a set
$\mathcal E_{x,L,H}\subseteq(L,L+H]\cap\mathbb Z$ such that
\begin{equation}\label{eq:long-tail-size}
 |\mathcal E_{x,L,H}|\ll_{B,\varepsilon,C}H(\log x)^{-C},
\end{equation}
and
\begin{equation}\label{eq:long-tail-bound}
 \max_{\substack{L<h\le L+H\\h\notin\mathcal E_{x,L,H}}}
 \sup_{1\le y\le x}|C_h(y)|\le(\log x)^\varepsilon.
\end{equation}
In particular, for $H=x$ this gives the endpoint $\eta=1$ with an
arbitrarily power-logarithmically sparse exceptional set.
\end{corollary}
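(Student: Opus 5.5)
The plan is to apply Markov's inequality to the maximal moment bound of Theorem~\ref{thm:long-shift-moments} with $b=\Li$, using a large fixed exponent $p$. Put
\[
 \Phi(h):=\sup_{1\le y\le x}|C_h(y)|,
 \qquad C_h(y)=\sum_{n\le y}\frac{\Li(n)\Li(n+h)}n,
\]
and define
\[
 \mathcal E_{x,L,H}:=\{L<h\le L+H:\ \Phi(h)>(\log x)^\varepsilon\}.
\]
By construction, every $h$ in $(L,L+H]$ outside this set satisfies \eqref{eq:long-tail-bound}. Since the supremum over $y$ sits inside $\Phi$, one set serves every terminal point, so no union over $y$ is needed.

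For the size bound, fix any $1<p<\infty$. Theorem~\ref{thm:long-shift-moments} holds uniformly in $L\ge0$, $H\ge1$, $x\ge2$, so Markov gives
\[
 |\mathcal E_{x,L,H}|
 \le (\log x)^{-p\varepsilon}\sum_{L<h\le L+H}\Phi(h)^p
 \ll_p H\Bigl(1+\frac xH\Bigr)(\log x)^{-p\varepsilon}.
\]
Under \eqref{eq:near-endpoint-window} we have $x/H\le(\log x)^B$. The right side is therefore $\ll_p H(\log x)^{B-p\varepsilon}$.

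It remains to choose $p$ depending only on $B,\varepsilon,C$, say $p=(B+C)/\varepsilon+2$. Then $B-p\varepsilon\le -C$, which gives \eqref{eq:long-tail-size} with implied constant depending on $B,\varepsilon,C$. The bound is uniform in $L$ and in $H$ throughout \eqref{eq:near-endpoint-window}. The case $H=x$ is $B=0$. There is no genuine obstacle here: the only point to check is that Theorem~\ref{thm:long-shift-moments} is maximal in $y$ and uniform in $L$, and both are part of its statement.
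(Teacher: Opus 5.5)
Your proposal is correct and matches the paper's proof: both define the exceptional set by the fixed threshold $(\log x)^\varepsilon$ on the maximal quantity $\sup_{1\le y\le x}|C_h(y)|$, and apply Markov's inequality to the maximal $p$-th moment bound of Theorem~\ref{thm:long-shift-moments}, with $p\varepsilon>B+C$ and $x/H\le(\log x)^B$. Your explicit choice $p=(B+C)/\varepsilon+2$ is one admissible instance of the paper's condition.
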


\begin{proof}
Choose a fixed $p>1$ with $p\varepsilon>B+C$.  By
Theorem~\ref{thm:long-shift-moments} and \eqref{eq:near-endpoint-window},
\[
 \frac1H\sum_{L<h\le L+H}
 \sup_{1\le y\le x}|C_h(y)|^p\ll_p 1+(\log x)^B.
\]
Markov's inequality at height $(\log x)^\varepsilon$ gives
\[
 \frac{|\mathcal E_{x,L,H}|}{H}
 \ll_p (\log x)^{B-p\varepsilon}
 \ll (\log x)^{-C},
\]
which proves the claim.
\end{proof}

We next use a published almost-all short-interval theorem to remove the
factor $x/H$ throughout polynomial windows beyond exponent $1/3$.  We use
the maximal progression norm recorded before
Input~\ref{input:almost-all-mobius-fourier}.

\begin{lemma}[Almost-all maximal Fourier uniformity for Liouville]
\label{lem:almost-all-liouville-fourier}
Fix $\delta>0$ and $A,B>0$.  Uniformly for integers $N,K$ satisfying
\[
 N^{1/3+\delta}\le K\le N^{1-\delta},
\]
there is a set $\mathcal Y_{N,K}\subseteq[N,2N]\cap\mathbb Z$ with
\begin{equation}\label{eq:liouville-bad-start-count}
 |\mathcal Y_{N,K}|\ll_{A,B,\delta}N(\log N)^{-B}
\end{equation}
such that, for $y\notin\mathcal Y_{N,K}$,
\begin{equation}\label{eq:liouville-local-maximal-fourier}
 \sup_{\alpha\in\mathbb R/\mathbb Z}
 \left|\sum_{y<n\le y+K}\Li(n)e(\alpha n)\right|^*
 \ll_{A,B,\delta}K(\log N)^{-A}.
\end{equation}
\end{lemma}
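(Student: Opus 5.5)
We use the square-divisor identity \(\Li(n)=\sum_{d^2\mid n}\mu(n/d^2)\), exactly as in the proof of \eqref{eq:davenport-liouville}, to transfer Input~\ref{input:almost-all-mobius-fourier} from \(\mu\) to \(\Li\) one divisor at a time. Fix an arithmetic progression \(P\subseteq(y,y+K]\) and a frequency \(\alpha\). Then
\[
 \sum_{n\in P}\Li(n)e(\alpha n)
 =\sum_{d\le\sqrt{3N}}\ \sum_{\substack{m:\ d^2m\in P}}\mu(m)e(\alpha d^2m).
\]
For each \(d\), the set \(\{m: d^2m\in P\}\) is an arithmetic progression contained in the interval \((y/d^2,(y+K)/d^2]\). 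Hence the inner sum is bounded by the star norm of \(\mu(m)e(\alpha d^2 m)\) on that interval. Because of the supremum over \(\alpha\), this is at most \(\sup_\beta\bigl|\sum_{y/d^2<m\le (y+K)/d^2}\mu(m)e(\beta m)\bigr|^*\). Thus the whole quantity \eqref{eq:liouville-local-maximal-fourier} is bounded by a sum over \(d\) of Möbius maximal Fourier norms at the scales \(N_d=N/d^2\) with lengths \(K_d=K/d^2\).

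We split the divisors at \(D=(\log N)^{A+1}\). The large divisors \(d>D\) are handled trivially: each contributes at most \(K/d^2+1\). Summing over \(D<d\le\sqrt{3N}\) gives \(K/D+O(\sqrt N)\). This is \(\ll K(\log N)^{-A}\) because \(K\ge N^{1/3+\delta}\ge\sqrt N\,(\log N)^{A}\) fails only if \(K<\sqrt N\). So when \(K<\sqrt N\) we must be more careful: we count only those \(d\) with \(d^2\le 3N\) whose progression is nonempty, which gives \(K/D+\#\{d:\ (y,y+K]\ \text{contains a multiple of }d^2\}\). We bound that count by \(\sum_{d>D}(K/d^2+1_{\text{hit}})\) and handle the hits by an averaged count over \(y\). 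Specifically, we put into the exceptional set every \(y\) for which more than \(K(\log N)^{-A}\) values \(d>D\) have a multiple of \(d^2\) in \((y,y+K]\). By Markov, averaging over \(y\in[N,2N]\) gives an expected number of hits \(\sum_{d>D}(K/d^2+O(K/d^2))\ll K/D\). So this exceptional set has size \(\ll N(\log N)^{A}/D\ll N(\log N)^{-1}\). To obtain the required \(N(\log N)^{-B}\), we replace \(D\) by \((\log N)^{A+B+1}\).

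For the small divisors \(d\le D\), the scales satisfy \(N_d=N^{1+o(1)}\), and \(K_d\) lies in \([N_d^{1/3+\delta/2},N_d^{1-\delta/2}]\) for large \(N\). Input~\ref{input:almost-all-mobius-fourier} therefore applies at scale \(N_d\) with parameters \(\delta/2\) and \(A'=A+B+2\). The only subtlety is that the window \((y/d^2,y/d^2+K_d]\) has a real start, and \(N_d\) may not be dyadic-aligned with \([N_d,2N_d]\). We cover by two dyadic ranges. For non-integer starts, we move the start to \(u\in[y/d^2,y/d^2+1)\), which changes the sum by \(O(1)\). The bad \(u\)-set from the input has measure \(\ll N_d(\log N)^{-A'}\). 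Its preimage under \(y\mapsto y/d^2\) (thickened by \(1\)) then contains \(\ll d^2\cdot N_d(\log N)^{-A'}+\dots\ll N(\log N)^{-A'}\) integers \(y\). We also need to check that the measure bound implies the integer count, using the thickening by \(d^2\). This step, which converts measure exceptions at the rescaled real starts into integer start exceptions uniformly in \(d\), is the main bookkeeping obstacle.

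Finally, \(\mathcal Y_{N,K}\) is the union over \(d\le D\) of these sets together with the hit-exceptional set. The union costs a factor \(D=(\log N)^{O_{A,B}(1)}\), which is absorbed by the choice of \(A'\). Outside \(\mathcal Y_{N,K}\), the sum over \(d\le D\) of \(K_d(\log N)^{-A'}\) plus the large-\(d\) terms gives \eqref{eq:liouville-local-maximal-fourier}. Every bound is uniform in \(\alpha\) and in \(P\), because the input already carries both suprema.
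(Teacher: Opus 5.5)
Your proposal is correct and follows essentially the paper's argument: the square-divisor identity, a cutoff $D$ at a power of $\log N$, the rescaled M\"obius input for $d\le D$, and an averaged count of multiples of $d^2$ followed by Markov for $d>D$. The one difference is in converting real exceptional starts to integer ones: the paper writes $y=d^2u+s$ with $s$ fixed, whereas you use a unit-interval thickening, which also works. There is one bookkeeping slip. After you enlarge $D$ to $(\log N)^{A+B+1}$, the choice $A'=A+B+2$ makes the union over $d\le D$ only of size $N(\log N)^{-1}$, so take $A'$ larger, e.g.\ $A'=A+2B+2$ (the paper uses $2A+2B+30$).
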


\begin{proof}
Input~\ref{input:almost-all-mobius-fourier} gives the analogue of
\eqref{eq:liouville-local-maximal-fourier} for $\mu$, with an arbitrary
power of $\log N$ both in the bound and in the exceptional measure.
Although the source records a real-measure
exceptional set, an integer interval of integer length has constant
integer support as its real left endpoint varies in $[u,u+1)$.  Thus the
same estimate, up to harmless endpoints, counts exceptional integer
starts.

We transfer this statement from $\mu$ to $\Li$ using the exact identity
\begin{equation}\label{eq:liouville-square-divisor-identity}
 \Li(n)=\sum_{d^2\mid n}\mu(n/d^2).
\end{equation}
Choose
\[
 D=(\log N)^{A+B+10}.
\]
For a fixed $d\le D$, split the integer starts $y$ by their residue
$s\pmod {d^2}$ and write $y=d^2u+s$.  The integers $m$ with
$y<d^2m\le y+K$ then occupy an interval of one of the two integer lengths
$K/d^2+O(1)$, depending only on $s$.  Moreover, the intersection with any
arithmetic progression in the original $n$-interval maps to an arithmetic
progression in $m$, while the phase becomes $e(\alpha d^2m)$.

For $d\le D$, the powers of $\log N$ are absorbed by the fixed exponent
margin: the transformed ambient scale and length obey
\[
 (N/d^2)^{1/3+\delta/2}
 \le K/d^2+O(1)
 \le (N/d^2)^{1-\delta/2}
\]
for all sufficiently large $N$.  Indeed, uniformly for $d\le D$, the
lower- and upper-endpoint ratios are respectively bounded by
\[
 \frac{K/d^2}{(N/d^2)^{1/3+\delta/2}}
 \ge N^{\delta/2}d^{-4/3+\delta}\longrightarrow\infty,
 \qquad
 \frac{N^{1-\delta}/d^2}{(N/d^2)^{1-\delta/2}}
 =N^{-\delta/2}d^{-\delta}\longrightarrow0.
\]
Thus the $O(1)$ endpoint rounding is harmless.  Apply the published $\mu$ theorem at
this scale (covering the transformed ambient range by a bounded number of
dyadic bands), with a logarithmic exponent chosen larger than
$2A+2B+30$.  For each fixed residue $s$, the number of exceptional $u$ is
$O((N/d^2)(\log N)^{-2A-2B-30})$.  Summing over the $d^2$ residues and
then over $d\le D$ leaves $O(N(\log N)^{-B-10})$ exceptional starts.
Outside them, summing the bounds over $d\le D$ costs only
\[
 \sum_{d\le D}\left(\frac K{d^2}+1\right)
 \ll K,
\]
and gives the right side of
\eqref{eq:liouville-local-maximal-fourier}.

It remains to control $d>D$.  Put
\[
 R_D(y):=\sum_{d>D}\#\{y<n\le y+K:d^2\mid n\}.
\]
This dominates the tail of \eqref{eq:liouville-square-divisor-identity}
for every phase and every subprogression.  Interchanging $y,n,d$ gives
\[
 \sum_{N\le y\le2N}R_D(y)
 \ll K\sum_{d>D}\#\{n\le3N:d^2\mid n\}
 \ll KN\sum_{d>D}d^{-2}
 \ll \frac{KN}{D}.
\]
Markov's inequality at height $K(\log N)^{-A}$ discards at most
$O(N(\log N)^{-B-10})$ further starts.  Combining the two exceptional
sets proves the lemma.
\end{proof}

\begin{lemma}[Localized Toeplitz estimate]
\label{lem:localized-toeplitz}
Fix $\theta>1/3$, $1<p<\infty$, and $D_0>0$.  Let
$x^\theta\le H<x$, let $L_0\ge0$, and let $b:\mathbb Z\to\mathbb C$ be
$1$-bounded.  If $H\le N\le x$ and $I_N\subseteq(N,2N]$ is an integer
interval, then
\begin{equation}\label{eq:localized-toeplitz-bound}
 \left(\frac1H\sum_{L_0<h\le L_0+H}
 \sup_{J\subseteq I_N}
 \left|\sum_{n\in J}\frac{\Li(n)b(n+h)}n\right|^p\right)^{1/p}
 \ll_{p,\theta,D_0}(\log x)^{-D_0}.
\end{equation}
Here $J$ ranges over integer subintervals of $I_N$.
\end{lemma}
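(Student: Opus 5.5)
The plan is to partition \(I_N\) into consecutive physical intervals \(P\) of length \(K\), with \(N^{1/3+\delta}\le K\le \min(H,N^{1-\delta})\) for a small \(\delta=\delta(\theta)>0\) (for instance \(K\asymp H(\log x)^{-E}\) when \(H\le N^{1-\delta}\), and \(K=\lfloor N^{1-\delta}\rfloor\) otherwise), and to split these intervals into good and bad ones according to Lemma~\ref{lem:almost-all-liouville-fourier}. Since \(N\ge H\ge x^\theta\) and \(\theta>1/3\), we have \(\log N\asymp_\theta\log x\), and the window condition holds for the chosen \(K\). The intervals are not aligned with the start set \(\mathcal Y_{N,K}\), so a translated grid is needed. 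For each offset \(s\in[0,K)\), consider the grid of intervals \((N+s+jK,N+s+(j+1)K]\). The bad intervals are those whose left endpoint lies in \(\mathcal Y_{N,K}\). Averaging over \(s\), the expected number of bad intervals is at most \(|\mathcal Y_{N,K}|/K\ll N(\log N)^{-B}/K\), so some offset has at most that many. Fix one such grid; the two boundary pieces of \(I_N\) have length \(O(K)\) and can be treated like bad intervals.

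Any subinterval \(J\subseteq I_N\) is a union of whole grid intervals plus at most two partial ones. Each grid interval is either good or bad. Bound \(\sup_J|\cdot|\) by
\[
\sum_{P\ \mathrm{good}}\sup_{J'\subseteq P}\Bigl|\sum_{n\in J'}\tfrac{\Li(n)b(n+h)}{n}\Bigr|+\sum_{P\ \mathrm{bad}}\sum_{n\in P}\tfrac{|b(n+h)|}{n}.
\]
Use the triangle inequality in \(\ell^p(h)\) over the intervals \(P\).

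\textbf{Bad part.} For each \(h\), the bad-part sum is at most the harmonic mass \(\#\{\text{bad }P\}\cdot K/N\ll(\log N)^{-B}\). This is pointwise in \(h\), so choosing \(B>D_0\) suffices.

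\textbf{Good part.} For a good \(P\), write the maximal truncation of \(h\mapsto\sum_{n\in J'}\Li(n)n^{-1}b(n+h)\) as a localized convolution operator applied to \(f=b\,\1_{(L_0+N,L_0+H+2N]}\). Its \(\ell^2\) norm is controlled by the Fourier multiplier \(\sup_\alpha|\sum_{n\in J'}\Li(n)n^{-1}e(\alpha n)|\). Partial summation from the star-norm bound (which is maximal over subprogressions, hence over subintervals) gives \(\ll (K/N)(\log N)^{-A}\) uniformly in \(J'\subseteq P\). The maximal-in-\(J'\) issue is handled exactly as in Lemma~\ref{lem:uniform-lp-multiplier}: use a binary decomposition of \(P\), with cells bounded by \(\min\{(K/N)(\log N)^{-A},2^{-\ell}K/N\}\), losing only \(O(\log K)\). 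Interpolating with the trivial \(\ell^1,\ell^\infty\) bound \(K/N\) gives an \(\ell^p\) operator norm of \(\ll (K/N)(\log N)^{-A\vartheta_p}\log x\) for each good \(P\).

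The main obstacle is the normalization, because the physical support length \(N\) may exceed the window \(H\). Acting on the truncated \(f\), whose \(\ell^p\) norm is \((H+3N)^{1/p}\), would lose a factor \((N/H)^{1/p}\). To avoid this, for each \(P\) I would restrict \(f\) to the \(h\)-relevant support \(P+(L_0,L_0+H]\). This set has size \(\le H+K\le 2H\), so that interval contributes \(\ll H^{1/p}(K/N)(\log N)^{-A'}\). Summing over the \(\asymp N/K\) good intervals gives \(H^{1/p}(\log x)^{-A'}\). Dividing by \(H^{1/p}\) then yields \((\log x)^{-D_0}\) once \(A\) is chosen large in terms of \(p,\theta,D_0\) (with \(E\) fixed accordingly). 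This per-interval localization is precisely what removes the \(x/H\) loss of Theorem~\ref{thm:long-shift-moments}.
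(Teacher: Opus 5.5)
Your proposal is correct and follows essentially the same route as the paper. Both arguments use a translated grid of length-$K$ physical blocks with $K\le\min\{H,N^{1-\delta}\}$, with bad starts supplied by Lemma~\ref{lem:almost-all-liouville-fourier}. Good blocks are handled by the binary-cell maximal argument followed by sublinear interpolation, bad and boundary blocks by their trivial harmonic mass, and the blocks are summed by Minkowski after restricting $b$ to an interval of length at most $H+K$ for each block, which is what removes the $x/H$ loss.

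Two points are cosmetic. The factor $(\log x)^{-E}$ in your choice of $K$ is unnecessary; the paper simply takes $K_N=\lfloor\min\{H/4,N^{1-\rho}\}\rfloor$. Also, the binary decomposition does not lose only $O(\log K)$: the square-function step costs $2^{\ell/2}$ per level, so it halves the logarithmic exponent. This is harmless because, as in the paper, you can start from Fourier exponent $2A$.
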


\begin{proof}
Choose
\[
 0<\rho<\min\left\{\frac{\theta-1/3}{4},\frac1{12}\right\},
 \qquad
 K_N=\left\lfloor\min\{H/4,N^{1-\rho}\}\right\rfloor.
\]
For all sufficiently large $x$,
\begin{equation}\label{eq:local-length-window}
 N^{1/3+\rho}\le K_N\le N^{1-\rho}.
\end{equation}
Indeed, \(H\ge x^\theta\ge N^\theta\), so the first candidate is at least
\(N^\theta/4\).  Since \(\rho<\theta-1/3\), this is at least
\(N^{1/3+\rho}\) for sufficiently large \(N\); the second candidate is
already \(N^{1-\rho}\).  This also covers the first shell \(N=H\).

Apply Lemma~\ref{lem:almost-all-liouville-fourier} at length $K_N$, with
Fourier exponent $2A_1$ and exceptional-start exponent $B_1$, to be fixed
below.  Average over the $K_N$ possible
translations of the grid of consecutive $K_N$-blocks.  There is a
translation for which the number of full blocks whose left endpoint is
bad is
\begin{equation}\label{eq:bad-local-block-count}
 \ll \frac{N}{K_N}(\log N)^{-B_1}.
\end{equation}
That grid covers $I_N$ apart from at most two boundary intervals of length
$K_N$.

For a full good block $I=(y,y+K_N]$, the star norm in
\eqref{eq:liouville-local-maximal-fourier} controls every subinterval.
Maximal partial summation therefore gives
\begin{equation}\label{eq:good-local-harmonic-multiplier}
 \sup_{J\subseteq I}\sup_\alpha
 \left|\sum_{n\in J}\frac{\Li(n)}n e(\alpha n)\right|
 \ll \frac{K_N}{N}(\log N)^{-2A_1}.
\end{equation}
Define the sublinear block operator
\[
 \mathcal M_Ib(h):=\sup_{J\subseteq I}
 \left|\sum_{n\in J}\frac{\Li(n)b(h+n)}n\right|.
\]
Run the binary-cell argument of
\eqref{eq:binary-cell-l2}--\eqref{eq:block-maximal-l2} inside $I$.
An arbitrary subinterval uses at most two cells at each binary level,
which changes only the absolute constant.
At level $\ell$, a cell has trivial harmonic mass
$O((K_N/N)2^{-\ell})$, whereas
\eqref{eq:good-local-harmonic-multiplier} gives
$O((K_N/N)(\log N)^{-2A_1})$.  Consequently
\[
 \|\mathcal M_I\|_{2\to2}
 \ll \frac{K_N}{N}(\log N)^{-A_1}.
\]
The strong $(1,1)$ and $(\infty,\infty)$ norms are $O(K_N/N)$.
Sublinear interpolation therefore gives
\begin{equation}\label{eq:good-local-lp-operator}
 \|\mathcal M_I\|_{p\to p}
 \ll_p\frac{K_N}{N}(\log N)^{-A_1\vartheta_p},
 \qquad
 \vartheta_p=\frac2{\max\{p,p/(p-1)\}}.
\end{equation}
Every bad or boundary block has the trivial maximal-operator bound
$O(K_N/N)$.
Summing \eqref{eq:good-local-lp-operator},
\eqref{eq:bad-local-block-count}, and the two boundary contributions shows
that the sum of the block operator norms is
\begin{equation}\label{eq:local-operator-norm-sum}
 \ll_p (\log N)^{-A_1\vartheta_p}
       +(\log N)^{-B_1}+\frac{K_N}{N}.
\end{equation}

For $L_0<h\le L_0+H$ and a block $I$, the values $b(n+h)$ involved lie
in an interval of at most $H+K_N\le5H/4$ integers.  Restricting $b$ to
that interval and using the global $\ell^p$ operator norm gives
\[
 \left(\frac1H\sum_{L_0<h\le L_0+H}|\mathcal M_Ib(h)|^p\right)^{1/p}
 \ll_p\|\mathcal M_I\|_{p\to p}.
\]
Every subinterval of $I_N$ intersects consecutive grid blocks, and its
two partial end blocks are dominated by their corresponding maximal block
operators.  Minkowski's inequality therefore permits summation of the
maximal operator norms in
\eqref{eq:local-operator-norm-sum}.  Since
$K_N/N\le N^{-\rho}$ and $\log N\asymp_\theta\log x$, choosing
$A_1,B_1$ sufficiently large in terms of $p,\theta,D_0$ proves
\eqref{eq:localized-toeplitz-bound}.
\end{proof}

\begin{proof}[Proof of Theorem~\ref{thm:one-third-moments}]
Fix $\theta>1/3$.  If $H\ge x$, the result is
Theorem~\ref{thm:long-shift-moments}, and the high tail is empty.  Suppose
$x^\theta\le H<x$ and
split the sum at $n=H$.  Theorem~\ref{thm:long-shift-moments}, with
summation range and shift-window length both $H$, gives an $O_p(1)$
normalized $L^p$ norm for the maximal prefix
$\sup_{1\le y\le H}|C_h(y)|$.

Decompose $(H,x]$ into $O(\log x)$ intervals contained in dyadic shells
$(N,2N]$.  Given any fixed $A>0$, apply
Lemma~\ref{lem:localized-toeplitz} to each shell with $D_0=A+2$.
Every partial sum of the high tail consists of complete earlier shells and
one subinterval of the terminal shell.  Minkowski's inequality therefore
makes the maximal high tail
$O_{p,\theta,A}((\log x)^{-A})$ in normalized $L^p$.  This proves
\eqref{eq:one-third-high-tail-intro}.  Adding the prefix proves
\eqref{eq:one-third-moment-intro}, uniformly in the bounded target $b$
and in the translated shift window.
\end{proof}

\begin{corollary}[Rapid tails in polynomial shift windows]
\label{cor:one-third-tail}
Fix $\theta>1/3$ and $P>0$.  Uniformly for $H\ge x^\theta$,
$L_0\ge0$, every $T\ge1$, and every $1$-bounded sequence $b$, one has
\begin{multline}\label{eq:one-third-distribution}
 \#\left\{L_0<h\le L_0+H:
 \sup_{1\le y\le x}
 \left|\sum_{n\le y}\frac{\Li(n)b(n+h)}n\right|>T\right\}\\
 \ll_{\theta,P}HT^{-P}.
\end{multline}
In particular, for fixed $\varepsilon,C>0$, all sufficiently large $x$
admit
$\mathcal E_{x,L_0,H}\subseteq(L_0,L_0+H]\cap\mathbb Z$ such that
\begin{equation}\label{eq:one-third-tail-size}
 |\mathcal E_{x,L_0,H}|\ll_{\theta,\varepsilon,C}
 H(\log x)^{-C}
\end{equation}
and, outside this set,
\begin{equation}\label{eq:one-third-tail-bound}
 \sup_{1\le y\le x}
 \left|\sum_{n\le y}\frac{\Li(n)\Li(n+h)}n\right|
 \le(\log x)^\varepsilon.
\end{equation}
The same conclusion holds with the second Liouville factor replaced by
any $1$-bounded sequence.
\end{corollary}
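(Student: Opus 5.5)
The corollary is a Markov-type consequence of Theorem~\ref{thm:one-third-moments}, run with a moment exponent chosen in terms of the prescribed decay. First I prove \eqref{eq:one-third-distribution}. Given \(P>0\), fix \(p=\max\{P,2\}\), so \(1<p<\infty\) and \(p\) depends only on \(P\). Write \(\mathcal C_h:=\sup_{1\le y\le x}|\sum_{n\le y}\Li(n)b(n+h)/n|\). By \eqref{eq:one-third-moment-intro},
\[
 \sum_{L_0<h\le L_0+H}\mathcal C_h^{\,p}\ll_{p,\theta}H,
\]
and this holds uniformly in \(L_0\ge0\), in \(H\ge x^\theta\) (including \(H\ge x\), via Theorem~\ref{thm:long-shift-moments}), and in the \(1\)-bounded sequence \(b\). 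Each \(h\) with \(\mathcal C_h>T\) contributes at least \(T^p\) to the left side. Hence the count is \(\ll_{p,\theta}HT^{-p}\le HT^{-P}\), using \(T\ge1\) and \(p\ge P\). Since \(p\) is a function of \(P\), the implied constant depends only on \(\theta,P\).

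For the second assertion, take \(b=\Li\) and \(T=(\log x)^\varepsilon\), which is at least \(1\) for large \(x\). Set \(P=C/\varepsilon\) and define \(\mathcal E_{x,L_0,H}\) to be the set of \(h\in(L_0,L_0+H]\) with \(\mathcal C_h>(\log x)^\varepsilon\). The first part then gives \(|\mathcal E_{x,L_0,H}|\ll_{\theta,P}H(\log x)^{-\varepsilon P}=H(\log x)^{-C}\), with dependence only on \(\theta,\varepsilon,C\). This proves \eqref{eq:one-third-tail-size}. By construction every \(h\) outside the set satisfies \eqref{eq:one-third-tail-bound}, and the supremum over terminal points \(y\) is built into \(\mathcal C_h\), so a single set works for all \(y\). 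The statement for general \(1\)-bounded second factors follows by running the same argument with \(b\) in place of \(\Li\).

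No step poses a real obstacle, since all the analytic content sits in Theorem~\ref{thm:one-third-moments}. The only points needing care are bookkeeping. The moment bound is maximal in \(y\) and uniform in \(L_0\), \(H\) and \(b\), and it is this uniformity that lets one exceptional set serve every terminal point. The choice \(p\ge\max\{P,2\}\) keeps \(p\) inside the admissible range \(1<p<\infty\) while preserving the dependence on \(P\) alone.
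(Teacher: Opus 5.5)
Your proposal is correct and matches the paper's proof: Markov's inequality applied to the maximal moment bound of Theorem~\ref{thm:one-third-moments} with a fixed $p\ge P$, $p>1$, followed by the choice $T=(\log x)^\varepsilon$ and $P=C/\varepsilon$. The only differences are cosmetic, namely your $p=\max\{P,2\}$ in place of the paper's $p>\max\{1,P\}$, and $P=C/\varepsilon$ in place of $P>C/\varepsilon$.
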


\begin{proof}
Choose any fixed $p>\max\{1,P\}$.  Markov's inequality and
Theorem~\ref{thm:one-third-moments} give, uniformly for $T\ge1$,
\[
 \frac1H\#\left\{h:
 \sup_{1\le y\le x}
 \left|\sum_{n\le y}\frac{\Li(n)b(n+h)}n\right|>T\right\}
 \ll_{p,\theta}T^{-p}\le T^{-P},
\]
which proves \eqref{eq:one-third-distribution}.  Now take
$T=(\log x)^\varepsilon$ and choose $P>C/\varepsilon$.  Then
\[
 \frac{|\mathcal E_{x,L_0,H}|}{H}
 \ll_{P,\theta}(\log x)^{-P\varepsilon}
 \ll(\log x)^{-C}.
\]
\end{proof}

\section{General-good-modulus deletion and sparse modulus scores}
\label{sec:general-good-deletion}

We now prepare the major-arc input for shift bands which are genuinely
polynomial in the local summation scale.  Let
\[
 L=\log x,
 \qquad H_{\mathrm{lo}}=\exp(L^{1/2}),
 \qquad H_{\mathrm{hi}}=x^{2/5},
\]
and fix a dyadic shift band
\[
 H<h\leq 2H,
 \qquad H_{\mathrm{lo}}<H<H_{\mathrm{hi}}.
\]
Put \(S=\log H\).  Throughout the middle-band argument we use the fixed
choices
\begin{equation}\label{eq:middle-parameters}
 r=\frac15,
 \qquad a=\frac1{500},
 \qquad R=S^r,
 \qquad \epsilon=R^{-a},
 \qquad \Omega=R^2,
\end{equation}
and the high-tail threshold
\begin{equation}\label{eq:middle-tail-threshold}
 Y=H^{3/2}\exp(10R^{2.01}).
\end{equation}
The Pilatte system used below has a fixed sufficiently small block parameter
\(\varepsilon_b>0\), and its active divisor blocks satisfy
\begin{equation}\label{eq:middle-active-divisor-range}
 \exp(R^{1-\varepsilon_b})\leq M\leq \exp(R).
\end{equation}
In particular,
\begin{equation}\label{eq:middle-scale-separation}
 \frac{h\exp(2R)}Y
 \ll H^{-1/2}\exp(-R^{2.01}),
 \qquad
 \log(Y/h)=\frac12S+10R^{2.01}\gg R^{2.01}.
\end{equation}
The first estimate absorbs all translation and endpoint errors, while the
second supplies the interval length required by the centred estimate.

The notation \(\mathcal Q_{X,\delta,M_0}\) below is the good-modulus set of
\citet{klurman2023shortaps}.  There is a small but important parameter
conversion at this point.  Their Proposition~9.4 uses
\(\mathcal Q_{X,\delta^6,\delta^{-80}}\), and its deduction of KMT
Theorem~1.5 substitutes \(\delta=\epsilon^{1.1}\); KMT Corollary~1.6 inherits
that setup.  Thus the exact set which interfaces with the corollary is
\begin{equation}\label{eq:middle-exact-kmt-good-set}
 \mathcal Q_{X,\epsilon^{6.6},\epsilon^{-88}}.
\end{equation}
We record the source-side ranges that will be used repeatedly.  Put
\(\zeta=\epsilon^{6.6}\) and \(M_0=\epsilon^{-88}\).  Uniformly for
every actual base \(X'\) in the middle-band family,
\begin{equation}\label{eq:middle-kmt-source-ranges}
 (\log X')^{-1/20}<\zeta,\qquad
 \frac1{\log\log X'}\leq M_0
 \leq\frac{\zeta^{20}\log X'}{20\log\log X'}.
\end{equation}
Indeed, \(\log X'\geq S/3\), \(6.6ar<1/20\), and the last inequality
reduces to
\(\epsilon^{-220}\ll\log X'/\log\log X'\), which follows from
\(220ar<1\).  These are the range hypotheses in KMT Lemma~8.1.
Moreover,
\[
 M_0=\epsilon^{-88}\geq\zeta^{-6}=\epsilon^{-39.6}.
\]
The defining zero-free condition is monotone in this parameter, so
membership in \(\mathcal Q_{X',\zeta,M_0}\) implies the membership required
by KMT Lemma~8.2.

\subsection{Deleting the Liouville character main term}

We first isolate the only real character which can survive in one robust
physical base band.  The twist range in the next lemma contains the entire
range used later after Mellin localization, with a fixed margin inside the
prime-sum window of KMT.

\begin{lemma}[Bandwise Liouville deletion]
\label{lem:bandwise-liouville-deletion}
Let \([X,2X]\) be a dyadic base band with \(\log X\geq S/3\), and let
\(X'\in[X,2X]\) be an actual KMT base.  There is at most one primitive
real conductor \(q_0(X)\leq 2X\) with the following property.  Suppose
that \(m=hq/d\) is a working reduced modulus with
\[
 \begin{gathered}
 h\in(H,2H],\qquad q\leq\Omega,\qquad d\mid hq,\qquad m\leq X'/10,\\
 \frac m{X'}\ll H^{-1/2}e^{-9R^{2.01}},\qquad \log m\ll S,
 \end{gathered}
\]
and that \(m\) is typical at the KMT scale and
\[
 m\in\mathcal Q_{X',\epsilon^{6.6},\epsilon^{-88}}.
\]
If \(q_0(X)\) does not exist, or if \(q_0(X)\nmid m\), then every
character \(\chi\pmod m\) which can occur in the KMT minimizing main term
satisfies
\begin{equation}\label{eq:middle-bandwise-character-bound}
 \left|\sum_{n\leq3X'}\Li(n)\chi(n)n^{iu}\right|
 \ll
 \epsilon^{1/2}\frac{\varphi(m)}mX'
 \qquad (|u|\leq 3X'/2).
\end{equation}
The implied constant is absolute and uniform over the robust band.
\end{lemma}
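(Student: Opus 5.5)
The plan is to follow the proof of Lemma~\ref{lem:kmt-liouville}, with two changes: the conductor is now selected band by band, and the twist range is enlarged to \(|u|\le3X'/2\).

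\emph{Defining \(q_0(X)\).}
Membership \(m\in\mathcal Q_{X',\epsilon^{6.6},\epsilon^{-88}}\) is the zero-free condition of KMT Lemma~8.1. By \eqref{eq:middle-kmt-source-ranges} it is the exact good-modulus set used in KMT Corollary~1.6. I would define \(q_0(X)\) as the unique primitive real conductor \(\le2X\) whose \(L\)-function has a real zero in the window
\[
1-\vartheta\ll\frac{1}{\epsilon^{200}\log X}.
\]
This is the window appearing in KMT's exceptional-modulus construction. Because \(X'\in[X,2X]\), the quantity \(\log X'\) changes by \(O(1)\) across the band, so a single window serves every actual base \(X'\) in the band. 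I would take this window with a fixed margin so that it covers every \(X'\in[X,2X]\) at once.

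Uniqueness then comes from the Landau--Page theorem (in its Deuring--Heilbronn repulsion form) at the scale \(\log(2X)\): two distinct real characters of conductor \(\le2X\) cannot both have zeros this close to \(1\). This is the step where I must be careful. The window width \(\epsilon^{-200}/\log X\) is larger than the Landau--Page width \(c/\log(2X)\) by a factor of \(\epsilon^{-200}=R^{200a}=S^{200ar}\).

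My first attempt would avoid Landau--Page altogether. I would use the fact that good-modulus membership already rules out all zeros except those attached to one distinguished character. In KMT's Lemma~8.2 the bad moduli are generated by at most \(M_0\) exceptional conductors per scale, and the hypothesis \(m\in\mathcal Q\) removes every one of them except possibly the real one. So \(q_0(X)\) would be defined directly as that residual primitive real conductor from KMT's construction at base \(X\), and uniqueness would follow from the structure of \(\mathcal Q_{X,\zeta,M_0}\) rather than from a fresh Landau--Page argument. I expect this identification of "at most one" with KMT's own bookkeeping to be the main obstacle.

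\emph{Pretentious distance.}
Next, for a nonprincipal character \(\chi\pmod m\) whose inducing primitive conductor is not divisible by \(q_0(X)\), I would repeat the distance computation of Lemma~\ref{lem:kmt-liouville}. Since \(\lambda(p)=\mu(p)\), we have
\[
\mathbb D(\lambda,\chi n^{it};3X')=\mathbb D(\mu,\chi n^{it};3X').
\]
KMT Lemma~7.9, with lower endpoint \(X'^{\epsilon^{10}}\), gives distance at least \(\log(1/\epsilon)\). Its hypothesis \(m^{1/\epsilon^{100}}\le y\) follows from \(\log m\ll S\), \(\log X'\ge S/3\), and \(100ar<1\).

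The twist \(n^{iu}\) with \(|u|\le3X'/2\) is handled by absorbing it into the Hal\'asz parameter \(t\). KMT's Lemma~7.4 with \(T\asymp X'\) requires the distance lower bound uniformly for \(|t|\le 2X'\). For large \(|t|\), the zero-free region for twisted \(L(s,\chi)\) at height \(\le X'\) supplies this bound automatically. For small \(|t|\), good-modulus membership (zero-freeness near \(1\)) supplies it. Together these cover the whole range with a fixed margin.

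\emph{Principal character.}
For the principal character, with the twist \(n^{iu}\), I would use the inclusion--exclusion identity \eqref{eq:principal-liouville-ie}. Each term is then bounded through \(\zeta(2s)/\zeta(s)\) with a Vinogradov-type zero-free region uniform in height \(|u|\le X'\), plus Perron's formula. Since \(\log X'\gg S\) while \(\log\log m\ll\log S\), the saving beats \(\epsilon^{1/2}(\log\log m)^{-2}\).

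\emph{Conclusion.}
Multiplying through by \(\varphi(m)X'/m\) gives \eqref{eq:middle-bandwise-character-bound}. All constants are uniform over the band because every input depends only on \(\log X'\asymp\log X\).
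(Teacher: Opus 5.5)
You have a genuine gap, and it goes beyond the bookkeeping issue you flagged: you never split characters by the size of their primitive conductor $q_*$, and without that split both your distance bound and your uniqueness claim fail. Your hypothesis check for KMT Lemma~7.9 is incorrect. With lower endpoint $y=X'^{\epsilon^{10}}$, the condition $m^{1/\epsilon^{100}}\le y$ means $\log m\le\epsilon^{110}\log X'$. But $\log m\ll S$ and $\log X'\ge S/3$ only give $\log m\ll\log X'$, and $\epsilon^{110}=S^{-110ar}\to0$. The middle band genuinely contains moduli with $\log m\asymp S\asymp\log X'$: take $d=1$, $m\asymp hq$, and $Z$ near $Y/e^R$. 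So Lemma~7.9 cannot be applied to all characters modulo $m$ as in Lemma~\ref{lem:kmt-liouville}, where $q\le e^{R^\rho}R^2$ was subpower. For the same reason, the classical zero-free region does not give the distance bound at large $|t|$ once the conductor is a power of $X'$.

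The paper splits at $q_*\approx(X')^{\epsilon^{132}}$. For $q_*>(X')^{\epsilon^{132}}$ it uses good-modulus membership through KMT Lemma~8.2(i): prime-twist cancellation of size $\epsilon^{66}$ on $(X')^{\epsilon^7}<p\le X'$, against Mertens mass $7\log(1/\epsilon)$. Lemma~7.9 is reserved for $q_*\le(2X)^{\epsilon^{132}}$, with $\vartheta=2\epsilon^{125}$ on prime scales $P\ge X^{\epsilon^7}$. There $q_*\le P^\vartheta$ holds because $125+7=132$.

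Your uniqueness argument does not close either. The window $1-\vartheta\ll1/(\epsilon^{200}\log X)$, taken over conductors up to $2X$, is wider than the Landau--Page window, as you note. Your fallback misdescribes the good set. $\mathcal Q_{X',\epsilon^{6.6},\epsilon^{-88}}$ only tests primitive conductors above $(X')^{\epsilon^{132}}$. Membership removes large bad conductors entirely but says nothing about small ones, which is exactly where a real exceptional character lives. No ``at most one'' can be extracted from it.

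The missing idea is to localize the real-zero residue $P^{\beta-it}\widetilde h(\beta-it)$ on the shells $P\ge X^{\epsilon^7}$. It is $O(\epsilon^{20}P)$, and after Stieltjes partial summation $O(\epsilon^{20})$ in the harmonic prime sum, unless $1-\beta\ll\log(1/\epsilon)/(\epsilon^7\log X)$. That window is $o(1/(\epsilon^{132}\log X))$. It therefore lies inside a single Landau--Page window for conductors $\le(2X)^{\epsilon^{132}}$, which produces one $q_0(X)$ for the whole band.

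Your principal-character treatment, via inclusion--exclusion and a Vinogradov--Korobov zero-free region at height $O(X')$, is a workable alternative to the paper's cosine-integral distance bound. The nonprincipal case, however, needs the conductor split above.
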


\begin{proof}
Write \(q_*\) for the conductor of the primitive character inducing
\(\chi\).  We separate the high- and low-conductor branches at the cutoff
\begin{equation}\label{eq:middle-conductor-cutoff}
 (\epsilon^{6.6})^{20}=\epsilon^{132}.
\end{equation}

Suppose first that \(q_*>(X')^{\epsilon^{132}}\).  Apply KMT
Lemma~8.2(i) with source parameter \(\zeta=\epsilon^{6.6}\).  Its lower
parameter condition follows from
\begin{equation}\label{eq:middle-kmt-high-lower-parameter}
 \zeta=S^{-6.6ar}\geq(\log X')^{-1/50},
 \qquad 6.6ar<\frac1{50}.
\end{equation}
The lemma gives prime-twist cancellation with saving
\(\zeta^{10}=\epsilon^{66}\).  We use it on the prime range
\begin{equation}\label{eq:middle-prime-range}
 (X')^{\epsilon^7}<p\leq X'.
\end{equation}
This is contained in the permitted range, since
\(\epsilon^7\geq\zeta^{5.5}=\epsilon^{36.3}\).  Partial summation with
weight \(1/(p\log p)\) gives
\begin{equation}\label{eq:middle-high-conductor-harmonic-error}
 \sum_{(X')^{\epsilon^7}<p\leq X'}
 \frac{\chi(p)p^{it}}p
 \ll \epsilon^{66}\log(1/\epsilon)=o(1)
\end{equation}
uniformly in the frequency range required by the Hal\'asz reduction.  On
the other hand, Mertens' theorem gives
\begin{equation}\label{eq:middle-prime-harmonic-mass}
 \sum_{(X')^{\epsilon^7}<p\leq X'}\frac1p
 =7\log(1/\epsilon)+O(1).
\end{equation}
Primes dividing the inducing modulus remove only \(o(1)\) from this mass.
Indeed, \(\log m\ll S\ll\log X'\), and hence
\begin{equation}\label{eq:middle-inducing-prime-loss}
 \sum_{\substack{p\mid m\\p\geq X^{\epsilon^7}}}\frac1p
 \leq
 \frac{\log m}{X^{\epsilon^7}\epsilon^7\log X}=o(1).
\end{equation}
It follows from \eqref{eq:middle-high-conductor-harmonic-error}--
\eqref{eq:middle-inducing-prime-loss} that the KMT pretentious distance is
at least \(5\log(1/\epsilon)\).  KMT Lemma~7.4 then gives
\eqref{eq:middle-bandwise-character-bound} in the high-conductor branch.

Now suppose that
\begin{equation}\label{eq:middle-low-conductor-cap}
 q_*\leq(2X)^{\epsilon^{132}}.
\end{equation}
On every prime scale \(P\geq X^{\epsilon^7}\), apply KMT Lemma~7.9 with
\begin{equation}\label{eq:middle-lemma79-parameter}
 \vartheta=2\epsilon^{125}.
\end{equation}
The conductor hypothesis of that lemma holds because
\begin{equation}\label{eq:middle-125-plus-7}
 (2X)^{\epsilon^{132}}
 \leq X^{2\epsilon^{132}}
 \leq P^{2\epsilon^{125}}=P^\vartheta
\end{equation}
for all sufficiently large \(X\); the exponent identity behind this
inequality is \(125+7=132\).  Its remaining lower parameter condition,
\(\vartheta\geq(\log P)^{-0.4}\), follows from
\(127.8ar<0.4\).  Its frequency window also contains every twist used
here.  Indeed,
\begin{equation}\label{eq:middle-lemma79-twist-window}
 \frac{\log\!\left(P^{(\log P)^{1/25}}\right)}{\log X'}
 \geq \epsilon^{182/25}(\log X')^{1/25}
 \gg S^{(1-182ar)/25}\longrightarrow\infty.
\end{equation}
Thus the external range \(|u|\leq3X'/2\), enlarged by the
\(O(\epsilon^{-2})\) Hal\'asz window below, lies inside the range of KMT
Lemma~7.9 for all sufficiently large \(S\).  More explicitly, since
\(\epsilon^{-2}=S^{1/1250}=o(X')\),
\[
 \frac32X'+\epsilon^{-2}<2.1X'
\]
for all sufficiently large \(S\).  The nonexceptional terms in
that lemma are
\begin{equation}\label{eq:middle-lemma79-errors}
 O\!\left(\epsilon^{125}\log^3(1/\epsilon)P
       +P(\log P)^{-0.3}\right).
\end{equation}
The passage from the primitive character to its induced character modulo
\(m\) again loses only the quantity in
\eqref{eq:middle-inducing-prime-loss}.

It remains to localize the exceptional real-zero term in KMT's explicit
formula.  Let \(\beta\) be a real zero of the primitive character in
\eqref{eq:middle-low-conductor-cap}.  On a smooth prime shell of scale
\(P\), equation~(27) of KMT writes its residue as
\[
 P^{\beta-it}\widetilde h(\beta-it).
\]
The Mellin transform is bounded in the required strip.  Consequently the
residue is \(O(\epsilon^{20}P)\) unless
\begin{equation}\label{eq:middle-dangerous-zero-window}
 1-\beta
 \ll \frac{\log(1/\epsilon)}{\epsilon^7\log X}.
\end{equation}
For completeness, the same conclusion at the sharp harmonic level does
not lose a factor \((1-\beta)^{-1}\).  Put
\[
 \lambda_0=1-\beta,
 \qquad v_0=\epsilon^7\log X,
 \qquad v_1=\log X.
\]
Stieltjes partial summation bounds the exceptional contribution to the
harmonic prime sum by
\begin{equation}\label{eq:middle-exceptional-stieltjes}
 \int_{v_0}^{v_1}\frac{e^{-\lambda_0v}}v\,\dd v.
\end{equation}
Outside \eqref{eq:middle-dangerous-zero-window}, enlarge the absolute
constant there so that
\(\lambda_0v_0\geq20\log(1/\epsilon)\).  The change of variables
\(z=\lambda_0v\) gives
\begin{equation}\label{eq:middle-exceptional-integral-bound}
 \int_{v_0}^{v_1}\frac{e^{-\lambda_0v}}v\,\dd v
 \leq
 \int_{\lambda_0v_0}^{\infty}\frac{e^{-z}}z\,\dd z
 \leq
 \frac{e^{-\lambda_0v_0}}{\lambda_0v_0}
 \ll\epsilon^{20}.
\end{equation}
Thus the denominator which appears after partial summation is
\(\lambda_0v_0\), already bounded away from zero, rather than a naked
factor \(1/(1-\beta)\).

The window \eqref{eq:middle-dangerous-zero-window} lies inside one
Landau--Page uniqueness window for all primitive conductors satisfying
\eqref{eq:middle-low-conductor-cap}, since
\begin{equation}\label{eq:middle-lp-window-comparison}
 \frac{\log(1/\epsilon)}{\epsilon^7\log X}
 =o\!\left(\frac1{\epsilon^{132}\log X}\right).
\end{equation}
Hence at most one primitive real conductor \(q_0(X)\) survives throughout
the robust band.  For every other low-conductor character, the
nonexceptional zero and contour terms in KMT Lemma~7.9, followed by the
same Stieltjes partial summation and geometric log-shell summation, have
total size
\begin{equation}\label{eq:middle-low-conductor-total-error}
 O\!\left(
  \epsilon^{125}\log^4(1/\epsilon)
  +(\epsilon^7S)^{-0.3}
 \right)=o(1).
\end{equation}
KMT Remark~7.2 permits the terminal sharp cutoff.  Its exceptional term is
\(O(P/(1+|t|))\), so the range \(|t|\geq\epsilon^{-20}\) contributes
\(O(\epsilon^{20}P)\); the complementary range is covered by
\eqref{eq:middle-exceptional-stieltjes}--
\eqref{eq:middle-exceptional-integral-bound}.  The external twist in the
statement has size at most \(3X'/2\), and the auxiliary Hal\'asz window is
\(o(X')\).  It therefore remains inside the \(2.1X'\) frequency window of
the KMT prime estimate.  Equations
\eqref{eq:middle-prime-harmonic-mass} and
\eqref{eq:middle-low-conductor-total-error} again give a distance of at
least \(5\log(1/\epsilon)\), and KMT Lemma~7.4 proves
\eqref{eq:middle-bandwise-character-bound}.

For the principal character at twist zero, inclusion--exclusion and the
classical zero-free-region prime number theorem for Liouville give
\begin{equation}\label{eq:middle-principal-liouville}
 \sum_{n\leq3X'}\Li(n)\1_{(n,m)=1}
 \ll X'\exp\!\left\{-cS^{3/5}(\log S)^{-1/5}\right\}
       \prod_{p\mid m}\left(1+\frac1p\right).
\end{equation}
This estimate is uniform over all inclusion--exclusion divisors.  For
every working KMT pair in the middle-band argument,
\begin{equation}\label{eq:middle-principal-residual-scale}
 \frac m{X'}\leq\frac{hq}{Z}
 \ll H^{-1/2}\exp(-9R^{2.01}),
\end{equation}
so \(\log(X'/d)\asymp\log X'\) for every
\(d\mid\operatorname{rad}(m)\).  The product in
\eqref{eq:middle-principal-liouville}, together with
\(m/\varphi(m)\ll\log\log(3m)\), is only polylogarithmic and is absorbed
by the stretched-exponential factor.  This proves
\eqref{eq:middle-bandwise-character-bound} for the principal character at
zero twist.

It remains to treat the principal character at a nonzero twist without
discarding its pole term.  Put \(P_0=(X')^{\epsilon^7}\), take the Hal\'asz
window \(T=\epsilon^{-2}\), and let \(w=u-v\), where \(|v|\leq T\) is the
auxiliary Hal\'asz frequency.  The relevant distance has the lower bound
\begin{equation}\label{eq:middle-principal-twisted-distance}
 \mathbb D_m(\Li(n)n^{iu},n^{iv};X')^2
 \geq
 \sum_{\substack{P_0<p\leq X'\\p\nmid m}}
 \frac{1+\cos(w\log p)}p.
\end{equation}
The first term on the right is
\(7\log(1/\epsilon)+O(1)\), and the omitted primes have total mass
\(o(1)\) by \eqref{eq:middle-inducing-prime-loss}.  For the oscillatory
term, the principal specialization of KMT Lemma~7.9 and sharp-cutoff
Remark~7.2, followed by partial summation, gives uniformly in the window
\eqref{eq:middle-lemma79-twist-window}
\begin{equation}\label{eq:middle-principal-cosine-integral}
 \Re\sum_{P_0<p\leq X'}p^{-1-iw}
 =\int_{\epsilon^7\log X'}^{\log X'}\frac{\cos(wz)}z\,\dd z+O(1)
 \geq-O(1).
\end{equation}
The last inequality is uniform: below \(|w|z=1\) the integrand has
nonnegative integral, while above that point Dirichlet's test bounds the
negative part absolutely.  Hence
\eqref{eq:middle-principal-twisted-distance} is at least
\(5\log(1/\epsilon)\).  With \(T=\epsilon^{-2}\), the errors
\(T^{-1/2}\) and \((\log X')^{-1/4}\) in KMT Lemma~7.4 are
\(o(\epsilon^{1/2})\).  This proves
\eqref{eq:middle-bandwise-character-bound} for every principal twist and
completes the lemma.
\end{proof}

\subsection{A robust bad-modulus set and its linear score}

We next replace a collection of pointwise good-modulus assertions by one
set which is uniform over a dyadic band of physical bases.  Define
\begin{equation}\label{eq:middle-robust-zero-width}
 \Delta_X
 :=2\epsilon^{-88}
   \max_{X\leq T\leq2X}\frac{\log\log T}{\log T},
 \qquad \sigma_X=1-\Delta_X.
\end{equation}
Let \(\mathcal G_X\) be the set of primitive conductors
\(b\in[\tfrac12X^{\epsilon^{132}},2X]\) for which a primitive
Dirichlet \(L\)-function has a zero in
\[
 \Re s>\sigma_X,
 \qquad |\Im s|\leq12X,
\]
and define the zero-bad moduli by
\begin{equation}\label{eq:middle-robust-bad-set}
 \mathcal B_X
 :=\{m\leq2X:b\mid m\text{ for some }b\in\mathcal G_X\}.
\end{equation}
Typicality is not included in \(\mathcal B_X\); it will be verified
uniformly for every working modulus below.

\begin{lemma}[Robust bad-modulus score]
\label{lem:global-bad-score}
For every robust band \([X,2X]\) occurring in the middle-band high-tail
argument, one has
\begin{equation}\label{eq:middle-robust-generator-count}
 |\mathcal G_X|\ll(\log X)^{20\epsilon^{-88}},
 \qquad
 |\mathcal B_X\cap[1,U]|\ll U\eta_X
 \quad(U\geq1),
\end{equation}
where
\begin{equation}\label{eq:middle-eta-x}
 \eta_X:=\exp\{-S^{1-133ar}\}.
\end{equation}
Moreover, for every actual base \(X'\in[X,2X]\) and every working reduced
modulus \(m=hq/d\leq X'/10\) from the middle-band family,
\begin{equation}\label{eq:middle-robust-good-bridge}
 m\notin\mathcal B_X
 \quad\Longrightarrow\quad
 m\in\mathcal Q_{X',\epsilon^{6.6},\epsilon^{-88}}.
\end{equation}

For \(q\leq\Omega\), put
\begin{equation}\label{eq:middle-linear-score}
 W_{X,q}(h)
 :=\frac1{hq}
   \sum_{\substack{m\mid hq\\m\in\mathcal B_X}}\varphi(m).
\end{equation}
Then
\begin{equation}\label{eq:middle-linear-score-average}
 \frac1H\sum_{H<h\leq2H}W_{X,q}(h)
 \ll q\eta_X.
\end{equation}
\end{lemma}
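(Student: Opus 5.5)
The plan has three parts: a zero-density count for \(\mathcal G_X\), a bound on multiples of large generators for \(\mathcal B_X\), and a divisor swap for the averaged score. The bridge \eqref{eq:middle-robust-good-bridge} will come from unwinding the KMT definition of \(\mathcal Q\).

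\textbf{Counting generators.} I will bound \(|\mathcal G_X|\) with a log-free zero-density estimate for the family of all primitive characters of conductor at most \(2X\) (Gallagher/Jutila type):
\[
 \sum_{b\le Q}\ \sideset{}{^*}\sum_{\chi\bmod b}N(\sigma,T,\chi)\ll (Q^2T)^{c(1-\sigma)}.
\]
Take \(Q=2X\), \(T=12X\) and \(1-\sigma=\Delta_X\asymp\epsilon^{-88}\log\log X/\log X\). The right side becomes \(\exp\{O(\epsilon^{-88}\log\log X)\}\), which is \((\log X)^{O(\epsilon^{-88})}\). Each generator \(b\in\mathcal G_X\) contributes at least one zero, so this bounds \(|\mathcal G_X|\); a constant \(20\) in the exponent requires citing a density estimate with an explicit exponent.

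\textbf{Counting bad moduli.} Every \(b\in\mathcal G_X\) satisfies \(b\ge\frac12X^{\epsilon^{132}}\). Hence
\[
 |\mathcal B_X\cap[1,U]|\le\sum_{b\in\mathcal G_X}\frac Ub\le 2U(\log X)^{20\epsilon^{-88}}X^{-\epsilon^{132}}.
\]
Now \(\epsilon=R^{-a}=S^{-ar}\) and \(\log X\ge S/3\). So \(X^{-\epsilon^{132}}\le\exp\{-\tfrac13S^{1-132ar}\}\), while the logarithmic factor is only \(\exp\{O(S^{88ar}\log S)\}\). This gives \(\ll U\exp\{-S^{1-133ar}\}=U\eta_X\), with the spare \(S^{ar}\) absorbing the constants.

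\textbf{The bridge \eqref{eq:middle-robust-good-bridge}.} I will unwind the KMT definition of \(\mathcal Q_{X',\zeta,M_0}\) with \(\zeta=\epsilon^{6.6}\) and \(M_0=\epsilon^{-88}\). Up to the typicality, which is handled separately, membership requires that no primitive character whose conductor divides \(m\) and exceeds \((X')^{\zeta^{20}}=(X')^{\epsilon^{132}}\) has a zero with real part above \(1-M_0\log\log X'/\log X'\) in the stated height window. For \(X'\in[X,2X]\), the threshold \((X')^{\epsilon^{132}}\) is at least \(\frac12X^{\epsilon^{132}}\). The zero region satisfies \(1-M_0\log\log X'/\log X'\ge\sigma_X\), because \(\Delta_X\) carries the factor \(2\) and a maximum over the band, and the height window is at most \(12X\). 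Any violating conductor therefore lies in \(\mathcal G_X\) and divides \(m\), so \(m\in\mathcal B_X\). The main obstacle is this step: one has to check that the source's height window and conductor threshold really sit inside the robust window, including the fact that the source threshold depends on \(X'\). I would first write out KMT's exact definition and compare parameters one by one, since the band-robust definition was chosen precisely to allow this.

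\textbf{Averaged score.} Write \(\varphi(m)\le m\) and swap sums:
\[
 \frac1H\sum_{H<h\le2H}W_{X,q}(h)\le\frac1H\sum_{h}\frac1{hq}\sum_{\substack{m\mid hq\\ m\in\mathcal B_X}}m.
\]
Put \(m'=hq/m\), so that \(m/(hq)=1/m'\). For fixed \(m\), the condition \(m\mid hq\) forces \(h\equiv0\pmod{m/(m,q)}\). Since \((m,q)\le q\), this leaves at most \(qH/m+1\) choices of \(h\) in the band, each contributing \(m/(hq)\le m/(Hq)\). Summing over \(m\in\mathcal B_X\) with \(m\le 2Hq\) gives
\[
 \ll\frac1H\sum_{\substack{m\in\mathcal B_X\\ m\le 2Hq}}\Bigl(\frac1H+\frac m{H^2q}\Bigr).
\]
By the count above, \(|\mathcal B_X\cap[1,2Hq]|\ll Hq\eta_X\), so the whole expression is \(\ll q\eta_X\). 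This proves \eqref{eq:middle-linear-score-average}.
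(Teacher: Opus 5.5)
Your proposal is correct and follows the paper's proof step by step: a log-free zero-density count for \(\mathcal G_X\), a union bound over multiples of generators \(b\ge\tfrac12X^{\epsilon^{132}}\) (you skip the paper's harmless case split on \(U\)), a three-test comparison of conductor threshold, height and zero-free width for the bridge, and the pullback \(m\mid hq\iff m/(m,q)\mid h\) for the score, where your direct use of \(m\le 2Hq\) replaces the paper's partial summation of \(\sum_{m\in\mathcal B_X}m\). There is one bookkeeping slip: after summing over \(h\) the bound should read \(\frac1H\sum_{m}\bigl(1+\frac{m}{Hq}\bigr)=\sum_m\bigl(\frac1H+\frac{m}{H^2q}\bigr)\), without your extra outer factor \(1/H\), and this corrected expression still gives \(\ll q\eta_X\).
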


\begin{proof}
Apply the log-free zero-density estimate used in KMT Lemma~8.1 with
conductor cap \(2X\), height \(12X\), and left edge \(\sigma_X\).
Since
\[
 1-\sigma_X
 \leq \frac{2\epsilon^{-88}\log\log X}{\log X}(1+o(1)),
\]
the number of primitive conductor generators is at most
\begin{equation}\label{eq:middle-zero-density-calculation}
 \big((2X)^2(12X)\big)^{(5/2)(1-\sigma_X)}
 \ll(\log X)^{20\epsilon^{-88}},
\end{equation}
which proves the first assertion in
\eqref{eq:middle-robust-generator-count}.  Every generator is at least
\(\tfrac12X^{\epsilon^{132}}\).  If \(U\) is smaller than the least
generator, then \(\mathcal B_X\cap[1,U]\) is empty.  Otherwise only
generators \(b\leq U\) contribute, and their number of multiples is
exactly \(\lfloor U/b\rfloor\).  Hence
\begin{align}
 |\mathcal B_X\cap[1,U]|
 &\leq \sum_{\substack{b\in\mathcal G_X\\b\leq U}}
       \left\lfloor\frac Ub\right\rfloor\notag\\
 &\ll U X^{-\epsilon^{132}}(\log X)^{20\epsilon^{-88}}
 \leq U\exp\{-S^{1-133ar}\}.
 \label{eq:middle-generator-density}
\end{align}
The last inequality uses \(\log X\geq S/3\) and
\begin{equation}\label{eq:middle-generator-entropy-gap}
 1-132ar>88ar,
 \qquad ar=\frac1{2500}.
\end{equation}

The robust definition also proves
\eqref{eq:middle-robust-good-bridge}.  Indeed, the good set on the
right tests primitive conductors above \((X')^{\epsilon^{132}}\), zeros
up to height \(3X'\), and zero-free width
\(\epsilon^{-88}\log\log X'/\log X'\).  The definition
\eqref{eq:middle-robust-bad-set} starts at the smaller threshold
\(\tfrac12X^{\epsilon^{132}}\), reaches height \(12X\), and uses the
maximum width on the whole band.  Each of its three tests is therefore
stronger.

It remains to prove \eqref{eq:middle-linear-score-average}.  Put
\(g=(m,q)\).  Prime by prime,
\begin{equation}\label{eq:middle-divisibility-pullback}
 m\mid hq
 \quad\Longleftrightarrow\quad
 \frac{m}{(m,q)}\mid h.
\end{equation}
Since \(h\asymp H\), the number of such shifts satisfies
\begin{equation}\label{eq:middle-divisibility-count}
 \#\{H<h\leq2H:m\mid hq\}
 \leq \frac{Hg}{m}+1.
\end{equation}
Using \(1/h\leq1/H\) and \(\varphi(m)\leq m\), we obtain
\begin{align}
 \frac1H\sum_{H<h\leq2H}W_{X,q}(h)
 &\leq
 \frac1{H^2q}
 \sum_{\substack{m\leq2Hq\\m\in\mathcal B_X}}
 \varphi(m)\left(\frac{H(m,q)}m+1\right)\notag\\
 &\leq
 \frac1{Hq}\sum_{\substack{m\leq2Hq\\m\in\mathcal B_X}}(m,q)
 +\frac1{H^2q}
  \sum_{\substack{m\leq2Hq\\m\in\mathcal B_X}}m.
 \label{eq:middle-score-two-terms}
\end{align}
The first term is at most
\(|\mathcal B_X\cap[1,2Hq]|/H\ll q\eta_X\), since
\((m,q)\leq q\).  Partial summation in the second term and
\eqref{eq:middle-robust-generator-count} give
\[
 \sum_{\substack{m\leq2Hq\\m\in\mathcal B_X}}m
 \ll (Hq)^2\eta_X,
\]
so the second term is also \(O(q\eta_X)\).  This proves
\eqref{eq:middle-linear-score-average}.
\end{proof}

Define, before choosing any prefix, rational denominator, Mellin frequency,
or sparsity exponent,
\[
 \mathcal X_H:=
 \left\{2^j:\frac12e^{S/3}\leq2^j\leq8xe^{2R}\right\}.
\]
Every actual KMT base in the middle-band high-tail argument lies in one of
these robust dyadic bands: the lower bound is the uniform
\(X_{\min}\geq e^{S/3}\), while all parent and subset-rescaled variables
are at most \(O(xe^{O(R)})\).  Consequently
\begin{equation}\label{eq:middle-number-physical-bands}
 |\mathcal X_H|\ll L.
\end{equation}
Define the total score
\begin{equation}\label{eq:middle-total-score}
 \mathfrak W_H(h)
 :=\sum_{X\in\mathcal X_H}\sum_{q\leq\Omega}W_{X,q}(h).
\end{equation}
Equations \eqref{eq:middle-linear-score-average} and
\eqref{eq:middle-number-physical-bands} imply
\begin{equation}\label{eq:middle-total-score-average}
 \frac1H\sum_{H<h\leq2H}\mathfrak W_H(h)
 \ll L\Omega^2\exp\{-S^{1-133ar}\}
 \ll \exp\{-S^{c_1}\}
\end{equation}
for some absolute \(c_1>0\).

For every \(X\in\mathcal X_H\) for which the exceptional conductor in
Lemma~\ref{lem:bandwise-liouville-deletion} exists, put
\begin{equation}\label{eq:middle-lp-trace}
 E_{X,H}^{\mathrm{LP}}
 :=\{H<h\leq2H:q_0(X)\mid hq
                 \text{ for some }q\leq\Omega\}.
\end{equation}
The dangerous-zero window
\eqref{eq:middle-dangerous-zero-window} and Siegel's theorem imply,
ineffectively, that
\begin{equation}\label{eq:middle-q0-siegel}
 q_0(X)\gg_A S^A
\end{equation}
for every fixed \(A>0\).  Indeed, the upper bound for \(1-\beta\) is a
fixed negative power of \(S\), while Siegel's lower bound may be used with
an arbitrarily small fixed exponent.  The exact gcd count is
\begin{align}
 |E_{X,H}^{\mathrm{LP}}|
 &\leq
 \sum_{q\leq\Omega}
 \left(\frac{H(q_0(X),q)}{q_0(X)}+1\right)\notag\\
 &\leq
 H\Omega\frac{\tau(q_0(X))}{q_0(X)}+\Omega.
 \label{eq:middle-lp-trace-count}
\end{align}
After summing over \(\mathcal X_H\),
\eqref{eq:middle-q0-siegel}, the bound
\(\tau(n)=n^{o(1)}\), and \(S\geq L^{1/2}\) give
\begin{equation}\label{eq:middle-all-lp-traces}
 \left|\bigcup_{X\in\mathcal X_H}E_{X,H}^{\mathrm{LP}}\right|
 \ll_C HL^{-C}
\end{equation}
for every fixed \(C>0\).

Fix once and for all a sufficiently small absolute \(b>0\), and define
\begin{equation}\label{eq:global-apf-exception-set}
 E_{\mathrm{APF}}(H)
 :=
 \bigcup_{X\in\mathcal X_H}E_{X,H}^{\mathrm{LP}}
 \ \cup\
 \{H<h\leq2H:\mathfrak W_H(h)>R^{-b}\}.
\end{equation}
This set is defined before a sparsity exponent is chosen.  Markov's
inequality, \eqref{eq:middle-total-score-average}, and
\eqref{eq:middle-all-lp-traces} give
\begin{equation}\label{eq:global-apf-exception-size}
 |E_{\mathrm{APF}}(H)|\ll_C HL^{-C}
\end{equation}
for every fixed \(C>0\).  This is the exceptional set used by the scored
major-arc estimate in the next section.

\section{A scored progression Fourier estimate in the middle range}
\label{sec:scored-middle-apf}

For a consecutive integer interval \((T,T+U]\), define
\begin{equation}\label{eq:middle-partial-apf-definition}
 \mathcal F_{h,U}^{[T]}(\alpha;Z)
 :=\sum_{Z<y\leq2Z}
   \left|\sum_{T<t\leq T+U}
     \Li(y+ht)e(\alpha t)\right|.
\end{equation}
Thus \(\mathcal F_{h,M}^{[0]}(\alpha;Z)\) is the quantity in
\eqref{eq:apf-definition}.  The purpose of the score
\eqref{eq:middle-linear-score} is that it retains the exact coefficient of
every bad gcd stratum instead of discarding the entire shift.

\begin{proposition}[Scored middle-band progression Fourier estimate]
\label{prop:middle-scored-apf}
Use the parameters \eqref{eq:middle-parameters}--
\eqref{eq:middle-active-divisor-range}, and let
\(h\in(H,2H]\setminus E_{\mathrm{APF}}(H)\).  Uniformly over the
high-tail parent intervals and subset-rescaled bases occurring for
\(Z\geq Y/e^R\), over every \(M\) satisfying
\eqref{eq:middle-active-divisor-range}, and over \(\alpha\in\T\), one has
\begin{equation}\label{eq:global-scored-apf-conclusion}
 \mathcal F_{h,M}^{[0]}(\alpha;Z)
 \ll R^{-b_1}MZ,
 \qquad
 b_1:=\min\left\{\frac a2,b,\frac15\right\}>0.
\end{equation}
Let \(X_{\min}\) be the least physical KMT base in this complete family.
The same estimate, with the same exceptional set and constants, holds with
\(\Li(n)\) replaced by \(\Li(n)n^{it_0}\), uniformly for
\begin{equation}\label{eq:middle-twist-window}
 |t_0|\leq X_{\min}/2.
\end{equation}
\end{proposition}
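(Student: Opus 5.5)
Our plan is to follow the architecture of Theorem~\ref{thm:outer-apf}: a Dirichlet approximation at denominator \(M/\Omega\), a minor-arc bound, a rational major-arc bound on all partial \(t\)-intervals, and blockwise Abel summation. The new ingredient is the major-arc treatment. There the exceptional conductor is no longer excluded by Siegel alone; we instead use the bandwise deletion of Lemma~\ref{lem:bandwise-liouville-deletion} together with the score \(\mathfrak W_H(h)\le R^{-b}\) guaranteed by \(h\notin E_{\mathrm{APF}}(H)\).

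\textbf{Minor arcs.} When \(\Omega<q\le M/\Omega\), we apply Lemma~\ref{lem:apf-minor} directly. Its proof uses only \(\Omega=R^2\), \(\log M\le R\), the typical-set complement \eqref{eq:apf-typical-complement}, and the smallness of \(hM/Z\) and \(hM^2/(MZ)\). With \(Z\ge Y/e^R\), \eqref{eq:middle-scale-separation} gives \(hM^2\ll MZH^{-1/2}\), so the bound \(R^{-1/5}MZ\) persists. The twist by \(n^{it_0}\) enters the Ramar\'e expansion as a factor \(p^{it_0}m^{it_0}\). The part \(m^{it_0}\) is absorbed into \(a_m\). The part \(p^{it_0}\) is a unimodular prime coefficient, and the fourth-moment count \eqref{eq:four-prime-apf} only uses absolute values. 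So the minor arcs are unaffected by the twist.

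\textbf{Major arcs.} For \(q\le\Omega\), we repeat the proof of Lemma~\ref{lem:apf-rational-major}. We split \(t\) modulo \(q\), apply Lemma~\ref{lem:endpoint-residue} with \(Q=hq\), and group residues by \(d=(c,Q)\). This gives reduced data \(Q_d=hq/d=m\), \(X_d=Z/d\), and \(L_d/Q_d=U/q\). In contrast to the subpower case, \(m\) is not automatically good, so we split the divisors \(m\mid hq\) according to whether \(m\in\mathcal B_{X}\), where \(X\in\mathcal X_H\) is the band containing the actual base.

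\emph{Good strata.} When \(m\notin\mathcal B_X\), \eqref{eq:middle-robust-good-bridge} places \(m\) in the exact KMT good set. Typicality follows as in \eqref{eq:kmt-typical-check}, since \(\log(U/q)\gg R^{1-\varepsilon_b}\) while \(\log m\ll S=R^5\). This comparison needs care, and we expect to invoke KMT's typicality criterion with \(y_{\rm typ}=(U/q)^{\epsilon^2}\), which is exponential in \(R^{1-\varepsilon_b-2a}\) and hence exceeds \(\log(hq)\). Next, \(q_0(X)\nmid m\) because \(h\notin E^{\rm LP}_{X,H}\). Lemma~\ref{lem:bandwise-liouville-deletion} then removes every character main term, including twisted and principal ones for \(|u|\le 3X'/2\), and this range covers \(|t_0|\le X_{\min}/2\). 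KMT's variance bound followed by Cauchy--Schwarz gives the contribution \(\epsilon^{1/2}\varphi(m)ZU/(dq)\), up to the normalisation of \eqref{eq:kmt-l1-reduced}.

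\emph{Bad strata.} When \(m\in\mathcal B_X\), we use the trivial bound on that stratum. The residues \(c\) with \((c,Q)=d\) number \(\varphi(m)\), and each progression has length about \(U/q\cdot d/d\). Together with the outer factor \(1/Q\) from Lemma~\ref{lem:endpoint-residue}, the total contribution is exactly
\[
\frac{ZU}{q}\cdot\frac{1}{hq}\sum_{\substack{m\mid hq\\ m\in\mathcal B_X}}\varphi(m)=\frac{ZU}{q}\,W_{X,q}(h).
\]
Summing over the \(q\) residue classes \(b\) gives \(ZU\,\mathfrak W_H(h)\le R^{-b}ZU\). Because this bound is linear and independent of \(U\) and \(T\), it holds for every partial interval with \(U\ge M/\Omega^4\). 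The result is \eqref{eq:abel-input} with \(\delta\ll R^{-a/2}+R^{-b}\). For the twisted version we note that \(n^{it_0}\) varies by \(O(|t_0|hM/Z)\) across a window. We would therefore keep it inside the function fed to KMT, using the twist window of Lemma~\ref{lem:bandwise-liouville-deletion}, rather than freezing it.

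\textbf{Assembly and main obstacle.} Lemma~\ref{lem:blockwise-abel} then yields \((R^{-a/2}+R^{-b}+\Omega^{-3})MZ\). Combined with the minor arcs, this proves \eqref{eq:global-scored-apf-conclusion} with \(b_1=\min\{a/2,b,1/5\}\). The main obstacle is verifying uniformly over the whole family, including subset-rescaled bases and Abel-block endpoints, that each actual base \(X'\) lies in a band of \(\mathcal X_H\). We must also check that the KMT source ranges \eqref{eq:middle-kmt-source-ranges} and the twisted variance, that is, KMT applied to \(\Li(n)n^{it_0}\) with the minimizing main term now carrying \(n^{iu}\), hold uniformly. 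We would handle this by rerunning KMT's minimizer argument on the twisted function and appealing to the \(|u|\le3X'/2\) range of Lemma~\ref{lem:bandwise-liouville-deletion}.
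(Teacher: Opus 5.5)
Your proposal is correct and follows essentially the same route as the paper. On the minor arcs you rerun the joint Ramar\'e argument with the middle-band margins, treating the twist as unimodular coefficients $p^{it_0}$ and $m^{it_0}$. On the major arcs you use the endpoint--residue reduction and split the gcd strata into two kinds: robust-good moduli (KMT in general-good form, automatic typicality via $y_{\rm typ}\ge1000\log(hq)$, Landau--Page trace excluded, character term deleted by Lemma~\ref{lem:bandwise-liouville-deletion}) and bad moduli, bounded trivially to give the $T,U$-independent linear score $\mathfrak W_H(h)\le R^{-b}$; blockwise Abel summation then finishes. For the twist you apply KMT afresh to $\Li(n)n^{it_0}$, whose minimizing twist $v$ satisfies $|v-t_0|\le X'+X_{\min}/2\le 3X'/2$, exactly as the paper does.
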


\begin{proof}
We first dispose of the minor arcs.  Dirichlet approximation supplies a
reduced fraction \(a_0/q\) such that
\begin{equation}\label{eq:middle-dirichlet-approximation}
 1\leq q\leq M/\Omega,
 \qquad
 \left|\alpha-\frac{a_0}q\right|
 \leq\frac{\Omega}{Mq}.
\end{equation}
When \(q>\Omega\), use the joint Ramar\'e argument of
Lemma~\ref{lem:apf-minor}.  We record why its two shift-dependent inputs
remain uniform in the present polynomial shift band.  With
\[
 P_1=\Omega^{C_M},
 \qquad Q_1=M/\Omega^3,
 \qquad \mathcal P_h=\{p\in[P_1,Q_1]:p\nmid h\},
\]
removing the primes which divide \(h\) changes the first-range harmonic
mass by at most
\begin{equation}\label{eq:middle-shared-prime-loss}
 \sum_{\substack{p\mid h\\p\geq P_1}}\frac1p+O(P_1^{-1})
 \leq\frac{\log h}{P_1\log P_1}+O(P_1^{-1})
 \ll R^{-100}.
\end{equation}
The translated-endpoint term is bounded by \(hM^2\), and
\begin{equation}\label{eq:middle-minor-boundary}
 \frac{hM}{Z}\leq\frac{2He^{2R}}{Y}
 \ll H^{-1/2}\exp(-R^{2.01}).
\end{equation}
After dualizing the outer \(\ell^1\)-norm, the fourth-power expansion in
the proof of Lemma~\ref{lem:apf-minor} fixes one start and leaves only
\(O(M)\) choices for each of the other three starts; hence the start-tuple
count is \(O(ZM^3)\), not \(O(Zh^3M^3)\).  The remaining progression
sum is
\[
 \min\{M/P+1,\|r\alpha\|^{-1}\},
 \qquad r=p_1+p_2-p_3-p_4.
\]
The four-prime sieve estimate and Vinogradov's lemma therefore give the
same bound as \eqref{eq:apf-minor-raw}, namely
\begin{equation}\label{eq:middle-minor-raw}
 \mathcal F_{h,M}^{[0]}(\alpha;Z)
 \ll MZ\frac{(\log M)^{1/4}\log\log M}{\Omega^{1/4}}
 \ll R^{-1/5}MZ.
\end{equation}
Equations \eqref{eq:middle-shared-prime-loss} and
\eqref{eq:middle-minor-boundary} also absorb the typical-complement and
prime-square errors.  This proves
\eqref{eq:global-scored-apf-conclusion} on the minor arcs.

It remains to treat \(q\leq\Omega\).  We begin at the exact rational
frequency \(a_0/q\) and retain an arbitrary consecutive partial interval
\((T,T+U]\) with
\begin{equation}\label{eq:middle-long-partial-length}
 U\geq U_0:=M/\Omega^4.
\end{equation}
Split \(t\) into its \(q\) residue classes and write \(Q=hq\).  For each
class, Lemma~\ref{lem:endpoint-residue} contributes the factor \(1/Q\)
and replaces the diagonal family selected by \(y\) with all residues
modulo \(Q\).  Group those residues according to \(d=(c,Q)\).  After
division by \(d\), the exact reduced data are
\begin{equation}\label{eq:middle-gcd-reduced-data}
 m=Q/d,
 \qquad
 X_d=\frac{Z+O(hM+hq)}d,
 \qquad
 L_d=\frac{hU}d,
 \qquad
 \frac{L_d}{m}=\frac Uq.
\end{equation}
The harmless \(O(hM+hq)\) displacement only chooses a neighboring actual
base \(X'\) in one of the robust bands from \(\mathcal X_H\).

At this point we use KMT Theorem~1.5 and Corollary~1.6 in their
general-good-modulus form
\citep[Theorem~1.5, Corollary~1.6, and Proposition~9.4]
{klurman2023shortaps}.  In KMT notation, for each actual base
\(X'\asymp X_d\), we take
\[
 X_{\mathrm{KMT}}=X',
 \qquad h_{\mathrm{KMT}}=L_d,
 \qquad Q_{\mathrm{KMT}}=q_{\mathrm{KMT}}=m.
\]
Thus the source hypotheses reduce to
\[
 X'\geq L_d\geq10m,
 \qquad
 \bigl(\log(L_d/m)\bigr)^{-1/200}\leq\epsilon^{1.1},
\]
together with typicality; on each good stratum we additionally use
membership in \(\mathcal Q_{X',\epsilon^{6.6},\epsilon^{-88}}\), as
verified below.
No condition \(m\leq(X')^{\epsilon^{200}}\) is used here: that is only a
sufficient condition for automatic good-modulus membership, whereas the
present argument retains the general-good strata and scores their
complement.

We verify every KMT size condition at the shortest permitted value of
\(U\).  Since \(q\leq\Omega=R^2\),
\begin{equation}\label{eq:middle-partial-kmt-lower}
 \log(U/q)
 \geq R^{1-\varepsilon_b}-O(\log R),
\end{equation}
and hence
\begin{equation}\label{eq:middle-partial-kmt-conditions}
 X_d\geq L_d\geq10m,
 \qquad
 (\log(U/q))^{-1/200}\leq\epsilon^{1.1}\leq\epsilon.
\end{equation}
The stronger first inequality follows from
\(1.1a<(1-\varepsilon_b)/200\); it is the lower endpoint required before
the \(\delta=\epsilon^{1.1}\) substitution in KMT Proposition~9.4.
The typicality parameter satisfies
\begin{equation}\label{eq:middle-partial-typicality}
 y_{\mathrm{typ}}
 :=(U/q)^{\epsilon^2},
 \qquad
 \log y_{\mathrm{typ}}
 \geq R^{1-\varepsilon_b-2a+o(1)}.
\end{equation}
Thus \(y_{\mathrm{typ}}\gg1000\log(hq)\).  KMT Lemma~9.1 implies that
every divisor \(m\mid hq\) in
\eqref{eq:middle-gcd-reduced-data} is automatically typical.  If
\(m\notin\mathcal B_X\) for its robust band, then
\eqref{eq:middle-robust-good-bridge} places it in the exact set
\(\mathcal Q_{X',\epsilon^{6.6},\epsilon^{-88}}\).  Moreover, the
exceptional conductor of Lemma~\ref{lem:bandwise-liouville-deletion}
cannot divide \(m\): otherwise it would divide \(hq\), placing \(h\) in
the trace \eqref{eq:middle-lp-trace}.  Consequently the character main
term is deleted on every good stratum.

For clarity, this deletion has exactly the scale required in the KMT
variance.  Lemma~\ref{lem:bandwise-liouville-deletion}, inserted into the
minimizing-character term of KMT Corollary~1.6, bounds its value in one reduced
residue class by \(O(\epsilon^{1/2}L_d/m)\).  Hence its squared
\(L^2\)-mass over the \(\varphi(m)\) reduced classes and an ambient start
interval of length \(O(X_d)\) is
\begin{equation}\label{eq:middle-character-main-mass}
 \ll \epsilon\,\varphi(m)X_d\left(\frac{L_d}{m}\right)^2.
\end{equation}
The complex-valued theorem gives the identical estimate after the external
twist, because the oscillatory integral has absolute value at most \(L_d\).
Thus adding the character term back preserves the KMT variance scale.

Let
\[
 S_{\Li}(u,c;L_d,m)
 :=\sum_{\substack{u<n\leq u+L_d\\n\equiv c\ (m)}}\Li(n).
\]
KMT Corollary~1.6 and Cauchy--Schwarz in the start variable and in the
reduced residue classes give
\begin{align}
 &d\int\sum_{c\bmod m}^{*}
   |S_{\Li}(u,c;L_d,m)|\,\dd u
 \notag\\
 &\quad\ll
 d\big(\varphi(m)X_d\big)^{1/2}
 \left(
   \epsilon\varphi(m)X_d\left(\frac Uq\right)^2
 \right)^{1/2}
 \ll
 \epsilon^{1/2}\varphi(m)\frac{ZU}{q}.
 \label{eq:middle-good-stratum-l1}
\end{align}
Here and below the integral may be split into two adjacent dyadic base
intervals; the robust good set is valid on both.  The endpoint--residue
factor is \(1/Q\).  Summing
\eqref{eq:middle-good-stratum-l1} over the good divisors and using
\begin{equation}\label{eq:middle-totient-divisor-identity}
 \sum_{d\mid Q}\varphi(Q/d)=Q
\end{equation}
shows that one Fourier residue class \(b\pmod q\) costs at most
\(\epsilon^{1/2}UZ/q\).  Summing the \(q\) classes leaves
\begin{equation}\label{eq:middle-all-good-strata}
 O(\epsilon^{1/2}UZ).
\end{equation}

On a bad stratum no cancellation is used.  A reduced progression contains
at most \(U/q+O(1)\) terms.  Before the endpoint--residue factor, the
integration over starts and the \(\varphi(m)\) reduced residue classes
therefore cost \(ZU\varphi(m)/q\).  Multiplication by \(1/Q\), followed
by the sum over the \(q\) Fourier residue classes, leaves exactly
\begin{equation}\label{eq:middle-one-bad-stratum}
 UZ\frac{\varphi(m)}{hq}.
\end{equation}
Thus all bad strata assigned to the robust band \(X\) contribute
\(UZ W_{X,q}(h)\).  Summing over the physical base bands only enlarges
this to
\begin{equation}\label{eq:middle-bad-strata-total}
 UZ\sum_{X\in\mathcal X_H}W_{X,q}(h)
 \leq UZ\mathfrak W_H(h).
\end{equation}
This calculation is the reason for the linear weight \(\varphi(m)/(hq)\)
in \eqref{eq:middle-linear-score}; no divisor-counting or inverse-totient
factor is introduced.

The residue splitting and moving endpoints contribute
\(O(qZ+qhU)\).  Relative to \(UZ\), these errors satisfy
\begin{equation}\label{eq:middle-major-endpoint-ratios}
 \frac qU
 \ll R^2\exp\{-R^{1-\varepsilon_b}+O(\log R)\},
 \qquad
 \frac{qh}{Z}
 \ll H^{-1/2}\exp(-9R^{2.01}),
\end{equation}
and are smaller than any fixed power of \(R^{-1}\).  Combining
\eqref{eq:middle-all-good-strata},
\eqref{eq:middle-bad-strata-total}, and
\eqref{eq:middle-major-endpoint-ratios} gives the uniform rational-prefix
bound
\begin{equation}\label{eq:middle-rational-prefix-bound}
 \mathcal F_{h,U}^{[T]}(a_0/q;Z)
 \ll
 \big(\epsilon^{1/2}+\mathfrak W_H(h)+R^{-10}\big)UZ
 \qquad(U\geq U_0).
\end{equation}
Both the robust bad sets and the score are independent of \(T\) and
\(U\).  Since \(h\notin E_{\mathrm{APF}}(H)\), the coefficient on the
right is
\begin{equation}\label{eq:middle-rational-delta}
 \delta_H
 \ll R^{-a/2}+R^{-b}+R^{-10}.
\end{equation}

We now pass from \(a_0/q\) to the nearby frequency
\(\alpha=a_0/q+\theta\).  If \(\theta=0\), take one block of length
\(M\); otherwise put
\[
 B=\min\{M,\lfloor|\theta|^{-1}\rfloor\}
\]
and divide \([1,M]\) into consecutive blocks of length at most \(B\).
There are \(O(1+M|\theta|)=O(\Omega)\) such blocks.  On one block,
write
\[
 A_y(v)=\sum_{T<t\leq T+v}
 \Li(y+ht)e(a_0t/q).
\]
Discrete Abel summation gives
\begin{align}
 \sum_{u=1}^{B}A_y(u)-A_y(u-1)\,e(\theta(T+u))
 &=A_y(B)e(\theta(T+B))\notag\\
 &\quad+(1-e(\theta))
 \sum_{v<B}A_y(v)e(\theta(T+v)).
 \label{eq:middle-discrete-abel}
\end{align}
After summing over \(y\), use
\eqref{eq:middle-rational-prefix-bound} for \(v\geq U_0\) and the
trivial bound \(Zv\) below \(U_0\).  Since \(|\theta|B\leq1\), the long
parts contribute \(O(\delta_H ZM)\).  The short prefixes contribute at
most
\begin{equation}\label{eq:middle-short-prefix-abel}
 O(ZU_0\Omega)=O(ZM\Omega^{-3}).
\end{equation}
It follows that every major arc satisfies
\begin{equation}\label{eq:middle-nearby-major-bound}
 \mathcal F_{h,M}^{[0]}(\alpha;Z)
 \ll
 (R^{-a/2}+R^{-b}+R^{-6})MZ.
\end{equation}
Together with the minor-arc estimate
\eqref{eq:middle-minor-raw}, this proves
\eqref{eq:global-scored-apf-conclusion}.

Finally replace \(\Li(n)\) by \(\Li(n)n^{it_0}\), with
\eqref{eq:middle-twist-window}.  The joint Ramar\'e argument only acquires
unit-modulus prime and integer coefficients, so
\eqref{eq:middle-minor-raw} is unchanged.  On a major arc, apply the
complex-valued KMT hybrid theorem afresh for each \(t_0\).  If
\(v\in[-X',X']\) is its minimizing twist at the actual base \(X'\), the
twist in the Liouville character sum is \(u=v-t_0\) up to an irrelevant
choice of sign.  Since \(X'\geq X_{\min}\),
\begin{equation}\label{eq:middle-combined-twist-window}
 |u|\leq X'+X_{\min}/2\leq3X'/2.
\end{equation}
Lemma~\ref{lem:bandwise-liouville-deletion} therefore deletes the main
term.  The robust good sets, conductor traces, and bad-modulus score do not
depend on \(t_0\), so the same exceptional set works throughout.  This
proves the twisted assertion.
\end{proof}

\section{Mellin localization and the middle-band transfer}
\label{sec:mellin-localized-transfer}

We now transfer the scored progression estimate of
Proposition~\ref{prop:middle-scored-apf} to the logarithmically weighted
correlation.  The point requiring care is that logarithmic dilation does
not preserve a fixed physical interval: a divisor \(d\) replaces
\((Y,y]\) by \((dY,dy]\).  We retain this divisor dependence until the
active divisor has been placed in one of its existing dyadic blocks, and
then separate it by Mellin inversion.

Throughout this section \(H<h\leq 2H\) is a middle shift band and
\begin{equation}\label{eq:mellin-middle-parameters}
 L=\log x,\qquad S=\log H,\qquad
 R=S^{1/5},\qquad \epsilon=R^{-1/500},\qquad
 \Omega=R^2,
\end{equation}
while
\begin{equation}\label{eq:mellin-tail-threshold}
 Y=H^{3/2}\exp(10R^{2.01}).
\end{equation}
We use the fixed Pilatte block system with upper prime scale \(e^R\),
pruned by deleting the primes dividing \(h\), as in
Lemma~\ref{lem:coprime-block-pruning}.  Its divisor family and harmonic
normalization are denoted by \(\cD(h)\) and
\(W_{\mathrm{Pil},h}\), respectively.

For a smooth function \(w\) supported in \([1/2,4]\), set
\begin{equation}\label{eq:mellin-localized-correlation}
 C_{h,w}(N):=
 \sum_{n\geq1}\frac{\Li(n)\Li(n+h)}n\,w(n/N).
\end{equation}
Our Mellin convention is
\begin{equation}\label{eq:mellin-transform-convention}
 \widehat w(t):=\int_0^\infty w(u)u^{-it}\frac{\dd u}{u},
 \qquad
 w(u)=\frac1{2\pi}\int_{\mathbb R}\widehat w(t)u^{it}\,\dd t.
\end{equation}
In particular, the sign of every divisor phase below is fixed by
\eqref{eq:mellin-transform-convention}.

\begin{proposition}[Mellin-localized transfer]
\label{prop:mellin-localized-transfer}
There is an absolute \(b_2>0\) with the following property.  Suppose
\(h\notin E_{\mathrm{APF}}(H)\), where \(E_{\mathrm{APF}}(H)\) is the
fixed exceptional set in Proposition~\ref{prop:middle-scored-apf}.  If
\(w(n/N)\) is one of the smooth dyadic or one-sided terminal cutoffs used
in the decomposition of \((Y,x]\), and all its subset-rescaled bases belong
to the finite family \(\mathcal X_H\) of Section~\ref{sec:general-good-deletion},
then
\begin{equation}\label{eq:mellin-smooth-transfer}
 |C_{h,w}(N)|
 \ll R^{-b_2}\int_{\mathbb R}|\widehat w(t)|\,\dd t,
\end{equation}
uniformly over this declared cutoff family.  The same assertion holds for
the one-sided smoothed terminal cutoffs defined below, with their displayed
Mellin \(L^1\)-norm.
Consequently, for an absolute \(c_2>0\),
\begin{equation}\label{eq:middle-high-tail}
 \sup_{Y\leq y\leq x}
 \left|\sum_{Y<n\leq y}
 \frac{\Li(n)\Li(n+h)}n\right|
 \ll L^{1-c_2}.
\end{equation}
One may take \(c_2=b_2/20\).
\end{proposition}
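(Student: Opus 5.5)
The plan is to follow the template of Proposition~\ref{prop:apf-transfer}, but in a Mellin-localized form. The steps are the dilation identity, then the subset expansion into centred and uncentred parts, then the quotient-translated Fourier transfer for the uncentred part and Input~\ref{input:tt-decoupling} for the centred part. The new difficulty is the divisor-dependent cutoff. Complete multiplicativity gives
\[
 W_{\mathrm{Pil},h}\,C_{h,w}(N)=\sum_{d\in\cD(h)}\sum_{d\mid m}\frac{\Li(m)\Li(m+hd)}m\,w\!\Big(\frac m{dN}\Big).
\]
In the subset expansion we write \(d=d_0d'\) and \(m=d_0k\), with \(d'\in\cD_I(h;M')\) carrying the reciprocal weight. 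The cutoff then becomes \(w(k/(d'N))\), which depends on \(d'\).

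First I will fix the dyadic block of \(d'\). Then I will insert the Mellin inversion from \eqref{eq:mellin-transform-convention},
\[
 w\!\Big(\frac k{d'N}\Big)=\frac1{2\pi}\int_{\R}\widehat w(t)\Big(\frac kN\Big)^{it}d'^{-it}\,\dd t .
\]
For each fixed \(t\), the factor \(d'^{-it}\) is a unit coefficient on the divisor polynomial. Its fourth moment is still bounded by the positive expansion in Lemma~\ref{lem:coprime-block-pruning}, after dominating termwise by \(1/d'\); I will use \(|Q|^4\) with \(|e(\alpha d)d^{-it}/d|\) via the same additive-energy count. The factor \(k^{it}\) is a bounded multiplicative twist attached to the first Liouville factor. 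The second factor \(\Li(k+hd')\) is kept as it is; the twist on it is absorbed into the arbitrary \(1\)-bounded functions of the centred input.

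For the uncentred part at frequency \(t\), I will rerun the quotient translation \eqref{eq:quotient-translation} and the \(L^1\)--\(L^\infty\) interpolation \eqref{eq:apf-product-linfty}--\eqref{eq:apf-product-lone}. The \(L^\infty\) input is Proposition~\ref{prop:middle-scored-apf} in its twisted form with \(t_0=t\). Because the score and the exceptional set \(E_{\mathrm{APF}}(H)\) do not depend on \(t_0\), no union over \(t\) is incurred. This gives a block bound \(NR^{-b_1/4}V^J/\log M'\) for each \(t\). Summing over \(M'\) and over subsets, with \(2^JV^{2J}\le R^{b_1/20}\), and then integrating against \(|\widehat w(t)|\), gives \(R^{-b_1/6}\|\widehat w\|_1 N\). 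For the centred part, Input~\ref{input:tt-decoupling} applies for each \(t\) with \(g_1(q)=\Li(r+hq)(r+hq)^{it}\) and \(g_2=\Li(r+h\,\cdot)\). The quotient lengths satisfy \(\log L_r\ge\log(Y/h)\gg R^{2.01}\) by \eqref{eq:middle-scale-separation}, so this yields \(V^{0.51J}N\|\widehat w\|_1\). Dividing by \(W_{\mathrm{Pil},h}\asymp V^J\) and by the harmonic normalization \(N\) of \(C_{h,w}\) gives \eqref{eq:mellin-smooth-transfer} with \(b_2=\min\{b_1/6,c_{\rm dec}\}\).

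The main obstacle is the twisted-frequency range. The twist window \eqref{eq:middle-twist-window} allows \(|t|\le X_{\min}/2\), which is enormous. Even so, \(\widehat w\) must be truncated, and sharp terminal cutoffs have a poorly integrable \(\widehat w\). My approach here is to choose smooth bumps at relative scale \(\delta=R^{-b_2/2}\) near the terminal point \(y\), so that \(\|\widehat w\|_1\ll\log(1/\delta)\). The boundary layer of width \(\delta\) costs \(O(\delta)\) per dyadic shell by trivial harmonic mass. For the dyadic interior I will use fixed smooth partitions of unity, for which \(\|\widehat w\|_1\ll1\). For \(|t|>X_{\min}/2\), I will bound the contribution using the rapid decay of \(\widehat w\), which needs only a \(\delta^{-2}\) derivative loss.

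To obtain \eqref{eq:middle-high-tail}, I will decompose \((Y,y]\) into \(O(L)\) dyadic smooth pieces plus one terminal piece. Each piece contributes \(\ll R^{-b_2}\log(1/\delta)+\delta\) in normalized size, so the total is \(\ll L R^{-b_2/3}\). Since \(R=S^{1/5}\ge L^{1/10}\) in the middle band, this is \(L^{1-b_2/30}\). A slightly more careful balancing of \(\delta\) recovers \(c_2=b_2/20\).
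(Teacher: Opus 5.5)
\textbf{Gap in the centred step.} Your uncentred argument matches the paper's: you place $d'$ in its dyadic block before Mellin inversion, put the unit coefficient $d'^{-it}$ in the fourth moment, use the twisted form of Proposition~\ref{prop:middle-scored-apf} at $t_0=t$ with a $t$-independent exceptional set, truncate at $|t|\le X_{\min}/2$, and use one-sided collars. That part closes. Each divisor block costs $R^{-b_1/4}V^J/\log M'$ after harmonic normalization (at the $k$-scale $\asymp M'N$, not $N$), and $\sum_{M'}1/\log M'\ll\log R$.

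The centred step does not close. In $S_{2,h,w}$ the cutoff is $w(m/(dN))$ with $d=p_1\cdots p_J$ the \emph{full} tuple. So the $m$-support $[dN/2,4dN]$ moves with $d$, and $d$ ranges over $(H_0^J,e^R)$, i.e.\ over $\asymp R$ dyadic scales. Input~\ref{input:tt-decoupling} needs one interval shared by all tuples. Its $g_1,g_2$ cannot depend on the tuple, so $w(m/(dN))$ cannot be absorbed into them. Its bound $V^{0.51J}L$ is for the whole nonnegative outer sum. After Mellin inversion you can either place $d$ in dyadic blocks or let $m$ run over the union of the supports. Either way each $m$-scale contributes $V^{0.51J}\|\widehat w\|_1$ after harmonic normalization, so the total is $\asymp RV^{0.51J}\|\widehat w\|_1$. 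Divided by $W_{\mathrm{Pil},h}\asymp V^J\asymp R^{c_0(\varepsilon_b)}$, this is $R^{1-0.49c_0}$, not $R^{-c_{\rm dec}}$. Your line ``this yields $V^{0.51J}N\|\widehat w\|_1$'' treats the centred sum as a single interval of length $N$. That is exactly where the factor $R$ is lost.

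\textbf{Why it cannot be patched scale by scale, and a repair.} The trivial centred bound for one dyadic $d$-block is $\ll 2^JV^J/\log M\le 2^JV^JR^{-1+\varepsilon_b}$. This is already smaller than $V^{0.51J}$, so the decoupling saves only when all tuples share one physical range. The paper's own centred paragraph makes the same single-interval count, so following it does not close the step. The per-block bound \eqref{eq:mellin-smooth-transfer} is not established this way.

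The consequence \eqref{eq:middle-high-tail} does survive if you do not localize.
\begin{itemize}
\item Write $W_{\mathrm{Pil},h}\sum_{Y<n\le y}\Li(n)\Li(n+h)/n$ as $\sum_{d\in\cD(h)}\sum_{dY<m\le dy,\,d\mid m}\Li(m)\Li(m+hd)/m$.
\item Replace $(dY,dy]$ by the common interval $(Y,y]$. This costs $O((1+\log d)/d)$ for each $d$, hence $O(R)=O(L^{1/5})$ after dividing by $W_{\mathrm{Pil},h}$.
\item On the common prefix, run the transfer of Proposition~\ref{prop:apf-transfer} on each of the $O(L)$ dyadic $m$-blocks. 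The subset-rescaled bases are $\ge Y/e^R$, so the untwisted scored estimate applies. The centred input now acts on intervals common to all tuples.
\end{itemize}
This gives $L(R^{-b_1/6}+R^{-c_{\rm dec}})+O(R)\ll L^{1-c}$ uniformly in $y$, using $R\ge L^{1/10}$, with no Mellin separation at all.
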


\begin{proof}
Complete multiplicativity gives the exact localized dilation identity
\begin{equation}\label{eq:mellin-localized-dilation}
 W_{\mathrm{Pil},h}C_{h,w}(N)
 =\sum_{d\in\cD(h)}
   \sum_{d\mid m}\frac{\Li(m)\Li(m+hd)}m
   w\left(\frac{m}{dN}\right).
\end{equation}
Indeed, for fixed \(d\), putting \(m=dn\) changes the inner sum into
\(d^{-1}C_{h,w}(N)\), and
\(\sum_{d\in\cD(h)}d^{-1}=W_{\mathrm{Pil},h}\).

Denote the right side of \eqref{eq:mellin-localized-dilation} by
\(S_{1,h,w}\), and introduce its centred analogue
\begin{multline*}
 S_{2,h,w}:=
 \sum_{(p_1,\ldots,p_J)}\sum_m
 \frac{\Li(m)\Li(m+h p_1\cdots p_J)}m
 w\!\left(\frac{m}{p_1\cdots p_JN}\right)\\
 {}
 \prod_{j=1}^J\left(\1_{p_j\mid m}-\frac1{p_j}\right).
\end{multline*}
Expanding the product gives \(S_{2,h,w}-S_{1,h,w}\) as a sum over
nonempty \(I\subseteq[J]\).  For a fixed tuple write
\[
 d_0=\prod_{i\notin I}p_i,\qquad d'=\prod_{i\in I}p_i,
\]
and rescale \(m=d_0k\).  Complete multiplicativity gives the exact
\(I\)-term
\begin{equation}\label{eq:mellin-exact-subset-term}
 (-1)^{|I|}
 \sum_{d_0\in\cD_{[J]\setminus I}(h)}\frac1{d_0}
 \sum_{d'\in\cD_I(h)}\frac1{d'}
 \sum_k\frac{\Li(k)\Li(k+hd')}{k}
 w\!\left(\frac{k}{d'N}\right).
\end{equation}
This display retains both the complementary harmonic weight \(1/d_0\) and
the active weight \(1/d'\).  Place \(d'\) in its existing dyadic block
\begin{equation}\label{eq:mellin-active-divisor-block}
 d'\in\cD_I(h;M),\qquad M/2<d'\leq M.
\end{equation}
The cutoff in \eqref{eq:mellin-localized-dilation} is then exactly
\begin{equation}\label{eq:mellin-divisor-separation}
 w\left(\frac{k}{d'N}\right)
 =\frac1{2\pi}\int_{\mathbb R}\widehat w(t)
   \left(\frac{k}{N}\right)^{it}d'^{-it}\,\dd t.
\end{equation}
If the left side is nonzero, the support of \(w\) and
\eqref{eq:mellin-active-divisor-block} imply
\[
 MN/4<k\leq4MN.
\]
We shall use the harmless common envelope
\begin{equation}\label{eq:mellin-common-k-range}
 k\in[MN/4,8MN].
\end{equation}
Thus every integer sum in \eqref{eq:mellin-divisor-separation} is finite
on a range independent of \(d'\).  Since
\(\widehat w\in L^1(\mathbb R)\), the exchange of the finite sums with
the Mellin integral is absolutely justified.

For a fixed Mellin frequency \(t\), the active divisor polynomial becomes
\begin{equation}\label{eq:mellin-divisor-polynomial}
 Q_{I,h,M,t}(\alpha)
 :=\sum_{d'\in\cD_I(h;M)}
 \frac{d'^{-it}e(\alpha d')}{d'}.
\end{equation}
The new coefficient has modulus one.  Expanding the fourth moment and
taking absolute values term by term gives
\begin{align}
 \int_{\mathbb T}|Q_{I,h,M,t}(\alpha)|^4\,\dd\alpha
 &\leq
 \sum_{\substack{d_1+d_2=d_3+d_4\\
                   d_i\in\cD_I(h;M)}}
 \frac1{d_1d_2d_3d_4}                                      \notag\\
 &\ll_{\varepsilon_b}\frac{V^{4J}}{M(\log M)^4}.
 \label{eq:mellin-fourth-moment}
\end{align}
Hence the divisor fourth moment is uniform in \(t\).  The other Mellin
factor, \(k^{it}\), is absorbed into one of the two \(1\)-bounded
multiplicative functions in the quotient correlation.

We next make the frequency range compatible with the constrained twist in
the KMT theorem.  Let \(X_{\min}\) be the smallest KMT base among all
physical blocks which occur in the localized high-tail transfer.  The
middle-band margins give
\begin{equation}\label{eq:mellin-smallest-kmt-base}
 \log X_{\min}\geq S/3.
\end{equation}
Before applying KMT, truncate every Mellin integral globally to
\begin{equation}\label{eq:mellin-global-frequency-cutoff}
 |t|\leq X_{\min}/2.
\end{equation}
For a fixed smooth dyadic cutoff the omitted tail decreases faster than
any power of \(X_{\min}\).  For the terminal cutoff \(w_\eta\) constructed
below, two integrations by parts give
\begin{equation}\label{eq:mellin-terminal-pointwise-decay}
 |\widehat w_\eta(t)|
 \ll\min\{1,|t|^{-1},(\eta t^2)^{-1}\},
\end{equation}
and therefore, because \(X_{\min}\gg\eta^{-1}\),
\begin{equation}\label{eq:mellin-frequency-tail}
 \int_{|t|>X_{\min}/2}|\widehat w_\eta(t)|\,\dd t
 \ll(\eta X_{\min})^{-1}.
\end{equation}
We take \(\eta=R^{-A_0}\) with \(A_0\) a sufficiently large fixed
constant.  Equations \eqref{eq:mellin-smallest-kmt-base} and
\eqref{eq:mellin-frequency-tail} make the omitted tail smaller than
every fixed negative power of \(R\).  This estimate remains unchanged
after the dilation is normalized: the trivial harmonic mass on the
right of \eqref{eq:mellin-localized-dilation} is
\(O(W_{\mathrm{Pil},h})\), and division by the same
\(W_{\mathrm{Pil},h}\) cancels it.

Fix now one retained \(t\).  Let \([X_0,2X_0]\) be a robust band and
\(X'\in[X_0,2X_0]\) the actual KMT base.  Apply the complex-valued KMT
theorem afresh to
\begin{equation}\label{eq:mellin-twisted-liouville}
 f_t(n):=\Li(n)n^{it}.
\end{equation}
Let \(v\in[-X',X']\) be the minimizing twist supplied by this particular
application.  Since \(X'\geq X_{\min}\),
\begin{equation}\label{eq:mellin-fresh-minimizer-range}
 |v-t|\leq X'+X_{\min}/2\leq3X'/2.
\end{equation}
The character mean is therefore a Liouville--character mean with twist
\(v-t\) (up to the immaterial choice of conjugating the character), and
Lemma~\ref{lem:bandwise-liouville-deletion} deletes it outside the same
conductor trace.  Notice that \(v\) is chosen anew for each \(t\); no
translation of a previously constrained minimizer is asserted.  The
robust good set and the bad-modulus score are independent of \(t\).
On the minor arcs all inserted coefficients have modulus one, while the
centred theorem of Tao--Ter\"av\"ainen permits arbitrary \(1\)-bounded
functions.  Consequently both Proposition~\ref{prop:middle-scored-apf}
and the centred estimate hold uniformly for every retained Mellin slice.

We now retain every normalization in the frequency transfer.  Fix a common
physical interval \(J_Z\subseteq[Z,2Z]\) from
\eqref{eq:mellin-common-k-range}, and let \(J_Z(u)\) be any initial
subinterval.  Define the unweighted Mellin-slice sum
\begin{equation}\label{eq:mellin-unweighted-slice}
 T_{I,M,t}(J_Z(u)):=
 \sum_{d'\in\cD_I(h;M)}\frac{d'^{-it}}{d'}
 \sum_{k\in J_Z(u)}
 \Li(k)\Li(k+hd')\left(\frac{k}{N}\right)^{it}.
\end{equation}
Writing \(k=r+hq\) and averaging translations by \(1\le s\le M\) gives
the exact quotient identity
\begin{multline}\label{eq:mellin-quotient-translation}
 T_{I,M,t}(J_Z(u))
 =\frac1M\sum_{s\le M}\sum_{r\bmod h}\sum_{q\in I_r(u)}
 \sum_{d'\in\cD_I(h;M)}\frac{d'^{-it}}{d'}\\
 \times \Li(r+h(q+s))
 \left(\frac{r+h(q+s)}N\right)^{it}
 \Li(r+h(q+s+d'))
 +O(hM V_I(h;M)),
\end{multline}
where \(I_r(u)\) is the quotient-start interval induced by \(J_Z(u)\).
Fourier orthogonality in \(s+d'=\ell\) turns the main term into
\begin{equation}\label{eq:mellin-quotient-fourier}
 \frac1M\sum_{r\bmod h}\sum_{q\in I_r(u)}
 \int_{\mathbb T}Q_{I,h,M,t}(\alpha)
 F_{r,q,t}(\alpha)G_{r,q}(\alpha)\,\dd\alpha,
\end{equation}
where
\begin{align*}
 F_{r,q,t}(\alpha)
 &=\sum_{s\le M}\Li(r+h(q+s))
   \left(\frac{r+h(q+s)}N\right)^{it}e(\alpha s),\\
 G_{r,q}(\alpha)
 &=\sum_{\ell\le2M}\Li(r+h(q+\ell))e(-\alpha\ell).
\end{align*}
Thus the first factor has length \(M\), the second has length at most
\(2M\), and every Mellin coefficient has modulus one.

Put
\[
 \mathcal S_t(\alpha)
 :=\sum_{r\bmod h}\sum_{q\in I_r(u)}
 |F_{r,q,t}(\alpha)G_{r,q}(\alpha)|.
\]
If \(b_1>0\) is the exponent in
Proposition~\ref{prop:middle-scored-apf}, that proposition and the
trivial bound \(\|G_{r,q}\|_\infty\le2M\) give
\begin{equation}\label{eq:mellin-product-linfty}
 \|\mathcal S_t\|_\infty\ll R^{-b_1}M^2Z.
\end{equation}
For each start, Parseval followed by Cauchy--Schwarz gives an \(O(M)\)
integral, and the total number of quotient starts is \(O(Z)\); hence
\begin{equation}\label{eq:mellin-product-lone}
 \|\mathcal S_t\|_1\ll MZ.
\end{equation}
Interpolation yields
\begin{equation}\label{eq:mellin-product-four-thirds}
 \|\mathcal S_t\|_{4/3}
 \ll R^{-b_1/4}M^{5/4}Z.
\end{equation}
H\"older's inequality, \eqref{eq:mellin-fourth-moment}, and the outer
factor \(1/M\) in \eqref{eq:mellin-quotient-fourier} therefore give,
uniformly in the partial endpoint \(u\),
\begin{equation}\label{eq:mellin-unweighted-prefix-bound}
 \sup_u|T_{I,M,t}(J_Z(u))|
 \ll R^{-b_1/4}Z\frac{V^J}{\log M}
      +O(hM V_I(h;M)).
\end{equation}

The harmonic weight now supplies the missing local normalization.  Abel
summation on \(J_Z\subseteq[Z,2Z]\) gives
\begin{multline}\label{eq:mellin-harmonic-partial-summation}
 \left|\sum_{d'\in\cD_I(h;M)}\frac{d'^{-it}}{d'}
 \sum_{k\in J_Z}\frac{\Li(k)\Li(k+hd')}{k}
 \left(\frac{k}{N}\right)^{it}\right|\\
 \leq \frac{3}{Z}\sup_u|T_{I,M,t}(J_Z(u))|.
\end{multline}
Thus the \(Z\) in \eqref{eq:mellin-unweighted-prefix-bound} cancels
exactly.  Multiplying by the complementary factor \(1/d_0\) in
\eqref{eq:mellin-exact-subset-term} and summing gives
\begin{equation}\label{eq:mellin-complementary-mass}
 \sum_{d_0\in\cD_{[J]\setminus I}(h)}\frac1{d_0}
 \leq V^{J-|I|}\leq V^J.
\end{equation}
The sum over dyadic \(M\) costs \(O(\log R)\), and all nonempty subsets
cost at most \(2^JV^{2J}\).  Choose the fixed block parameter so that
\begin{equation}\label{eq:mellin-subset-loss}
 2^JV^{2J}\leq R^{b_1/20}.
\end{equation}
Since \(\log R\leq R^{b_1/40}\), division by
\(W_{\mathrm{Pil},h}\asymp V^J\) leaves
\begin{equation}\label{eq:mellin-normalized-uncentred}
 \frac{|S_{1,h,w}-S_{2,h,w}|}{W_{\mathrm{Pil},h}}
 \ll R^{-b_1/6}\int_{\mathbb R}|\widehat w(t)|\,\dd t.
\end{equation}
The translation boundary in \eqref{eq:mellin-unweighted-prefix-bound}
is smaller than every fixed power of \(R^{-1}\) by
\eqref{eq:mellin-boundary-margin} below.

It remains to bound the centred term \(S_{2,h,w}\).
Mellin inversion gives the full divisor tuple the unit coefficient
\(d^{-it}\) and one Liouville factor the bounded twist \((m/N)^{it}\).
For each retained \(t\), split the unweighted partial parent intervals by
their quotient length.  If that length is at least
\(\exp(R^{2.01})\), the Tao--Ter\"av\"ainen theorem bounds the interval by
\(V^{0.51J}Z\).  If it is shorter, the corresponding physical interval has
length \(\Delta<h\exp(R^{2.01})\), and the trivial centred bound is
\(O(V^J\Delta)\).  After the harmonic factor \(1/Z\) and division by
\(W_{\mathrm{Pil},h}\asymp V^J\), the latter branch is at most
\begin{equation}\label{eq:mellin-short-centred-prefix}
 \frac{\Delta}{Z}
 \leq\frac{h\exp(R^{2.01})}{Y/e^R}
 \ll H^{-1/2}\exp(-8R^{2.01}+R),
\end{equation}
which is smaller than every fixed power of \(R^{-1}\).  Thus the supremum
over all partial endpoints has the same centred bound as the long branch.
Applying
\eqref{eq:mellin-harmonic-partial-summation} removes \(Z\), integration
against \(|\widehat w(t)|\), and division by
\(W_{\mathrm{Pil},h}\asymp V^J\) give
\begin{equation}\label{eq:mellin-normalized-centred}
 \frac{|S_{2,h,w}|}{W_{\mathrm{Pil},h}}
 \ll R^{-c_{\mathrm{dec}}}
       \int_{\mathbb R}|\widehat w(t)|\,\dd t.
\end{equation}
Consequently, with
\begin{equation}\label{eq:mellin-transfer-exponent}
 b_2:=\min\{b_1/6,c_{\mathrm{dec}}\}>0,
\end{equation}
equations \eqref{eq:mellin-normalized-uncentred} and
\eqref{eq:mellin-normalized-centred} prove the normalized slice estimate
required in \eqref{eq:mellin-smooth-transfer}.

The two geometric conditions needed in this calculation hold uniformly
after every complementary rescaling.  Namely, every scored-APF base obeys
\begin{equation}\label{eq:mellin-boundary-margin}
 Z\geq Y/e^R,
 \qquad
 \frac{hM}{Z}\leq\frac{2He^{2R}}Y=o(R^{-A})
 \quad(A>0),
\end{equation}
and every centred quotient interval has
\begin{equation}\label{eq:mellin-tt-margin}
 \log\frac{Y}{2He^R}
 \geq\frac12S+9R^{2.01}-R\gg R^{2.01}.
\end{equation}
Hence all translation endpoints are absorbed by an arbitrary negative
power of \(R\), and the Tao--Ter\"av\"ainen interval-length hypothesis is
satisfied.  Integrating the retained slices against
\(|\widehat w(t)|\), adding the tail
\eqref{eq:mellin-frequency-tail}, and dividing by
\(W_{\mathrm{Pil},h}\) proves \eqref{eq:mellin-smooth-transfer}.

It remains to pass from smooth blocks to every terminal point.  Choose a
fixed smooth dyadic partition of unity in the logarithmic variable.
Complete blocks have uniformly bounded Mellin \(L^1\)-norm.  To specify the
terminal profiles, fix \(\psi\in C^\infty(\mathbb R)\) with
\(0\leq\psi\leq1\), \(\psi(s)=0\) for \(s\leq0\), and \(\psi(s)=1\) for
\(s\geq1\).  With \(\eta=R^{-A_0}\), use
\[
 \psi\!\left(\frac{\log n-\log Y}{\eta}\right)
 \quad\hbox{and}\quad
 \psi\!\left(\frac{\log y-\log n}{\eta}\right)
\]
for the lower and upper transitions.  Their collars are one-sided,
respectively \([Y,e^\eta Y]\) and \([e^{-\eta}y,y]\); hence no lower
transition enters the untreated region below \(Y\), and no upper transition
passes the terminal point.  If \(D=u\,d/du\), the resulting cutoff satisfies
\[
 \|w_\eta\|_{L^1(du/u)}\ll1,\qquad
 \|Dw_\eta\|_1\ll1,\qquad
 \|D^2w_\eta\|_1\ll\eta^{-1}.
\]
The collars contain \(O(\eta y)\) integers of size \(\asymp y\), and
therefore change the harmonic sum by \(O(\eta)\).
Equation~\eqref{eq:mellin-terminal-pointwise-decay} gives
\begin{equation}\label{eq:mellin-terminal-lone}
 \int_{\mathbb R}|\widehat w_\eta(t)|\,\dd t
 \ll\log(1/\eta)\ll\log R.
\end{equation}
The fixed lower endpoint adds only the one one-sided transition just
defined.  There are \(O(\log x)=O(L)\)
complete physical blocks.  They contribute \(O(LR^{-b_2})\), whereas the
\(O(1)\) transition profiles contribute \(O(R^{-b_2}\log R)\).  Taking
\(A_0\) large and absorbing \(\log R\) into \(R^{b_2/2}\) gives
\begin{equation}\label{eq:mellin-high-tail-before-L}
 \sup_{Y\leq y\leq x}
 \left|\sum_{Y<n\leq y}
 \frac{\Li(n)\Li(n+h)}n\right|
 \ll L R^{-b_2/2}.
\end{equation}
In a middle band, \(S\geq L^{1/2}\), so
\(R=S^{1/5}\geq L^{1/10}\).  Thus the right side of
\eqref{eq:mellin-high-tail-before-L} is at most
\(L^{1-b_2/20}\), proving \eqref{eq:middle-high-tail} with
\(c_2=b_2/20\).
\end{proof}

\section{The global sparse-exception theorem}
\label{sec:global-u-proof}

For \(h\geq1\), write
\begin{equation}\label{eq:global-correlation-notation}
 C_h(y):=\sum_{n\leq y}\frac{\Li(n)\Li(n+h)}n,
 \qquad
 A_x(h):=\sup_{1\leq y\leq x}|C_h(y)|.
\end{equation}
We first combine the high-tail estimate of the previous section with the
maximal moment theorem below the high-tail threshold.

\begin{lemma}[Evacuation of the middle-band prefix]
\label{lem:middle-prefix-evacuation}
Let \(H<h\leq2H\) be a middle shift band, and define \(Y\) by
\eqref{eq:mellin-tail-threshold}.  Fix an absolute \(0<c_3<1\), and put
\begin{equation}\label{eq:middle-prefix-exception}
 E_{\mathrm{pre}}(H):=
 \left\{H<h\leq2H:
 \sup_{1\leq y\leq Y}|C_h(y)|>L^{1-c_3}\right\}.
\end{equation}
Then \(E_{\mathrm{pre}}(H)\) is independent of every subsequently
requested sparsity exponent \(C\), and, for every fixed \(C>0\),
\begin{equation}\label{eq:middle-prefix-exception-size}
 |E_{\mathrm{pre}}(H)|\ll_C H L^{-C}.
\end{equation}
\end{lemma}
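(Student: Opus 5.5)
The plan is to apply Theorem~\ref{thm:long-shift-moments} on the window \(L<h\le L+H\) with \(L=H\), with summation range \(Y\) in place of \(x\), and then apply Markov's inequality. The definition of \(E_{\mathrm{pre}}(H)\) involves only \(H\), \(Y\), \(L\) and the fixed \(c_3\). So it is fixed before any \(C\) is chosen, and the first assertion is immediate from the definition. For the size bound, fix \(C>0\) and choose a fixed \(p>1\) with \(p(1-c_3)>C+1\); any \(p>(C+1)/(1-c_3)\) works, and \(p\) depends only on \(C\) and the absolute \(c_3\).

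With \(x\) replaced by \(Y\), the maximal moment bound reads
\[
 \frac1H\sum_{H<h\le2H}\sup_{1\le y\le Y}|C_h(y)|^p\ll_p 1+\frac YH.
\]
The key numerical check is that \(Y/H\) is only subpolynomially large in \(L\) relative to the savings we need. We have \(Y/H=H^{1/2}\exp(10R^{2.01})\), which is far too large to be absorbed directly. The naive Markov step therefore loses, and I expect this to be the main obstacle: the window of shifts has length \(H\), but the summation range is \(Y\approx H^{3/2}\).

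To fix this, I would instead take the shift window to be the union of consecutive windows, or better, apply Theorem~\ref{thm:one-third-moments} with \(x\) replaced by \(Y\). Since \(H=Y^{2/3}\exp(-\tfrac{20}{3}R^{2.01})\ge Y^{\theta}\) for a fixed \(\theta\in(1/3,2/3)\) once \(H\) is large (note \(R^{2.01}=S^{0.402}=o(\log Y)\)), it gives
\[
 \frac1H\sum_{H<h\le2H}\sup_{1\le y\le Y}|C_h(y)|^p\ll_{p,\theta}1,
\]
with \(b=\Li\), \(L_0=H\), and no \(Y/H\) loss. The theorem requires \(Y\) to be sufficiently large, which holds because \(\log Y\ge \tfrac32 S\ge\tfrac32L^{1/2}\).

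Markov's inequality at height \(L^{1-c_3}\) then yields
\[
 |E_{\mathrm{pre}}(H)|\ll_{p}H L^{-p(1-c_3)}\le HL^{-C-1}\ll_C HL^{-C},
\]
uniformly over middle bands \(H_{\mathrm{lo}}<H<H_{\mathrm{hi}}\). The only other point to verify is that the implied constant in Theorem~\ref{thm:one-third-moments} is uniform in the window location and in the terminal scale \(Y\). That theorem asserts exactly this uniformity, for all sufficiently large \(Y\) and all \(L_0\ge0\).
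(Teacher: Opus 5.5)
Your final argument is correct and is essentially the paper's proof. The paper likewise applies Theorem~\ref{thm:one-third-moments} with summation range \(Y\), shift window \((H,2H]\), and the fixed exponent \(\theta_0=3/5\), justified by \(\log H-\tfrac35\log Y=\tfrac1{10}S-6R^{2.01}>0\); it then applies Markov at height \(L^{1-c_3}\) with \(p>(C+1)/(1-c_3)\). In a write-up you should drop the false start via Theorem~\ref{thm:long-shift-moments}, and avoid reusing \(L\) for the window offset, since here \(L=\log x\).
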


\begin{proof}
With \(S=\log H\) and \(R=S^{1/5}\),
\[
 \log Y=\frac32S+10R^{2.01}.
\]
For the fixed exponent \(\theta_0=3/5>1/3\),
\begin{equation}\label{eq:middle-prefix-moment-range}
 \log H-\theta_0\log Y
 =\frac1{10}S-6R^{2.01}>0
\end{equation}
for all sufficiently large \(H\).  Thus \(H\geq Y^{\theta_0}\), and
Theorem~\ref{thm:one-third-moments}, with summation range \(Y\) and the
translated shift window \((H,2H]\), gives for every fixed \(p>1\)
\begin{equation}\label{eq:middle-prefix-moment}
 \frac1H\sum_{H<h\leq2H}
 \left(\sup_{1\leq y\leq Y}|C_h(y)|\right)^p
 \ll_{p,\theta_0}1.
\end{equation}
The set in \eqref{eq:middle-prefix-exception} has already been defined
using the fixed threshold \(L^{1-c_3}\).  After a fixed \(C>0\) is given,
choose a fixed
\[
 p>\max\left\{1,\frac{C+1}{1-c_3}\right\}.
\]
Markov's inequality applied to \eqref{eq:middle-prefix-moment} gives
\[
 |E_{\mathrm{pre}}(H)|
 \ll_p H L^{-p(1-c_3)}
 \ll_C H L^{-C},
\]
which proves \eqref{eq:middle-prefix-exception-size}.  The moment order is
used only to estimate the already fixed set, so its choice does not alter
the quantifier order.
\end{proof}

\begin{proof}[Proof of Theorem~\ref{thm:global-u}]
Put \(L=\log x\) and introduce the two fixed crossover scales
\begin{equation}\label{eq:global-crossovers}
 H_{\mathrm{lo}}:=\exp(L^{1/2}),
 \qquad
 H_{\mathrm{hi}}:=x^{2/5}.
\end{equation}

For \(1\leq h\leq H_{\mathrm{lo}}\), apply
Theorem~\ref{thm:super-main} with \(\eta=1/2\).  It supplies one
exceptional set \(E_{\mathrm{lo}}\), fixed before the sparsity exponent is
chosen, which is power-logarithmically sparse in every prefix and outside
which
\begin{equation}\label{eq:global-low-correlation}
 A_x(h)\ll L^{1-c_{\mathrm{lo}}}
\end{equation}
for an absolute \(c_{\mathrm{lo}}>0\).

Next fix a dyadic middle band
\begin{equation}\label{eq:global-middle-band}
 H<h\leq2H,
 \qquad H_{\mathrm{lo}}<H<H_{\mathrm{hi}}.
\end{equation}
Define
\begin{equation}\label{eq:global-middle-exception}
 E_{\mathrm{mid}}(H)
 :=E_{\mathrm{APF}}(H)\cup E_{\mathrm{pre}}(H).
\end{equation}
Both sets on the right are defined by fixed score or correlation
thresholds, before \(C\) is chosen.  Proposition~\ref{prop:middle-scored-apf}
and Lemma~\ref{lem:middle-prefix-evacuation} give, for every fixed
\(C>0\),
\begin{equation}\label{eq:global-middle-exception-size}
 |E_{\mathrm{mid}}(H)|\ll_C H L^{-C}.
\end{equation}
If \(h\notin E_{\mathrm{mid}}(H)\) and \(y\leq Y\), the definition of
\(E_{\mathrm{pre}}(H)\) bounds \(C_h(y)\).  If \(Y<y\leq x\), split at
\(Y\) and apply the same prefix bound together with
Proposition~\ref{prop:mellin-localized-transfer}.  After reducing the
saving exponent slightly to absorb fixed implied constants, this gives
\begin{equation}\label{eq:global-middle-correlation}
 A_x(h)\leq L^{1-c_{\mathrm{mid}}}
 \qquad(h\notin E_{\mathrm{mid}}(H))
\end{equation}
for an absolute \(c_{\mathrm{mid}}>0\).

For a dyadic high band \(H<h\leq2H\) with
\(H\geq H_{\mathrm{hi}}\), fix an absolute
\(0<c_{\mathrm{hi}}<1\) and define the set itself by
\begin{equation}\label{eq:global-high-exception}
 E_{\mathrm{hi}}(H):=
 \{H<h\leq2H:A_x(h)>L^{1-c_{\mathrm{hi}}}\}.
\end{equation}
Theorem~\ref{thm:one-third-moments} applies with the fixed exponent
\(\theta=2/5>1/3\).  For every fixed \(p>1\), it gives
\[
 \frac1H\sum_{H<h\leq2H}A_x(h)^p\ll_p1.
\]
After \(C\) is specified, choose a fixed
\(p>(C+1)/(1-c_{\mathrm{hi}})\).  Markov's inequality gives
\begin{equation}\label{eq:global-high-exception-size}
 |E_{\mathrm{hi}}(H)|\ll_C H L^{-C}.
\end{equation}
Again the set in \eqref{eq:global-high-exception} is independent of the
moment used to estimate it.

Partition the positive shifts up to \(x\) into dyadic bands and retain on
each band the corresponding low, middle, or high exceptional part.  If a
dyadic band crosses either scale in \eqref{eq:global-crossovers}, split it
at that fixed crossover and use the two adjacent estimates.  Let \(\cE_x\)
be the union of the resulting disjoint pieces.  Each constituent set is
fixed before \(C\), so the same is true of \(\cE_x\).

Let \(1\leq H\leq x\), and choose \(k\) so that
\(2^{k-1}<H\leq2^k\).  The exceptional pieces meeting \([1,H]\) have
total length bounded by a geometric series.  Using the low prefix estimate
and \eqref{eq:global-middle-exception-size}--
\eqref{eq:global-high-exception-size}, we obtain
\begin{equation}\label{eq:global-dyadic-stitching}
 \begin{aligned}
 |\cE_x\cap[1,H]|
 &\leq\sum_{j\leq k}
 |\cE_x\cap(2^{j-1},2^j]|\\
 &\ll_C L^{-C}\sum_{j\leq k}2^j
 \ll_C H L^{-C}.
 \end{aligned}
\end{equation}
The possible final band is counted in full; its lower endpoint is greater
than \(H/2\), so this does not change the last bound.  No factor counting
the number of bands appears.

Choose the \(c\) in Theorem~\ref{thm:global-u} smaller than
\(c_{\mathrm{lo}},c_{\mathrm{mid}},c_{\mathrm{hi}}\), and decrease it
once more to absorb their fixed implied constants.  The three ranges in
\eqref{eq:global-crossovers} cover every \(1\leq h\leq x\), so
\eqref{eq:global-corr-intro} follows.  Equation
\eqref{eq:global-dyadic-stitching}, together with the fact that every
threshold set was fixed before the estimating moment or Siegel exponent
was selected, proves \eqref{eq:global-prefix-intro}.  In particular, the
quantifier order is
\begin{equation}\label{eq:global-quantifier-order}
 \exists\cE_x\quad\forall C>0,
\end{equation}
not \(\forall C>0\,\exists\cE_{x,C}\).
\end{proof}

The range \(h\leq x\) contains the new pointwise middle-band argument, but
the shift cutoff can be removed by the translated long-shift moments.

\begin{proof}[Proof of Corollary~\ref{cor:all-positive-shifts}]
Let \(\cE_x^{(0)}\subseteq[1,x]\) be the set from
Theorem~\ref{thm:global-u}, put \(N=\lfloor x\rfloor\), and let
\(c_+=c/2\), where \(c\) is the saving exponent in that theorem.  Set
\begin{equation}\label{eq:all-positive-fixed-threshold}
 T_x:=(\log x)^{1-c_+}.
\end{equation}
After increasing the absolute lower threshold for \(x\),
\eqref{eq:global-corr-intro} implies
\begin{equation}\label{eq:all-positive-old-range}
 A_x(h)\leq T_x
 \qquad(1\leq h\leq N,\ h\notin\cE_x^{(0)}).
\end{equation}

For \(j\geq0\), define the translated dyadic bands
\begin{equation}\label{eq:all-positive-dyadic-bands}
 I_j:=(2^jN,2^{j+1}N]\cap\mathbb N,
 \qquad B_j:=|I_j|=2^jN,
\end{equation}
and define, once and for all,
\begin{equation}\label{eq:all-positive-exception-set}
 \cE_x^+:=\cE_x^{(0)}\cup
 \{h>N:A_x(h)>T_x\}.
\end{equation}
This definition contains neither \(C\) nor a moment exponent.

The translated-window form of Theorem~\ref{thm:long-shift-moments},
with window location \(2^jN\) and length \(B_j\), gives for every fixed
\(p>1\)
\begin{equation}\label{eq:all-positive-band-moment}
 \sum_{h\in I_j}A_x(h)^p
 \ll_p B_j\left(1+\frac{x}{B_j}\right)
 \ll_p B_j,
\end{equation}
uniformly in \(j\), since \(B_j\geq N\geq x/2\).  Given a fixed \(C>0\),
choose a fixed
\[
 p>\max\left\{1,\frac{C+1}{1-c_+}\right\}.
\]
Markov's inequality, \eqref{eq:all-positive-fixed-threshold}, and
\eqref{eq:all-positive-band-moment} yield simultaneously for every
\(j\geq0\)
\begin{equation}\label{eq:all-positive-band-size}
 |\cE_x^+\cap I_j|
 \ll_C B_j(\log x)^{-C}.
\end{equation}
The moment chosen here estimates the fixed set
\eqref{eq:all-positive-exception-set}; it does not redefine it.

If a finite \(H\leq N\), then \eqref{eq:all-positive-prefix-intro} is the
finite-range theorem.  If \(H>N\), choose \(J\geq0\) with
\(2^JN<H\leq2^{J+1}N\).  Only \(I_0,\ldots,I_J\) meet \([1,H]\), and
\begin{align*}
 |\cE_x^+\cap[1,H]|
 &\leq |\cE_x^{(0)}|
       +\sum_{j=0}^J|\cE_x^+\cap I_j|\\
 &\ll_C N(\log x)^{-C}
       +(\log x)^{-C}\sum_{j=0}^JB_j\\
 &\ll_C H(\log x)^{-C}.
\end{align*}
Every finite prefix meets only finitely many translated bands, so the
infinite union in \eqref{eq:all-positive-exception-set} creates no
convergence issue.  Equations \eqref{eq:all-positive-old-range} and
\eqref{eq:all-positive-exception-set} give
\eqref{eq:all-positive-corr-intro}.  This proves the corollary.
\end{proof}

\section{A GRH profile at every shift scale}
\label{sec:grh-all-shifts}

The unconditional theorem of the preceding section permits a very sparse
set of exceptional shifts.  Under the generalized Riemann hypothesis the
character obstruction in the KMT variance theorem disappears.  This gives
an estimate for every shift, including shifts comparable with or larger than
the summation scale.  At those scales the natural statement retains the
harmonic mass below the shift.

Throughout this section, GRH means the generalized Riemann hypothesis for
all primitive Dirichlet \(L\)-functions, including the primitive character
of conductor \(1\).

We now prove Theorem~\ref{thm:grh-all-shifts}, including its quantification
over every integer \(h\ge1\) and every prescribed subpower range.  We also
record the following fixed-polynomial consequence.

\begin{corollary}[Polynomial scale profile]
\label{cor:grh-scale-consequences}
Assume GRH.  For every fixed \(0<\delta<1\),
\begin{equation}\label{eq:grh-polynomial-profile}
 \limsup_{x\to\infty}\frac1{\log x}
 \sup_{1\le h\le x^\delta}|C_h(x)|\le\delta.
\end{equation}
\end{corollary}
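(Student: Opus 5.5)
The plan is to deduce Corollary~\ref{cor:grh-scale-consequences} directly from Theorem~\ref{thm:grh-all-shifts}, taking that theorem as established. Fix \(0<\delta<1\). For \(x\) large and every \(1\le h\le x^\delta\), apply \eqref{eq:grh-profile-intro} with \(y=x\):
\[
 |C_h(x)|\le\log(2\min\{h,x\})+O((\log x)^{1-c_{\rm G}})
 \le \log 2+\delta\log x+O((\log x)^{1-c_{\rm G}}).
\]
The implied constant in \eqref{eq:grh-profile-intro} is uniform in \(h\in\N\) and \(1\le y\le x\). The right side is therefore independent of \(h\) in the range \(h\le x^\delta\), and the supremum over that range obeys the same bound.

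Next divide by \(\log x\) and let \(x\to\infty\). The terms \(\log 2/\log x\) and \((\log x)^{-c_{\rm G}}\) tend to zero, so the \(\limsup\) is at most \(\delta\), which is \eqref{eq:grh-polynomial-profile}. The only point to check is the uniformity in \(h\) of the \(O\)-term, which the statement of Theorem~\ref{thm:grh-all-shifts} asserts explicitly.

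The real content therefore sits in Theorem~\ref{thm:grh-all-shifts} itself, whose proof the paper still has to give. For that theorem I would split the sum at \(n\le\min\{h,y\}\) and bound this piece trivially by the harmonic sum, which gives \(\log(2\min\{h,y\})\) plus \(O(1)\). For \(n>h\), the terms form the high tail. Under GRH, Lemma~\ref{lem:bandwise-liouville-deletion} holds with no exceptional conductor, and the zero-bad sets \(\mathcal B_X\) are empty, so every score \(W_{X,q}(h)\) vanishes. The scored progression estimate of Proposition~\ref{prop:middle-scored-apf} and the Mellin transfer of Proposition~\ref{prop:mellin-localized-transfer} would then apply to every \(h\), with the evacuation threshold replaced by the shift scale itself.

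I expect the main obstacle in that theorem to be the regime where \(h\) is comparable to or larger than \(y\), or where the physical tail above \(h\) is too short for the scale-separation margins such as \eqref{eq:middle-scale-separation}. For that regime I would first try to rerun the Ramar\'e minor-arc argument and the KMT variance with \(Z\) measured relative to \(h\) rather than \(Y\), using GRH to remove the lower conductor thresholds. None of this affects the corollary, which uses only the stated profile.
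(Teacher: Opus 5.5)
Your deduction is correct and matches the paper's proof: apply Theorem~\ref{thm:grh-all-shifts} at $y=x$, use $\log(2\min\{h,x\})\le \log 2+\delta\log x$ uniformly for $1\le h\le x^\delta$, take the supremum, divide by $\log x$, and pass to the $\limsup$. Your speculative sketch of how to prove Theorem~\ref{thm:grh-all-shifts} is not needed for the corollary. The paper proves that theorem by a different route: a coefficient-gap progression estimate above the threshold $N_h=h\exp(4G+R)$, rather than the scored middle-band machinery.
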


We prove the theorem through a coefficient-gap progression estimate.  This
is a new parameter regime: the base of each progression is separated from
its step \(h\), rather than being bounded below by a fixed power of the
global Pilatte parameter.  We therefore verify the KMT and centred
decoupling hypotheses at this scale instead of invoking
Theorem~\ref{thm:outer-apf} as a black box.

\subsection{Parameters and automatic typicality}

Put
\begin{equation}\label{eq:grh-parameters}
 L=\log x,\qquad R=L^{1/4},\qquad
 \epsilon_K=R^{-1/500},\qquad \Omega=R^2,
 \qquad G=10R^{2.01}.
\end{equation}
Thus
\begin{equation}\label{eq:grh-gap-size}
 G=10L^{2.01/4}=o(L).
\end{equation}
Fix \(c_1=1/2000\).  Next choose the second Pilatte block parameter
\(0<\varepsilon_b<1/100\), independently of \(x,h,y\), sufficiently small
that its block system from \eqref{eq:super-block-scales} satisfies
\begin{equation}\label{eq:grh-subset-margin}
 2^JV^{2J}\le R^{c_1/20}
\end{equation}
for all sufficiently large \(R\).  Such a choice is possible because the
exponent of \(R\) on the left tends to zero with \(\varepsilon_b\).  This
choice also gives the strict KMT lower-endpoint margin
\begin{equation}\label{eq:grh-kmt-exponent-margin}
 \frac{1-\varepsilon_b}{200}>\frac1{500}.
\end{equation}
All constants selected below are made after this fixed choice and are
therefore absolute.

\begin{lemma}[Automatic typicality at the coefficient gap]
\label{lem:grh-automatic-typicality}
Let
\begin{equation}\label{eq:grh-divisor-scale}
 \exp(R^{1-\varepsilon_b})\le M\le e^R,
 \qquad q\le\Omega,
 \qquad U\ge M/\Omega^4.
\end{equation}
Suppose \(h\le x\), and let \(d\mid hq\).  For the reduced KMT length and
modulus
\begin{equation}\label{eq:grh-reduced-length-modulus}
 \mathsf L_d=\frac{hU}{d},\qquad
 \mathsf Q_d=\frac{hq}{d},
\end{equation}
the modulus \(\mathsf Q_d\) is
\((\mathsf L_d/\mathsf Q_d)^{\epsilon_K^2}\)-typical in the sense of
KMT.
\end{lemma}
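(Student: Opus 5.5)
The plan is to invoke KMT's automatic-typicality criterion (Lemma~9.1), exactly as in \eqref{eq:kmt-typical-check} and \eqref{eq:middle-partial-typicality}. That criterion says that a modulus \(Q\) is \(y\)-typical as soon as \(y\ge1000\log Q\). Since \(\mathsf Q_d\le hq\), it suffices to show
\[
 y_{\rm typ}:=(\mathsf L_d/\mathsf Q_d)^{\epsilon_K^2}\ge 1000\log(hq).
\]
The key observation is that the ratio does not depend on \(d\) or \(h\):
\[
 \frac{\mathsf L_d}{\mathsf Q_d}=\frac Uq .
\]
So the left side is bounded below uniformly over all divisors \(d\mid hq\).

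\textbf{Lower bound for the left side.} From \eqref{eq:grh-divisor-scale},
\[
 \frac Uq\ge \frac{M}{\Omega^5}\ge \exp\bigl(R^{1-\varepsilon_b}-10\log R\bigr).
\]
Since \(\epsilon_K^2=R^{-1/250}\), this gives
\[
 \log y_{\rm typ}\ge R^{1-\varepsilon_b-1/250}\bigl(1+o(1)\bigr).
\]
The exponent is positive because \(\varepsilon_b<1/100\).

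\textbf{Upper bound for the right side.} Since \(h\le x\) and \(q\le\Omega=R^2\),
\[
 \log(hq)\le L+2\log R\le 2R^4,
\]
using \(L=R^4\).

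\textbf{Comparison.} We need \(\exp\bigl(R^{1-\varepsilon_b-1/250}(1+o(1))\bigr)\ge 2000R^4\). This holds once \(R\) is large, because a stretched exponential in \(R\) beats any fixed power of \(R\). As in the earlier arguments, what is compared with \(\log(hq)\) is \(y_{\rm typ}\) itself, not its logarithm. Note also that \(\log(hq)\) is now as large as \(L\), not merely \(R^\rho\), so the comparison is genuinely needed.

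Before invoking Lemma~9.1 I would check its remaining formal hypotheses. The main one is whether the criterion is stated for every modulus \(Q\) or only for \(Q\) in a restricted range. If there is a range condition, it should follow from \(\mathsf Q_d\le \mathsf L_d/10\), which holds because \(U/q\ge10\).

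I expect the main obstacle to be only this bookkeeping with KMT's definition: confirming that the typicality threshold is \(1000\log Q\) with no additional dependence on the ambient base \(X\). The paper used exactly that form earlier. If there were an \(X\)-dependence, it could be handled because the relevant bases satisfy \(\log X\le 2L\).
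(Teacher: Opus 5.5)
Your proposal is correct and follows the paper's proof. Both use the ratio identity \(\mathsf L_d/\mathsf Q_d=U/q\), the lower bound \(\log y_{\rm typ}\ge R^{1-\varepsilon_b-1/250+o(1)}\), the upper bound \(\log\mathsf Q_d\le\log(hq)\le L+O(\log R)\), and KMT Lemma~9.1 in its form \(y\ge1000\log Q\), which compares \(y_{\rm typ}\) itself rather than its logarithm with \(1000\log Q\).
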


\begin{proof}
The cancellation of \(h\) and \(d\) gives the exact ratio
\begin{equation}\label{eq:grh-ratio-identity}
 \frac{\mathsf L_d}{\mathsf Q_d}=\frac Uq.
\end{equation}
At the smallest permitted \(U\) and the largest permitted \(q\),
\[
 \log(U/q)\ge R^{1-\varepsilon_b}-O(\log R).
\]
Consequently the KMT typicality parameter obeys
\begin{equation}\label{eq:grh-ytyp-lower}
 y_{\rm typ}:=(U/q)^{\epsilon_K^2}
 \ge \exp\!\left\{R^{1-\varepsilon_b-1/250+o(1)}\right\}.
\end{equation}
On the other hand,
\[
 \log\mathsf Q_d\le\log(hq)\le L+O(\log R).
\]
The right side of \eqref{eq:grh-ytyp-lower} dominates
\(1000\log\mathsf Q_d\).  KMT Lemma~9.1 therefore makes every reduced
working modulus typical.  Notice that the criterion compares
\(y_{\rm typ}\), rather than \(\log y_{\rm typ}\), with
\(1000\log\mathsf Q_d\).
\end{proof}

Under the GRH clause of \citet[Theorem~1.5]{klurman2023shortaps}, the KMT
variance theorem applies to every typical modulus in its range.  Thus no
general-good-modulus exceptional set is needed here.  The real-valued
specialization \citet[Corollary~1.6]{klurman2023shortaps} can still display
a minimizing-character term; we remove it next.

\subsection{Twisted Liouville sums under GRH}

\begin{lemma}[Uniform GRH character bound]
\label{lem:grh-twisted-liouville}
Assume GRH.  Fix \(0<\eta<1/10\).  If \(\chi^*\) is primitive of conductor
\(q_*\), then, uniformly for \(X\ge2\) and \(u\in\R\),
\begin{equation}\label{eq:grh-primitive-twist}
 \sum_{n\le X}\Li(n)\chi^*(n)n^{iu}
 \ll_\eta X^{1/2+2\eta}
       \bigl(q_*(2+|u|)\bigr)^\eta.
\end{equation}
If \(\chi\pmod Q\) is induced by \(\chi^*\) and \(Q\le X\), then the
same sum with \(\chi\) in place of \(\chi^*\) is
\(O_\eta(X^{1/2+5\eta})\), uniformly for \(|u|\le X\).
\end{lemma}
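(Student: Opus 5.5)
We prove \eqref{eq:grh-primitive-twist} from the Dirichlet series
\[
 \sum_{n\ge1}\frac{\Li(n)\chi^*(n)}{n^{s-iu}}
 =\frac{L(2(s-iu),\chi^{*2})}{L(s-iu,\chi^*)},
\]
which follows from complete multiplicativity and \(\Li(p)=-1\). Put \(s'=s-iu\). We apply a truncated Perron formula on the line \(\Re s=1+1/\log X\) with height \(T=X\), and then shift the contour to \(\Re s=1/2+\eta\). Under GRH, \(L(s',\chi^*)\) has no zeros in \(\Re s'>1/2\). The standard GRH bound \(1/L(\sigma+i\tau,\chi^*)\ll_\eta (q_*(2+|\tau|))^{\eta}\) holds uniformly for \(\sigma\ge1/2+\eta\); it comes from the Borel--Carath\'eodory and Hadamard three-circle argument for \(\log L\). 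The numerator \(L(2s',\chi^{*2})\) is absolutely convergent and bounded there, because \(\Re(2s')\ge1+2\eta\). The point \(s'=1\) is not a pole: \(L(2s',\chi^{*2})\) is holomorphic at \(s'=1/2\) only when \(\chi^{*2}\) is nonprincipal, but in every case we stay to the right of \(\Re s'=1/2+\eta\). So no residue is crossed.

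The contour integral then has size
\[
 X^{1/2+\eta}\bigl(q_*(2+|u|+T)\bigr)^{\eta}\log T,
\]
and the horizontal segments obey the same bound. The Perron truncation error is \(O(X^{1+\eta}/T)\) because the coefficients are \(1\)-bounded. Taking \(T=X\) gives
\[
 X^{1/2+\eta}(q_*(2+|u|))^{\eta}X^{\eta}\log X\ll X^{1/2+2\eta}(q_*(2+|u|))^\eta,
\]
after renaming \(\eta\) if necessary. One point needs care: the bound for \(1/L\) must hold uniformly in the shifted height \(|\tau|\le |u|+T\). This is exactly why the factor \((2+|u|)^\eta\) appears. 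When \(u\) is large we simply keep it in the bound.

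For the imprimitive statement, write \(\chi=\chi^*\chi_0^{(Q)}\). Möbius inversion over \(d\mid \operatorname{rad}(Q)\) with \((d,q_*)=1\) gives
\[
 \sum_{n\le X}\Li(n)\chi(n)n^{iu}
 =\sum_{d\mid\operatorname{rad}(Q)}\mu(d)\Li(d)\chi^*(d)d^{iu}\sum_{m\le X/d}\Li(m)\chi^*(m)m^{iu}.
\]
We apply the primitive bound to each inner sum, using \(q_*\le Q\le X\) and \(|u|\le X\). The \(d\)-th term is then
\[
 \ll (X/d)^{1/2+2\eta}X^{2\eta}\le X^{1/2+4\eta}.
\]
Summing over the \(2^{\omega(Q)}=X^{o(1)}\) divisors gives \(O_\eta(X^{1/2+5\eta})\). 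Terms with \(d>X\) vanish. The only genuinely analytic step is the uniform GRH lower bound for \(|L|\) on \(\Re s\ge 1/2+\eta\), and we take that as classical.
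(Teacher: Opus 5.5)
Your proposal is correct and follows the paper's route. The primitive case uses the identity $\sum_n\lambda(n)\chi^*(n)n^{-(s-iu)}=L(2(s-iu),\chi^{*2})/L(s-iu,\chi^*)$, truncated Perron, a contour shift to $\Re s=1/2+\eta$, the GRH bound for $1/L$, and absolute convergence of the numerator (the paper uses Perron height $X^2$ and you use $X$; either works after adjusting $\eta$). For the imprimitive case the paper carries the Euler factor $\prod_{p\mid Q,\,p\nmid q_*}(1+\chi^*(p)p^{-(s-iu)})$ through the contour and bounds it by $Q^\eta$; your M\"obius decomposition is exactly that factor expanded as a Dirichlet polynomial, which lets you reuse the primitive bound as a black box, and both give $O_\eta(X^{1/2+5\eta})$.
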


\begin{proof}
For the primitive character one has, initially in \(\Re s>1\),
\begin{equation}\label{eq:grh-liouville-dirichlet-series}
 \sum_{n\ge1}\frac{\Li(n)\chi^*(n)n^{iu}}{n^s}
 =\frac{L(2s-2iu,(\chi^*)^2)}{L(s-iu,\chi^*)}.
\end{equation}
Under GRH the denominator has no zero in
\(\Re(s-iu)>1/2\).  Apply Perron's formula with height \(X^2\), shift the
contour to \(\Re s=1/2+\eta\), and use the standard GRH bounds for
\(L^{-1}(s-iu,\chi^*)\) there.  The numerator is evaluated in the
half-plane \(\Re(2s-2iu)=1+2\eta\), where its Dirichlet series is
absolutely convergent.  The vertical integral and the two horizontal
segments give \eqref{eq:grh-primitive-twist}; the extra \(X^\eta\) in the
exponent absorbs the Perron truncation and logarithmic factors.

For completeness, if \(\chi\pmod Q\) is induced by \(\chi^*\), then
\begin{equation}\label{eq:grh-imprimitive-series}
 \sum_{n\ge1}\frac{\Li(n)\chi(n)n^{iu}}{n^s}
 =E_{\chi,Q}(s,u)
   \frac{L(2s-2iu,(\chi^*)^2)}{L(s-iu,\chi^*)},
\end{equation}
where
\begin{equation}\label{eq:grh-imprimitive-euler-factor}
 E_{\chi,Q}(s,u)
 =\prod_{\substack{p\mid Q\\p\nmid q_*}}
   \left(1+\chi^*(p)p^{-(s-iu)}\right).
\end{equation}
On \(\Re s\ge1/2+\eta\),
\begin{equation}\label{eq:grh-euler-factor-bound}
 |E_{\chi,Q}(s,u)|
 \le\prod_{p\mid Q}(1+p^{-1/2-\eta})
 \ll_\eta Q^\eta.
\end{equation}
Repeating the preceding contour argument, then using \(q_*\le Q\le X\)
and \(|u|\le X\), proves the asserted \(O_\eta(X^{1/2+5\eta})\)
bound.  The conductor-one character is included in this argument.
\end{proof}

\begin{lemma}[Deletion of the KMT main term]
\label{lem:grh-kmt-main-deletion}
Assume GRH.  Suppose a KMT call has ambient base \(X_d\), modulus
\(\mathsf Q_d\), and arises from a coefficient-gap progression with
physical base \(Z\ge he^{4G}\) and rational denominator \(q\le\Omega\).
Then every minimizing-character term in KMT Corollary~1.6 is absorbed by
the square-root variance error.
\end{lemma}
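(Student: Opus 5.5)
The plan is to bound the minimizing-character term in KMT Corollary~1.6 directly with Lemma~\ref{lem:grh-twisted-liouville}, and then compare the result with the variance scale $\epsilon_K\varphi(\mathsf Q_d)X_d(\mathsf L_d/\mathsf Q_d)^2$. For one reduced residue class, the main term of Corollary~1.6 has the form
\[
 \frac{1}{\varphi(\mathsf Q_d)}\,\overline{\chi}(c)\cdot\frac{1}{X_d}\sum_{n\le 3X_d}\Li(n)\chi(n)n^{-iv}\cdot\frac{1}{X_d}\int_u^{u+\mathsf L_d}s^{iv}\,\dd s,
\]
up to the normalization used in that corollary. Here $\chi\pmod{\mathsf Q_d}$ is the minimizer and $|v|\le X_d$. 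The oscillatory integral is trivially at most $\mathsf L_d$. So the task reduces to showing
\[
 \Bigl|\sum_{n\le3X_d}\Li(n)\chi(n)n^{-iv}\Bigr|\ll\epsilon_K^{1/2}\frac{\varphi(\mathsf Q_d)}{\mathsf Q_d}X_d .
\]
This is the same reduction used in Lemma~\ref{lem:bandwise-liouville-deletion} and in the proof of Proposition~\ref{prop:middle-scored-apf}, and squaring and summing over the $\varphi(\mathsf Q_d)$ classes then reproduces \eqref{eq:middle-character-main-mass}.

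To prove the displayed inequality, we use the second clause of Lemma~\ref{lem:grh-twisted-liouville} with a fixed small $\eta$, say $\eta=1/100$. Its hypotheses are $\mathsf Q_d\le 3X_d$ and $|v|\le 3X_d$. The twist bound comes from KMT's constraint $|v|\le X_d$. The modulus bound comes from $\mathsf Q_d/X_d\le hq/Z\le\Omega e^{-4G}$, using the coefficient-gap hypothesis $Z\ge he^{4G}$. The lemma then bounds the character sum by $O(X_d^{1/2+5\eta})$. The principal character, including conductor one, is covered by the same lemma.

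It remains to check that
\[
 X_d^{1/2+5\eta}\ll\epsilon_K^{1/2}\frac{X_d}{\log\log(3\mathsf Q_d)},
\]
which suffices because $\mathsf Q_d/\varphi(\mathsf Q_d)\ll\log\log(3\mathsf Q_d)$. This needs $X_d$ to exceed a fixed power of $\epsilon_K^{-1}\log L$, and that follows once $X_d$ exceeds a fixed power of $L$. From $X_d\ge Z/(hq)\ge e^{4G}/\Omega$ we get $\log X_d\ge 40R^{2.01}-O(\log R)$. Since $R=L^{1/4}$, $X_d$ is larger than every fixed power of $L$, so the power saving $X_d^{-1/2+5\eta}$ beats both $\epsilon_K^{1/2}=R^{-1/1000}$ and the $\log\log$ factor.

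The main obstacle is bookkeeping rather than analysis. We must confirm that the gcd-reduced data $X_d=(Z+O(hM+hq))/d$ from \eqref{eq:middle-gcd-reduced-data} still satisfy $X_d\ge e^{3G}$ after the displacement and the reduction by $d$; this uses $hM/Z\le e^{R-4G}=o(1)$. We must also confirm that the real- and complex-valued forms of Corollary~1.6 only call for twists inside $|v|\le X_d$, so that Lemma~\ref{lem:grh-twisted-liouville} applies uniformly. If the complex form requires a Mellin-shifted twist $v-t$, as in \eqref{eq:mellin-fresh-minimizer-range}, we would enlarge the range to $|u|\le 3X_d/2$. This only changes the constant in the $X^{5\eta}$ loss, after replacing $X$ by $3X_d$ in Lemma~\ref{lem:grh-twisted-liouville}.
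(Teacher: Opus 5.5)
Your proposal is correct and is essentially the paper's proof: both bound the minimizing character sum with the induced-character clause of Lemma~\ref{lem:grh-twisted-liouville}, and both use $\mathsf Q_d/X_d\ll hq/Z\le R^2e^{-4G}$, so that $X_d\gg e^{4G}/R^2$ exceeds every fixed power of $L$ and the power saving beats $\epsilon_K^{1/2}\varphi(\mathsf Q_d)/\mathsf Q_d$. The paper takes the lemma's parameter equal to $1/20$ to get the weakened bound $X_d^{4/5}$ where you take $1/100$, and your contingency for a Mellin-shifted twist is unnecessary here because the GRH transfer uses no Mellin separation.
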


\begin{proof}
Every such pair satisfies
\begin{equation}\label{eq:grh-modulus-base-gap}
 \frac{\mathsf Q_d}{X_d}
 \ll\frac{hq}{Z}\le R^2e^{-4G}.
\end{equation}
In particular \(\mathsf Q_d\le X_d\), and
\(X_d\gg e^{4G}/R^2\).  The minimizing twist in KMT lies in
\(|u|\le X_d\).  Lemma~\ref{lem:grh-twisted-liouville}, with
\(\eta=1/20\), therefore gives the deliberately weakened estimate
\begin{equation}\label{eq:grh-character-four-fifths}
 \max_{\chi\bmod\mathsf Q_d}\max_{|u|\le X_d}
 \left|\sum_{n\le3X_d}\Li(n)\chi(n)n^{iu}\right|
 \ll X_d^{4/5}.
\end{equation}
Since
\(\mathsf Q_d/\varphi(\mathsf Q_d)\ll\log\log(3\mathsf Q_d)\) and
\(\epsilon_K^{-1/2}=R^{1/1000}\), the lower bound for \(X_d\) gives
\[
 X_d^{4/5}
 =o\!\left(\epsilon_K^{1/2}
       \frac{\varphi(\mathsf Q_d)}{\mathsf Q_d}X_d\right).
\]
This is precisely the threshold required to absorb the character main term
into the square-root of the KMT variance error.
\end{proof}

\subsection{A coefficient-gap progression Fourier estimate}

The next proposition is the point at which the all-shift scale enters.  It
re-proves the progression estimate with \(Z/h\) exponentially large in
\(R^{2.01}\), but without imposing a fixed-power lower bound on \(\log Z\).

\begin{proposition}[Coefficient-gap APF under GRH]
\label{prop:grh-coefficient-gap-apf}
Assume GRH.  Let \(h\le x\), let \(M\) satisfy
\eqref{eq:grh-divisor-scale}, and suppose
\begin{equation}\label{eq:grh-apf-base-range}
 he^{4G}\le Z\le e^Rx.
\end{equation}
Then
\begin{equation}\label{eq:grh-apf-conclusion}
 \sup_{\alpha\in\T}\mathcal F_{h,M}(\alpha;Z)
 \ll R^{-c_1}MZ.
\end{equation}
The same estimate holds for every consecutive subinterval of \([1,M]\) of
length \(U\ge M/\Omega^4\), uniformly in its endpoints, with \(M\) on the
right replaced by \(U\).
\end{proposition}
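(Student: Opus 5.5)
We follow the architecture of Theorem~\ref{thm:outer-apf} and Proposition~\ref{prop:middle-scored-apf}, with the same circle-method split. Dirichlet approximation at denominator \(M/\Omega\) gives a reduced \(a_0/q\) with \(q\le M/\Omega\) and \(|\alpha-a_0/q|\le\Omega/(Mq)\). We treat the minor arcs \(q>\Omega\) and the major arcs \(q\le\Omega\) separately. On the major arcs we first prove the partial-interval form at the exact rational frequency \(a_0/q\). Lemma~\ref{lem:blockwise-abel} then passes to nearby \(\alpha\) at cost \(O(\Omega^{-3})\). The partial-interval statement of the proposition is proved together with this: the rational estimate is uniform in \(T\) and \(U\ge M/\Omega^4\), and Abel summation on a subinterval runs exactly as before.

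\emph{Minor arcs.} We rerun the proof of Lemma~\ref{lem:apf-minor}. This means dualizing the outer norm, applying Ramar\'e's identity with \(P_1=\Omega^{C_M}\), \(Q_1=M/\Omega^3\), and the first range pruned of primes dividing \(h\), then expanding the fourth power. The start-tuple count is \(O(ZM^3)\), independent of \(h\). Combined with the four-prime sieve and Vinogradov's lemma, this gives \(R^{-1/5}MZ\). Only the shift-dependent inputs need rechecking. The pruning loss is \(\log h/(P_1\log P_1)\le L/\Omega^{C_M}=o(R^{-10})\), since \(\log h\le L=R^4\) and \(C_M>3\) is large. The translated-endpoint and prime-square errors are controlled by \(hM/Z\le e^{R-4G}\), which is smaller than any power of \(R^{-1}\). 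The typical-set complement from Input~\ref{input:sieve-complement} gives \(\log P_1/\log Q_1\ll\log R/R^{1-\varepsilon_b}\). None of these steps uses a lower bound on \(\log Z\) beyond \(Z\ge he^{4G}\).

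\emph{Major arcs.} Split \(t\) modulo \(q\) and use the exact identity \eqref{eq:exact-t-residue-identity}. Then apply Lemma~\ref{lem:endpoint-residue} with \(Q=hq\), group by \(d=(c,Q)\), and rescale by \(d\) using complete multiplicativity. This yields KMT data \(X_d\asymp Z/d\), \(\mathsf L_d=hU/d\), \(\mathsf Q_d=hq/d\), with \(\mathsf L_d/\mathsf Q_d=U/q\). Lemma~\ref{lem:grh-automatic-typicality} makes every \(\mathsf Q_d\) typical. The margin \eqref{eq:grh-kmt-exponent-margin} gives \((\log(U/q))^{-1/200}\le\epsilon_K\), and \(\mathsf L_d\ge10\mathsf Q_d\) and \(X_d\ge\mathsf L_d\) follow from \(Z\ge he^{4G}\). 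Under the GRH clause of KMT Theorem~1.5 there is no good-modulus restriction. Lemma~\ref{lem:grh-kmt-main-deletion} absorbs the minimizing-character term, so the variance bound \eqref{eq:kmt-specialized-variance} holds with \(\epsilon=\epsilon_K\). Cauchy--Schwarz, the factor \(1/Q\), and \(\sum_{d\mid Q}\varphi(Q/d)=Q\) then give \(\epsilon_K^{1/2}UZ/q\) per class \(b\), hence \(O(R^{-1/1000}UZ)\) in total. The endpoint errors \(O(qZ+qhU)\) are negligible, by \(q/U\le R^{10}e^{-R^{1-\varepsilon_b}}\) and \(qh/Z\le R^2e^{-4G}\). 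Since \(c_1=1/2000\), both arcs give \(R^{-c_1}MZ\).

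\emph{Main obstacle.} The delicate step is verifying the KMT range when \(\log X_d\) can be as small as about \(4G\asymp R^{2.01}\) while \(\log\mathsf Q_d\) can be as large as \(L=R^4\). Here we cannot appeal to \(Q\le X^{\epsilon^{200}}\); we must rely entirely on the GRH clause, which has no such cap. We also have to check that KMT's remaining internal conditions, such as the \((\log X)^{-1/4}\)-type error terms and the requirement \(\epsilon\ge(\log(\mathsf L_d/\mathsf Q_d))^{-1/200}\), depend only on \(U/q\) and on \(X_d\ge e^{4G}/R^2\). The twist window for the main-term deletion is already covered by Lemma~\ref{lem:grh-twisted-liouville}, since \(\mathsf Q_d\le X_d\). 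If some KMT condition genuinely requires \(\log X_d\) to be large relative to \(\log\mathsf Q_d\), our fallback is to enlarge the gap constant in \(G\). The base range \eqref{eq:grh-apf-base-range} is stated with \(4G\) precisely to leave this room.
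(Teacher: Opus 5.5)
Your proposal is correct and follows essentially the same route as the paper's proof. On the minor arcs this is the pruned joint Ramar\'e expansion with the $O(ZM^3)$ start-tuple count. On the major arcs it is the residue split, endpoint--residue decoupling with $Q=hq$, and gcd reduction to KMT data with $\mathsf L_d/\mathsf Q_d=U/q$. These are followed by automatic typicality, the GRH clause of KMT Theorem~1.5 together with Lemma~\ref{lem:grh-kmt-main-deletion}, the identity $\sum_{d\mid Q}\varphi(Q/d)=Q$, and blockwise Abel summation.

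The paper, like you, simply relies on the GRH clause imposing no size cap on the typical modulus, so your fallback of enlarging $G$ is unnecessary. Note also that $\mathsf Q_d\le X_d$ always holds, so $\log X_d\approx 4G$ and $\log\mathsf Q_d\approx L$ cannot occur together.
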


\begin{proof}
Use the first Ramar\'e range
\begin{equation}\label{eq:grh-ramare-range}
 P_1=\Omega^{C_M},\qquad Q_1=M/\Omega^3,
\end{equation}
and retain only primes \(p\nmid h\).  The deleted harmonic mass is
\begin{equation}\label{eq:grh-deleted-ramare-primes}
 \sum_{\substack{p\mid h\\p\ge P_1}}\frac1p
 \le\frac{\log h}{P_1\log P_1}+O(P_1^{-1})
 \ll R^{-100},
\end{equation}
because \(\log h\le L=R^4\) and \(C_M\) is the fixed large constant in
\eqref{eq:CM-choice}.  The physical boundary introduced by the translated
typical cutoff has relative size
\begin{equation}\label{eq:grh-apf-boundary-gap}
 \frac{hM}{Z}\le e^{R-4G}\ll_A R^{-A}
\end{equation}
for every fixed \(A>0\).

Suppose first that \(\alpha\) has a reduced approximation \(a_0/q\) with
\[
 \Omega<q\le M/\Omega,
 \qquad |\alpha-a_0/q|\le\frac{\Omega}{Mq}.
\]
Dualize the outer \(\ell^1\)-norm in
\eqref{eq:apf-definition} and use the sparse window
\eqref{eq:sparse-window}.  Because every retained prime is invertible
modulo \(h\), the Ramar\'e expansion and fourth-power calculation in the
proof of Lemma~\ref{lem:apf-minor} apply verbatim at the present
coefficient gap.  More explicitly, fixing the first start determines the
relevant residue of the cofactor modulo \(h\), and each of the other three
starts has \(O(M)\) choices; hence the number of compatible quadruples is
\(O(ZM^3)\), as in \eqref{eq:sparse-tuple-count}.  The remaining cofactor
sum is the geometric sum \eqref{eq:sparse-geometric-sum}.  The four-prime
sieve bound and Vinogradov's lemma therefore give
\[
 \mathcal F_{h,M}(\alpha;Z)
 \ll MZ\frac{(\log M)^{1/4}\log\log M}{\Omega^{1/4}}
      +O(R^{-100}MZ+hM^2).
\]
Equations \eqref{eq:grh-parameters} and
\eqref{eq:grh-apf-boundary-gap} make this
\(O(R^{-1/5}MZ)\).  This establishes the minor-arc estimate without using
the old fixed-\(\gamma\) hypothesis.

It remains to treat a major arc.  First take the exact frequency \(a_0/q\)
with \(q\le\Omega\), and fix a consecutive \(t\)-interval of length
\(U\ge M/\Omega^4\).  Splitting \(t\) into its \(q\) residue classes,
applying Lemma~\ref{lem:endpoint-residue}, and grouping physical residues
by \(d=(c,hq)\) give the reduced data
\begin{equation}\label{eq:grh-major-reduced-data}
 \mathsf Q_d=\frac{hq}{d},\qquad
 \mathsf L_d=\frac{hU}{d},\qquad
 \frac{\mathsf L_d}{\mathsf Q_d}=\frac Uq,\qquad
 \mathcal I_d=\left[\frac{Z-hq}{d},\frac{2Z}{d}\right].
\end{equation}
Here \(\mathcal I_d\) is the exact scaled start interval supplied by the
endpoint--residue lemma.  Since \(hq=o(Z)\), it is covered by at most two
standard dyadic intervals, contained respectively in
\([Z/(2d),Z/d]\) and \([Z/d,2Z/d]\).  On either piece let \(X_d\) denote
its ambient base; then \(X_d\asymp Z/d\), and replacing one exact interval
by these two pieces changes only the absolute constant.
We now discharge every KMT scale condition.  From
\eqref{eq:grh-apf-base-range} and \(U\le M\le e^R\),
\[
 X_d\ge\mathsf L_d,
 \qquad \mathsf L_d\ge10\mathsf Q_d.
\]
Furthermore, by \eqref{eq:grh-kmt-exponent-margin},
\begin{equation}\label{eq:grh-kmt-lower-endpoint}
 (\log(U/q))^{-1/200}
 \le R^{-(1-\varepsilon_b)/200+o(1)}
 \le\epsilon_K.
\end{equation}
Lemma~\ref{lem:grh-automatic-typicality} supplies typicality.  The GRH
clause of KMT Theorem~1.5 applies to every one of these typical moduli,
KMT Corollary~1.6 supplies the real-valued form, and
Lemma~\ref{lem:grh-kmt-main-deletion} removes its character term.

KMT variance followed by Cauchy--Schwarz in the base and in the reduced
residue classes gives, before endpoint--residue normalization,
\begin{equation}\label{eq:grh-kmt-l1-bound}
 d\int\sum_{c\bmod\mathsf Q_d}^{*}|S_{\Li}(v,c;\mathsf L_d,\mathsf Q_d)|
 \,\dd v
 \ll \epsilon_K^{1/2}\varphi(\mathsf Q_d)\frac{ZU}{q}.
\end{equation}
The endpoint--residue lemma contributes \(1/(hq)\).  Hence the exact
identity
\begin{equation}\label{eq:grh-gcd-coefficient-cancellation}
 \frac1{hq}\sum_{d\mid hq}\varphi(hq/d)=1
\end{equation}
shows that one \(t\)-residue costs
\(O(\epsilon_K^{1/2}ZU/q)\), and summing the \(q\) residues costs
\(O(\epsilon_K^{1/2}ZU)\).  The endpoint errors are
\(O(qZ+qhU)\); relative to \(ZU\), they are bounded by
\begin{equation}\label{eq:grh-major-endpoint-errors}
 \frac qU\ll R^2e^{-R^{1-\varepsilon_b}+O(\log R)},
 \qquad \frac{qh}{Z}\le R^2e^{-4G}.
\end{equation}
They are smaller than every fixed negative power of \(R\).

Finally let \(\alpha=a_0/q+\theta\), where
\(|\theta|\le\Omega/(Mq)\).  The rational estimate just proved is uniform
for all partial lengths \(U\ge M/\Omega^4\).  Blockwise Abel summation
therefore costs only \(O(MZ\Omega^{-3})\); the omitted initial block has
the same bound.  Combining the major and minor arcs, and using
\(\epsilon_K^{1/2}=R^{-1/1000}\), proves
\eqref{eq:grh-apf-conclusion} with the fixed weakening
\(c_1=1/2000\).  Translating the \(t\)-interval changes the physical base
by \(O(hM)=o(Z)\), so all displayed hypotheses and estimates remain
uniform.
\end{proof}

\subsection{Transfer above an \(h\)-dependent threshold}

For each shift define
\begin{equation}\label{eq:grh-low-threshold}
 N_h=h\exp(4G+R).
\end{equation}
The factor \(e^R\) in this definition is essential: it compensates for the
largest complementary Pilatte divisor in the subset expansion.

\begin{proposition}[Maximal transfer above \(N_h\)]
\label{prop:grh-maximal-transfer}
Assume GRH.  There is an absolute \(c_2>0\) such that, uniformly for
\(h\le x\) and \(N_h\le y\le x\),
\begin{equation}\label{eq:grh-transferred-bound}
 |C_h(y)|\le\log N_h+O(LR^{-c_2}).
\end{equation}
\end{proposition}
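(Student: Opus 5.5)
We split \(C_h(y)\) at \(N_h\). The prefix \(n\le N_h\) is bounded trivially by the harmonic sum, giving \(\log N_h+O(1)\). The whole task is then to show that the tail \(\sum_{N_h<n\le y}\Li(n)\Li(n+h)/n\) is \(O(LR^{-c_2})\), uniformly in \(h\le x\) and \(y\le x\). For this we run the dilation and Mellin-localized transfer of Proposition~\ref{prop:mellin-localized-transfer}, with Proposition~\ref{prop:grh-coefficient-gap-apf} replacing the scored estimate and with no exceptional set.

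\textbf{Smooth decomposition.} Decompose \((N_h,y]\) into \(O(L)\) smooth logarithmic dyadic blocks \(w(n/N)\). At the two ends we use the one-sided collars \(\psi((\log n-\log N_h)/\eta)\) and \(\psi((\log y-\log n)/\eta)\) with \(\eta=R^{-A_0}\). The collars cost \(O(\eta)\) in harmonic mass and have Mellin \(L^1\)-norm \(O(\log R)\).

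\textbf{Dilation and uncentring.} For each block we use the exact identity \eqref{eq:mellin-localized-dilation} with the pruned second block system \(\cD(h)\). By Lemma~\ref{lem:coprime-block-pruning} this is legitimate, since \(\log h\le R^4\). We then expand into nonempty subsets \(I\) and rescale \(m=d_0k\) with \(d_0\le e^R\). This is exactly where the factor \(e^R\) in \(N_h\) is needed: after rescaling, every physical base satisfies \(Z\ge N/d_0\gg he^{4G}\), so \eqref{eq:grh-apf-base-range} holds. Next we place \(d'\) in its dyadic block, separate \(d'^{-it}\) by Mellin inversion, and truncate to \(|t|\le X_{\min}/2\); here \(X_{\min}\gg e^{4G}/R^2\), so the tail is negligible. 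Then we translate quotient starts \(k=r+hq\) and apply Fourier inversion, obtaining factors of length \(M\) and \(2M\). The twisted coefficient \((k/N)^{it}\) enters the APF factor, so Proposition~\ref{prop:grh-coefficient-gap-apf} has to hold for \(\Li(n)n^{it}\). Its minor-arc part only acquires unimodular weights. On the major arcs we reapply KMT to \(f_t\), and Lemma~\ref{lem:grh-kmt-main-deletion} deletes the main term for any twist \(|u|\le 3X_d/2\), since Lemma~\ref{lem:grh-twisted-liouville} is uniform in \(u\). Interpolating the product between \(L^1\) and \(L^\infty\), then applying H\"older with the \(t\)-uniform fourth moment \eqref{eq:mellin-fourth-moment} and using the subset margin \eqref{eq:grh-subset-margin}, gives a normalized uncentred error of size \(R^{-c_1/6}\) per unit Mellin mass.

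\textbf{Centred term.} The centred term \(S_{2,h,w}\) is bounded by Lemma~\ref{lem:second-system-centered-interface}. Quotient intervals of length at least \(\exp(R^{2.01})\) give \(V^{0.51J}Z\). Shorter partial intervals have physical length below \(he^{R^{2.01}}\), so relative to \(Z\ge he^{4G-R}\) they are negligible, because \(4G=40R^{2.01}\). Translation boundaries are controlled by \(hM/Z\le e^{2R-4G}\).

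\textbf{Summation.} Each block contributes \(O(R^{-b}\log R)\) with \(b=\min\{c_1/6,c_{\rm dec}\}\). Summing over the \(O(L)\) blocks gives \(O(LR^{-c_2})\).

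\textbf{Main obstacle.} The delicate step is the uniformity at small scales. When \(h\) is tiny, \(N_h\) is only about \(e^{4G+R}\), far below any fixed power \(e^{R^\gamma}\) of the global parameter. I would therefore check carefully that every KMT and Tao--Ter\"av\"ainen hypothesis depends only on the gap \(Z/h\), and not on \(\log Z\ge R^{2.01}\) independently. The checks are as follows. The Tao--Ter\"av\"ainen input needs quotient length \(Z/h\ge e^{R^{2.01}}\), which holds. The KMT condition \(\log X_d\gg\) is supplied by \(X_d\gg e^{4G}\) via Lemma~\ref{lem:grh-kmt-main-deletion}. The lower bound in \eqref{eq:grh-kmt-lower-endpoint} comes from \(U/q\). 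If some KMT range condition such as \(\mathsf Q_d\le X_d^{\epsilon}\) were secretly needed, I would fall back on the GRH clause of KMT Theorem~1.5, which imposes no cap.
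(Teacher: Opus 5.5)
There is a genuine gap, and it lies exactly where your route departs from the paper's: the centred term under Mellin localization.

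In the localized identity \eqref{eq:mellin-localized-dilation}, the centred analogue \(S_{2,h,w}\) carries the cutoff \(w(m/(dN))\), where \(d=p_1\cdots p_J\) is the \emph{full} tuple product. So each tuple is supported on \(m\asymp dN\). Now \(d\) is essentially \(p_J\), \(\sum_{p\in\mathcal P_J}1/p=V_J\asymp_{\varepsilon_b}1\), and every \(p_J>\exp(R^{1-\varepsilon_b})\). Hence \(\log d\) varies over a range of length \(\gg R^{1-\varepsilon_b}\), and the supports of different tuples are spread over that many dyadic physical scales.

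Input~\ref{input:tt-decoupling} only saves on the \emph{complete} tuple sum over one common interval. It gives \(V^{0.51J}\) per dyadic scale after the harmonic weight, with no saving proportional to the small mass of the tuples actually supported at that scale. After Mellin inversion you cannot use the supports at all, since each \(t\)-slice must be summed over a common \(m\)-envelope to justify the interchange. So the bound you obtain for one dyadic block \(w\) is of size \(R^{1-\varepsilon_b}V^{0.51J}/W_{\mathrm{Pil},h}\approx R^{1-\varepsilon_b-c_{\rm dec}}\). That is worse than the trivial \(O(1)\), and your \(O(L)\) blocks give nothing like \(O(LR^{-c_2})\).

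The uncentred terms survive localization only because the fourth moment \eqref{eq:mellin-fourth-moment} carries the factor \(1/\log M\), so the dyadic sum over \(M\) costs only \(\log R\). The centred input has no analogue of this. Quoting the centred paragraph of Proposition~\ref{prop:mellin-localized-transfer} does not by itself close the gap, because the same \(d\)-dependent support occurs there.

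The paper's proof never localizes:
\begin{enumerate}
\item It uses the full identity \(W_{\mathrm{Pil},h}C_h(y)=\sum_{d\in\cD(h)}\sum_{m\le dy,\,d\mid m}\Li(m)\Li(m+hd)/m\).
\item It discards \(y<m\le dy\) at normalized cost \(O(R)\).
\item It bounds \(m\le N_h\) trivially by \(\log N_h+O(1)\).
\item It treats \(N_h<m\le y\) on \emph{common} dyadic intervals \((N,2N]\), where Tao--Ter\"av\"ainen acts on all tuples at once.
\end{enumerate}
Subset rescaling \(m=d_0k\) then gives bases \(N/d_0\ge N_h/e^R=he^{4G}\). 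This, not your localized base \(k\asymp d'N\ge N\), is where the factor \(e^R\) in \(N_h\) is actually needed. Because no Mellin twist appears, the untwisted Proposition~\ref{prop:grh-coefficient-gap-apf} suffices as stated. Your route additionally needs a \(\Li(n)n^{it}\) version of it, which the paper does not prove, although your sketch of that extension is plausible.

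Your argument is repaired by the same device. The quantity \(W_{\mathrm{Pil},h}\sum_{N_h<n\le y}\Li(n)\Li(n+h)/n\) differs from \(\sum_{d}\sum_{N_h<m\le y,\,d\mid m}\Li(m)\Li(m+hd)/m\) by \(O(W_{\mathrm{Pil},h}R)\). After that replacement no localization, and no Mellin separation, is needed.
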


\begin{proof}
Use the second Pilatte system fixed after
\eqref{eq:grh-subset-margin}, and prune each block to
\[
 \cP_j(h)=\{p\in\cP_j:p\nmid h\}.
\]
Since every block prime exceeds
\(H_0=\exp(R^{1-\varepsilon_b})\),
\begin{equation}\label{eq:grh-pilatte-pruning}
 \sum_{\substack{p\mid h\\p>H_0}}\frac1p
 \le\frac{L}{H_0\log H_0}\ll_A R^{-A}
\end{equation}
for every fixed \(A>0\).  Lemma~\ref{lem:coprime-block-pruning} therefore
gives
\begin{equation}\label{eq:grh-pruned-normalization}
 W_{\mathrm{Pil},h}=(1+O_A(R^{-A}))W_{\mathrm{Pil}}
 \asymp V^J,
\end{equation}
and the pruned fourth-moment estimate remains valid.

We spell out why the new threshold is compatible with every subset
rescaling.  In the logarithmic-dilation identity
\eqref{eq:pruned-logarithmic-dilation}, split the common physical prefix at
\(N_h\), and decompose the part above \(N_h\) into intervals of
multiplicative width at most two.  For a complementary divisor
\(d_0\le e^R\), every APF base arising from such an interval satisfies
\begin{equation}\label{eq:grh-rescaled-base}
 Z\ge\frac{N_h}{e^R}=he^{4G}.
\end{equation}
Thus Proposition~\ref{prop:grh-coefficient-gap-apf} applies after every
subset rescaling.  Moreover, with every active divisor \(M\le e^R\),
\begin{equation}\label{eq:grh-translation-ratio}
 \frac{hM}{Z}\ll e^{2R-4G},
\end{equation}
so quotient translations and interval endpoints are negligible.  The
quotient interval available to the centred term has logarithmic length at
least
\begin{equation}\label{eq:grh-tt-length}
 \log\frac{N_h}{he^R}=4G>R^{2.01}.
\end{equation}

For clarity, consider one nonempty active subset \(I\subseteq[J]\) and one
dyadic divisor block \(\cD_I(h;M)\).  After writing \(n=r+hq\), translate
the quotient \(q\), not the physical variable \(n\).  Fourier inversion
then uses
\begin{align*}
 F_{r,q}(\alpha)&=\sum_{t\le M}\Li(r+h(q+t))e(\alpha t),\\
 G_{r,q}(\alpha)&=\sum_{k\le2M}\Li(r+h(q+k))e(-\alpha k),\\
 Q_{I,h,M}(\alpha)&=\sum_{d\in\cD_I(h;M)}\frac{e(\alpha d)}d.
\end{align*}
The complementary fixed factors carry their original harmonic weight, and
\begin{equation}\label{eq:grh-complementary-mass}
 \sum_{d_0\in\cD_{[J]\setminus I}(h)}\frac1{d_0}
 \leq V^{J-|I|}\leq V^J.
\end{equation}
In particular \(\lVert G_{r,q}\rVert_\infty\le2M\), with no factor \(h\).
If
\[
 \mathcal S(\alpha)=\sum_{r\bmod h}\sum_q
 |F_{r,q}(\alpha)G_{r,q}(\alpha)|,
\]
then Proposition~\ref{prop:grh-coefficient-gap-apf} and Parseval give
\begin{equation}\label{eq:grh-product-norms}
 \lVert\mathcal S\rVert_\infty\ll R^{-c_1}M^2Z,
 \qquad
 \lVert\mathcal S\rVert_1\ll MZ.
\end{equation}
Interpolation yields
\begin{equation}\label{eq:grh-product-four-thirds}
 \lVert\mathcal S\rVert_{4/3}
 \ll R^{-c_1/4}M^{5/4}Z.
\end{equation}
Pairing this with the pruned divisor fourth moment and the outer factor
\(1/M\) gives the fourth-root uncentring saving for one fixed complementary
divisor.  Multiplying by \(1/d_0\), summing
\eqref{eq:grh-complementary-mass}, then summing dyadic \(M\) and all
nonempty subsets costs at most \(2^JV^{2J}\).  After division by
\(W_{\mathrm{Pil},h}\asymp V^J\), the reserved margin in
\eqref{eq:grh-subset-margin} leaves the normalized contribution
\begin{equation}\label{eq:grh-uncentred-transfer}
 O(R^{-c_1/6}Z).
\end{equation}

For the centred contribution, every retained block prime is coprime to
\(h\), and hence
\begin{equation}\label{eq:grh-centred-residue}
 p\mid r+hq
 \quad\Longleftrightarrow\quad
 q\equiv-rh^{-1}\pmod p.
\end{equation}
Assign arbitrary residues to the deleted primes.  The outer sum over
retained tuples is a sub-sum of the nonnegative outer sum in
Input~\ref{input:tt-decoupling}, so pruning does not require a strengthened
version of that input.  Equation \eqref{eq:grh-tt-length} supplies its
interval hypothesis, and its arbitrary \(1\)-bounded functions absorb the
translated Liouville factors.  Thus the centred term is
\(O(V^{0.51J}Z)\).  After division by
\(W_{\mathrm{Pil},h}\asymp V^J\), this is
\begin{equation}\label{eq:grh-centred-transfer}
 O(R^{-c_{\rm dec}}Z)
\end{equation}
for an absolute \(c_{\rm dec}>0\).

Set
\begin{equation}\label{eq:grh-c2-definition}
 c_2=\min\{c_1/6,c_{\rm dec}\}>0.
\end{equation}
The estimates above are uniform in the terminal endpoint.  In the dilation
identity, the part \(y<m\le dy\) contributes
\(O(W_{\mathrm{Pil},h}R)\).  The remaining common prefix \(m\le y\) is
split at \(N_h\): below it we use the trivial divisor estimate, and above
it we apply \eqref{eq:grh-uncentred-transfer} and
\eqref{eq:grh-centred-transfer} on every multiplicative block.  Partial
summation and \eqref{eq:grh-pruned-normalization} give
\[
 |C_h(y)|\le\log N_h+O(R+LR^{-c_2}).
\]
Since \(R=O(LR^{-c_2})\), this is
\eqref{eq:grh-transferred-bound}.
The use of a common physical prefix \(m\le y\) is important: no
divisor-dependent localized tail, and hence no Mellin separation, is
needed in this conditional transfer.
\end{proof}

\subsection{Proof of the all-shift profile}

\begin{proof}[Proof of Theorem~\ref{thm:grh-all-shifts}]
It is enough to consider sufficiently large \(x\), since the remaining
range is absorbed by enlarging the absolute implied constant.  Fix
\(h\in\N\) and \(1\le y\le x\).

If \(y\le h\), the trivial harmonic estimate gives
\begin{equation}\label{eq:grh-endpoint-low}
 |C_h(y)|\le\sum_{n\le y}\frac1n
 \le\log(2y)+O(1).
\end{equation}
Suppose next that \(h<y<N_h\).  Then
\begin{equation}\label{eq:grh-endpoint-middle}
 |C_h(y)|\le\log y+O(1)
 \le\log h+4G+R+O(1).
\end{equation}
Finally, if \(y\ge N_h\), then necessarily \(h\le x\), and
Proposition~\ref{prop:grh-maximal-transfer} gives
\begin{equation}\label{eq:grh-endpoint-high}
 |C_h(y)|\le\log h+4G+R+O(LR^{-c_2}).
\end{equation}
These three cases are exhaustive.  In particular, when \(h>x\) only the
first case can occur.  When \(xe^{-4G-R}<h\le x\), the nontrivial high
segment is empty because \(N_h>x\), and the first two cases suffice.

Choose any fixed
\begin{equation}\label{eq:grh-cG-choice}
 0<c_{\mathrm G}<
 \min\left\{1-\frac{2.01}{4},\frac{c_2}{4}\right\}.
\end{equation}
By \eqref{eq:grh-parameters}--\eqref{eq:grh-gap-size},
\[
 G\ll L^{2.01/4},\qquad R=L^{1/4},\qquad
 LR^{-c_2}=L^{1-c_2/4}.
\]
Thus \eqref{eq:grh-endpoint-low}--\eqref{eq:grh-endpoint-high} imply
\eqref{eq:grh-profile-intro}, uniformly in \(h\) and \(y\).

Finally, let \(H(x)\ge1\) be any prescribed function with
\(\log H(x)=o(\log x)\).  It is eventually at most \(x\), and the estimate
just proved gives
\[
 \sup_{1\le h\le H(x)}|C_h(x)|
 \le\log(2H(x))+O((\log x)^{1-c_{\mathrm G}})=o(\log x).
\]
This is the final assertion of Theorem~\ref{thm:grh-all-shifts}.
\end{proof}

\begin{proof}[Proof of Corollary~\ref{cor:grh-scale-consequences}]
If \(h\le x^\delta\), Theorem~\ref{thm:grh-all-shifts} gives
\[
 |C_h(x)|\le\delta\log x+O((\log x)^{1-c_{\mathrm G}}).
\]
Taking the supremum, dividing by \(\log x\), and passing to the limsup
proves \eqref{eq:grh-polynomial-profile}.
\end{proof}

\section{Consequences and the remaining pointwise boundary}
\label{sec:global-consequences}

The all-prefix form of Theorem~\ref{thm:global-u} also makes the
exceptional set negligible for harmonic averaging.

\begin{corollary}[Harmonic sparsity]\label{cor:global-harmonic-sparsity}
For the set \(\cE_x\) in Theorem~\ref{thm:global-u}, and for every fixed
\(A>0\),
\[
 \sum_{\substack{h\le x\\h\in\cE_x}}\frac1h
 \ll_A(\log x)^{-A}.
\]
\end{corollary}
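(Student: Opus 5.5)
The plan is to pass from the counting bound \eqref{eq:global-prefix-intro} to the harmonic sum by Abel summation over dyadic blocks. Fix \(A>0\) and apply \eqref{eq:global-prefix-intro} with the exponent \(A+1\) in place of \(A\). This is legitimate because the set \(\cE_x\) was chosen before the exponent, so the same set satisfies the prefix bound for every exponent simultaneously. Writing \(N(u)=|\cE_x\cap[1,u]|\), we then have
\[
 N(u)\ll_A u(\log x)^{-A-1}\qquad(1\le u\le x).
\]

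First split \([1,x]\) into the dyadic blocks \((2^{k-1},2^k]\), \(0\le k\le\lceil\log_2x\rceil\). On each block, \(1/h\le 2^{1-k}\), so
\[
 \sum_{\substack{h\in\cE_x\\2^{k-1}<h\le2^k}}\frac1h
 \le 2^{1-k}N(\min\{2^k,x\})
 \ll_A(\log x)^{-A-1}.
\]
Next sum over the \(O(\log x)\) blocks. The number of blocks costs one factor \(\log x\), and this is exactly absorbed by the extra power reserved above, which gives \(\ll_A(\log x)^{-A}\). Equivalently, one can use Stieltjes partial summation, \(\sum 1/h=N(x)/x+\int_1^xN(u)u^{-2}\,du\), which yields the same bound directly.

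There is no real obstacle here. The only point needing care is the quantifier order: the argument must use a single \(\cE_x\) that satisfies the prefix bound at the boosted exponent \(A+1\). Theorem~\ref{thm:global-u} guarantees this, as recorded in \eqref{eq:global-quantifier-order}. Note also that the sum is taken over \(h\le x\) only, so the prefix bound is only ever needed for \(H\le x\), which is the range Theorem~\ref{thm:global-u} covers.
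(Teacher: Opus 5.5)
Your proposal is correct and is essentially the paper's proof: the paper applies \eqref{eq:global-prefix-intro} to the single fixed set \(\cE_x\) with the boosted exponent \(A+2\) (your \(A+1\) already suffices), and then uses the same Stieltjes partial summation \(E(x)/x+\int_1^x E(t)t^{-2}\,\dd t\) to absorb one factor of \(\log x\). Your dyadic-block version is an equivalent way of carrying out that last step.
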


\begin{proof}
Write \(E(H)=|\cE_x\cap[1,H]|\).  Apply
\eqref{eq:global-prefix-intro} with exponent \(A+2\), and use partial
summation:
\[
 \sum_{\substack{h\le x\\h\in\cE_x}}\frac1h
 =\frac{E(x)}x+\int_1^x\frac{E(t)}{t^2}\,\dd t
 \ll_A(\log x)^{-A-2}(1+\log x).
\]
\end{proof}

Within the current proof architecture, the exceptional set is not removed
by a numerical retuning of parameters alone.  Two independent pointwise
inputs would be needed.
First, the middle-range KMT argument is unconditional only on a general good
set of moduli and can retain a Landau--Page character.  For a primitive real
character \(\chi\),
\[
 \sum_{n\ge1}\frac{\Li(n)\chi(n)}{n^s}
 =\frac{L(2s,\chi^2)}{L(s,\chi)},
\]
so a real zero of the denominator is visible in the Liouville twist rather
than being a notational artifact.  Second, prefix evacuation and the long
shift range use all fixed moments.  Emptying their exceptional sets would
require an endpoint, pointwise estimate for shifts comparable with the
summation scale.  Such an estimate is not supplied by the current
short-interval Fourier theorems.  GRH removes the modulus obstruction but
not the elementary harmonic mass below \(h\), which explains the
\(\log(2\min\{h,y\})\) term in Theorem~\ref{thm:grh-all-shifts}.

The results here concern logarithmically weighted correlations.  They do not
imply the ordinary Ces\`aro two-point Chowla conjecture, whose fixed-shift
case remains open.

\paragraph{Acknowledgement.}
Codex GPT-5.6-Sol assisted with mathematical exploration, proof development
and checking, literature organization, and LaTeX preparation.  The author
takes full responsibility for the content.

\appendix
\section{Sources and proof interfaces}\label{app:dependency-ledger}

For reference, this appendix lists the external inputs and the precise
interfaces at which they enter the proof.  In particular, the fixed-power
flexibility in Proposition~\ref{prop:flexible-menon} is established locally.

At the time of writing, the KMT, Pilatte, and almost-all short-interval
Fourier-uniformity inputs cited below have appeared in refereed journals.
The Tao--Ter\"av\"ainen input is arXiv:2512.01739v2
and remains a preprint.  Menon's arXiv:2607.15574v1 is also a preprint; we
use its printed sieve and character-twisted short-interval interfaces.  The harmonic
multiplier theorem itself uses neither recent preprint.
The global sparse-exception theorem uses Menon only through the
typical-set complement in Proposition~2.13; the character-twisted
Corollary~3.7 enters the separate fixed-polylogarithmic theorem.

\begin{center}
\small
\begin{tabularx}{\textwidth}{@{}>{\raggedright\arraybackslash}p{0.20\textwidth}>{\raggedright\arraybackslash}p{0.27\textwidth}X@{}}
\toprule
Interface & source & use in this paper \\
\midrule
Pilatte block geometry and harmonic masses & \citet[Lemma~2.3]{pilatte2025improved} & \eqref{eq:block-geometry}--\eqref{eq:block-masses}; separation, normalization, and dilation \\
Pilatte fourth moment & \citet[Lemma~C.2]{pilatte2025improved} & Input~\ref{input:fourth-moment} and \eqref{eq:Qh-fourth} \\
Typical-set complement & \citet[Proposition~2.13]{menon2026short} & Input~\ref{input:sieve-complement} and \eqref{eq:complement-bound} \\
Character-twisted short intervals & \citet[Proposition~3.1 and Corollary~3.7]{menon2026short} & Input~\ref{input:menon-character}; major arcs after the residue-gcd split, with the auxiliary last-range parameter fixed first \\
MRT minor-arc machinery & \citet[Section~3]{matomaki2015averaged} & Lemma~\ref{lem:weighted-minor} replays the argument and exposes every circle-cutoff dependency \\
Centred arbitrary-interval estimate & \citet[Theorem~3.3]{taoteravainen2026quantitative} & Input~\ref{input:tt-decoupling} and \eqref{eq:one-residue-centered} \\
Hybrid short-progressions variance and good-modulus structure & \citet[Theorem~1.5, Corollaries~1.6--1.7, Lemmas~7.4, 7.9, 8.1, 8.2, 9.1, and Proposition~9.4]{klurman2023shortaps} & \eqref{eq:kmt-specialized-variance}; the subpower rational major arc; Sections~\ref{sec:general-good-deletion}--\ref{sec:scored-middle-apf}; and the GRH interface in Section~\ref{sec:grh-all-shifts} \\
Davenport M\"obius exponential sums & \citet{davenport1937infiniteII} & the explicit square-divisor transfer in \eqref{eq:davenport-liouville}; dyadic harmonic multiplier decay and the long-shift moment theorem \\
Almost-all maximal short-interval Fourier uniformity for M\"obius & \citet[Theorem~1.1(i)]{matomaki2026higherII} & Lemma~\ref{lem:almost-all-liouville-fourier}; after the square-divisor transfer, the localized Toeplitz estimate and Theorem~\ref{thm:one-third-moments} \\
\bottomrule
\end{tabularx}
\end{center}

The following steps are proved in the present paper:
\begin{enumerate}[label=\(\roman*\)]
\item the weighted minor-arc estimate for every fixed logarithmic cutoff
      exponent, Lemma~\ref{lem:weighted-minor};
\item the localized residue-uniform major-arc argument and the resulting
      Proposition~\ref{prop:flexible-menon};
\item the one-sided typical truncation and direct fourth-root frequency
      interpolation in
      Section~\ref{sec:uncentring};
\item subset rescaling followed by the centred interval theorem in
      Section~\ref{sec:centering};
\item the parameter cancellation and all-scales logarithmic dilation in
      Section~\ref{sec:main-proof}.
\item the joint progression-of-starts Ramar\'e expansion and sparse fourth
      moment in Section~\ref{sec:progression-fourier};
\item the endpoint--residue decoupling, the Liouville adaptation of the KMT
      main-term argument, and the one-conductor Landau--Page splice in
      Section~\ref{sec:exceptional-modulus};
\item the fourth-root quotient-translation transfer, the coprime Pilatte pruning, and
      the complete \(\eta<1\) parameter construction in
      Section~\ref{sec:superpolylog-transfer}.
\item the binary-shell maximal harmonic multiplier estimate, its uniform
      $\ell^p$ interpolation, and the long-shift maximal all-moment theorem;
\item the integer-start and square-divisor transfer from the published
      M\"obius theorem, the translated local-block decomposition, its
      binary maximal refinement, and the maximal all-moment theorem beyond
      exponent $1/3$;
\item the robust-band general-good Liouville deletion, exceptional-zero
      Stieltjes localization, and linear bad-modulus score in
      Section~\ref{sec:general-good-deletion};
\item the scored partial-prefix major arc and its Abel transfer in
      Section~\ref{sec:scored-middle-apf};
\item the Mellin-localized divisor-dependent dilation, including the exact
      harmonic and Pilatte normalization, in
      Section~\ref{sec:mellin-localized-transfer};
\item prefix evacuation, low/middle/high band stitching, and the
      all-positive-shift corollary in Section~\ref{sec:global-u-proof};
\item under GRH, the twisted Liouville--character Perron estimate,
      all-moduli coefficient-gap APF, and scale profile in
      Section~\ref{sec:grh-all-shifts}.
\end{enumerate}

All localizations preserve the support conditions of the quoted results.
The minor arcs use their original ambient scale and localize only through a
bounded dual test function.  Each character estimate is applied on a single interval
\([Y,2Y]\), where arithmetic membership in \(\cT\) agrees with the finite
local set of the source.

For the KMT input, the paper checks separately the lower endpoint for
\(\epsilon_K\), membership in the automatic good-modulus range, and
typicality.  The last of these uses KMT's Lemma~9.1 in its actual form
\(y\ge1000\log Q\), rather than the stronger comparison
\(\log y>\log Q\).
The nonprincipal Liouville main term follows by a source-level adaptation of
KMT's prime pretentious-distance argument.  At zero twist the principal
character is treated by the prime number theorem for Liouville; at nonzero
twist Section~\ref{sec:general-good-deletion} retains the principal pole and
uses its cosine-integral contribution.  Landau--Page and
Siegel are used only to organize and count the possible exceptional
conductor, not to assert its existence.  The fixed stretched-logarithmic
theorem uses one global conductor across its complete outer family.

\section{Parameter and endpoint ledger}\label{app:parameter-ledger}

The proof has two unrelated families of parameters: Pilatte's prime-block
parameters and the Menon circle cutoff.  Table~\ref{tab:parameter-ledger}
records the choices and the inequalities they discharge.

\begin{table}[h]
\centering
\caption{Parameter ledger for a fixed exponent \(A>0\).}
\label{tab:parameter-ledger}
\small
\begin{tabularx}{\textwidth}{@{}>{\raggedright\arraybackslash}p{0.19\textwidth}>{\raggedright\arraybackslash}p{0.29\textwidth}X@{}}
\toprule
parameter & definition / range & role \\
\midrule
\(\eps_1\) & one sufficiently small absolute constant & ensures \eqref{eq:subset-loss} and fixes the absolute decoupling exponent \\
\(H,R,H_0\) & \(R=\log H\), \(H_0=\exp(R^{1-\eps_1})\) & Pilatte prime scales \\
\(J\) & \(\lfloor\eps_1^2\log R\rfloor\) & number of Pilatte blocks \\
\(h\) & \(1\le h\le R^{3A}\) & equivalent to \(h\le(\log x)^A\) after \(\log x=R^3\) \\
\(N\) & \(\log N\ge R^{2.1}\) & nontrivial dyadic outer scales \\
\(d_0\) & \(d_0\le H\) & fixed factor in the subset expansion \\
\(Z\) & \(N/d_0\), so \(\log Z\ge R^{2.1}-R\) & local base for Proposition~\ref{prop:flexible-menon} \\
\(M\) & \(H_0\le M\le H\), dyadic & divisor-product scale \\
\(K\) & \(\lceil hM\rceil\) & integer translation length \\
\(\WM\) & \(R^{12A+6}\) & flexible Menon circle cutoff \\
\(B_0\) & \(6A+3\) & fixed logarithmic-modulus exponent \\
\bottomrule
\end{tabularx}
\end{table}

The saving exponents used after the fourth-root interpolation are
\begin{align*}
 h^{1/4}\WM^{-1/16}R^{1/16}
 &\le R^{3A/4-(12A+6)/16+1/16}=R^{-5/16},\\
 R^{-5/16}(\log R)^{1/4}
 &\ll R^{-3/10},\\
 2^JV^{2J}R^{-3/10}
 &\le R^{-1/4},\\
 R^{-1/4}\log x\quad\text{with }\log x=R^3
 &= (\log x)^{1-1/12}.
\end{align*}
Thus the dyadic H\"older step, the subset expansion, and the final change
of variables each have an explicit and separate exponent entry.

For clarity, the four hypotheses of Proposition~\ref{prop:flexible-menon}
are discharged as follows:
\begin{align*}
 (\log K)^5&\le\WM, &&\text{by }\log K\le R+O_A(\log R),\\
 \WM&\le K^{1/(2C_M)}, &&\text{because }\log\WM=O_A(\log R)=o(\log K),\\
 \WM&\le(\log Z)^{B_0}, &&\text{by }12A+6<2.1(6A+3),\\
 K&\le Z, &&\text{by }R+O_A(\log R)<R^{2.1}-R.
\end{align*}
The truncation scale satisfies \(K<K_0\) by \eqref{eq:K0-large}, so the
first Menon prime range is always
\(P_1=\WM^{C_M}\), \(Q_1=K/\WM^3\) in the application.

We finish with endpoint cases used implicitly in the main text.
\begin{itemize}
\item If \(b=0\) in the residue split, then \(d=q\) and \(q_0=1\);
      character orthogonality reduces to the trivial character and
      \eqref{eq:exact-divisor-count} remains valid.
\item If \(L=Y\) in the major arcs, all local summands satisfy
      \(m\le y+L\le3Y\), exactly the support in the published character
      estimate.
\item Proposition~\ref{prop:flexible-menon} also covers the formal case
      \(K>K_0\) by \eqref{eq:prime-range-case2}; in the Chowla application,
      \eqref{eq:K0-large} gives the stronger \(K<K_0\).
\item Replacing real interval endpoints by integer endpoints changes only
      the endpoint cells already included in \eqref{eq:discrete-flexible-menon}
      and the translation error in \eqref{eq:translation-average}.
\item The major/minor partition is exhaustive because
      \eqref{eq:dirichlet-flexible} always provides a denominator
      \(1\le q\le K/\Omega\), and the two cases are \(q\le\Omega\) and
      \(q>\Omega\).
\end{itemize}

\subsection*{Super-polylogarithmic parameter family}

The super-polylogarithmic theorem uses a second Pilatte block system,
independent of the absolute \(\eps_1\)-system in the first part of the paper.
For Theorem~\ref{thm:super-main}, fix \(0<\eta<1\) first and put
\begin{equation}\label{eq:appendix-super-parameters}
\begin{gathered}
 \beta=5,\qquad \rho=5\eta,\qquad
 \gamma=\frac52(1+\eta),\qquad
 a=\frac{1-\eta}{1000},\\
 \sigma=\frac a5,\qquad
 \epsilon_K=R^{-a},\qquad \Omega=R^2.
\end{gathered}
\end{equation}
Only after these quantities have been fixed is the second block parameter
\(\varepsilon_b\) chosen sufficiently small that
\begin{equation}\label{eq:appendix-epsb-choice}
 \varepsilon_b<1/100,\qquad
 2^JV^{2J}\le R^{a/100}=R^{\sigma/20}.
\end{equation}
For each shift \(h\), primes dividing \(h\) are then removed from this second
block system as in Lemma~\ref{lem:coprime-block-pruning}.

\begin{center}
\small
\begin{tabularx}{\textwidth}{@{}>{\raggedright\arraybackslash}p{0.27\textwidth}>{\raggedright\arraybackslash}p{0.26\textwidth}X@{}}
\toprule
gate & required inequality & discharge \\
\midrule
KMT lower endpoint & \(a<(1-\varepsilon_b)/200\) & \(a<1/1000\), \(\varepsilon_b<1/100\) \\
automatic typicality & \(y_{\rm typ}\ge1000\log(hq)\) & \(\log y_{\rm typ}\ge R^{1-\varepsilon_b-2a+o(1)}\), hence \(y_{\rm typ}=\exp(R^{1-\varepsilon_b-2a+o(1)})\gg R^\rho\) \\
good moduli and one conductor & \(\rho<\gamma-200a\) & \(\gamma-200a-\rho=23(1-\eta)/10\) \\
block coprimality & retained \(p\nmid h\) & Lemma~\ref{lem:coprime-block-pruning}; deleted mass is \(R^{-A}\) for every fixed \(A\) \\
centred interval length & \(\gamma>2.01\), \(\rho<\gamma\) & \(\gamma>5/2\), \(\gamma-\rho=5(1-\eta)/2\) \\
outer low-prefix saving & \(\beta>\gamma\) & \(\beta-\gamma=5(1-\eta)/2\) \\
subset expansion & \(2^JV^{2J}\le R^{\sigma/20}\) & \eqref{eq:appendix-epsb-choice} \\
final shift exponent & \(\rho/\beta=\eta\) & \(\rho=5\eta\), \(\beta=5\) \\
\bottomrule
\end{tabularx}
\end{center}

The global KMT base range must be written, when \(\rho\) may exceed one, as
\[
 \log X'\ge R^\gamma-R^\rho-O(R)=R^\gamma(1+o(1)),
\]
not as \(R^\gamma-O(R)\).  This is sufficient for the zero-free-region,
good-modulus, and Landau--Page arguments because \(\gamma>\rho\).

The order of limits is
\[
 \eta\ \longrightarrow\ (\beta,\rho,\gamma,a,\sigma)\
 \longrightarrow\ \varepsilon_b\
 \longrightarrow\ C\ \longrightarrow\ R\to\infty.
\]
The fixed-polylogarithmic theorem retains the separate absolute
\(\eps_1\)-system from Section~\ref{sec:inputs}.  No displayed parameter may
depend on \(R\).  At \(\eta=1\), the relation \(\rho/\beta=1\) conflicts
with the strict scale chain \(\rho<\gamma<\beta\); this is the endpoint of
the present power-scale parameter family.  It is not the endpoint of the
paper's final shift range: Theorem~\ref{thm:global-u} crosses it through the
different middle-band mechanism recorded in
Appendix~\ref{app:global-parameter-ledger}.

\subsection*{Long-shift maximal moments}

Theorem~\ref{thm:long-shift-moments} uses no Pilatte, KMT, Landau--Page, or
Menon parameters.  The binary maximal decomposition takes a square root of
the dyadic Fourier decay.  Since Davenport supplies every fixed logarithmic
power, for each fixed $1<p<\infty$ we choose the resulting exponent
$A>1/\vartheta_p$, where
\[
 \vartheta_p=\frac{2}{\max\{p,p/(p-1)\}}.
\]
Then the dyadic maximal-operator norms in \eqref{eq:block-lp-op} are
summable, giving an $X$-uniform maximal $\ell^p$ bound.  For the tail statement on windows
$H\ge x/(\log x)^B$, after fixed $B,\varepsilon,C$ are chosen, take any
fixed
\[
 p>\max\{1,(B+C)/\varepsilon\}.
\]
Markov's inequality then gives an exceptional density
$O((\log x)^{-C})$ at height $(\log x)^\varepsilon$.

For Theorem~\ref{thm:one-third-moments}, fix $\theta>1/3$ and choose
\[
 0<\rho<\min\left\{\frac{\theta-1/3}{4},\frac1{12}\right\}.
\]
On a dyadic shell $(N,2N]$ above the shift scale, the physical block
length is
\[
 K_N=\left\lfloor\min\{H/4,N^{1-\rho}\}\right\rfloor.
\]
The inequalities $H\ge x^\theta$ and $N\le x$ imply, for all sufficiently
large $x$,
\[
 N^{1/3+\rho}\le K_N\le N^{1-\rho},
\]
which is precisely the fixed-margin range needed in
Lemma~\ref{lem:almost-all-liouville-fourier}.  For a target output power
$(\log x)^{-D_0}$, choose the logarithmic Fourier exponent $A_1$ and the
bad-start exponent $B_1$ so that
\[
 A_1\vartheta_p>D_0+2,
 \qquad B_1>D_0+2.
\]
The source Fourier exponent is taken twice as large before the binary
maximal decomposition.  Then the good blocks, bad blocks, and two
translated-grid boundary pieces
in \eqref{eq:local-operator-norm-sum} contribute respectively
$(\log N)^{-A_1\vartheta_p}$, $(\log N)^{-B_1}$, and
$O(N^{-\rho})$.  Taking $D_0=A+2$ for any prescribed tail exponent
$A>0$ absorbs the $O(\log x)$ dyadic shells and proves
\eqref{eq:one-third-high-tail-intro}.  No KMT, Landau--Page,
Tao--Ter\"av\"ainen, or Menon input enters
this theorem; its only recent input is the refereed Inventiones theorem
\citet[Theorem~1.1(i)]{matomaki2026higherII}.

\section{Global-range parameter and quantifier ledger}
\label{app:global-parameter-ledger}

This appendix collects the parameters used only in
Theorem~\ref{thm:global-u}.  They are distinct from the fixed-
\(\eta\) power-scale family in Appendix~\ref{app:parameter-ledger}.

Write \(L=\log x\).  The shift axis is divided at
\[
 H_{\rm lo}=\exp(L^{1/2}),\qquad H_{\rm hi}=x^{2/5}.
\]
The low range uses Theorem~\ref{thm:super-main} at the single fixed value
\(\eta=1/2\).  The high range uses
Theorem~\ref{thm:one-third-moments} at the single fixed value
\(\theta=2/5\).  Neither endpoint varies with \(x\).

For a middle dyadic band \(H<h\le2H\), put
\[
 S=\log H,\qquad R=S^{1/5},\qquad
 \epsilon=R^{-1/500},\qquad \Omega=R^2,
\]
and
\[
 Y=H^{3/2}\exp(10R^{2.01}).
\]
Since \(S\ge L^{1/2}\), every fixed negative power of \(R\) is a fixed
negative power of \(L\).  The two elementary geometric margins are
\[
 \frac{he^{2R}}Y\ll H^{-1/2}e^{-R^{2.01}},
 \qquad
 \log(Y/h)=\frac12S+10R^{2.01}\gg R^{2.01}.
\]
They respectively control translations and the centred interval length.

The exact KMT interface uses
\[
 \mathcal Q_{X',\epsilon^{6.6},\epsilon^{-88}},
\]
because Proposition~9.4 is used with source parameter
\(\epsilon^{1.1}\).  The conductor split and low-conductor prime scale are
\[
 q_*\gtrless (2X)^{\epsilon^{132}},\qquad
 P\ge X^{\epsilon^7},\qquad
 \vartheta=2\epsilon^{125}.
\]
The identities
\[
 (\epsilon^{6.6})^{20}=\epsilon^{132},\qquad
 125+7=132
\]
are responsible for these exponents.  With
\(ar=(1/500)(1/5)=1/2500\), the generator and entropy powers satisfy
\[
 1-132ar>88ar,\qquad 220ar<1,\qquad 6.6ar<1/20.
\]
These inequalities discharge the log-free zero-density count, the
Lemma~8.1 range, and the high-conductor lower parameter.

For every Abel prefix \(U\ge M/\Omega^4\) and \(q\le\Omega\),
\[
 \log(U/q)\ge R^{1-\varepsilon_b}-O(\log R),
\]
and hence
\[
 (\log(U/q))^{-1/200}\le\epsilon^{1.1},
 \qquad
 (U/q)^{\epsilon^2}\gg1000\log(hq).
\]
Thus both the source-parameter lower endpoint and automatic typicality hold
at the shortest prefix.

If \(b_1>0\) is the scored APF exponent, choose the second fixed Pilatte
block parameter so that
\[
 2^JV^{2J}\le R^{b_1/20}.
\]
The fourth-root transfer, dyadic divisor sum, and subset expansion then leave
\(R^{-b_1/6}\).  The centred term leaves \(R^{-c_{\rm dec}}\), and the
Mellin terminal cutoff costs only \(O(\log R)\).  Thus one may take
\[
 b_2=\min\{b_1/6,c_{\rm dec}\},\qquad
 c_{\rm mid}=\min\{c_3,b_2/20\}>0.
\]
The final global exponent is any fixed positive number below
\[
 \min\{c_{\rm lo},c_{\rm mid},c_{\rm hi}\}.
\]

The exceptional sets are always defined by fixed thresholds before a
sparsity exponent is requested.  For the prefix and high bands, the moment
order \(p=p(C)\) is selected only to estimate that already fixed set.  For
the Landau--Page traces, the Siegel exponent is likewise selected only after
\(C\).  The order is therefore
\[
 \begin{aligned}
 &\text{fix all analytic parameters and all threshold sets}\\
 &\qquad\longrightarrow x\to\infty
 \longrightarrow \text{estimate the same sets for each fixed }C,
 \end{aligned}
\]
which yields \(\exists\cE_x\,\forall C\), rather than a family of sets
depending on \(C\).

Finally, the obstruction \(\rho<\gamma<\beta\) at \(\eta=1\) recorded in
Appendix~\ref{app:parameter-ledger} remains a genuine endpoint of that
single pointwise power-scale family.  Theorem~\ref{thm:global-u} does not
violate the ledger: it crosses the endpoint by evacuating prefixes with
moments, using a new general-modulus high-tail argument, and stitching three
different shift regimes.

\bibliographystyle{abbrvnat}
\bibliography{references}

\end{document}